\documentclass[11pt,a4paper]{amsart}

\usepackage{amsfonts,amssymb,amsmath,amsthm}
\usepackage{mathtools,esint}
\usepackage{lmodern}
\usepackage[noadjust]{cite}
\usepackage{enumitem}
\usepackage{xcolor}
\usepackage[plainpages=false,pdfpagelabels,backref=page]{hyperref}
\hypersetup{
 colorlinks=true,
 linkcolor={cyan!90!black},
 citecolor={magenta},
 urlcolor={green!40!black},
 pdftitle={Uniform coarse ellipticity, A2 weights, and boundary regularity},
 pdfauthor={Kaj Nystr\"om},
 pdfsubject={Degenerate elliptic equations, Dirichlet problems and boundary Harnack inequalities}
}

\usepackage{graphicx}

\newtheorem{thm}{Theorem}[section]
\newtheorem{lem}[thm]{Lemma}
\newtheorem{prop}[thm]{Proposition}
\newtheorem{cor}[thm]{Corollary}

\theoremstyle{definition}
\newtheorem{defn}[thm]{Definition}
\newtheorem{rem}[thm]{Remark}
\numberwithin{equation}{section}
\DeclareMathOperator*{\essosc}{ess\,osc}

\newcommand{\R}{\mathbb R}

\renewcommand{\d}{\,\mathrm{d}}
\newcommand{\esssup}{\mathop{\mathrm{ess\,sup}}}
\newcommand{\essinf}{\mathop{\mathrm{ess\,inf}}}
\newcommand{\supp}{\operatorname{supp}}
\newcommand{\avgint}{\fint}
\newcommand{\Om}{\Omega}
\newcommand{\dist}{\operatorname{dist}}
\newcommand{\osc}{\operatorname{osc}}
\newcommand{\cE}{\mathcal E}
\newcommand{\cF}{\mathcal F}
\newcommand{\cT}{\mathcal T}
\newcommand{\cS}{\mathcal S}
\newcommand{\Ha}{H^1_a}
\newcommand{\Haz}{H^1_{a,0}}
\newcommand{\Vzero}{\mathcal V_0}
\newcommand{\La}{\mathcal L_a}
\newcommand{\tr}{\operatorname{tr}}
\newcommand{\diam}{\operatorname{diam}}
\newcommand{\Lip}{\operatorname{Lip}}
\newcommand{\Id}{\mathrm I}
\newcommand{\loc}{\mathrm{loc}}
\newcommand{\Qunit}{Q^{\circ}}
\allowdisplaybreaks
\makeatletter
\renewcommand{\l@section}{%
  \@tocline{1}{3pt}{0pt}{}{\normalfont\small}}
\makeatother

\begin{document}

\title[Uniform coarse ellipticity and boundary regularity]
{Uniform coarse ellipticity, $A_2$ weights,\\
and boundary regularity}
\author{Kaj Nystr\"om}
\address{Department of Mathematics, Uppsala University, Box 480\\
SE-751 06 Uppsala, Sweden}
\email{kaj.nystrom@math.uu.se}
\subjclass[2020]{35J70, 35B45, 35B65, 31B25, 31C25, 42B25}
\renewcommand{\subjclassname}{\textup{2020} Mathematics Subject Classification}
\keywords{Degenerate elliptic equations, coarse ellipticity, NTA domains, continuous Dirichlet problem, boundary Harnack inequality, H\"older continuity, Muckenhoupt weights}

\begin{abstract}
We study symmetric divergence form elliptic equations with measurable
coefficients that may be degenerate and unbounded, under a uniform bound
on the coarse ellipticity ratio over all ambient cubes and scales.
We prove that uniform coarse ellipticity and classical $A_2$
weighted ellipticity define incomparable coefficient classes.
For every admissible choice of coarse parameters, we construct a scalar
weight with a logarithmically corrected, locally integrable singularity
along a product Cantor set. Its codimension is chosen according to the
upper coarse discount exponent, and the resulting coefficient satisfies
the uniform coarse condition but is comparable to no $A_2$ weight.
Conversely, an explicit $A_2$ weight fails the coarse condition for every
admissible parameter choice.  Under uniform coarse ellipticity we give a direct PDE treatment of boundary
regularity in the approximable energy realization. Using coarse interior
estimates, we prove boundary decay and solvability of the continuous
Dirichlet problem in bounded NTA domains. An almost-positivity argument,
with energy approximation and boundary testing justified, yields
boundary Harnack and H\"older continuity of quotients of positive
solutions with common zero boundary values. The argument requires no
additional energy Sobolev inequality and avoids the construction of
Green functions. Further examples distinguish uniform coarse control
from finite coefficient moments and from coercivity on
$\mathrm L^2(\mathrm{d} x)$. We also record interior H\"older estimates
and continuity criteria under nonuniform control along nested scales.
\end{abstract}

\maketitle

\section{Introduction and statement of main results}\label{Section_0}

Boundary regularity for elliptic equations concerns the interaction between
coefficient degeneracy, domain geometry, and the behaviour of solutions
near the boundary. In this paper we consider
\begin{equation}\label{eq:equation}
 \La u=-\operatorname{div}(a\nabla u)=0,
\end{equation}
where $a$ is measurable and symmetric and may be both degenerate and
unbounded. We give a direct treatment of boundary regularity under
uniform coarse ellipticity in nontangentially accessible domains and
prove that this coefficient class is incomparable with classical
$A_2$ weighted ellipticity.

The notion of coarse-grained ellipticity was introduced by Armstrong
and Kuusi \cite{AK}. We use the formulation and interior estimates of
Armstrong, Avelin, Kuusi and Turpeinen \cite{AAKT}, in which variational
matrices measure effective energies on simplices and their spatial
moments are discounted and summed across finer scales. Our assumption
bounds the resulting ellipticity ratio uniformly in location and scale,
providing uniform interior estimates for the boundary decay and
comparison arguments. We work in the approximable energy realization
and justify the zero extensions and boundary tests needed to apply
these estimates near the boundary.

The paper has two main components. Theorem~\ref{thm:coefficientclasses}
establishes incomparability of uniform coarse ellipticity with
classical $A_2$ weighted ellipticity.
For every admissible tuple of coarse parameters, we construct a scalar
coefficient satisfying the uniform coarse condition but comparable to
no $A_2$ weight. Conversely, a single explicit $A_2$ weight fails the
coarse condition for every admissible tuple.

Under uniform coarse ellipticity, Theorems~\ref{thm:dirichletmain} and \ref{thm:BHImain} give the continuous
Dirichlet solution, boundary Harnack, and H\"older continuity of quotients
and their logarithms. The energy and boundary-value identifications in
Subsection~\ref{sec:energy} also allow boundary comparison on
uniform domains to be deduced from the abstract Dirichlet-form
theorem of \cite{ChenBHP}. Here we give a direct PDE proof in
the stated local NTA setting, based on the almost-positivity
method of \cite{DSS}.  These boundary results apply,
in particular, to the coefficients outside the classical weighted
class constructed here. Subsection~\ref{intro:literature} details the
relation to the abstract theory and earlier coefficient examples.

\subsection{Assumptions on the coefficients}\label{intro:coefficients}

Throughout the paper, $n\geq3$ and
$a\colon\R^n\to\R^{n\times n}$ is measurable, symmetric and positive
definite almost everywhere. Our basic qualitative assumption is
\begin{equation}\label{eq:basic}
 \tr a,\ \tr a^{-1}\in\mathrm L^1_{\loc}(\R^n).
\end{equation}
Here and throughout, matrix norms are operator norms. For $R>0$, write
$Q_R(x)=x+(-R,R)^n$. All cubes used below have sides parallel to the
coordinate axes.
The coefficient is defined on the ambient space. This is relevant at the
boundary, where weak Harnack will be applied to zero extensions on cubes
meeting both sides of the domain. An extension satisfying the hypotheses
is part of the assumptions when the coefficient is initially given only
inside the domain.

Armstrong, Avelin, Kuusi and Turpeinen \cite{AAKT} establish local
boundedness, weak Harnack, and Harnack inequalities in terms of coarse
ellipticity constants. These constants are formed from variational
matrices on simplices and discounted sums across scales. Their finiteness
on a fixed ambient cube already involves all finer scales within that
cube, but does not provide a uniform bound on the coarse ratios of all
smaller ambient cubes. Uniform use of the estimates in an oscillation or
boundary comparison argument requires control of the constants as both
the center and radius vary. This is the role of our uniform hypothesis.

Fix parameters
\begin{equation}\label{eq:admissible}
 1<p,q<\infty,\qquad s,t>0,\qquad
 \theta=1-s-t-\frac{n-1}{2}\left(\frac1p+\frac1q\right)>0.
\end{equation}
The restriction $n\geq3$ is inherited from the coarse interior estimates
of \cite[Theorems A and C]{AAKT}. We retain this range throughout,
including in the comparison of coefficient classes. In particular, the
inverse-square singularities in the reverse example below are locally
integrable in this range.

For an axis-parallel cube $Q$, let $\Lambda_{s,1,p}(Q)$ and
$\lambda_{t,1,q}(Q)$ be the rescaled coarse constants of \cite{AAKT},
defined precisely in Subsection~\ref{sec:coarse} below. Set
\begin{equation}\label{eq:ratio}
 \Theta_a(Q)=\frac{\Lambda_{s,1,p}(Q)}{\lambda_{t,1,q}(Q)}.
\end{equation}
The uniform coarse ellipticity assumption that we impose is
\begingroup
\renewcommand*{\theHequation}{UC}
\begin{equation}\tag{UC}\label{eq:UC}
 0<\lambda_{t,1,q}(Q),\quad
 \Lambda_{s,1,p}(Q)<\infty
 \quad\text{for every axis-parallel cube }Q,
 \qquad
 \sup_Q\Theta_a(Q)\leq K<\infty.
\end{equation}
\endgroup
All axis-parallel cubes, with arbitrary centers and side lengths, occur
in the supremum. We write
\begin{equation}\label{eq:data}
 \mathbf d=(n,p,q,s,t,K).
\end{equation}
The global formulation keeps the notation simple. Each proof only uses
the assumption on the ambient cubes involved in that proof, as discussed
in Subsection~\ref{sec:localization}.
The bound in \eqref{eq:UC} controls an effective ellipticity ratio, rather
than the sizes of $a$ and $a^{-1}$ separately. It nevertheless implies
$|a(x)||a(x)^{-1}|\leq K$ almost everywhere, as
Proposition~\ref{prop:pointwiseanisotropy} below shows. The remaining scalar
variation need not be described by an $A_2$ weight. The explicit
construction in Theorem~\ref{thm:coefficientclasses} makes this
distinction quantitative.

\begin{rem}
A positive weight $\rho$ belongs to $A_2(\R^n)$ if
\begin{equation}\label{eq:A2definition}
 [\rho]_{A_2}
 :=\sup_Q\left(\avgint_Q\rho\d x\right)
          \left(\avgint_Q\rho^{-1}\d x\right)<\infty.
\end{equation}
By $A_2$ weighted ellipticity we mean the existence of such a weight
and fixed constants $0<c\leq C<\infty$ such that
\begin{equation}\label{eq:A2classdefinition}
 c\rho(x)|\xi|^2\leq a(x)\xi\cdot\xi
 \leq C\rho(x)|\xi|^2
 \quad\hbox{for almost every }x\hbox{ and every }\xi\in\R^n.
\end{equation}
Uniform bounds on the products of normalized coefficient moments,
as in \eqref{eq:uniformmoments} below, provide a convenient sufficient
condition for \eqref{eq:UC} in the admissible exponent range. By Proposition~\ref{prop:A2}, these bounds
also imply classical $A_2$ weighted ellipticity. The corresponding
corollary in Subsection~\ref{sec:momentproducts} therefore recovers
a case covered by the classical weighted theory. To obtain coefficients
outside that class, we use the full variational condition
\eqref{eq:UC}, which the explicit construction below verifies directly.
\end{rem}
\subsection{Energy solutions and boundary values}

We consider the bilinear form
\begin{equation}\label{eq:initialform}
 \cE(f,g)=\int_{\R^n}a\nabla f\cdot\nabla g\d x,
 \qquad f,g\in C_0^\infty(\R^n).
\end{equation}
Under \eqref{eq:basic}, this form is closable on
$\mathrm L^2(\R^n,\d x)$. We denote its closure by
$(\cE,\cF)$, retaining the notation $\cE$ for the closed form,
and equip $\cF$ with the norm
\begin{equation}\label{norma}
 \|f\|_{\cF}
 =\bigl(\|f\|_{\mathrm L^2(\R^n)}^2+\cE(f,f)\bigr)^{1/2}.
\end{equation}
The $\mathrm L^2$ term reflects our choice to realize the energy
as a closed Dirichlet form on $\mathrm L^2(\R^n,\d x)$.
This choice requires no inequality controlling the
$\mathrm L^2$ norm by the energy and adds no term to the
weak equation. For an open set $D$, a function $u$ belongs to $\cF_{\loc}(D)$
if, for every open $V\Subset D$, there is an element of $\cF$
agreeing with $u$ almost everywhere on $V$. In particular, membership
implies local $\mathrm L^2$ integrability and local finite energy,
while retaining the approximation requirement encoded by $\cF$.
These two finiteness properties alone are not asserted to characterize
$\cF_{\loc}(D)$. The definition imposes neither global integrability
nor a boundary condition.

A weak solution of $\La u=0$ in $D$ is a function
$u\in\cF_{\loc}(D)$ satisfying
\[
 \int_D a\nabla u\cdot\nabla\phi\d x=0
 \qquad\text{for every }\phi\in C_0^\infty(D).
\]
Such solutions will also be called $\La$-harmonic functions
in $D$. A weak subsolution in $D$ is a function
$u\in\cF_{\loc}(D)$ satisfying
\[
 \int_D a\nabla u\cdot\nabla\phi\d x\leq0
 \qquad\text{for every nonnegative }\phi\in C_0^\infty(D).
\]
Weak supersolutions are defined by reversing this inequality.
Closability, identification of distributional gradients, and
compatibility with the energy spaces of \cite{AAKT} are
established in Subsection~\ref{sec:energy}.
Remark~\ref{rem:energyspaces} summarizes the roles of the
different spaces.

For an open set $\Om$, let $\cF_0(\Om)$ be the closure of
$C_0^\infty(\Om)$ in the ambient form norm \eqref{norma},
where functions in $C_0^\infty(\Om)$ are extended by zero
to $\R^n$ before taking the closure. Let $W\subset\R^n$ be
open and let $u$ be a function on $\Om\cap W$. We say that
$u$ has zero local energy boundary values on
$\partial\Om\cap W$ if the zero extension of $\zeta u$ to
$\R^n$ belongs to $\cF_0(\Om)$ for every
$\zeta\in C_0^\infty(W)$, see Definition~\ref{def:zerotrace}.
This condition prescribes vanishing only on the physical
boundary $\partial\Om\cap W$ and imposes no data on the
artificial boundary $\Om\cap\partial W$. Its testing properties
are established in Subsection~\ref{sec:energy}. Under the
coefficient and geometric hypotheses used below, its equivalence
with continuous vanishing for nonnegative weak solutions is
proved in Subsection~\ref{sec:boundary}.

The basic assumption \eqref{eq:basic} suffices to construct
this energy realization, but alone guarantees neither
$\mathrm L^2$ coercivity nor uniform regularity estimates
across scales. Energy convergence need not imply
$\mathrm L^2$ convergence, even for zero boundary functions,
and multiplication by smooth cutoffs requires care because
the coefficients may be unbounded. Subsection~\ref{sec:energy}
records the standard energy calculus and verifies the approximation
and boundary testing properties needed for this realization and
the direct PDE proof.
The additional hypothesis \eqref{eq:UC} supplies the uniform
interior estimates used in our boundary arguments. It does
not, however, imply an unweighted $\mathrm L^2$ Poincar\'e
inequality, as Proposition~\ref{prop:nogap} shows.

\subsection{Geometric assumptions}\label{intro:geometry}

For a domain $\Om$, $w\in\partial\Omega$, $r>0$, we use the notation
\begin{equation}\label{eq:domainnotation}
 D_r(w)=\Om\cap B(w,r),\qquad
 \Delta_r(w)=\partial\Om\cap B(w,r),\qquad
 \delta(x)=\dist(x,\partial\Om).
\end{equation}
We use the following quantitative formulation of two-sided NTA geometry,
with interior connectivity expressed by uniform curves.

\begin{defn}\label{def:NTA}
A domain $\Om$ has NTA data $(\kappa,M,R_0)$ if
$0<\kappa\leq1/4$, $M\geq1$, $R_0>0$, and the following hold.
\begin{enumerate}
\item For $w\in\partial\Om$ and $0<r<R_0$, there are points
$A_r^+(w)$ and $A_r^-(w)$ such that
\[
 B(A_r^+(w),\kappa r)\subset\Om\cap B(w,r),\qquad
 B(A_r^-(w),\kappa r)\subset\Om^c\cap B(w,r).
\]
\item If $x,y\in\Om$ and $|x-y|<R_0$, a rectifiable curve
$\gamma\subset\Om$ joins $x$ to $y$, has length $\ell\leq M|x-y|$,
and, when parametrized by arc length from $x$, satisfies
\[
 \delta(\gamma(t))\geq M^{-1}\min\{t,\ell-t\},
 \qquad 0\leq t\leq\ell.
\]
\end{enumerate}
We write $A_r(w)=A_r^+(w)$ for an interior corkscrew point.
\end{defn}

\begin{rem}
Definition~\ref{def:NTA} is a standard quantitative formulation of the corkscrew and
interior Harnack chain geometry, with constants and localization scales
adjusted if necessary. The NTA framework originates in \cite{JK}.
In particular, Lipschitz domains have this geometry. Neither a Lipschitz
graph representation nor a condition on boundary surface measure is
assumed in the results below. Constants are allowed to depend on
$\kappa$ and $M$, while $R_0$ specifies the admissible radii.
\end{rem}

\subsection{Main results}\label{intro:results}

The following theorem constructs explicit examples to show that classical
$A_2$ weighted ellipticity and the conditions \eqref{eq:basic}
and \eqref{eq:UC} define incomparable coefficient classes.

\begin{thm}\label{thm:coefficientclasses}
For every $n\geq3$, the following assertions hold.
\begin{enumerate}
\item For every tuple $(p,q,s,t)$ satisfying \eqref{eq:admissible},
there is a scalar coefficient $a=w\Id$, with $w\geq1$ almost everywhere
and $w,w^{-1}\in\mathrm L^1_{\loc}(\R^n)$, satisfying \eqref{eq:UC}
for this prescribed tuple with $K$ depending only on $n$ and $s$,
but satisfying no comparison \eqref{eq:A2classdefinition} with an
$A_2$ weight. For fixed $n,s$, the same coefficient works for every $p,q,t$
for which $(p,q,s,t)$ is admissible.
\item The weight
\begin{equation}\label{eq:classreverseweight}
 w_0(x)={|x|^2}/{|x-e_1|^2},  \qquad x\in\R^n\setminus\{0,e_1\},
\end{equation}
with arbitrary positive values at $0$ and $e_1$, belongs to
$A_2(\R^n)$. For $a_0=w_0\Id$, there is no choice of parameters
satisfying \eqref{eq:admissible} for which both coarse sums are finite
on $Q=(-1,2)^n$.
\end{enumerate}
Consequently, for each fixed admissible tuple, the class satisfying
\eqref{eq:basic} and \eqref{eq:UC} and the class of $A_2$ weighted
elliptic coefficients are incomparable. The same holds for the union
of the coarse classes over all admissible tuples.
\end{thm}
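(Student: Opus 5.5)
The two assertions are proved separately. The incomparability statements then follow at once: part (1) gives a member of each coarse class, and also of their union, that lies outside the $A_2$ class, while part (2) gives an $A_2$ weighted coefficient that lies in no coarse class.

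\emph{Part (1).} The approach is to build $w$ as a logarithmically corrected power of the distance to a product Cantor set. Let $C\subset\R$ be a self-similar Cantor set with ratio $\tau$ and dimension $\beta=\log 2/\log(1/\tau)$, and set $E=C^n$, of dimension $n\beta$ and codimension $d=n(1-\beta)$. Define
\[
 w(x)=\max\bigl\{1,\ \dist(x,E)^{-d}\,\bigl(\log(e/\dist(x,E))\bigr)^{-2}\bigr\},
\]
with the tail harmlessly cut off away from $E$. Then $w\geq1$ and $w^{-1}\leq1$ trivially. The bound $w\in\mathrm L^1_{\loc}$ follows from $|\{\dist(\cdot,E)<r\}\cap Q_1|\approx r^{d}$: this makes $\int w$ comparable to $\sum_k (\log 2^k)^{-2}<\infty$.

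The point of the construction is that $w\notin A_2$ up to comparability. Suppose $c\rho\le w\le C\rho$ for some $\rho\in A_2$. Then $w$ itself satisfies the $A_2$ bound up to constants. But an $A_2$ weight is doubling, while $w$ gives a dyadic cube $Q$ of side $2^{-k}$ centred on $E$ mass $\int_Q w\approx 2^{-kn}$ times a divergent-looking tail. More simply, since $w\ge1$ and $w^{-1}\le1$ with $w^{-1}\to0$ on $E$, I would show that $\avgint_Q w\,\avgint_Q w^{-1}$ is unbounded. Alternatively I would use the $A_\infty$ property: for an $A_2$ weight, a set of proportion $\epsilon$ carries proportion at most $C\epsilon^\delta$ of the mass. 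Here the neighbourhood $\{\dist<2^{-j}\}$ occupies a fraction $2^{-(j-k)d}$ of $Q$ yet carries a fraction of the mass that decays only logarithmically, which contradicts the $A_\infty$ property.

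\emph{Verification of (UC).} This is the main obstacle. I would bound $\Lambda_{s,1,p}(Q)$ and $\lambda_{t,1,q}(Q)$ directly from their variational definitions. For the lower constant, $w\ge1$ gives that every variational lower matrix is $\ge\Id$, so $\lambda_{t,1,q}(Q)\gtrsim1$ uniformly. For the upper constant, the variational upper matrix on a simplex is bounded by the harmonic-type effective conductivity. I would use test fields that avoid the singular set, namely affine functions corrected near $E$. The codimension $d$ is chosen relative to $s$, in particular $d<2s$ or a similar threshold coming from the upper discount exponent. With this choice the effective upper conductivity on a subcube of side $3^{-m}$ meeting $E$ is bounded, independently of the scale $m$, by a constant depending on $n,s$: $E$ has positive codimension, and the logarithmic correction makes the defect summable. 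The discounted sum over scales, weighted by $3^{-sm}$, then converges uniformly in $Q$ and in location, by self-similarity of $E$. I would first verify this on cubes adapted to the Cantor grid and then pass to arbitrary cubes by covering and by monotonicity of the coarse constants. The resulting bound on $\Lambda_{s,1,p}$ does not involve $p,q,t$, since the $\mathrm L^p$ moments of bounded quantities are bounded. This gives the uniformity claimed in the statement.

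\emph{Part (2).} For the $A_2$ claim, $|x|^2$ and $|x-e_1|^{-2}$ are power weights with exponents in $(-n,n)$ because $n\ge3$, so each lies in $A_2$ separately. Near either pole the product is comparable to a single such power. Far away, $w_0\approx1$. A case analysis on cubes, according to whether they are small and near $0$, small and near $e_1$, or large, then bounds the $A_2$ characteristic.

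For the failure of the coarse sums, use the explicit variational matrices on small simplices near the poles. Near $e_1$ the upper constant of a simplex at distance $r$ grows like $r^{-2}$. Near $0$ the lower constant decays like $r^{2}$. The discounted sums then behave like $\sum_m 3^{-sm}3^{2m}$ or its analogue. I would show that, whatever the values of $s,t,p,q$ in the admissible range ($s+t<1$), at least one of the two sums diverges. The key is to compare the moment exponents with $\theta>0$: the constraint $\theta>0$ forces $s,t<1$, and the moments of order $p$ of $r^{-2}$ over shells of volume $r^n$ then fail to be discounted enough.
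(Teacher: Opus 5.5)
Your treatment of the non-$A_2$ property and of part (2) is broadly on track. The verification of \eqref{eq:UC} in part (1), which is the heart of the theorem, rests on a false claim and a normalization that cannot work. You assert that the effective upper conductivity on a simplex of side $3^{-m}$ meeting $E$ is bounded independently of $m$. But every simplex $T$ satisfies $a_T\geq(\avgint_T a^{-1}\d x)^{-1}$: every admissible competitor has $\avgint_T\nabla h\d x=e$, and Cauchy--Schwarz does the rest. If $T$ has scale $h$ and meets $E$, then $\dist(\cdot,E)\leq\sqrt n\,h$ on $T$. Hence $|a_T|\geq\Phi(\sqrt n\,h)\approx h^{-d}(\log(1/h))^{-2}$, where $\Phi(r)=r^{-d}(\log(e/r))^{-2}$ is your profile. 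In particular, for a cube $Q$ of side $R$ centred on $E$, the $k=0$ term alone gives $\Lambda_{s,1,p}(Q)\geq c\,\Phi(\sqrt n\,R)$, which is unbounded over such cubes. Pairing any upper bound for $\Lambda_{s,1,p}$ with $\lambda_{t,1,q}(Q)\gtrsim1$ therefore cannot bound $\Theta_a(Q)$ uniformly. (Your threshold $d<2s$ would also be superfluous if the conductivities were bounded.) What you need is to compare both constants with one cube-dependent scalar $F_Q=\Phi(\dist(Q,E)+\ell(Q))$. Since $w\geq cF_Q$ on $Q$, every $T\subset Q$ satisfies $|a_{*,T}^{-1}|\leq\avgint_Tw^{-1}\d x\leq CF_Q^{-1}$, giving $\lambda_{t,1,q}(Q)^{-1}\leq CF_Q^{-1}$. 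On the other side you need $\max_{T\in\cT_k(Q)}|a_T|\leq CF_Q3^{kd}$. The discount $3^{-ks}$ then absorbs the square-root growth $3^{kd/2}$ precisely when $d<2s$ (the paper takes $d=s$), giving $\Lambda_{s,1,p}(Q)\leq CF_Q$, and $F_Q$ cancels in the ratio. Independence of $p,q,t$ comes from bounding the maximum over each generation, not from boundedness of the cell quantities.

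That generation bound rests on the cell estimate $|a_T|\leq C\Phi(\dist(T,E)+R)$ for a reference simplex of scale $R$ in arbitrary position, and your sketch does not supply it. The affine competitor only gives $|a_T|\leq\avgint_Tw\d x$. Near $E$ this mean exceeds $\Phi(R)$ by a factor comparable to $\log(1/R)$, so the resulting bound for $\Theta_a(Q)$ on cubes centred on $E$ grows like $\log(1/\ell(Q))$. Removing this loss is the substance of the construction. The paper makes the competitor constant on $(1+\tau)P$ for the maximal Cantor cubes $P$ with $\ell(P)\leq\min\{R,1\}/4$ and $(1+4\tau)P\subset T$. Then, near $E$, the gradient survives only where $\dist(x,\partial T)\leq C\dist(x,E)$. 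The boundary values $e\cdot x$ force a nonzero gradient where $E$ crosses the faces of $T$. This remaining energy is controlled by a hyperplane concentration estimate $\mu(B(z,R)\cap\{\dist(\cdot,H)<r\})\leq CR^{(n-1)\delta}r^{\delta}$ for the natural measure $\mu$ on $E$, uniform in the position and orientation of $H$; here $\delta$ (your $\beta$) is the dimension of the one-dimensional Cantor factor. It gives an extra factor $(r/R)^{\delta}$ beyond the tube volume $(r/R)^{d}$ on each shell $\{\dist(\cdot,E)\approx r\}$, so the shell sum converges. Summability thus comes from this exponent, not from the logarithmic correction. Because the estimate holds for simplices in every position, no passage from grid-adapted cubes is needed. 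The passage you propose via covering and monotonicity is not available anyway, since the coarse constants are not monotone under inclusion.

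In part (2), the $k$th terms are bounded below by $3^{-k(s-1+n/(2p))}$ and $3^{-k(t-1+n/(2q))}$, not by $3^{-ks}3^{2k}$. Neither sum need diverge by itself. By \eqref{eq:matrixaveragebounds} and Jensen, the upper sum is finite whenever $|a_0|\in\mathrm L^p(Q)$, e.g.\ for admissible tuples with $p<n/2$. You must add the two necessary conditions $2s>2-n/p$ and $2t>2-n/q$ and use $1/p+1/q<2$ to contradict $2(s+t)+(n-1)(1/p+1/q)<2$; the observation $s,t<1$ does not suffice.
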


\begin{rem}
The coefficient in the first assertion of
Theorem~\ref{thm:coefficientclasses} has a logarithmically corrected,
locally integrable singularity along a totally disconnected product
Cantor set of dimension $n-s$. For fixed $n$ and $s$, the same
coefficient works for every admissible tuple $(p,q,s,t)$.
No bound for the coarse ellipticity constant uniform as
$s\downarrow0$ is asserted. The variational mechanism is outlined in
Subsection~\ref{intro:proof}. Section~\ref{sec:coefficientclasses} contains the
full construction and the proofs of both assertions of the theorem.
\end{rem}

The following theorem gives the continuous Dirichlet solution and its
harmonic measure representation.

\begin{thm}\label{thm:dirichletmain}
Assume \eqref{eq:basic}, \eqref{eq:admissible}, and \eqref{eq:UC}.
Let $\Om$ be a bounded NTA domain with NTA data $(\kappa,M,R_0)$. Then for every $f\in C(\partial\Om)$
there is a unique
$H_\Om f\in C(\overline\Om)\cap\cF_{\loc}(\Om)$ satisfying
\eqref{eq:equation} and $H_\Om f=f$ on $\partial\Om$.
The extension operator is linear, positive, and contractive in the
supremum norm, and $H_\Om1=1$. For every $x\in\Om$ there is a unique
probability measure $\omega^x_\Om$ on $\partial\Om$ such that
\begin{equation}\label{eq:representationmain}
 H_\Om f(x)=\int_{\partial\Om}f\d\omega^x_\Om.
\end{equation}
Furthermore, weak solutions are locally H\"older continuous, and nonnegative solutions
with zero local energy boundary values satisfy a boundary H\"older
decay estimate with constants depending only on $\mathbf d$ and
the NTA data. H\"older boundary data give a boundary H\"older modulus
as specified in Proposition~\ref{prop:datamodulus} below.
\end{thm}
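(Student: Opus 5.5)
The proof proceeds in four stages: interior regularity, boundary decay, construction of $H_\Om f$ by approximation, and the representation. Throughout, the interior estimates of \cite[Theorems A and C]{AAKT} are used with constants controlled by $\mathbf d$ alone, which \eqref{eq:UC} guarantees on every ambient cube.

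\emph{Interior H\"older continuity.} Iterating the Harnack inequality of \cite{AAKT} on nested cubes $Q_{r}(x)\subset Q_{2r}(x)$ inside $\Om$, applied to $u-\inf_{Q_{2r}}u$ and $\sup_{Q_{2r}}u-u$, gives the oscillation decay $\osc_{Q_r}u\le\gamma\,\osc_{Q_{2r}}u$ with $\gamma=\gamma(\mathbf d)<1$. This yields local H\"older continuity of weak solutions, with exponent and constant depending only on $\mathbf d$.

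\emph{Boundary decay.} Let $u\ge0$ be a solution in $D_{2r}(w)$ with zero local energy boundary values. Using the energy calculus of Subsection~\ref{sec:energy}, the zero extension of $u$ is a weak subsolution in $B(w,2r)$. Hence $M-u$ (with $M=\sup_{D_{2r}}u$), extended by $M$ outside $\Om$, is a nonnegative supersolution on the full ambient cube. Apply weak Harnack to it. By the exterior corkscrew condition in Definition~\ref{def:NTA}, the set $\{M-u=M\}$ contains $B(A_r^-(w),\kappa r)$, a fixed proportion of the cube. It follows that $\sup_{D_{r}}u\le(1-c)M$, and iterating gives $\sup_{D_\rho(w)}u\le C(\rho/r)^\alpha\sup_{D_{2r}}u$ with constants depending only on $\mathbf d$ and $\kappa$. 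Only the exterior corkscrew is needed at this stage.

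\emph{Existence.} For Lipschitz $f$, extend $f$ to $F\in C_0^\infty(\R^n)$; density reduces the general case to smooth data. Let $u=F+v$, where $v\in\cF_0(\Om)$ minimizes $\cE(F+v,F+v)$. Because $\cE$ need not be coercive on $\mathrm L^2$ (Proposition~\ref{prop:nogap}), I would not minimize directly. Instead I would take the closure of $\cF_0(\Om)$ in the energy seminorm, or use the $\mathrm L^2$-augmented form $\cE+\epsilon\|\cdot\|^2$ and let $\epsilon\to0$. This limiting procedure is one technical point. The maximum principle follows by testing with $(u-\sup f)_+$, which has zero boundary values, and gives $\|u\|_\infty\le\|f\|_\infty$.

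Boundary continuity is the main obstacle. For $w\in\partial\Om$, the functions $(u-f(w)-\osc_{B(w,2r)}f)_+$ are subsolutions with zero local energy boundary values near $w$. Applying the boundary decay to them, and to their negative counterparts, gives $|u(x)-f(w)|\le\osc_{B(w,2r)}f+C(|x-w|/r)^\alpha\|f\|_\infty$. This yields $u\in C(\overline\Om)$ and the modulus of Proposition~\ref{prop:datamodulus}. The delicate part is justifying that truncations of $u-F$ lie in $\cF_0$ locally with unbounded $a$; I expect to rely on Subsection~\ref{sec:energy} here. For general $f\in C(\partial\Om)$, approximate uniformly by Lipschitz data and use contractivity to get a uniform Cauchy limit. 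Local uniform limits remain solutions by Caccioppoli estimates, giving energy bounds in $\cF_{\loc}$.

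\emph{Uniqueness and representation.} Uniqueness follows because the difference $h$ of two solutions is continuous and vanishes on $\partial\Om$. Hence $(h-\epsilon)_+$ has compact support in $\Om$ and belongs to $\cF_0(\Om)$, so the weak maximum principle gives $h\le\epsilon$ for every $\epsilon>0$, and similarly $h\ge-\epsilon$. Linearity, positivity and $H_\Om1=1$ are then immediate. Finally, the Riesz representation theorem applied to the positive functional $f\mapsto H_\Om f(x)$ on $C(\partial\Om)$ produces the unique probability measure $\omega^x_\Om$ satisfying \eqref{eq:representationmain}.
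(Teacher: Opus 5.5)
Your outline matches the paper's proof step for step. The interior H\"older estimate comes from Harnack oscillation decay. Boundary decay comes from zero extension of a nonnegative subsolution, weak Harnack applied to $M-\widetilde u$, and the exterior corkscrew. Boundary continuity comes from level truncations of a variational solution, continuous data from uniform approximation plus the contraction estimate and Caccioppoli, uniqueness from testing with $(h-\varepsilon)^+$, and the measure from Riesz representation.

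\textbf{The gap: producing the variational solution.} The step you leave open is the one that needs an idea, namely constructing the variational solution when $\cE$ is not coercive on $\mathrm L^2$. Completing $\cF_0(\Om)$ in the energy seminorm helps only once you know the completion is a space of functions. The paper gets this from $\tr a^{-1}\in\mathrm L^1(\Om)$. Cauchy--Schwarz and the $W^{1,1}$ Sobolev inequality give
\[
 \|\varphi\|_{\mathrm L^{n/(n-1)}(\Om)}\leq C_n\Bigl(\int_\Om\tr a^{-1}\d x\Bigr)^{1/2}\cE_\Om(\varphi,\varphi)^{1/2},\qquad\varphi\in C_0^\infty(\Om).
\]
Combined with the gradient identification from closability, this makes $\Vzero(\Om)$ a Hilbert space of $W^{1,1}$ functions. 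Riesz representation there yields $u=F+z$, the maximum principle makes $z$ bounded, and bounded elements of $\Vzero(\Om)$ lie in $\cF_0(\Om)$ by Lemma~\ref{lem:boundedcompletion}. The truncations you were unsure about are handled as in Lemma~\ref{lem:boundarylevels}. Replace $z$ by smooth compactly supported approximants $z_j$, so that $\zeta(F+z_j-b)^+$ is Lipschitz and vanishes on $\partial\Om$. Then apply Lemma~\ref{lem:Lipzero} and pass to the limit by closedness.

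\textbf{Your $\varepsilon$-regularized route.} This can also be completed, but energy bounds alone do not suffice, since they only give $\|u_\varepsilon\|_{\mathrm L^2}^2\lesssim\varepsilon^{-1}$. Test $-\operatorname{div}(a\nabla u_\varepsilon)+\varepsilon u_\varepsilon=0$ with $(u_\varepsilon-b)^+$, where $b=\max\{\max_{\partial\Om}F,0\}$, and argue symmetrically from below, to get $|u_\varepsilon|\leq\max_{\partial\Om}|F|$. Then $u_\varepsilon-F$ is bounded in the $\cF$ norm, a weak limit in $\cF_0(\Om)$ exists, and $\varepsilon\int u_\varepsilon\varphi\to0$ gives the unperturbed equation. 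In either route, the maximum principle uses that a zero-energy element of $\cF_0(\Om)$ vanishes: it lies in $W^{1,1}_{\loc}(\R^n)$, has zero gradient, and vanishes off $\Om$.

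\textbf{The H\"older modulus.} Your one-scale estimate $|u(x)-f(w)|\leq\osc_{B(w,2r)}f+C(|x-w|/r)^{\alpha}\|f\|_\infty$ gives continuity. For $\gamma$-H\"older data, however, optimizing $r$ yields only the exponent $\alpha\gamma/(\alpha+\gamma)$. To reach every $\beta<\min\{\beta_B,\gamma\}$, as in Proposition~\ref{prop:datamodulus}, you must iterate the contraction across dyadic scales: $A_{j+1}\leq\kappa_BA_j+\omega_j$.
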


\begin{rem}
For arbitrary continuous boundary data, the solution is obtained as
a uniform limit of bounded variational solutions with Lipschitz
boundary data. The limit belongs to $\cF_{\loc}(\Om)$ and has finite
energy on every relatively compact subdomain, but the integral
\[
 \int_\Om a\nabla u\cdot\nabla u\d x
\]
need not be finite. The variational construction does not require
an $\mathrm L^2$ Poincar\'e inequality.  Moreover, the theorem remains
valid when the NTA assumption is replaced by boundedness,
connectedness, and the uniform exterior volume density condition
used in Subsection~\ref{sec:boundary}. In this version, the boundary
regularity constants depend on the coefficient data and the density
constant $c_e$ in \eqref{eq:exteriordensity}; $R_0$ specifies the
admissible radii.
\end{rem}

The next theorem establishes boundary comparison and H\"older
continuity of quotients and their logarithms.

\begin{thm}\label{thm:BHImain}
Assume \eqref{eq:basic}, \eqref{eq:admissible}, and \eqref{eq:UC}.
Let $\Om$ be a bounded NTA domain with NTA data $(\kappa,M,R_0)$. Then there are
$L\geq16$, $C\geq1$, and $\alpha\in(0,1)$, depending only on
$\mathbf d,\kappa,M$, with the following property.
Let $w\in\partial\Om$, $0<r<R_0/L$, and let $u,v$ be nonnegative
weak solutions in $D_{Lr}(w)$ with zero local energy boundary values
on $\Delta_{Lr}(w)$. Suppose
$u(A_r(w))>0$ and $v(A_r(w))>0$. Then $u,v>0$ in $D_r(w)$ and
\begin{equation}\label{eq:BHImain}
 C^{-1}\frac{u(A_r(w))}{v(A_r(w))}
 \leq\frac{u(x)}{v(x)}
 \leq C\frac{u(A_r(w))}{v(A_r(w))},\qquad x\in D_r(w).
\end{equation}
Furthermore, the quotient has a positive continuous extension to
$\overline\Om\cap B(w,r)$, and
\begin{equation}\label{eq:quotientmain}
 \left|\frac{u(x)}{v(x)}-\frac{u(y)}{v(y)}\right|
 \leq C\frac{u(A_r(w))}{v(A_r(w))}
       \left(\frac{|x-y|}{r}\right)^\alpha,
 \qquad x,y\in\overline\Om\cap B(w,r).
\end{equation}
Moreover,
\begin{equation}\label{eq:logmain}
 \left|\log\frac{u(x)}{v(x)}-\log\frac{u(y)}{v(y)}\right|
 \leq C\left(\frac{|x-y|}{r}\right)^\alpha,
 \qquad x,y\in\overline\Om\cap B(w,r).
\end{equation}
Continuous vanishing on $\Delta_{Lr}(w)$ may replace the local energy
boundary condition. Boundary values of the quotient refer to its
continuous extension.
\end{thm}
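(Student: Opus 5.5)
The proof runs the almost-positivity scheme of \cite{DSS}, with the uniform interior estimates of \cite{AAKT} under \eqref{eq:UC} as the only regularity input. Four ingredients are needed, each with constants depending only on $\mathbf d,\kappa,M$ and uniform over all centres and scales thanks to \eqref{eq:UC}:
\begin{itemize}
\item[(i)] the interior Harnack inequality, chained along the uniform curves of Definition~\ref{def:NTA}, which gives Harnack chains between corkscrew points;
\item[(ii)] the boundary H\"older decay of Theorem~\ref{thm:dirichletmain} for nonnegative solutions with zero local energy boundary values;
\item[(iii)] a Carleson estimate $u\le C u(A_r(w))$ on $D_{2r}(w)$;
\item[(iv)] a boundary weak Harnack inequality for the zero extension of a nonnegative supersolution, on cubes meeting $\Om^c$.
\end{itemize}
Ingredient (iv) applies because the coefficient is defined on all of $\R^n$, and the zero extension of a nonnegative solution with zero local energy boundary values is a weak subsolution across $\Delta_{Lr}(w)$; the energy calculus of Subsection~\ref{sec:energy} justifies this.

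The Carleson estimate (iii) comes from the standard Caffarelli--Fabes--Mortola--Salsa argument. Suppose $u(x_0)\ge C^k u(A_r(w))$ at some point with $\delta(x_0)$ small. The Harnack chains of (i) let us compare $u(x_0)$ with the corkscrew value. The boundary decay (ii) at the nearest boundary point of $x_0$ then produces a new point, closer to the boundary, where $u$ is larger by a fixed factor. Iterating gives a sequence converging to $\partial\Om$ along which $u$ blows up, which contradicts the local boundedness of \cite{AAKT} applied to the zero extension. Positivity in $D_r(w)$ follows from $u(A_r(w))>0$, the Harnack chains, and the strong minimum principle that interior Harnack implies.

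For the comparison \eqref{eq:BHImain}, normalize $u(A_r(w))=v(A_r(w))=1$. The key step is almost positivity, which I expect to be the main obstacle. Let $h=u-\epsilon v$, or more generally $h=u-\epsilon v-\eta$ with a small slack $\eta$. By Carleson and Harnack, $h$ is positive at corkscrew points of a fixed scale. Near the boundary, its negative part is a subsolution with zero local energy boundary values. By the boundary decay (ii), that negative part becomes exponentially small relative to the corkscrew values at finer scales. An induction over dyadic scales, as in \cite{DSS}, then shows $h\ge0$ in $D_r(w)$ for a fixed $\epsilon$, and the symmetric argument gives the upper bound.

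Making this induction work requires three justifications:
\begin{itemize}
\item the truncation $(\epsilon v-u)^+$ must be admissible as a test function in $\cF_0$-type spaces, since multiplication by cutoffs requires care with unbounded $a$;
\item the boundary testing property must allow the comparison principle on subdomains $D_\rho(w)$ with mixed boundary;
\item the case of continuous vanishing must be reduced to zero local energy boundary values, using the equivalence from Subsection~\ref{sec:boundary}.
\end{itemize}

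The H\"older estimate \eqref{eq:quotientmain} follows by iterating \eqref{eq:BHImain} on shrinking balls. Let $M_k=\sup_{D_{\rho_k}}u/v$ and $m_k=\inf_{D_{\rho_k}}u/v$. Both $M_k v-u$ and $u-m_k v$ are nonnegative solutions with zero boundary values. Applying \eqref{eq:BHImain} to whichever of them is larger at the corkscrew point gives $M_{k+1}-m_{k+1}\le(1-c)(M_k-m_k)$ with $\rho_{k+1}=\rho_k/L$. If one of them vanishes at the corkscrew point, it vanishes identically by positivity, and the oscillation is already zero. Combining this boundary oscillation decay with the interior H\"older continuity of $u$ and $v$, via Harnack, in the usual Whitney-ball case split ($|x-y|$ against $\delta(x)$) gives \eqref{eq:quotientmain} and the continuous extension. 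Since $u/v$ is bounded above and below by \eqref{eq:BHImain}, and $\log$ is Lipschitz on such a range, \eqref{eq:logmain} follows directly.
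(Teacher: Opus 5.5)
\textbf{There is a genuine gap: the almost-positivity step.} Your treatment of positivity, the Carleson estimate, the quotient oscillation step and the logarithm is essentially the paper's. The almost-positivity step, which you rightly call the main obstacle, is left as ``an induction over dyadic scales, as in De Silva--Savin,'' and the mechanism you do name would not close it.

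The boundary decay (ii) does not use the fact that $h^-$ vanishes on the interior core. Across one dyadic step it improves $h^-$ only by the fixed factor $\kappa_B$, which is close to $1$. Across $j$ steps it gives $h^-\le C2^{-j\beta_B}$. Harnack chains, by contrast, bound corkscrew values from below only by $c\,2^{-j\gamma}$, with $\gamma$ in general much larger than $\beta_B$. Moreover, positivity of $h=u-\epsilon v$ at finer corkscrews is exactly what is unknown. So $h^-$ is never shown to be small relative to the positive values at finer scales. The item ``comparison principle on $D_\rho(w)$ with mixed boundary'' points to a tool that is neither established here nor needed: the argument is applied directly to the signed combination.

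\textbf{The missing idea} is the scale-invariant one-step lemma (the paper's Lemma~\ref{lem:almost}). If $h\ge-1$ in $D_R(z)$ and $h\ge M_*$ on $\mathcal A_R^{\varepsilon_*}(z)=\{x\in D_R(z):\delta(x)>\varepsilon_*R\}$, then $h\ge-a_*$ in $D_{R/2}(z)$ and $h\ge a_*M_*$ on $\mathcal A_{R/2}^{\varepsilon_*}(z)$. The lemma rests on two estimates with different dependence on the layer thickness.
\begin{enumerate}
\item The zero extension of $h^-$ is an ambient subsolution vanishing outside a layer of width $\varepsilon_*R$. Apply the exterior-density contraction of Lemma~\ref{lem:zeroset} on cubes of side comparable to $\varepsilon_*R$ centred at nearest boundary points, and march about $1/\varepsilon_*$ steps. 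This gives $h^-\le e^{-c_*/\varepsilon_*}$ on $B(z,R/2)$. The tool is weak Harnack for $M_0-\widetilde f$, where $\widetilde f$ is the zero extension of a \emph{sub}solution. The zero extension of a supersolution is not a supersolution, contrary to your (iv).
\item A chain of boundedly many balls from $\{\varepsilon_*R/2<\delta\le\varepsilon_*R\}$ into $\{\delta>\varepsilon_*R\}$, applied to $h+1\ge0$, gives $h\ge c_H(M_*+1)-1$. Here $c_H$ is independent of $\varepsilon_*$.
\end{enumerate}

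The constants must then be fixed in a definite order. First set $a_*=c_H/2$ and $M_*=\max\{2,2/c_H\}$. Then choose $\varepsilon_*$ with $e^{-c_*/\varepsilon_*}\le a_*$. Only then choose your multiplier $\epsilon$, using Carleson and the $\varepsilon_*$-dependent core lower bound. The requirement is that, after normalizing $u-\epsilon v$ to satisfy $h\ge-1$, one has $h\ge M_*$ on the whole core, not merely at corkscrew points.

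Iterating gives $h\ge-a_*^j$ and $h\ge M_*a_*^j$ at scale $2^{-j}R$, but only in a nontangential region at a fixed centre. To reach all of $D_r(w)$, verify the hypotheses uniformly for every $z\in\Delta_{2r}(w)$ with $R=2r$. Then iterate at the nearest boundary point of each $x\in D_r(w)$.
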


\begin{rem}
The assumptions $u(A_r(w))>0$ and $v(A_r(w))>0$, together with
interior Harnack chains, ensure that both solutions are strictly
positive in $D_r(w)$. The geometry-dependent factor $L$ ensures
that the chains remain within the region where the solutions are
$\La$-harmonic. Consequently, if the solutions are defined only in
$D_{2r}(w)$, the theorem gives comparison and quotient estimates
in $D_{2r/L}(w)$, normalized at $A_{2r/L}(w)$, provided this scale
is admissible and both solutions are positive at that point.
\end{rem}

\begin{rem}
Theorems~\ref{thm:dirichletmain} and \ref{thm:BHImain} apply
to the coefficients in the first assertion of
Theorem~\ref{thm:coefficientclasses}, which lie outside the classical
$A_2$ weighted class. This includes the case where the singular set
intersects the boundary, with no additional restriction on its
position relative to the boundary. The relation between this
construction and earlier fractal examples is discussed below in
Subsection~\ref{intro:literature}.
\end{rem}

\subsection{Relation to the literature}\label{intro:literature}

The coefficient examples of Theorem~\ref{thm:coefficientclasses} clarify how the framework of this paper differs from
classical weighted ellipticity. Coarse ellipticity on a fixed cube already allows coefficients
outside the $A_2$ class. Indeed, the construction in
\cite[Proposition~6.1]{AADFKM}, after adding a positive constant,
gives an integrable coefficient with a finite coarse quantity
on a fixed cube but no $\mathrm L^{1+\varepsilon}$ integrability
near an interior hyperplane, for any $\varepsilon>0$.
The reverse H\"older property of $A_2$ weights therefore rules
out comparability of this coefficient with an $A_2$ weight.  Fractal concentration already appears
in \cite[Proposition~6.2]{AADFKM}, where mollified measures supported on
Ahlfors regular sets yield coarse bounds on a fixed cube that are
uniform in the regularization parameter, despite divergence of higher
coefficient moments.  Our construction establishes uniform coarse control over all
locations and scales for a single locally integrable coefficient. For every prescribed
admissible tuple, we construct a locally integrable scalar weight
comparable to no $A_2$ weight and verify \eqref{eq:UC} on every cube.
The proof estimates the variational matrices directly, using
competitors that are constant near interior Cantor pieces and
controlling the remaining energy near simplex faces.
The ordinary $A_2$ products diverge logarithmically, whereas the coarse
ratios remain uniformly bounded. Conversely, a single explicit $A_2$
weight fails the coarse condition for every admissible tuple.
Together, these examples establish the incomparability stated in
Theorem~\ref{thm:coefficientclasses}.

Boundary regularity for uniformly elliptic equations has been developed
in both Lipschitz domains and more general domains satisfying
quantitative accessibility conditions. For harmonic functions in
Lipschitz domains, the boundary Harnack principle was established
independently by Ancona \cite{Ancona}, Dahlberg \cite{Dahlberg},
and Wu \cite{Wu}. Dahlberg also proved quantitative absolute
continuity of harmonic measure with respect to surface measure
on Lipschitz boundaries.  Caffarelli, Fabes,
Mortola and Salsa \cite{CFMS} proved boundary comparison estimates for
nonnegative solutions of uniformly elliptic divergence form equations.
Jerison and Kenig \cite{JK} introduced NTA domains and developed the
boundary theory of positive harmonic functions in this setting.
The corkscrew and Harnack chain conditions provide a geometric framework
for comparison without requiring local graph representations or
regularity assumptions on boundary surface measure.

For degenerate coefficients comparable to a Muckenhoupt $A_2$ weight,
Fabes, Kenig and Serapioni \cite{FKS} established local regularity.
Fabes, Jerison and Kenig developed the corresponding boundary theory
through a weighted Wiener criterion \cite{FJK} and boundary comparison
estimates \cite{FJKB}. By Proposition~\ref{prop:A2}, the uniform moment
products in \eqref{eq:uniformmoments} place the coefficient in this
classical weighted setting. Corollary~\ref{cor:momentcriterion}
therefore recovers a case covered by that theory. Its exponent
restriction is needed to verify the coarse hypothesis, but is
unnecessary for the classical weighted boundary conclusions.

The analytic framework used here combines developments in
nonuniform ellipticity and coarse-graining. Bella and Sch\"affner
\cite{BS} enlarged the coefficient moment range for local boundedness
and Harnack estimates. Armstrong and Kuusi \cite{AK} introduced
coarse-grained ellipticity in their work on high-contrast
homogenization. Armstrong, Avelin, De Filippis, Kuusi and Mingione
\cite{AADFKM} developed a deterministic local boundedness and Harnack
theory in this framework, with sufficient conditions expressed through
negative Sobolev regularity. Armstrong, Avelin, Kuusi and Turpeinen
\cite{AAKT} established local boundedness, weak Harnack, and Harnack
estimates in the optimal coefficient range, using spatial moments
of variational matrices discounted and summed across scales.
These latter estimates are the analytic inputs of the present paper.
We impose uniform control of the resulting coarse ellipticity ratio
as both the location and scale vary.

Boundary Harnack has also been developed in the abstract setting
of Dirichlet forms. Barlow and Murugan
\cite[Theorem~1.1]{BM} prove boundary comparison on inner uniform
domains under an elliptic Harnack inequality and an additional
Green function assumption. Building on the Green function theory
of Barlow, Chen and Murugan \cite{BCM}, Chen
\cite[Theorem~1.1]{ChenBHP} proves boundary Harnack on uniform
domains for regular strongly local symmetric Dirichlet forms
satisfying elliptic Harnack, on complete metric doubling spaces
equipped with a Radon measure of full support. No separate
spectral gap or Green function hypothesis is imposed in this
later theorem.

In our Euclidean setting, \eqref{eq:UC} supplies the elliptic
Harnack inequality through \cite{AAKT}. The energy and testing results in Subsection~\ref{sec:energy}
identify our weak solutions with harmonic functions for the
Dirichlet form and show that our cutoff boundary condition
provides the local zero-boundary representatives required
by the abstract theory. On uniform domains, boundary comparison therefore
also follows from \cite{ChenBHP}, with H\"older continuity of
quotients obtained by oscillation reduction.

Our direct proof treats the local NTA formulation with the stated
dependence on the geometric and coarse ellipticity data by adapting
the almost-positivity method of De Silva and Savin
\cite[Sections~2 and~3.2]{DSS} to the approximable degenerate
energy class. We justify the truncation, zero-extension, and
boundary testing steps and use interior Harnack and weak Harnack
estimates together with boundary decay. This argument avoids
constructing Green functions.

\subsection{Discussion of the proofs}\label{intro:proof}

For the first assertion of Theorem~\ref{thm:coefficientclasses},
we choose a totally disconnected product Cantor set $E$ of
codimension $s$, the upper coarse discount exponent. Set
$a=w\Id$ and $d_E(x)=\dist(x,E)$, with
\[
 w(x)=\Phi_s(d_E(x))\quad\text{for }x\notin E,\qquad
 \Phi_s(r)={16r^{-s}}{(4+s\log(1/r))^{-2}},
 \qquad 0<r<1,
\]
and $\Phi_s(r)=1$ for $r\geq1$, $w=1$ on $E$. The logarithmic
correction makes this critical power singularity locally integrable,
while the $A_2$ products on balls centered on $E$ grow like
$4+s\log(1/R)$. To prove that $a$ satisfies \eqref{eq:UC}, we note that the upper coarse ellipticity quantity in \eqref{eq:Lambdadef} below is assembled from variational matrices $a_T$ on the simplices
of fixed triangulations at successive scales. By
\eqref{eq:uppermatrix}, $a_Te\cdot e$ is the minimum average
energy among admissible functions with boundary values
$e\cdot x$. Constructing a function with these boundary values
therefore gives an upper bound for this quadratic form. For each such
simplex $T$ of scale $R$, we construct an
admissible competitor $h_e$ by modifying the affine function
$e\cdot x$ to make it constant near interior Cantor pieces. Near $E$, its
remaining gradient is confined to
$\dist(x,\partial T)\leq C d_E(x)$.
Uniform hyperplane concentration estimates yield an additional
volume exponent $\delta=(n-s)/n$, making the energy contributions
summable and giving
\[
 \avgint_T w|\nabla h_e|^2\d x
 \leq C|e|^2\Phi_s\bigl(\dist(T,E)+R\bigr).
\]
This removes the logarithmic loss in ordinary coefficient averages,
uniformly even when simplex faces intersect $E$. For an ambient cube $Q$, put
$F_Q=\Phi_s(\dist(Q,E)+\ell(Q))$.
At generation $k$, the upper variational matrices are bounded
by $CF_Q3^{ks}$ and the inverse lower matrices by $CF_Q^{-1}$.
The upper coarse discount combines with the square-root growth
to give the summable factor $3^{-ks/2}$. Hence
\[
 \Lambda_{s,1,p}(Q)\leq CF_Q,
 \qquad
 (\lambda_{t,1,q}(Q))^{-1}\leq CF_Q^{-1}.
\]
Cancellation of $F_Q$ proves \eqref{eq:UC} uniformly over all
cubes. The coefficient and $C$ depend only on $n,s$, with no
uniform bound asserted as $s\downarrow0$.
Section~\ref{sec:coefficientclasses} gives the construction and
the reverse example proving incomparability with classical
$A_2$ weighted ellipticity.

The proofs of Theorems~\ref{thm:dirichletmain} and
\ref{thm:BHImain} separate the energy construction from the
quantitative estimates. Local integrability of $a$ and $a^{-1}$ gives a closable
energy form and identifies its elements with functions having
distributional gradients. On bounded sets, an energy completion with
zero boundary values controls an $\mathrm L^1$ norm. This suffices for
variational existence without an $\mathrm L^2(\d x)$ Poincar\'e
inequality. Truncation and bounded cutoff approximation then provide
the tests needed for comparison and zero extension.

Uniform coarse ellipticity supplies scale-independent interior Harnack
and weak Harnack constants through \cite{AAKT}. Interior oscillation
decay follows from Harnack. At the boundary, a nonnegative subsolution
with zero energy boundary values extends by zero to an ambient
subsolution. The exterior corkscrew supplies a fixed positive fraction
of its zero set in boundary cubes. Weak Harnack applied to the
complement of this subsolution gives boundary contraction. Variational
solutions with Lipschitz data therefore attain their boundary values
continuously. Uniform approximation and the maximum principle give the
continuous Dirichlet solution and its harmonic measure representation.

Boundary comparison uses the same local estimates together with
interior chains. A growth contradiction first proves the Carleson
estimate. The almost-positivity step is then applied to a signed
linear combination of the two solutions. Its negative part is supported
in a thin boundary layer and decays exponentially across that layer.
A short Harnack chain propagates positivity to the core at the next
scale with a constant independent of the layer thickness. This allows
the constants to be chosen successively and the signed estimate to be
iterated. Recentring at a nearest boundary point yields comparison
throughout the smaller domain. Boundary and interior oscillation
contractions for the quotient give its two-point H\"older estimate.
The positive lower bound then yields the corresponding logarithmic
estimate.

\subsection{Organization of the paper}

Section~\ref{sec:framework} develops the analytic framework. It introduces
the coarse quantities and local estimates, constructs the energy spaces
and variational solutions, and proves interior H\"older continuity.
Section~\ref{sec:coefficientclasses} analyzes the
coefficient assumptions, constructs the Cantor example, and proves
incomparability with $A_2$ weighted ellipticity. Section~\ref{sec:boundarydirichlet} establishes boundary testing and decay,
then solves the continuous Dirichlet problem and constructs harmonic
measure. Section~\ref{sec:comparison} contains the complete boundary
comparison argument, from Harnack chains and the Carleson estimate to
almost positivity, boundary Harnack, and H\"older continuity of quotients
and their logarithms.
Section~\ref{sec:examples} gives further examples and discusses
localization and the geometric assumptions. The interior continuity
criteria under nonuniform control are included in
Subsection~\ref{sec:goodscales}.

\section{Coarse ellipticity, energy solutions and interior regularity}\label{sec:framework}

We develop the analytic framework used in the coefficient estimates
and boundary arguments.
We first define the coarse variational quantities and recall the local
estimates they control. We then construct the approximable energy spaces,
justify truncation and cutoff tests, and establish variational existence.
Finally, uniform coarse ellipticity yields interior oscillation decay
and H\"older continuity. The coarse variational definitions and energy approximation results
will be used in Section~\ref{sec:coefficientclasses}. The testing
results and interior estimates also prepare the boundary arguments
in Sections~\ref{sec:boundarydirichlet} and \ref{sec:comparison}.

\subsection{Coarse ellipticity and the local estimates}\label{sec:coarse}

For the auxiliary energy spaces and variational quantities discussed below, let
$U$ be a bounded connected Lipschitz domain. The boundary results
will be formulated on NTA domains $\Om$, which need not be Lipschitz. Given $U$, let $\Ha(U)$ be the completion
of functions in $C^\infty(U)\cap\mathrm L^1(U)$ with finite energy in the norm
\begin{equation}\label{eq:Hanorm}
 \|h\|_{\Ha(U)}^2
 =\left|\avgint_U h\d x\right|^2+\cE_U(h,h),
 \qquad
 \cE_U(h,h)=\int_U a\nabla h\cdot\nabla h\d x.
\end{equation}
Let $\Haz(U)$ be the closure of $C_0^\infty(U)$ in this space.
For every $h\in C^\infty(U)\cap\mathrm L^1(U)$ of finite energy,
weighted Cauchy-Schwarz and the $W^{1,1}$ Poincar\'e inequality give
\begin{align*}
 \|\nabla h\|_{\mathrm L^1(U)}
 &\leq\left(\int_U\tr a^{-1}\d x\right)^{1/2}
       \cE_U(h,h)^{1/2},
\end{align*}
and
\begin{align*}
 \|h\|_{\mathrm L^1(U)}
 &\leq |U|\left|\avgint_U h\d x\right|
       +C(U)\left(\int_U\tr a^{-1}\d x\right)^{1/2}
       \cE_U(h,h)^{1/2}.
\end{align*}
Thus a sequence Cauchy in $\Ha(U)$ is Cauchy in $W^{1,1}(U)$.
The gradient identification argument in the proof of
Lemma~\ref{lem:closability} below then identifies the completions
with functions in $W^{1,1}(U)$ and their distributional gradients.
This is precisely the energy norm and smooth completion used in
\cite[Section 2.1]{AAKT}. The matrices below are those of
\cite[Section 2.2]{AAKT}, and the triangulations and coarse constants
agree with \cite[Section 3]{AAKT} after translation and dilation.
Lemma~\ref{lem:boundedcompletion} relates these spaces to the ambient
form realization used for boundary values.
An $\Ha$-solution in $U$ means a function $h\in\Ha(U)$ satisfying
the weak equation against every test in $C_0^\infty(U)$.
The terms $\Ha$-subsolution and $\Ha$-supersolution refer to the
corresponding inequalities for nonnegative smooth tests, with the
same sign convention as in the introduction. This terminology
distinguishes the $\Ha$ classes from those defined through
$\cF_{\loc}$. Remark~\ref{rem:energyspaces} proves local
equivalence of the two notions of solution under \eqref{eq:UC}.

For $e\in\R^n$, define symmetric positive definite matrices $a_U$ and
$a_{*,U}$ by
\begin{align}
 a_Ue\cdot e
 &=\inf\left\{\avgint_U a\nabla h\cdot\nabla h\d x
       :h-e\cdot x\in\Haz(U)\right\},\label{eq:uppermatrix}\\
 a_{*,U}^{-1}e\cdot e
 &=\sup_{h\in\Ha(U)}\avgint_U
       \bigl(2e\cdot\nabla h-a\nabla h\cdot\nabla h\bigr)\d x.
       \label{eq:lowermatrix}
\end{align}
The infimum in \eqref{eq:uppermatrix} is attained by the unique
$\Ha$-solution $h$ in $U$ satisfying
$h-e\cdot x\in\Haz(U)$. This is the $\La$-harmonic replacement
of the affine function $x\mapsto e\cdot x$ in $U$. The supremum in \eqref{eq:lowermatrix} is unchanged if it is restricted
to $\Ha$-solutions, as shown in \cite[Propositions 2.1-2.3]{AAKT}.
The variational definitions also give
\begin{equation}\label{eq:matrixaveragebounds}
 a_U\leq\avgint_U a\d x,
 \qquad a_{*,U}^{-1}\leq\avgint_U a^{-1}\d x
\end{equation}
in the sense of quadratic forms. The first inequality follows by
using the affine competitor $h(x)=e\cdot x$, and the second follows
by completing the square in \eqref{eq:lowermatrix}.

Let $\Qunit=(-1/2,1/2)^n$. Divide each triadic subcube of side $3^{-k}$
into its $n!$ coordinate-order simplices, and let $\mathcal T_k$ denote the
resulting triangulation of $\Qunit$. Explicitly, in a cube
$x_0+(0,\ell)^n$, these simplices are
\[
 T_\pi=\{x_0+y:0<y_{\pi(1)}<\cdots<y_{\pi(n)}<\ell\},
\]
where $\pi$ ranges over all permutations of $\{1,\ldots,n\}$. This is
the triangulation of \cite[equations (3.1)-(3.2)]{AAKT}, after translation
and dilation. All simplices in $\mathcal T_k$ have the same volume.
If $Q=x_Q+\ell(Q)\Qunit$, perform the same construction
after the affine map from $\Qunit$ onto $Q$, and denote the triangulation
by $\mathcal T_k(Q)$. Set
\begin{align}
 (\Lambda_{s,1,p}(Q))^{1/2}
 &=(1-3^{-s})\sum_{k=0}^\infty 3^{-ks}
 \left(\frac1{\#\mathcal T_k(Q)}
       \sum_{T\in\mathcal T_k(Q)}|a_T|^p\right)^{1/(2p)},
       \label{eq:Lambdadef}\\
 (\lambda_{t,1,q}(Q))^{-1/2}
 &=(1-3^{-t})\sum_{k=0}^\infty 3^{-kt}
 \left(\frac1{\#\mathcal T_k(Q)}
       \sum_{T\in\mathcal T_k(Q)}|a_{*,T}^{-1}|^q\right)^{1/(2q)}.
       \label{eq:lambdadef}
\end{align}
Here and below matrix norms are operator norms. For a constant matrix $A$,
these definitions give $\Lambda=|A|$ and $\lambda=|A^{-1}|^{-1}$. Recall that $\Theta_a(Q)$ is introduced in \eqref{eq:ratio}. If $\widehat a(z)=a(x_Q+\ell(Q)z)$, change of variables in the
variational definitions gives the same matrices on corresponding
simplices. Thus the coarse constants for $a$ on $Q$ equal those for
$\widehat a$ on $\Qunit$. This covariance concerns simultaneous
transformation of the coefficient and cube. Multiplication of $a$ by a positive scalar
multiplies both $\Lambda$ and $\lambda$ by that scalar and leaves $\Theta$
unchanged.

\begin{rem} The use of simplices in the construction above reflects the piecewise affine interpolation and
cellwise harmonic replacement in \cite{AAKT}. In our coefficient
estimates below, we may bound $a_Te\cdot e$ using any competitor in
$e\cdot x+\Haz(T)$, without requiring a piecewise affine construction.
\end{rem}

The next proposition recalls the local elliptic estimates from \cite{AAKT}
that will be used throughout the paper. We use their coarse variational
formulation and impose no separate Besov regularity assumption on the
coefficients.

\begin{prop}\label{prop:AAKT}
Assume \eqref{eq:basic} on a neighborhood of a cube $Q$,
\eqref{eq:admissible}, and $\Theta_a(Q)<\infty$. Set
\begin{equation}\label{eq:mstar}
 m=\frac{2q}{q+1},\qquad
 m_t^*=\frac{nm}{n-(1-t)m}.
\end{equation}
For every $0<\eta\leq m_t^*/2$ there is a constant $C_\eta$, depending only
on $n,p,q,s,t,\eta$, such that every nonnegative $\Ha$-supersolution
$h\in\Ha(Q)$ satisfies
\begin{equation}\label{eq:WH}
 \left(\avgint_{(5/8)Q}h^\eta\d x\right)^{1/\eta}
 \leq \exp\bigl(C_\eta\sqrt{\Theta_a(Q)}\bigr)
          \essinf_{(1/2)Q}h.
\end{equation}
Furthermore, there is $C_0=C_0(n,p,q,s,t)$ such that every nonnegative
$\Ha$-solution $h\in\Ha(Q)$ satisfies
\begin{equation}\label{eq:EHIcube}
 \esssup_{(1/2)Q}h
 \leq \exp\bigl(C_0\sqrt{\Theta_a(Q)}\bigr)
          \essinf_{(1/2)Q}h.
\end{equation}
Every $\Ha$-subsolution in $Q$ is locally bounded above.
\end{prop}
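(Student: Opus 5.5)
This proposition is a recollection of \cite[Theorems~A and~C]{AAKT}, so the plan is to reduce to their normalized statements rather than reprove the estimates.

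\textbf{Step 1: normalize to the unit cube.} Given $Q=x_Q+\ell(Q)\Qunit$, set $\widehat a(z)=a(x_Q+\ell(Q)z)$ and $\widehat h(z)=h(x_Q+\ell(Q)z)$. By the covariance noted after \eqref{eq:lambdadef}, the variational matrices on corresponding simplices coincide. Hence $\Lambda_{s,1,p}$, $\lambda_{t,1,q}$ and $\Theta$ for $\widehat a$ on $\Qunit$ equal those for $a$ on $Q$.

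Change of variables in \eqref{eq:Hanorm} shows that $h\mapsto\widehat h$ maps $\Ha(Q)$ onto $\Ha(\Qunit)$ and $\Haz(Q)$ onto $\Haz(\Qunit)$. It also preserves the super-, sub- and solution inequalities against nonnegative smooth tests. The averages and essential infima in \eqref{eq:WH} and \eqref{eq:EHIcube} are dilation invariant. It therefore suffices to treat $Q=\Qunit$.

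\textbf{Step 2: normalize the scalar factor.} Multiplying $a$ by a positive scalar does not change the class of $\Ha$-(super)solutions, and it leaves $\Theta$ unchanged. We may therefore normalize $\lambda_{t,1,q}(\Qunit)=1$, so that $\Lambda_{s,1,p}(\Qunit)=\Theta_a(\Qunit)$.

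\textbf{Step 3: check the hypotheses of \cite{AAKT}.} The condition $\theta>0$ in \eqref{eq:admissible} is exactly their admissible range for $n\geq3$. Their energy space is the completion in \eqref{eq:Hanorm}, and their triangulations are $\mathcal T_k$, by \cite[Sections~2--3]{AAKT}, as noted above.

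\textbf{Step 4: apply their theorems.} The weak Harnack inequality of \cite[Theorem~C]{AAKT} gives \eqref{eq:WH}. It holds with exponent range $\eta\le m_t^*/2$, where $m$ and $m_t^*$ are as in \eqref{eq:mstar} and arise from their Sobolev embedding with $q$-moments of $a_{*,T}^{-1}$. The constant has the form $\exp(C\sqrt{\Lambda/\lambda})$ up to parameter dependence.

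The Harnack inequality \eqref{eq:EHIcube} and local boundedness of subsolutions come from \cite[Theorem~A]{AAKT}. For \eqref{eq:EHIcube}, combine local boundedness (an $\mathrm L^\eta$-to-$\mathrm L^\infty$ estimate for the subsolution $h$, with the same exponential dependence) with \eqref{eq:WH} for the supersolution $h$. Then cover $(1/2)Q$ by finitely many cubes if the radii in the two estimates do not match.

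\textbf{Main obstacle.} I expect the bookkeeping of constants to be the main difficulty. One must confirm that the \cite{AAKT} constants depend on the coefficient only through $\Theta$, with the $\sqrt{\Theta}$ in the exponent. One must also confirm that the covering and rescaling in Step 4 are done at a fixed finite number of subcubes of comparable size, so that only a bounded multiple of $\sqrt\Theta$ enters the exponent. The monotonicity of coarse constants under passage to subcubes is not available a priori, because the sums in \eqref{eq:Lambdadef} are averaged. I would therefore take the interior radii directly as stated in \cite{AAKT} (they are formulated on $\Qunit$ with inner cubes $(5/8)\Qunit$ and $(1/2)\Qunit$) rather than rescaling subcubes. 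If their radii differ, I would absorb the difference by a chaining argument within $\Qunit$ that uses only $\Theta_a(\Qunit)$.
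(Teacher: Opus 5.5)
Your proposal is essentially the paper's own argument: the paper gives no separate proof and simply records the proposition as \cite[Theorems A and C]{AAKT} after the affine rescaling of Step~1, which relies on the covariance of the variational matrices and coarse constants under simultaneous transformation of coefficient and cube. One part of your Step~4 should be dropped: the fallback of deriving \eqref{eq:EHIcube} from local boundedness and \eqref{eq:WH} by covering or chaining inside $Q$ cannot be justified using $\Theta_a(Q)$ alone, since the coarse ratios of subcubes (in particular those not aligned with the triadic grid of $Q$) are not controlled by $\Theta_a(Q)$, and the paper instead takes the Harnack inequality on $(1/2)Q$ directly from \cite{AAKT}.
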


Proposition~\ref{prop:AAKT} is \cite[Theorems A and C]{AAKT}, after affine rescaling.
It is an external input throughout this paper. The comparison of the
coarse constants in \cite[equation (3.14)]{AAKT} gives
$\Theta_a(Q)\geq1$. Under \eqref{eq:UC}, fix
\begin{equation}\label{eq:Hconstants}
 H=\max\{4,\exp(C_0\sqrt K)\},\qquad
 H_\eta=\max\{2,\exp(C_\eta\sqrt K)\}.
\end{equation}
For the boundary arguments we fix once and for all
$\eta=\min\{1,m_t^*/2\}$. Thus $H_\eta$ and every boundary contraction
constant depend only on the data in \eqref{eq:data} and the indicated
geometric constants. The harmless lower bounds ensure that the oscillation exponents used below
lie in a convenient range. Under \eqref{eq:UC}, the inequalities also apply
to weak solutions, subsolutions, and supersolutions in $\cF_{\loc}(Q)$,
without assuming membership in $\Ha(Q)$.
By Lemma~\ref{lem:boundedcompletion}, their restrictions to strictly
smaller concentric cubes belong to the corresponding $\Ha$ classes.
Apply the inequalities there with the same uniform constants, and
then let those cubes increase to $Q$.

The following lemma bounds the coarse ellipticity ratio by normalized
coefficient moments.

\begin{lem}\label{lem:moments}
If $|a|\in \mathrm L^p(Q)$ and $|a^{-1}|\in \mathrm L^q(Q)$, then
\begin{equation}\label{eq:momentdomination}
 \Theta_a(Q)\leq
 \left(\avgint_Q|a|^p\d x\right)^{1/p}
 \left(\avgint_Q|a^{-1}|^q\d x\right)^{1/q}.
\end{equation}
\end{lem}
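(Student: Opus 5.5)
Since $\Theta_a(Q)=\Lambda_{s,1,p}(Q)/\lambda_{t,1,q}(Q)$, the plan is to bound the two factors separately,
\[
 \Lambda_{s,1,p}(Q)\leq\Bigl(\avgint_Q|a|^p\d x\Bigr)^{1/p},
 \qquad
 (\lambda_{t,1,q}(Q))^{-1}\leq\Bigl(\avgint_Q|a^{-1}|^q\d x\Bigr)^{1/q},
\]
and then multiply. Both bounds come from the elementary comparison \eqref{eq:matrixaveragebounds} applied on each simplex, followed by Jensen's inequality across the triangulation. I would use the affine and translation covariance noted before the lemma to reduce to $Q=\Qunit$; the right-hand side of \eqref{eq:momentdomination} involves only normalized averages and is therefore invariant as well.

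\emph{The upper constant.} For $T\in\mathcal T_k(Q)$, \eqref{eq:matrixaveragebounds} gives $a_T\leq\avgint_T a\d x$ as quadratic forms. Since both sides are positive semidefinite, the operator norm is monotone under this order, so
\[
 |a_T|\leq\Bigl|\avgint_T a\d x\Bigr|\leq\avgint_T|a|\d x,
\]
the last step by convexity of the norm. Jensen's inequality then gives $|a_T|^p\leq\avgint_T|a|^p\d x$. All simplices of $\mathcal T_k(Q)$ have equal volume $|Q|/\#\mathcal T_k(Q)$ and tile $Q$ up to a null set. Hence
\[
 \frac1{\#\mathcal T_k(Q)}\sum_{T}|a_T|^p\leq\avgint_Q|a|^p\d x
\]
for every $k$. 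Inserting this into \eqref{eq:Lambdadef}, the geometric sum $(1-3^{-s})\sum_k3^{-ks}=1$ yields $\Lambda_{s,1,p}(Q)^{1/2}\leq(\avgint_Q|a|^p)^{1/(2p)}$, which squares to the first bound.

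\emph{The lower constant.} The argument is identical. From $a_{*,T}^{-1}\leq\avgint_T a^{-1}\d x$ in \eqref{eq:matrixaveragebounds} and the same Jensen step, the level-$k$ average of $|a_{*,T}^{-1}|^q$ is at most $\avgint_Q|a^{-1}|^q\d x$. Summing with weights $(1-3^{-t})3^{-kt}$ in \eqref{eq:lambdadef} gives the second bound, and multiplying the two bounds proves \eqref{eq:momentdomination}.

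\emph{Points to check.} The only delicate part is that the matrices are well defined and the comparisons genuinely hold under these integrability hypotheses. The upper comparison needs the affine competitor $e\cdot x$ to be admissible in \eqref{eq:uppermatrix}; it is, because $|a|\in\mathrm L^p\subset\mathrm L^1$ on $T$ gives it finite energy. The lower comparison comes from completing the square pointwise: $2e\cdot\nabla h-a\nabla h\cdot\nabla h\leq a^{-1}e\cdot e$, with $a^{-1}\in\mathrm L^1$. One should also note that $\lambda>0$ follows from the finiteness of the right-hand side. Positivity of $a_T$ itself is not needed, since only upper bounds on $|a_T|$ and $|a_{*,T}^{-1}|$ enter.
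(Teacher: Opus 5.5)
Your proposal is correct and matches the paper's proof: the quadratic-form bounds \eqref{eq:matrixaveragebounds} and Jensen's inequality on the equal-volume simplices give the level-$k$ moment bounds, and the normalized geometric series in \eqref{eq:Lambdadef} and \eqref{eq:lambdadef} then give the two factor bounds, whose product is \eqref{eq:momentdomination}. The extra details you supply (monotonicity of the operator norm, convexity, equal volumes) are sound, and the reduction to $\Qunit$ is harmless but unnecessary.
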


\begin{proof}
By \eqref{eq:matrixaveragebounds} and Jensen's inequality,
\[
 \frac1{\#\mathcal T_k(Q)}\sum_{T\in\mathcal T_k(Q)}|a_T|^p
 \leq\avgint_Q|a|^p\d x.
\]
Insert this estimate into \eqref{eq:Lambdadef} and sum the geometric series.
This gives $\Lambda_{s,1,p}(Q)\leq(\avgint_Q|a|^p)^{1/p}$.
The same argument with $a_{*,T}^{-1}$ gives
$\lambda_{t,1,q}(Q)^{-1}\leq(\avgint_Q|a^{-1}|^q)^{1/q}$.
Their product proves the assertion.
\end{proof}

\begin{rem} The normalized integrals in \eqref{eq:momentdomination} are evaluated anew
on every cube. Finite integrals on a fixed large set do not give a uniform
bound for these products on all its subcubes.
\end{rem}

\subsection{Energy spaces, truncation and \texorpdfstring{$\La$}{L_a}-harmonic replacement}\label{sec:energy}

We record standard energy calculus and verify the relations between
the spaces used for variational solutions, the local estimates of
\cite{AAKT}, and boundary testing. The concrete identifications and
approximation arguments are included to make the direct proof
self-contained. The energy construction uses only \eqref{eq:basic}.
Terminology for closed Dirichlet forms follows \cite{FOT}.

The following lemma proves closability and identifies the energy representation
of the closed form. Closability ensures that the completion in the form norm
can be identified with a subspace of $\mathrm L^2(\R^n)$,
with energy independent of the chosen approximating sequence.

\begin{lem}\label{lem:closability}
Under \eqref{eq:basic}, the form \eqref{eq:initialform} is closable on
$\mathrm L^2(\R^n,\d x)$. Its closure $(\cE,\cF)$ is a symmetric,
regular, strongly local Dirichlet form. Every $f\in\cF$ belongs to
$W^{1,1}_{\loc}(\R^n)$ and
\begin{equation}\label{eq:closedenergy}
 \cE(f,f)=\int_{\R^n}a\nabla f\cdot\nabla f\d x.
\end{equation}
\end{lem}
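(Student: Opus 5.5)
The approach is the standard closability argument for forms with $\mathrm L^1_{\loc}$ coefficients: control of gradients in $\mathrm L^1_{\loc}$ by the energy, via weighted Cauchy--Schwarz. The key pointwise inequality is $|\xi|\leq(a^{-1}\xi\cdot\xi)^{1/2}|a^{1/2}\cdot|$ type bound, namely $|\nabla f|^2\leq\tr a^{-1}\,(a\nabla f\cdot\nabla f)$. It follows from $|\eta|^2=|a^{-1/2}a^{1/2}\eta|^2\leq|a^{-1/2}|^2|a^{1/2}\eta|^2$ and $|a^{-1/2}|^2=|a^{-1}|\leq\tr a^{-1}$. Hence, for every compact $K$ and every $f\in C_0^\infty(\R^n)$,
\[
 \|\nabla f\|_{\mathrm L^1(K)}\leq\Bigl(\int_K\tr a^{-1}\d x\Bigr)^{1/2}\cE(f,f)^{1/2}.
\]

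\emph{Closability and identification.} Let $f_j\in C_0^\infty$ be $\cE$-Cauchy, with $f_j\to f$ in $\mathrm L^2$. By the inequality above, $\nabla f_j$ is Cauchy in $\mathrm L^1_{\loc}$ and converges to some $G$. Testing against $\phi\in C_0^\infty$ shows that $G=\nabla f$ distributionally, so $f\in W^{1,1}_{\loc}$. The vector fields $a^{1/2}\nabla f_j$ are Cauchy in $\mathrm L^2$ and converge to some $V$. Since $a^{1/2}\nabla f_j\to a^{1/2}G$ a.e. along a subsequence (using $\mathrm L^1_{\loc}$ convergence and passing to a subsequence), we get $V=a^{1/2}\nabla f$. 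If $f=0$, then $\nabla f=0$, hence $V=0$ and $\cE(f_j,f_j)\to0$; this is closability. For general $f$, the same argument gives $\cE(f,f)=\lim\cE(f_j,f_j)=\|a^{1/2}\nabla f\|_{\mathrm L^2}^2$, which is \eqref{eq:closedenergy}. Here $\tr a\in\mathrm L^1_{\loc}$ is needed only so that the initial form is finite on $C_0^\infty$.

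\emph{Dirichlet form properties.} Symmetry is immediate. Regularity holds because $C_0^\infty$ is by construction a core that is dense in $C_c$ in the uniform norm and in $\cF$. For the Markov property, it suffices by \cite{FOT} to check it on the core with smooth normal contractions. If $\phi_\epsilon$ is a smooth function with $|\phi_\epsilon'|\leq1$, $\phi_\epsilon(0)=0$ and $-\epsilon\leq\phi_\epsilon\leq1+\epsilon$, then $\phi_\epsilon\circ f\in C_0^\infty$ and $\cE(\phi_\epsilon\circ f)\leq\cE(f)$ pointwise in the integrand. Strong locality follows from \eqref{eq:closedenergy}. If $f,g\in\cF$ have compact supports and $g$ is constant on a neighbourhood of $\supp f$, then $\nabla g=0$ a.e. on a neighbourhood of $\supp f$ (a $W^{1,1}_{\loc}$ function constant on an open set has zero gradient there), while $\nabla f=0$ a.e. off $\supp f$. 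Hence $a\nabla f\cdot\nabla g=0$ a.e.

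The main obstacle is the a.e. identification $V=a^{1/2}\nabla f$ when $a$ is unbounded. It requires the subsequence argument above rather than a weak-limit argument in a fixed weighted space. One also has to ensure that the energy formula does not depend on the approximating sequence, and it does not, since the limit is identified pointwise.
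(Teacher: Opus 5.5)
Your proposal is correct and follows essentially the paper's route. Weighted Cauchy--Schwarz with $\tr a^{-1}\in\mathrm L^1_{\loc}$ gives $\mathrm L^1_{\loc}$ control of gradients, which identifies the distributional gradient and yields closability and \eqref{eq:closedenergy}. The Markov property, regularity and strong locality then come from smooth contractions on the core, density of $C_0^\infty(\R^n)$, and the gradient representation.

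The differences are minor. The paper identifies the limit without passing to subsequences: it writes $\nabla f_j-a^{-1/2}G=a^{-1/2}(a^{1/2}\nabla f_j-G)$ and applies Cauchy--Schwarz directly. It also proves the Markov property of the closure by weak compactness and lower semicontinuity rather than citing \cite{FOT}. The criterion in \cite{FOT} additionally requires $\phi_\epsilon(t)=t$ on $[0,1]$ and $\phi_\epsilon$ nondecreasing; you omitted these conditions, but smooth choices satisfying them exist.
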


\begin{proof}
By \eqref{eq:basic}, the initial form on $C_0^\infty(\R^n)$
is well defined, symmetric and nonnegative. Its domain
$C_0^\infty(\R^n)$ is dense in $\mathrm L^2(\R^n)$.
We first prove closability and identify the distributional
gradients of functions in the resulting closed domain $\cF$.
We then verify that the closed form has the Markov property
and is regular and strongly local.

To prove closability, let $f_j\in C_0^\infty(\R^n)$ satisfy
$f_j\to0$ in $\mathrm L^2(\R^n)$, and suppose that
$a^{1/2}\nabla f_j$ is Cauchy in $\mathrm L^2(\R^n,\R^n)$.
Denote its limit by $G$. For every compact set $E\subset\R^n$,
Cauchy-Schwarz gives
\begin{equation}\label{eq:gradientLone}
 \|\nabla f_j-a^{-1/2}G\|_{\mathrm L^1(E)}
 \leq\left(\int_E\tr a^{-1}\d x\right)^{1/2}
       \|a^{1/2}\nabla f_j-G\|_{\mathrm L^2(E)}.
\end{equation}
The right-hand side tends to zero by \eqref{eq:basic}. Thus
$\nabla f_j\to a^{-1/2}G$ in $\mathrm L^1_{\loc}(\R^n)$.
On the other hand, $f_j\to0$ in $\mathrm L^2$ implies that
$\nabla f_j\to0$ in the sense of distributions. Uniqueness of the
distributional limit therefore gives $a^{-1/2}G=0$ almost everywhere.
Since $a$ is positive definite almost everywhere, $G=0$.
Consequently $\cE(f_j,f_j)\to0$, which proves closability.

More generally, let $f\in\cF$ and choose
$f_j\in C_0^\infty(\R^n)$ converging to $f$ in the form norm.
Then $f_j\to f$ in $\mathrm L^2(\R^n)$, and
$a^{1/2}\nabla f_j$ converges to some
$G\in\mathrm L^2(\R^n;\R^n)$.
For every bounded open set $V\Subset\R^n$, we have
$f_j\to f$ in $\mathrm L^1(V)$, while
\eqref{eq:gradientLone} gives
\[
 \nabla f_j\longrightarrow a^{-1/2}G
 \quad\text{in }\mathrm L^1(V;\R^n).
\]
Passing to the limit in the integration-by-parts identity yields
\[
 \int_V f\,\partial_i\varphi\d x
 =-\int_V(a^{-1/2}G)_i\,\varphi\d x,
 \qquad \varphi\in C_0^\infty(V),\quad i=1,\ldots,n.
\]
Thus $f\in W^{1,1}(V)$ and its distributional gradient is
$a^{-1/2}G$. Since $V$ was arbitrary,
$f\in W^{1,1}_{\loc}(\R^n)$ and $G=a^{1/2}\nabla f$.
Passage to the limit in the energies now proves
\eqref{eq:closedenergy}.

We next verify the Markov property. This requires that the truncation
$T(t)=\min\{1,\max\{t,0\}\}$ satisfy
\[
 T(f)\in\cF,
 \qquad
 \cE(T(f),T(f))\leq\cE(f,f)
 \qquad\text{for every }f\in\cF.
\]
On the smooth core, composition with a smooth contraction $S$
satisfying $S(0)=0$ and $|S'|\leq1$ preserves the core and does
not increase the energy. Approximate $T$ pointwise by smooth
contractions with these properties. For each smooth compactly
supported $f$, the compositions converge to $T(f)$ in
$\mathrm L^2$ and have bounded form norms. Weak compactness
in $\cF$ and lower semicontinuity of the energy therefore give
membership and the energy bound. Applying the same argument
to core approximants of an arbitrary $f\in\cF$ proves the
Markov property of the closure. Hence the closure is a Dirichlet
form and has the normal contraction property. The gradient
representation and the ordinary $W^{1,1}$ chain rule identify
the gradients of its truncations. By construction, $C_0^\infty(\R^n)$ is dense in $\cF$ in the form
norm. It is also uniformly dense in $C_0(\R^n)$, the space of
continuous compactly supported functions. These two density
properties prove regularity.

Finally, polarizing \eqref{eq:closedenergy} gives
\[
 \cE(f,g)=\int_{\R^n}a\nabla f\cdot\nabla g\d x,
 \qquad f,g\in\cF.
\]
To verify strong locality, let $f,g\in\cF$, suppose that $g$ has
compact support, and assume that $f$ is constant in an open
neighborhood of $\supp g$. Then $\nabla f=0$ almost everywhere
in that neighborhood, while $\nabla g=0$ almost everywhere
outside $\supp g$. The integrand therefore vanishes almost
everywhere, so $\cE(f,g)=0$. This proves strong locality.
\end{proof}

\begin{rem}
The Markov property means that truncation to $[0,1]$ preserves
$\cF$ and does not increase the energy. For a closed symmetric
form, the same holds for every normal contraction, that is,
every Lipschitz map $S:\R\to\R$ with $S(0)=0$ and
$\Lip(S)\leq1$. This includes the positive and negative parts.
More generally, every Lipschitz map $S$ fixing zero preserves
$\cF$ and satisfies
\[
 \cE(S(f),S(f))\leq\Lip(S)^2\cE(f,f),
\]
by rescaling the normal contraction property. This also covers
the scaled truncations used in the testing arguments below.
\end{rem}

For an open set $D$, define
\begin{equation}\label{eq:Fzero}
 \cF_0(D)=\overline{C_0^\infty(D)}^{\|\cdot\|_{\cF}},
\end{equation}
with functions extended by zero to $\R^n$. All zero boundary assertions
below refer to this closure. The same core argument as above shows
that the restricted form on $\cF_0(D)$ is Markovian. In particular,
Lipschitz maps fixing zero preserve $\cF_0(D)$ with the same
energy bound as in the preceding remark. All cutoff and
truncation approximations below are made within this closure.

We record the strong approximation facts used in the boundary tests.

\begin{lem}\label{lem:strongtruncation}
Let $D$ be open and let $f_j,f\in W^{1,1}_{\loc}(D)$ have finite
weighted Dirichlet energy on $D$, that is,
\[
 \cE_D(f,f)<\infty,\qquad
 \cE_D(f_j,f_j)<\infty\quad\text{for every }j.
\]
Suppose that $f_j\to f$ in measure and
\[
 \|a^{1/2}\nabla(f_j-f)\|_{\mathrm L^2(D)}\longrightarrow0.
\]
If $T$ is Lipschitz and piecewise $C^1$ with finitely many breakpoints, then the same energy convergence holds for $T(f_j)-T(f)$.
If $T(0)=0$, $f_j\to f$ in $\mathrm L^r(D)$, and $1\leq r<\infty$,
then the compositions also converge in $\mathrm L^r(D)$. Furthermore, every bounded $f\in\cF_0(D)$ admits a sequence
$f_j\in C_0^\infty(D)$ converging in the form norm, almost everywhere,
and uniformly bounded by $\|f\|_\infty+1$. If $f\geq0$, the
approximants can be chosen nonnegative.
\end{lem}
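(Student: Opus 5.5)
We prove the three assertions of Lemma~\ref{lem:strongtruncation} in order: energy convergence of compositions, $\mathrm L^r$ convergence, and bounded approximation in $\cF_0(D)$.

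\emph{Energy convergence.} Let $t_1<\dots<t_N$ be the breakpoints of $T$. By the $W^{1,1}_{\loc}$ chain rule, $\nabla T(f)=T'(f)\nabla f$ almost everywhere, where the value of $T'$ at breakpoints is irrelevant because $\nabla f=0$ almost everywhere on each level set $\{f=t_i\}$. We split
\[
 a^{1/2}\bigl(\nabla T(f_j)-\nabla T(f)\bigr)
 =T'(f_j)\,a^{1/2}\nabla(f_j-f)+\bigl(T'(f_j)-T'(f)\bigr)a^{1/2}\nabla f .
\]
The first term has $\mathrm L^2(D)$ norm at most $\Lip(T)\,\|a^{1/2}\nabla(f_j-f)\|_{\mathrm L^2(D)}\to0$.

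For the second term it suffices to argue along subsequences, so we may assume $f_j\to f$ almost everywhere. At almost every point with $f\notin\{t_1,\dots,t_N\}$, continuity of $T'$ away from the breakpoints gives $T'(f_j)\to T'(f)$. On the set where $f$ equals some breakpoint, $a^{1/2}\nabla f=0$ almost everywhere, so the second term vanishes there. The integrand is dominated by $4\Lip(T)^2\,a\nabla f\cdot\nabla f\in\mathrm L^1(D)$, so dominated convergence finishes this part. Since every subsequence has a further subsequence along which the energy difference tends to zero, the full sequence converges.

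\emph{$\mathrm L^r$ convergence.} This is immediate from $|T(f_j)-T(f)|\le\Lip(T)|f_j-f|$. The hypothesis $T(0)=0$ is what guarantees that $T(f)\in\mathrm L^r(D)$.

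\emph{Bounded approximation.} Let $f\in\cF_0(D)$ be bounded and set $M=\|f\|_\infty$. Choose $g_j\in C_0^\infty(D)$ with $g_j\to f$ in the form norm, and pass to a subsequence so that $g_j\to f$ almost everywhere.

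\begin{enumerate}
\item Compose with a smooth function $S$ satisfying $S(0)=0$, $S(t)=t$ for $|t|\le M$, $|S|\le M+1$, and $0\le S'\le1$. Then $S(g_j)\in C_0^\infty(D)$ and $S(f)=f$.
\item Since $S$ is $C^1$ with bounded derivative, the argument of the first part (now with no breakpoints) gives energy convergence of $S(g_j)$ to $f$.
\item The Lipschitz bound gives $\mathrm L^2$ convergence, and almost everywhere convergence is inherited from $g_j$.
\end{enumerate}

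If $f\ge0$, we instead compose with a smooth $S$ that vanishes on $(-\infty,0]$, equals $t$ on $[\varepsilon,M]$, and satisfies $0\le S\le M+1$ and $0\le S'\le 1+\varepsilon$. The cutoff $S_\varepsilon(f)$ then differs from $f$ only on $\{f<\varepsilon\}$.

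\textbf{Main obstacle.} The difficulty is in the case $f\ge0$. Near zero, $S_\varepsilon$ cannot be the identity while remaining smooth and vanishing on $(-\infty,0]$, so the approximation is not exact there. We take a diagonal sequence in $\varepsilon$ and $j$. The error $\|a^{1/2}\nabla(S_\varepsilon(f)-f)\|_{\mathrm L^2}$ is controlled by the energy of $f$ on $\{0<f<\varepsilon\}$, plus the energy on $\{f=0\}$, where $\nabla f=0$ almost everywhere. The first of these tends to zero as $\varepsilon\to0$ by dominated convergence. Alternatively, we could first approximate $f^+=f$ by $g_j^+$, which converges in energy by the first part since $t\mapsto t^+$ has a single breakpoint, and then mollify within $D$. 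However, mollification is not energy-stable for unbounded $a$, so the smooth-composition route is the one we follow.
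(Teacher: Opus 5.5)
Your proof is correct. The first two assertions are argued exactly as in the paper: you pass to an almost everywhere convergent subsequence, use the same splitting of $a^{1/2}\nabla(T(f_j)-T(f))$, apply dominated convergence with $\nabla f=0$ almost everywhere on the breakpoint level sets, and finish with the Lipschitz bound.

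For the bounded approximation in $\cF_0(D)$ you take a different route, which is simpler in the signed case.

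\emph{Signed case.} The paper truncates a core sequence to $[-M,M]$ with $M=\|f\|_\infty+1$, invoking the first assertion with breakpoints $\pm M$. For each core function it then replaces this piecewise linear truncation by smooth contractions and makes a diagonal choice. You instead compose directly with a smooth $S$ that is the identity on $[-\|f\|_\infty,\|f\|_\infty]$ and bounded by $\|f\|_\infty+1$. Since $S(f)=f$, the first assertion with no breakpoints, together with the Lipschitz bound, gives convergence in the form norm at once. No diagonal argument is needed, and almost everywhere convergence is inherited from the core sequence.

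\emph{Nonnegative case.} Here a diagonal choice is unavoidable, as you note. You put the smoothing error on the limit, bounding $\|a^{1/2}\nabla(S_\varepsilon(f)-f)\|_{\mathrm L^2}$ by the energy of $f$ on $\{0<f<\varepsilon\}$. The paper instead puts it on the individual core functions. Both work. Two small points should be added:
\begin{enumerate}
\item The $\mathrm L^2$ part of the error also tends to zero. Indeed, $|S_\varepsilon(f)-f|\leq(1+\varepsilon)f\,\mathbf 1_{\{0<f<\varepsilon\}}$, so dominated convergence applies.
\item After the diagonal choice, almost everywhere convergence is no longer inherited from $g_j$. Extract a further subsequence, using that convergence in the form norm implies $\mathrm L^2$ convergence.
\end{enumerate}
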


\begin{proof}
It suffices to prove the first assertion along a subsequence converging
almost everywhere, since every subsequence has such a further
subsequence. Choose a bounded representative of $T'$ at the finitely
many breakpoints. The chain rule gives
\[
 a^{1/2}\nabla\bigl(T(f_j)-T(f)\bigr)
 =T'(f_j)a^{1/2}\nabla(f_j-f)
  +\bigl(T'(f_j)-T'(f)\bigr)a^{1/2}\nabla f.
\]
The first term tends to zero in $\mathrm L^2$. Away from the
breakpoint level sets of $f$, the second tends to zero by dominated
convergence. On those level sets $\nabla f=0$ almost everywhere by
the $W^{1,1}$ chain rule, so the same conclusion holds there.
The $\mathrm L^r$ assertion follows from the Lipschitz bound.

For bounded $f\in\cF_0(D)$, start with a smooth core sequence and
truncate it to $[-M,M]$, where $M=\|f\|_\infty+1$.
The first assertion and $\mathrm L^2$ convergence show that the
truncated sequence still converges in the form norm. For each smooth
core function, approximate this scalar truncation by smooth
contractions fixing zero, with uniformly bounded derivatives and
values in $[-M,M]$. The gradient convergence follows by dominated
convergence, since the gradient of the core function vanishes almost
everywhere on each breakpoint level set. A diagonal choice gives
smooth compactly supported approximants, with supports contained in
those of the corresponding core functions. For $f\geq0$, use the truncation
to $[0,M]$ and smooth nonnegative approximations fixing zero.
Passing to a subsequence gives almost everywhere convergence.
\end{proof}

The next lemma justifies multiplication by cutoffs and the use of local energy
functions as test functions.

\begin{lem}\label{lem:products} The following statements are true.
\begin{enumerate}
\item If $f\in\cF$ is bounded and $\eta\in C_0^\infty(\R^n)$,
then $\eta f\in\cF$ and
\begin{equation}\label{eq:productenergy}
 \cE(\eta f,\eta f)
 \leq2\int_{\R^n}\eta^2a\nabla f\cdot\nabla f\d x
       +2\int_{\R^n}f^2a\nabla\eta\cdot\nabla\eta\d x.
\end{equation}
If $D$ is open and $f\in\cF_0(D)$ is bounded, then
$\eta f\in\cF_0(D)$.

\item Let $G\subset\R^n$ be open and let $u$ be a weak solution
of $\La u=0$ in $G$. Then
\[
 \int_G a\nabla u\cdot\nabla\psi\d x=0
\]
for every bounded $\psi\in\cF_{\loc}(G)$ that vanishes almost
everywhere outside a compact subset of $G$. Thus the weak
equation may be tested with such energy functions, in addition
to functions in $C_0^\infty(G)$.

\item Let $\Om,W\subset\R^n$ be open and let $u$ be a weak
solution of $\La u=0$ in $\Om\cap W$. Assume that
\[
 \int_{\Om\cap W'}a\nabla u\cdot\nabla u\d x<\infty
 \qquad\text{for every open }W'\Subset W.
\]
Then
\[
 \int_{\Om\cap W}a\nabla u\cdot\nabla\psi\d x=0
\]
for every bounded $\psi\in\cF_0(\Om)$ that vanishes almost
everywhere outside a compact subset of $W$. Such a test
function may have support meeting $\partial\Om\cap W$.
Its zero boundary values are encoded by membership in
$\cF_0(\Om)$, and no zero boundary condition on $u$ is
required for this testing assertion.
\end{enumerate}
For weak subsolutions and supersolutions, the identities in
\emph{(ii)} and \emph{(iii)} hold with $\leq0$ and $\geq0$,
respectively, for nonnegative $\psi$ satisfying the stated conditions.
\end{lem}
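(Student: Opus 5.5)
For part (i), I would argue on the smooth core and then pass to the limit. Let $f_j\in C_0^\infty(\R^n)$ converge to $f$ in the form norm. By Lemma~\ref{lem:strongtruncation}, after truncating to $[-M,M]$ with $M=\|f\|_\infty+1$ and smoothing, I may assume $|f_j|\le M$ and $f_j\to f$ almost everywhere. Then $\eta f_j\in C_0^\infty(\R^n)$, and the product rule together with $(x+y)^2\le 2x^2+2y^2$ gives \eqref{eq:productenergy} for $\eta f_j$. To show that $\eta f_j$ is Cauchy in the form norm, I would write
\[
 a^{1/2}\nabla(\eta f_j-\eta f_k)=\eta\, a^{1/2}\nabla(f_j-f_k)+(f_j-f_k)\,a^{1/2}\nabla\eta .
\]
The first term is controlled by $\|\eta\|_\infty$ times the energy difference. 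The second term tends to zero by dominated convergence, with dominating function $4M^2|\nabla\eta|^2\tr a$, which is integrable on $\supp\eta$ by \eqref{eq:basic}. This dominated convergence step is the place where boundedness of $f$ is essential, because $a$ may be unbounded. The limit is $\eta f$, since $\eta f_j\to\eta f$ in $\mathrm L^2$. Lower semicontinuity, or directly the convergence just established, then passes \eqref{eq:productenergy} to the limit. If $f\in\cF_0(D)$, I would take the approximants from Lemma~\ref{lem:strongtruncation} in $C_0^\infty(D)$. Then $\eta f_j\in C_0^\infty(D)$, and the same argument gives $\eta f\in\cF_0(D)$.

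For part (ii), let $K\subset G$ be compact with $\psi=0$ outside $K$, and choose open sets $K\subset V\Subset V'\Subset G$. Choose $g\in\cF$ with $g=\psi$ on $V'$, and a cutoff $\eta\in C_0^\infty(V)$ with $\eta=1$ near $K$. Then $\psi=\eta\,T_M(g)$, where $T_M$ is truncation at level $\|\psi\|_\infty$. By the Remark following Lemma~\ref{lem:closability}, $T_M(g)\in\cF$ is bounded, and part (i) gives approximants $\phi_j=\eta h_j\in C_0^\infty(V)$ with $a^{1/2}\nabla\phi_j\to a^{1/2}\nabla\psi$ in $\mathrm L^2(V)$. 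The energy of $u$ on $V$ is finite, because $u$ agrees on $V$ with an element of $\cF$. Hence
\[
 0=\int a\nabla u\cdot\nabla\phi_j\,\mathrm d x\longrightarrow\int a\nabla u\cdot\nabla\psi\,\mathrm d x
\]
by Cauchy--Schwarz. For the subsolution and supersolution statements, I need nonnegative approximants when $\psi\ge0$. I would obtain them by using the nonnegative truncation $[0,M]$ and the nonnegative smoothing in Lemma~\ref{lem:strongtruncation}.

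Part (iii) is the main obstacle. The difficulty is that $\psi$ may be supported up to $\partial\Om$, where $u$ is not assumed to lie in $\cF$. I would take approximants $\psi_j\in C_0^\infty(\Om)$ from Lemma~\ref{lem:strongtruncation}, bounded by $\|\psi\|_\infty+1$, converging in the form norm and almost everywhere, and nonnegative when $\psi\ge0$. Fix $\zeta\in C_0^\infty(W)$ with $\zeta=1$ on a neighborhood of the compact set outside which $\psi$ vanishes. By part (i), $\zeta\psi_j\in C_0^\infty(\Om\cap W)$ and $\zeta\psi_j\to\zeta\psi=\psi$ in energy; this uses the boundedness of the $\psi_j$ and the dominated convergence argument from part (i). Each $\zeta\psi_j$ is an admissible test function in $\Om\cap W$, so the weak equation holds for it. Let $W'\Subset W$ contain $\supp\zeta$. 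The hypothesis gives $a^{1/2}\nabla u\in\mathrm L^2(\Om\cap W')$, and Cauchy--Schwarz on $\Om\cap W'$ lets me pass to the limit. Since the $\zeta\psi_j$ are nonnegative whenever $\psi\ge0$, the same limit argument gives the inequality versions for subsolutions and supersolutions.
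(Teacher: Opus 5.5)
Your proposal is correct and follows the paper's proof essentially step by step. In (i) you use bounded, almost everywhere convergent core approximants from Lemma~\ref{lem:strongtruncation} and dominated convergence for the cutoff-gradient term. In (ii) you truncate and cut off a local $\cF$ representative, and in (iii) you multiply $\cF_0(\Om)$ approximants by a cutoff and use Cauchy--Schwarz against the finite local energy of $u$. The one detail to state explicitly is that the cutoffs $\eta$ and $\zeta$ must be chosen nonnegative, so that the approximating tests stay nonnegative in the subsolution and supersolution versions.
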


\begin{proof}
Choose uniformly bounded $f_j\in C_0^\infty(\R^n)$
converging to $f$ in the form norm and almost everywhere,
as supplied by Lemma~\ref{lem:strongtruncation}.
The product rule gives
\[
 a^{1/2}\nabla\bigl(\eta(f_j-f)\bigr)
 =\eta a^{1/2}\nabla(f_j-f)
   +(f_j-f)a^{1/2}\nabla\eta.
\]
The first term tends to zero in $\mathrm L^2$ by energy
convergence. The second does so by dominated convergence,
using the uniform bounds on $f_j$ and the integrability of
$a\nabla\eta\cdot\nabla\eta$. Also, $\eta f_j\to\eta f$
in $\mathrm L^2$. Closedness therefore gives $\eta f\in\cF$
and convergence in the form norm. The product rule yields
\eqref{eq:productenergy}. If $f\in\cF_0(D)$, choose
$f_j\in C_0^\infty(D)$. Then $\eta f_j\in C_0^\infty(D)$,
which proves $\eta f\in\cF_0(D)$.

For the interior testing assertion, let $K\Subset G$ be a
compact set outside which $\psi$ vanishes almost everywhere.
Choose an open set $V$ with $K\Subset V\Subset G$ and an
ambient form function $f\in\cF$ agreeing with $\psi$ almost
everywhere in $V$. Set
\[
 g=\max\{-M,\min\{f,M\}\},
 \qquad M>\|\psi\|_{\mathrm L^\infty(G)}.
\]
By the truncation property, $g\in\cF$, and $g=\psi$ almost
everywhere in $V$. Choose $\chi\in C_0^\infty(V)$ with
$0\leq\chi\leq1$ and $\chi=1$ near $K$. Then $\chi g$
is the zero extension of $\psi$.

Choose uniformly bounded $g_j\in C_0^\infty(\R^n)$
converging to $g$ in the form norm and almost everywhere,
as supplied by Lemma~\ref{lem:strongtruncation}.
The first part of the proof shows that $\chi g_j$ converges
to $\chi g$ in the form norm. Moreover,
$\chi g_j\in C_0^\infty(G)$ and all these functions are
supported in the fixed compact set $\supp\chi$.
Since $u$ has finite energy on a neighborhood of this set,
weighted Cauchy-Schwarz gives
\[
 0=\int_G a\nabla u\cdot\nabla(\chi g_j)\d x
 \longrightarrow
 \int_G a\nabla u\cdot\nabla\psi\d x.
\]
This proves the interior testing assertion.

For the boundary testing assertion, let $K\Subset W$ be a
compact set outside which $\psi$ vanishes almost everywhere.
Choose $\chi\in C_0^\infty(W)$ with $0\leq\chi\leq1$ and
$\chi=1$ near $K$, and choose an open set $W'\Subset W$
containing $\supp\chi$. By Lemma~\ref{lem:strongtruncation},
choose uniformly bounded $\psi_j\in C_0^\infty(\Om)$ converging
to $\psi$ in the form norm and almost everywhere. Then
$\chi\psi_j\in C_0^\infty(\Om\cap W)$, and the first part
of the proof gives convergence to $\chi\psi=\psi$ in the
form norm. Moreover,
\[
 \left|
 \int_{\Om\cap W}
 a\nabla u\cdot\nabla(\chi\psi_j-\psi)\d x
 \right|
 \leq
 \left(\int_{\Om\cap W'}a\nabla u\cdot\nabla u\d x\right)^{1/2}
 \cE(\chi\psi_j-\psi,\chi\psi_j-\psi)^{1/2}
 \longrightarrow0.
\]
The first factor is finite by the assumed energy bound up
to the boundary. Since $\chi\psi_j\in C_0^\infty(\Om\cap W)$,
the weak equation gives
\[
 \int_{\Om\cap W}a\nabla u\cdot\nabla(\chi\psi_j)\d x=0.
\]
Passing to the limit proves
\[
 \int_{\Om\cap W}a\nabla u\cdot\nabla\psi\d x=0,
\]
as asserted.

For subsolutions and supersolutions, let $\psi\geq0$ and
choose nonnegative approximants. In the interior argument,
replace the truncation defining $g$ by
$g=\min\{M,\max\{f,0\}\}$ and choose $g_j\geq0$.
In the boundary argument, choose $\psi_j\geq0$.
The cutoffs are nonnegative, so the same limiting arguments
give the corresponding weak inequalities.
No boundary regularity of $\Om$ is used.
\end{proof}

The following definition specifies the local energy interpretation of zero
boundary values.

\begin{defn}\label{def:zerotrace}
Let $\Om,W\subset\R^n$ be open and let $u$ be a function on
$\Om\cap W$. For $\eta\in C_0^\infty(W)$, define
\[
 \widetilde{\eta u}(x)=
 \begin{cases}
  \eta(x)u(x), & x\in\Om\cap W,\\
  0, & x\in\R^n\setminus(\Om\cap W).
 \end{cases}
\]
We say that $u$ has zero local energy boundary values in $W$
if $\widetilde{\eta u}\in\cF_0(\Om)$ for every
$\eta\in C_0^\infty(W)$. This convention specifies vanishing
on $\partial\Om\cap W$. Since the cutoffs are compactly
supported in $W$, it imposes no condition on the artificial
boundary $\Om\cap\partial W$.
\end{defn}

The following lemma records the truncation properties used for signed
solutions in the boundary comparison argument. For a function $h$, we write
\[
 h^+=\max\{h,0\},
 \qquad h^-=\max\{-h,0\},
\]
so that $h=h^+-h^-$ and $|h|=h^++h^-$.

\begin{lem}\label{lem:truncations}
Let $G\subset\R^n$ be open, and let $h\in\cF_{\loc}(G)$
be a weak solution of $\La h=0$ in $G$.
Then $h^+$ and $h^-$ are
weak subsolutions in $G$. Suppose additionally that $G=\Om\cap W$, where
$\Om,W\subset\R^n$ are open. If $h$ has zero local energy
boundary values in $W$ in the sense of
Definition~\ref{def:zerotrace}, then so do $h^+$ and $h^-$.
Finite linear combinations of functions with zero local
energy boundary values in $W$ have the same boundary condition.
\end{lem}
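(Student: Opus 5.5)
The proof has three parts: subsolution property of $h^\pm$, preservation of zero local energy boundary values, and linear combinations. I will prove them in reverse order, since the last is immediate. If $u_1,u_2$ have zero local energy boundary values in $W$ and $c_1,c_2\in\R$, then for $\eta\in C_0^\infty(W)$ the zero extension of $\eta(c_1u_1+c_2u_2)$ equals $c_1\widetilde{\eta u_1}+c_2\widetilde{\eta u_2}$. This lies in $\cF_0(\Om)$ because $\cF_0(\Om)$ is a linear subspace, being a closure of a linear space.

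For the boundary condition on $h^\pm$, fix $\eta\in C_0^\infty(W)$. First reduce to $\eta\geq0$: choose $\chi\in C_0^\infty(W)$ with $\chi=1$ on $\supp\eta$, and write $\eta h^+=\eta\cdot(\chi h^+)$. The key observation for $\eta\geq0$ is
\[
\widetilde{\eta h^+}=(\widetilde{\eta h})^+.
\]
Since $\widetilde{\eta h}\in\cF_0(\Om)$ and $t\mapsto t^+$ is a normal contraction, the Markov property of the restricted form on $\cF_0(\Om)$ gives $\widetilde{\eta h^+}\in\cF_0(\Om)$. The same argument handles $h^-$ with $t\mapsto t^-$. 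For general $\eta$ we need to multiply $\widetilde{\chi h^+}\in\cF_0(\Om)$ by $\eta$. Lemma~\ref{lem:products}(i) requires boundedness, and $h$ need not be bounded. I would circumvent this by truncating. Set
\[
g_M=\min\{\widetilde{\chi h^+},M\}\in\cF_0(\Om),
\]
which is bounded, so $\eta g_M\in\cF_0(\Om)$ by Lemma~\ref{lem:products}(i), with the energy bound \eqref{eq:productenergy}. As $M\to\infty$, $g_M\to\widetilde{\chi h^+}$ in form norm by Lemma~\ref{lem:strongtruncation}. In \eqref{eq:productenergy}, the term $\int g_M^2\,a\nabla\eta\cdot\nabla\eta$ requires $h^2 a$ to be integrable near $\supp\eta$. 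This may fail, so I expect it to be the delicate point. I would instead argue directly: $\eta g_M\to\eta\,\widetilde{\chi h^+}$ in $\mathrm L^2$. The energy of the difference $\eta(\widetilde{\chi h^+}-g_M)$ is supported on $\{\widetilde{\chi h^+}>M\}$, and I would control it using $|\nabla\eta|$ bounded together with the finiteness of $\int (\widetilde{\chi h^+})^2 a\nabla\eta\cdot\nabla\eta$. If that finiteness is not available, the decomposition $\eta=\eta^+-\eta^-$ via smooth nonnegative functions (e.g.\ $\eta=(\eta+C\chi)-C\chi$ with $C=\|\eta\|_\infty$) handles everything with the nonnegative case, avoiding the issue entirely. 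I will adopt this decomposition as the main route.

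For the subsolution property, fix $V\Subset G$ and $f\in\cF$ with $f=h$ on $V$. Then $f^+\in\cF$ agrees with $h^+$ on $V$, so $h^+\in\cF_{\loc}(G)$. To show $\int a\nabla h^+\cdot\nabla\phi\leq0$ for $0\leq\phi\in C_0^\infty(G)$, I would use the standard test $\psi=\phi\,H_\varepsilon(h)$, where
\[
H_\varepsilon(t)=\min\{t^+/\varepsilon,1\}.
\]
This test is bounded. After cutting off $f$ it lies in $\cF_{\loc}(G)$ with compact support, using Lemma~\ref{lem:products}(i) applied to the bounded function $H_\varepsilon(f)$. It is therefore admissible by Lemma~\ref{lem:products}(ii). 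The weak equation gives
\[
0=\int\phi H_\varepsilon'(h)\,a\nabla h\cdot\nabla h+\int H_\varepsilon(h)\,a\nabla h\cdot\nabla\phi .
\]
The first term is nonnegative, so $\int H_\varepsilon(h)\,a\nabla h\cdot\nabla\phi\leq0$. Letting $\varepsilon\to0$ by dominated convergence, with the weighted Cauchy--Schwarz bound using local finite energy and $a\nabla\phi\cdot\nabla\phi\in\mathrm L^1$, yields $\int_{\{h>0\}}a\nabla h\cdot\nabla\phi\leq0$. This equals $\int a\nabla h^+\cdot\nabla\phi$ by the $W^{1,1}$ chain rule. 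Applying the same argument to $-h$ gives the result for $h^-$.
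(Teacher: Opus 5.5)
Your proposal is correct and follows the paper's proof essentially step by step: you test with $\phi\min\{h^+/\varepsilon,1\}$ and drop the nonnegative derivative term, you use $(\zeta h)^+=\zeta h^+$ for nonnegative cutoffs together with the normal contraction property in $\cF_0(\Om)$, you treat general cutoffs as differences of nonnegative ones, and you get linear combinations from linearity of $\cF_0(\Om)$. The truncation detour via $g_M$ can be dropped: the decomposition $\eta=(\eta+C\chi)-C\chi$, with $\chi\geq0$, which you adopted as your main route, is exactly the paper's argument.
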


\begin{proof}
The normal contraction property gives membership of the parts in the
local energy class. For $0\leq\phi\in C_0^\infty(G)$, test the equation
with $\phi S_\epsilon(h)$, where
$S_\epsilon(t)=\min\{t^+/\epsilon,1\}$. The test is bounded and has
finite energy by the chain and product rules. The derivative term has
nonnegative sign, and passage to the limit gives
\[
 \int_G a\nabla h^+\cdot\nabla\phi\d x\leq0.
\]
Here the gradient of $h$ is zero almost everywhere on $\{h=0\}$,
and weighted Cauchy-Schwarz justifies the limit. Apply the same argument
to $-h$ for the negative part. If $\zeta\geq0$ is a smooth cutoff,
then $\zeta h^+=(\zeta h)^+$, so the boundary condition is preserved
by contraction in $\cF_0(\Om)$. Differences of nonnegative smooth
cutoffs handle arbitrary cutoffs. Linearity proves the last assertion.
\end{proof}

For a bounded open set $D$, we introduce the energy completion
used to construct variational solutions without an $\mathrm L^2$
coercivity assumption. We define $\Vzero(D)$ as the completion of $C_0^\infty(D)$
with respect to the norm
\begin{equation}\label{eq:Vzero}
 \|\varphi\|_{\Vzero(D)}
 :=\cE_D(\varphi,\varphi)^{1/2}.
\end{equation}
Positive definiteness of $a$ and the compact support of
$\varphi$ ensure that this is a norm on $C_0^\infty(D)$. For $\varphi\in C_0^\infty(D)$, zero extension and the ordinary
$W^{1,1}$ Sobolev inequality give
\begin{equation}\label{eq:energycoerciveLone}
 \|\varphi\|_{\mathrm L^{n/(n-1)}(D)}
 \leq C_n\|\nabla\varphi\|_{\mathrm L^1(D)}
 \leq C_n\left(\int_D\tr a^{-1}\d x\right)^{1/2}
          \cE_D(\varphi,\varphi)^{1/2}.
\end{equation}
Thus every energy Cauchy sequence converges in
$\mathrm L^{n/(n-1)}(D)$, and hence in $\mathrm L^1(D)$.
Together with the gradient identification argument in
Lemma~\ref{lem:closability}, this identifies the completion with
functions and their distributional gradients. Consequently,
$\Vzero(D)$ is a Hilbert space with inner product $\cE_D$.
On bounded connected Lipschitz domains it agrees with $\Haz(D)$, with
equivalent norms. We will apply Riesz representation in $\Vzero(D)$ to construct
the zero boundary correction to a prescribed Lipschitz function.
This construction requires no $\mathrm L^2$ coercivity inequality.
Once the correction is shown to be bounded, its membership in
$\cF_0(D)$ follows from the following lemma.

\begin{lem}\label{lem:boundedcompletion}
Let $D$ be bounded and open. Every bounded element of $\Vzero(D)$
belongs to $\cF_0(D)$. If $U$ is a bounded connected Lipschitz
domain, $h\in\Ha(U)$ is bounded, and $\eta\in C_0^\infty(U)$,
then the zero extension of $\eta h$ belongs to $\cF$.
If $G$ is open, $u\in\cF_{\loc}(G)$, and $U\Subset G$ is a
bounded connected Lipschitz domain, then $u|_U\in\Ha(U)$.
If $u$ is a weak solution, subsolution, or supersolution in $G$,
its restriction belongs to the corresponding $\Ha$ class in $U$.
\end{lem}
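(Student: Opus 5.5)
The lemma has three assertions, and I will prove them in order, each time by approximation combined with the truncation and cutoff calculus of Lemmas~\ref{lem:strongtruncation} and~\ref{lem:products}.

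\emph{First assertion.} Let $f\in\Vzero(D)$ be bounded and choose $\varphi_j\in C_0^\infty(D)$ with $\cE_D(\varphi_j-f,\varphi_j-f)\to0$. By \eqref{eq:energycoerciveLone} we also have $\varphi_j\to f$ in $\mathrm L^1(D)$, and after passing to a subsequence, almost everywhere. Set $M=\|f\|_\infty+1$ and truncate to $\psi_j=\max\{-M,\min\{\varphi_j,M\}\}$. The first part of Lemma~\ref{lem:strongtruncation} applies, since convergence in measure holds, and it gives $\cE_D(\psi_j-f,\psi_j-f)\to0$. The $\psi_j$ are Lipschitz with compact support in $D$ and bounded by $M$, so dominated convergence gives $\psi_j\to f$ in $\mathrm L^2$, using that $D$ is bounded. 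Each $\psi_j$ lies in $\cF_0(D)$: mollify a smooth contraction approximation of the truncation, exactly as in the proof of Lemma~\ref{lem:strongtruncation}, or use the Markov property of $\cF_0(D)$ directly, since Lipschitz maps fixing zero preserve $\cF_0(D)$. Thus $(\psi_j)$ is Cauchy in the form norm \eqref{norma}. Closedness of $\cF_0(D)$ together with the gradient identification of Lemma~\ref{lem:closability} gives $f\in\cF_0(D)$.

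\emph{Second assertion.} Let $h\in\Ha(U)$ be bounded and $\eta\in C_0^\infty(U)$. Choose smooth $h_j\in C^\infty(U)\cap\mathrm L^1(U)$ converging to $h$ in $\Ha(U)$. Then $h_j\to h$ in $W^{1,1}(U)$, and along a subsequence almost everywhere. Truncating at level $M=\|h\|_\infty+1$ by smooth contractions, as in Lemma~\ref{lem:strongtruncation}, preserves energy convergence on $U$ and makes the $h_j$ uniformly bounded. The functions $\eta h_j$ are then smooth with compact support in $U$, hence they lie in $C_0^\infty(\R^n)$ after zero extension. The product rule yields
\[
 a^{1/2}\nabla\bigl(\eta(h_j-h)\bigr)=\eta a^{1/2}\nabla(h_j-h)+(h_j-h)a^{1/2}\nabla\eta .
\]
The first term tends to zero in $\mathrm L^2$ by energy convergence. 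The second tends to zero by dominated convergence, because $|a\nabla\eta\cdot\nabla\eta|\le|\nabla\eta|^2\tr a$ is integrable. Moreover $\eta h_j\to\eta h$ in $\mathrm L^2$ by bounded convergence. So the sequence is Cauchy in \eqref{norma}, and its limit, the zero extension of $\eta h$, lies in $\cF$.

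\emph{Third assertion.} Let $u\in\cF_{\loc}(G)$ and $U\Subset G$. Choose $V$ open with $U\Subset V\Subset G$ and $f\in\cF$ with $f=u$ almost everywhere on $V$, together with $f_j\in C_0^\infty(\R^n)$ converging to $f$ in the form norm. Restricted to $U$, the $f_j$ are smooth, integrable, of finite energy, and Cauchy in the $\Ha(U)$ norm, because the mean term is controlled by the $\mathrm L^2$ norm and $U$ is bounded. Their $\Ha(U)$-limit agrees with $f|_U=u|_U$, using the $W^{1,1}$ identification on both sides. Hence $u|_U\in\Ha(U)$, and no boundedness is needed here. For solutions and sub- or supersolutions, the $\Ha$ test class is $C_0^\infty(U)\subset C_0^\infty(G)$, so the weak (in)equalities transfer directly.

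The main obstacle is the first step, namely obtaining $\mathrm L^2$ convergence from energy convergence alone. Without a Poincar\'e inequality in $\mathrm L^2$, this requires the truncation to a uniform bound combined with the $\mathrm L^1$ control from \eqref{eq:energycoerciveLone}, so that dominated convergence can be applied on the bounded set $D$. I would also check carefully that truncated core functions are legitimate members of $\cF_0(D)$.
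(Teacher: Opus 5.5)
Your proposal is correct and follows essentially the same route as the paper. The paper likewise truncates an energy core sequence and uses the $\mathrm L^1$ control from \eqref{eq:energycoerciveLone} together with the uniform bound to get $\mathrm L^2$ convergence. It then truncates, smooths and multiplies by $\eta$, with dominated convergence for the cutoff-gradient term, and handles the local assertion by restricting ambient core approximants. The only minor variation is in the first step, where you place the truncated core functions in $\cF_0(D)$ via the Markov property (or Lemma~\ref{lem:Lipzero}) instead of smoothing them; this is equally valid.
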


\begin{proof}
An energy core sequence for $f\in\Vzero(D)$ converges in
$\mathrm L^{n/(n-1)}(D)$ by \eqref{eq:energycoerciveLone}, hence
in measure. If $f$ is bounded, truncate the sequence above its
essential bound. Lemma~\ref{lem:strongtruncation} and its scalar
smoothing argument give uniformly bounded smooth core approximants
converging in energy and in $\mathrm L^1$. Their uniform bound
upgrades $\mathrm L^1$ convergence to $\mathrm L^2$ convergence,
which proves membership in $\cF_0(D)$.

For bounded $h\in\Ha(U)$, the $W^{1,1}$ Poincar\'e inequality and
weighted Cauchy-Schwarz imply that an approximating sequence converges
in $\mathrm L^1(U)$ as well as in energy. Truncate and smooth it
as above, pass to an almost everywhere convergent subsequence,
and multiply by $\eta$. Dominated convergence controls the
cutoff-gradient term, proving the second assertion.

For the last assertion, choose an ambient form function agreeing
with $u$ on a neighborhood of $\overline U$. Restrict its smooth
form approximants to $U$. They converge in energy and in the mean,
since $\mathrm L^2(U)$ convergence implies convergence of the mean.
Thus they converge in \eqref{eq:Hanorm}. The solution,
subsolution, or supersolution property in the $\Ha$ class follows
by testing the original equation or inequality on $C_0^\infty(U)$.
\end{proof}

\begin{rem}\label{rem:energyspaces}
All the spaces introduced above use the same weighted energy, but differ in
their integrability requirements and boundary conditions.
On a bounded connected Lipschitz domain $U$, the space $\Ha(U)$
controls the mean and the energy, imposes no boundary condition,
and contains no separate $\mathrm L^2$ requirement. Its zero boundary
subspace $\Haz(U)$ agrees with $\Vzero(U)$, with equivalent norms.
The space $\Vzero(D)$ is defined on any bounded open set $D$ using
energy alone and approximation by $C_0^\infty(D)$. The ambient space
$\cF$ and its zero boundary subspace $\cF_0(D)$ also require
$\mathrm L^2$ integrability. For bounded open $D$, identifying
functions with their zero extensions, we have
\[
 \cF_0(D)=\Vzero(D)\cap\mathrm L^2(D).
\]
The inclusion from left to right follows from the defining core
approximations. Conversely, let $f\in\Vzero(D)\cap\mathrm L^2(D)$
and set $T_M(t)=\max\{-M,\min\{t,M\}\}$. Applying
Lemma~\ref{lem:strongtruncation} and scalar smoothing to an energy
core sequence gives $T_M(f)\in\Vzero(D)$. These bounded truncations
belong to $\cF_0(D)$ by Lemma~\ref{lem:boundedcompletion}.
As $M\to\infty$, they converge to $f$ in $\mathrm L^2(D)$ and
energy by dominated convergence. Their zero extensions are
therefore Cauchy in the form norm, and closedness gives
$f\in\cF_0(D)$.

Thus a bounded variational solution constructed in
$g+\Vzero(D)$, with a bounded Lipschitz extension $g$ of the
boundary data, satisfies $u-g\in\cF_0(D)$.
Membership in $\cF_{\loc}(G)$ means local agreement with
ambient form functions, as defined in the introduction.
In particular, it implies local $\mathrm L^2$ integrability
and imposes neither global integrability nor a boundary condition. Lemma~\ref{lem:boundedcompletion} gives
\[
 u\in\cF_{\loc}(G)
 \ \Longrightarrow\
 u|_U\in\Ha(U)
 \qquad\text{for every bounded connected Lipschitz }U\Subset G.
\]
Consequently, the interior estimates of \cite{AAKT} apply to our
weak solutions by restriction to relatively compact cubes.
Conversely, under \eqref{eq:UC}, applying local boundedness to an
$\Ha$-solution and its negative makes the solution bounded on
relatively compact subdomains. The bounded localization assertion
of Lemma~\ref{lem:boundedcompletion} then places it in
$\cF_{\loc}$. The two local notions of solution therefore agree
under \eqref{eq:UC}. Zero local energy boundary values are an additional requirement:
the zero extension of $\eta u$ must belong to $\cF_0(\Om)$
for every smooth cutoff supported in the prescribed ambient
boundary neighborhood. This is the role of $\cF_0(\Om)$ in the
boundary arguments.

For every open $D$, our space also satisfies
\[
 \cF_0(D)=\overline{\cF\cap C_c(D)}^{\|\cdot\|_{\cF}},
\]
where $C_c(D)$ denotes the continuous functions compactly supported
in $D$, extended by zero. Thus it agrees with the usual part-form
domain. Indeed, for $f\in\cF\cap C_c(D)$, choose
$\chi\in C_0^\infty(D)$ equal to one near $\supp f$.
Lemmas~\ref{lem:strongtruncation} and \ref{lem:products} give
uniformly bounded smooth core approximants $f_j$ such that
$\chi f_j\to f$ in the form norm. Since
$\chi f_j\in C_0^\infty(D)$, the two closures agree.

Lemma~\ref{lem:products} identifies our weak solutions with
harmonic functions for the Dirichlet form. If $u$ has zero local
energy boundary values in $W$, then for every open $V\Subset W$
we may choose $\chi\in C_0^\infty(W)$ equal to one near
$\overline V$. The zero extension of $\chi u$ belongs to
$\cF_0(\Om)$ and agrees with $u$ on $\Om\cap V$.
Our boundary condition therefore supplies the local zero-boundary
representatives required in \cite{ChenBHP} on smaller boundary
neighborhoods.
\end{rem}

The following lemma places Lipschitz functions that vanish on the boundary in
the corresponding zero boundary energy spaces.

\begin{lem}\label{lem:Lipzero}
Let $D\subset\mathbb R^n$ be bounded and open, and let $g$ be
Lipschitz on a neighborhood of $\overline D$, with $g=0$ on
$\partial D$. Then $g|_D\in\Vzero(D)$ and its zero extension
belongs to $\cF_0(D)$.
\end{lem}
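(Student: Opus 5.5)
The plan is to approximate $g$ by Lipschitz functions with compact support in $D$, and then mollify these to get elements of $C_0^\infty(D)$. The limit is justified in the energy norm by dominated convergence against the local integrability of $\tr a$ from \eqref{eq:basic}. Let $\Lip(g)=L$ and $\|g\|_{\mathrm L^\infty(D)}=M$; these are finite because $\overline D$ is compact.

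\emph{Step 1: compact support by shifting levels.} For $\epsilon>0$ set $g_\epsilon=S_\epsilon(g)$, where $S_\epsilon(t)=\operatorname{sign}(t)\max\{|t|-\epsilon,0\}$. This $S_\epsilon$ is a normal contraction with $S_\epsilon(0)=0$. Since $g$ is continuous on $\overline D$ and vanishes on $\partial D$, the set $\{x\in\overline D:|g(x)|\geq\epsilon\}$ is a compact subset of $D$. Hence $g_\epsilon$, extended by zero, is Lipschitz on $\R^n$ with constant $L$ and has compact support in $D$.

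As $\epsilon\to0$, we have $g_\epsilon\to g$ pointwise with $|g_\epsilon-g|\leq\epsilon$. The gradient satisfies $\nabla g_\epsilon=\mathbf 1_{\{|g|>\epsilon\}}\nabla g$ almost everywhere. Therefore
\[
 \int_D a\nabla(g_\epsilon-g)\cdot\nabla(g_\epsilon-g)\d x
 =\int_{D\cap\{|g|\leq\epsilon\}}a\nabla g\cdot\nabla g\d x .
\]
This tends to $\int_{D\cap\{g=0\}}a\nabla g\cdot\nabla g\d x$, which vanishes because $\nabla g=0$ almost everywhere on $\{g=0\}$. Dominated convergence applies, since $a\nabla g\cdot\nabla g\leq L^2\tr a\in\mathrm L^1(D)$ by \eqref{eq:basic} on the bounded set $D$. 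The same domination also gives $\cE_D(g,g)<\infty$.

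\emph{Step 2: mollification.} Fix $\epsilon$ and let $\rho_\tau$ be a standard mollifier. For $\tau$ smaller than the distance from $\supp g_\epsilon$ to $\partial D$, the function $g_\epsilon*\rho_\tau$ lies in $C_0^\infty(D)$. It satisfies $|\nabla(g_\epsilon*\rho_\tau)|\leq L$ and is supported in a fixed compact set $K_\epsilon\Subset D$. Moreover $\nabla(g_\epsilon*\rho_\tau)\to\nabla g_\epsilon$ almost everywhere as $\tau\to0$, by Lebesgue differentiation, and the functions converge uniformly.

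Dominated convergence with the dominating function $4L^2\tr a\,\mathbf 1_{K_\epsilon}$ gives energy convergence. Uniform convergence on a fixed compact set gives $\mathrm L^2$ convergence. A diagonal choice of $\tau$ for each $\epsilon$ then produces $\varphi_j\in C_0^\infty(D)$ with the following properties: $\cE_D(\varphi_j-g,\varphi_j-g)\to0$, $\varphi_j\to g$ in $\mathrm L^2(D)$, and $\varphi_j$ is uniformly bounded by $M$.

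\emph{Step 3: conclusion.} By Step 2, $(\varphi_j)$ is Cauchy in $\Vzero(D)$. By \eqref{eq:energycoerciveLone} its limit coincides with the $\mathrm L^1$ limit, which is $g|_D$; hence $g|_D\in\Vzero(D)$. Since the zero extensions of the $\varphi_j$ also converge in $\mathrm L^2(\R^n)$ to the zero extension of $g$, they are Cauchy in the form norm \eqref{norma}. Closedness of $\cF$, via Lemma~\ref{lem:closability}, then places the zero extension in $\cF_0(D)$. Alternatively, boundedness of $g$ together with Lemma~\ref{lem:boundedcompletion} gives this directly.

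The only delicate point is Step 1: the energy of $g_\epsilon-g$ concentrates on the thin set $\{0<|g|\leq\epsilon\}$ near $\partial D$, where $a$ may be unbounded. I expect it to be handled by the fact that the energy integrand is uniformly dominated by $L^2\tr a$ there, together with the vanishing of $\nabla g$ on $\{g=0\}$. No regularity of $\partial D$ is needed.
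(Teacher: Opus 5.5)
Your proof is correct, but its first step takes a different route from the paper's; the mollification and closure steps are the same. The paper localizes spatially. It multiplies $g$ by Lipschitz cutoffs $\zeta_\varepsilon$ that vanish where $\delta_D\leq\varepsilon$, equal one where $\delta_D\geq2\varepsilon$, and satisfy $|\nabla\zeta_\varepsilon|\leq C/\varepsilon$. The product-rule term $g\nabla\zeta_\varepsilon$ is controlled by the linear vanishing rate $|g|\leq L\delta_D$. The whole error is then at most $CL^2\int_{A_\varepsilon}\tr a\d x$ over the layer $A_\varepsilon=\{x\in D:\delta_D(x)<2\varepsilon\}$, which decreases to the empty set.

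You localize in the range instead, via the shrinkage $S_\epsilon(g)$. This produces no cross term and needs no distance estimate, only compactness of $\{|g|\geq\epsilon\}\cap\overline D$ in $D$. The price is that your error set $\{|g|\leq\epsilon\}\cap D$ decreases to $\{g=0\}\cap D$, which can have positive measure. You therefore need $\nabla g=0$ almost everywhere on the zero set and on the level sets $\{|g|=\epsilon\}$, and you correctly invoke this.

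The two routes trade off as follows. Your Step~1 really uses only continuity of $g$ up to $\partial D$ and $\int_Da\nabla g\cdot\nabla g\d x<\infty$, since that integrand already dominates. The Lipschitz bound enters only in the mollification step. This is also the level-truncation device the paper itself uses later in Proposition~\ref{prop:continuouszero}. The paper's version instead gives an explicit error bound depending only on $\Lip(g)$ and $\int_{A_\varepsilon}\tr a\d x$, uniform over all $g$ with a fixed Lipschitz constant, whereas your rate depends on the level-set structure of the particular $g$.
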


\begin{proof}
Write $L=\Lip(g)$ and $\delta_D(x)=\dist(x,\partial D)$ for
$x\in D$. Since $g=0$ on $\partial D$,
$|g(x)|\leq L\delta_D(x)$ for $x\in D$.
This estimate also shows that the zero extension $\widetilde g$
is globally Lipschitz. Its gradient agrees with $\nabla g$ in $D$
and vanishes almost everywhere on $D^c$.
Choose Lipschitz cutoffs $\zeta_\varepsilon$ in $D$ satisfying
\[
 0\leq\zeta_\varepsilon\leq1,\qquad
 \zeta_\varepsilon=0\ \text{if }\delta_D\leq\varepsilon,\qquad
 \zeta_\varepsilon=1\ \text{if }\delta_D\geq2\varepsilon,\qquad
 |\nabla\zeta_\varepsilon|\leq C/\varepsilon.
\]
Let $g_\varepsilon$ be the zero extension of
$\zeta_\varepsilon g$. In $D$,
\[
 \nabla(g-\zeta_\varepsilon g)
 =(1-\zeta_\varepsilon)\nabla g-g\nabla\zeta_\varepsilon.
\]
Both terms vanish outside the interior boundary layer
$A_\varepsilon=\{x\in D:\delta_D(x)<2\varepsilon\}$
and are bounded in magnitude by $CL$. For the second term, this
follows from $|g|\leq2L\varepsilon$ on $A_\varepsilon$.
Consequently,
\[
 \int_{\R^n}
 a\nabla(\widetilde g-g_\varepsilon)
 \cdot\nabla(\widetilde g-g_\varepsilon)\d x
 \leq CL^2\int_{A_\varepsilon}\tr a\d x
 \longrightarrow0.
\]
Indeed, $A_\varepsilon$ decreases to the empty set as
$\varepsilon\downarrow0$, and $\tr a\in\mathrm L^1(D)$.
Moreover,
\[
 \|\widetilde g-g_\varepsilon\|_{\mathrm L^2(\R^n)}^2
 \leq4L^2\varepsilon^2|D|
 \longrightarrow0.
\]

For fixed $\varepsilon$, the function $g_\varepsilon$ is globally
Lipschitz, with gradient bounded by $CL$, and its support is a
compact subset of $D$ at distance at least $\varepsilon$ from
$\partial D$. Let $\rho_\eta$ be a standard mollifier and set
\[
 g_{\varepsilon,\eta}=\rho_\eta*g_\varepsilon,
 \qquad 0<\eta<\varepsilon/2.
\]
Then $g_{\varepsilon,\eta}\in C_0^\infty(D)$ and
$g_{\varepsilon,\eta}\to g_\varepsilon$ in $\mathrm L^2$ as
$\eta\downarrow0$. The gradients converge almost everywhere
and remain bounded by $CL$. Dominated convergence therefore gives
\[
 \int_D
 a\nabla(g_{\varepsilon,\eta}-g_\varepsilon)
 \cdot\nabla(g_{\varepsilon,\eta}-g_\varepsilon)\d x
 \longrightarrow0,
\]
since the integrand is bounded by $CL^2\tr a$.
A diagonal choice of $\varepsilon\downarrow0$ and
$\eta\downarrow0$ thus gives smooth compactly supported
approximants converging to $\widetilde g$ in both
$\mathrm L^2$ and energy. Hence $g|_D\in\Vzero(D)$ and
$\widetilde g\in\cF_0(D)$.
\end{proof}

The following proposition constructs variational solutions for Lipschitz boundary
data and establishes their comparison properties.

\begin{prop}
\label{prop:variational}
Let $D\subset\mathbb R^n$ be bounded and open, and let $F$ be bounded and Lipschitz
on a neighborhood of $\overline D$. Then there is a unique $u\in F+\Vzero(D)$
such that
\begin{equation}\label{eq:variational}
 \cE_D(u,\varphi)=0,\qquad \varphi\in\Vzero(D).
\end{equation}
$u$ minimizes the energy in this affine class. Its value depends only on
$F|_{\partial D}$, and
\begin{equation}\label{eq:maximumvariational}
 \min_{\partial D}F\leq u\leq\max_{\partial D}F
 \quad\hbox{almost everywhere in }D.
\end{equation}
For two such solutions with boundary data $f$ and $g$,
\begin{equation}\label{eq:supcontract}
 \|u_f-u_g\|_{\mathrm L^\infty(D)}\leq\|f-g\|_{C(\partial D)}.
\end{equation}
Moreover $u-F\in\cF_0(D)$ and $u\in\cF_{\loc}(D)$.
\end{prop}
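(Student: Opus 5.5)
The plan follows the usual Hilbert-space route in $\Vzero(D)$, with no $\mathrm L^2$ coercivity needed.

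\emph{Existence and uniqueness.} First we check that $\cE_D(F,\varphi)$ is a bounded linear functional on $\Vzero(D)$. Since $F$ is Lipschitz on a neighborhood of $\overline D$ and $\tr a\in\mathrm L^1(D)$, weighted Cauchy--Schwarz gives
\[
 |\cE_D(F,\varphi)|\leq \Lip(F)\Bigl(\int_D\tr a\d x\Bigr)^{1/2}\|\varphi\|_{\Vzero(D)}.
\]
By Riesz representation in the Hilbert space $\Vzero(D)$ with inner product $\cE_D$, there is a unique $w\in\Vzero(D)$ with $\cE_D(w,\varphi)=-\cE_D(F,\varphi)$ for all $\varphi$. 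Then $u=F+w$ satisfies \eqref{eq:variational}. Uniqueness holds because the difference of two solutions lies in $\Vzero(D)$ and has zero energy, and $\cE_D$ is a norm there. Minimality is the Pythagorean identity $\cE_D(u+\varphi,u+\varphi)=\cE_D(u,u)+\cE_D(\varphi,\varphi)$.

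\emph{Dependence on boundary values only.} If $F$ and $G$ are bounded Lipschitz with $F=G$ on $\partial D$, then by Lemma~\ref{lem:Lipzero} we have $F-G\in\Vzero(D)$. Hence the affine classes coincide, and uniqueness gives the same $u$.

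\emph{Maximum principle.} Set $M=\max_{\partial D}F$. We show $(u-M)^+\in\Vzero(D)$. Write $u-M=(F-M)+w$. The function $(F-M)^+$ is Lipschitz and vanishes on $\partial D$, so in fact we argue via $\min\{F,M\}$ instead. The function $F-\min\{F,M\}$ is Lipschitz, vanishes on $\partial D$, and lies in $\Vzero(D)$ by Lemma~\ref{lem:Lipzero}. We may therefore replace $F$ by $\widetilde F=\min\{\max\{F,m\},M\}$, which has the same boundary values and satisfies $m\leq\widetilde F\leq M$, where $m=\min_{\partial D}F$.

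Then $(u-M)^+=(\widetilde F-M+w)^+$, and we need this in $\Vzero(D)$. Take core approximants $w_j\in C_0^\infty(D)$ of $w$, converging in energy and in $\mathrm L^{n/(n-1)}$ by \eqref{eq:energycoerciveLone}. The functions $(\widetilde F-M+w_j)^+$ are Lipschitz, compactly supported in $D$ (since $\widetilde F-M\leq0$), and lie in $\Vzero(D)$ by Lemma~\ref{lem:Lipzero}. By Lemma~\ref{lem:strongtruncation} they converge in energy to $(u-M)^+$, which is then in $\Vzero(D)$ by closedness.

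Testing \eqref{eq:variational} with $\varphi=(u-M)^+$ gives $\int_D a\nabla(u-M)^+\cdot\nabla(u-M)^+\d x=0$, using that the gradient vanishes on $\{u\leq M\}$. Hence $(u-M)^+=0$. The lower bound follows symmetrically.

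\emph{Contraction.} By linearity and uniqueness, $u_f-u_g$ is the variational solution with data $F_f-F_g$. Applying \eqref{eq:maximumvariational} gives \eqref{eq:supcontract}.

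\emph{Membership in $\cF_0(D)$ and $\cF_{\loc}(D)$.} Since $u$ and $F$ are bounded, $w=u-F$ is a bounded element of $\Vzero(D)$, so $w\in\cF_0(D)$ by Lemma~\ref{lem:boundedcompletion}. For $V\Subset D$, take $\chi\in C_0^\infty$ equal to one near $\overline D$. Then $\chi F$ is Lipschitz with compact support, hence in $\cF$; this follows by mollification as in Lemma~\ref{lem:Lipzero}, with dominated convergence using $\tr a\in\mathrm L^1_{\loc}$. Therefore $\chi F+w\in\cF$ agrees with $u$ on $D$, which gives $u\in\cF_{\loc}(D)$.

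The main obstacle is the truncation step in the maximum principle: justifying $(u-M)^+\in\Vzero(D)$ without $\mathrm L^2$ control. The reduction to $m\leq\widetilde F\leq M$ together with Lemma~\ref{lem:strongtruncation} (convergence in measure suffices) is intended to handle it.
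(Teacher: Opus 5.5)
Your proposal is correct and follows the paper's proof essentially step by step. It uses Riesz representation in $\Vzero(D)$ with the Pythagorean identity, and Lemma~\ref{lem:Lipzero} for dependence on boundary values only. It then clamps $F$ to $[m,M]$ and approximates the corrector by $C_0^\infty(D)$ functions, so that Lemma~\ref{lem:strongtruncation} places $(u-M)^+$ in $\Vzero(D)$. Finally, Lemma~\ref{lem:boundedcompletion} gives $u-F\in\cF_0(D)$.

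The differences are cosmetic. You apply the maximum principle directly to $F_f-F_g$, which is fine because the clamping is done inside its proof. You also spell out the $\cF_{\loc}$ step via $\chi F+(u-F)\in\cF$; there $\chi$ should be supported in the neighborhood on which $F$ is defined.
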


\begin{proof}
The functional $\varphi\mapsto-\cE_D(F,\varphi)$ is bounded on the Hilbert
space $\Vzero(D)$. Riesz representation gives a unique $z$ in that space
with $\cE_D(z,\varphi)=-\cE_D(F,\varphi)$. Set $u=F+z$.
Orthogonality gives, for every $h\in\Vzero(D)$,
\begin{equation}\label{eq:energyorthogonal}
 \cE_D(u+h,u+h)=\cE_D(u,u)+\cE_D(h,h).
\end{equation}
Since $\cE_D^{1/2}$ is a norm on $\Vzero(D)$, this proves
that $u$ is the unique energy minimizer in $F+\Vzero(D)$.
If two Lipschitz extensions have the same boundary values,
their difference belongs to $\Vzero(D)$ by
Lemma~\ref{lem:Lipzero}. They therefore define the same affine
class and the same minimizer. Thus the variational solution
depends only on $F|_{\partial D}$.

Set $m=\min_{\partial D}F$ and $M_0=\max_{\partial D}F$, and define
\[
 \widehat F=\max\{m,\min\{F,M_0\}\}.
\]
Then $\widehat F$ is Lipschitz, takes values in $[m,M_0]$,
and agrees with $F$ on $\partial D$.
By Lemma~\ref{lem:Lipzero},
$(\widehat F-F)|_D\in\Vzero(D)$, so
\[
 \widehat F+\Vzero(D)=F+\Vzero(D).
\]
We may therefore replace $F$ by $\widehat F$ and assume
$m\leq F\leq M_0$ throughout $D$. Approximate $z$ in energy by compactly supported
smooth $z_j$. The functions $(F+z_j-M_0)_+$ have compact support in $D$ and
belong to $\Vzero(D)$ by Lemma~\ref{lem:Lipzero}.
Lemma~\ref{lem:strongtruncation} shows that they
converge in energy to $(u-M_0)_+$. The gradient on the level set $\{u=M_0\}$
is zero almost everywhere, as follows from the $W^{1,1}$ chain rule.
Thus $(u-M_0)_+\in\Vzero(D)$. Testing \eqref{eq:variational} with this function
gives $\cE_D((u-M_0)_+,(u-M_0)_+)=0$, and
\eqref{eq:energycoerciveLone} gives $(u-M_0)_+=0$. The lower bound is identical.

The difference $u_f-u_g$ is the variational solution with boundary datum
$f-g$. Choose a Lipschitz extension of this datum clamped to its boundary
range, and apply \eqref{eq:maximumvariational} to obtain
\eqref{eq:supcontract}. Finally $z=u-F$ is bounded. Lemma~\ref{lem:boundedcompletion}
gives $z\in\cF_0(D)$, and localization of $F$ gives the remaining assertion.
\end{proof}

The following lemma gives the Caccioppoli estimate and proves stability of weak
solutions under locally uniform convergence.

\begin{lem}\label{lem:Caccioppoli}
Let $D\subset\R^n$ be open, let $h$ be a bounded weak solution
of $\La h=0$ in $D$, and let $\eta\in C_0^\infty(D)$. Then
\begin{equation}\label{eq:Caccioppoli}
 \int_D\eta^2a\nabla h\cdot\nabla h\d x
 \leq4\int_Dh^2a\nabla\eta\cdot\nabla\eta\d x.
\end{equation}
Furthermore, if $(h_j)$ is a locally uniformly bounded sequence
of weak solutions in $D$ that converges uniformly on compact
subsets to $h$, then $h\in\cF_{\loc}(D)$ and $h$ is a weak solution.
\end{lem}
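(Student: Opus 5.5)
For the Caccioppoli estimate \eqref{eq:Caccioppoli}, I will test the equation with $\psi=\eta^2h$. Since $h$ is bounded and lies in $\cF_{\loc}(D)$, we may choose an open $V$ with $\supp\eta\Subset V\Subset D$ and an ambient $f\in\cF$ agreeing with $h$ on $V$. Truncating at level $M>\|h\|_\infty$ gives a bounded element of $\cF$, still agreeing with $h$ on $V$. Lemma~\ref{lem:products}(i), applied with the cutoff $\eta^2$, then shows that $\eta^2h$ is, after modification outside $\supp\eta$, a bounded element of $\cF$. It vanishes outside the compact set $\supp\eta$. Hence Lemma~\ref{lem:products}(ii) allows it as a test function. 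The product rule gives
\[
 0=\int_D\eta^2a\nabla h\cdot\nabla h\d x+2\int_D\eta h\,a\nabla h\cdot\nabla\eta\d x.
\]
Weighted Cauchy--Schwarz bounds the second term by $2(\int\eta^2a\nabla h\cdot\nabla h)^{1/2}(\int h^2a\nabla\eta\cdot\nabla\eta)^{1/2}$. The left-hand energy is finite because $h$ has local finite energy, so we may divide and square to obtain the constant $4$.

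For the stability assertion, fix $V\Subset D$ open and $\eta\in C_0^\infty(D)$ with $\eta=1$ on $V$. Let $M$ be the uniform bound of the $h_j$ on $\supp\eta$. By \eqref{eq:Caccioppoli},
\[
 \int\eta^2a\nabla h_j\cdot\nabla h_j\d x\leq4M^2\int_{\supp\eta}\tr a\d x,
\]
which is finite and independent of $j$. The same bound holds for $h_j-h_k$, since this is also a bounded weak solution. Its sup norm on $\supp\eta$ tends to zero, so
\[
 \int_V a\nabla(h_j-h_k)\cdot\nabla(h_j-h_k)\d x\leq4\|h_j-h_k\|_{L^\infty(\supp\eta)}^2\int_{\supp\eta}\tr a\d x\to0.
\]
Thus the sequence is Cauchy in local energy. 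This is the key point: uniform convergence plus Caccioppoli gives strong, not just weak, energy convergence.

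It remains to show $h\in\cF_{\loc}(D)$. As above, pick ambient bounded representatives $g_j\in\cF$ agreeing with $h_j$ near $\supp\eta$, via truncation. By Lemma~\ref{lem:products}(i), $\eta g_j\in\cF$ with
\[
 \cE(\eta(g_j-g_k))\leq 2\int\eta^2a\nabla(h_j-h_k)\cdot\nabla(h_j-h_k)\d x+2\int(h_j-h_k)^2a\nabla\eta\cdot\nabla\eta\d x.
\]
Both terms tend to zero: the first by the Caccioppoli estimate on a slightly larger cutoff, the second by uniform convergence. In addition, $\eta g_j\to\eta h$ uniformly with fixed compact support, hence in $L^2$. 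Closedness of $(\cE,\cF)$ yields $\eta h\in\cF$, and this agrees with $h$ on $V$, so $h\in\cF_{\loc}(D)$. The closedness also identifies the limit gradient with $\nabla h$, as in Lemma~\ref{lem:closability}, and gives $a^{1/2}\nabla h_j\to a^{1/2}\nabla h$ in $L^2(V)$.

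Finally, for $\phi\in C_0^\infty(D)$, choose $V\supset\supp\phi$. Then $\int a\nabla h_j\cdot\nabla\phi\d x\to\int a\nabla h\cdot\nabla\phi\d x$ by weighted Cauchy--Schwarz, since $\int_{\supp\phi}a\nabla\phi\cdot\nabla\phi<\infty$ by \eqref{eq:basic}. The left-hand integrals are all zero, so $h$ is a weak solution.

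The only delicate step is the bookkeeping that turns local objects into elements of $\cF$ so that closedness and the testing lemma apply. I expect this to be handled entirely by truncation together with Lemma~\ref{lem:products}(i)--(ii).
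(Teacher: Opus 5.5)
Your proof is correct and follows the same route as the paper. You test with $\eta^2h$ via Lemma~\ref{lem:products} and absorb by weighted Cauchy--Schwarz. You then apply the estimate to $h_j-h_k$ near $\supp\eta$, which makes $\eta h_j$ Cauchy in the form norm, so closedness gives $h\in\cF_{\loc}(D)$ and the weak equation passes to the limit.

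There is one small slip. The bound $4M^2\int_{\supp\eta}\tr a\d x$ is missing the factor $\|\nabla\eta\|_\infty^2$; it should read $4M^2\int a\nabla\eta\cdot\nabla\eta\d x$. Also, no larger cutoff is needed, since Caccioppoli with the same $\eta$ already controls the first term.
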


\begin{proof}
By Lemma~\ref{lem:products}, we may test with $\eta^2h$.
Cauchy-Schwarz in the $a$ metric gives
$I\leq2I^{1/2}J^{1/2}$, with $I$ and $J$ the two integrals in
\eqref{eq:Caccioppoli}. This proves the estimate.
For the convergence assertion, fix an arbitrary open $V\Subset D$
and choose $\eta\in C_0^\infty(D)$ equal to one on a neighborhood
of $\overline V$. Apply the estimate to $h_j-h_k$ on a relatively
compact neighborhood of $\supp\eta$, where the sequence is
uniformly bounded. The resulting bound is
\[
 \int\eta^2a\nabla(h_j-h_k)\cdot\nabla(h_j-h_k)\d x
 \leq4\|h_j-h_k\|_{\mathrm L^\infty(\supp\eta)}^2
        \int a\nabla\eta\cdot\nabla\eta\d x.
\]
The product estimate then makes $\eta h_j$ Cauchy in the form norm.
Its limit belongs to $\cF$ and agrees with $h$ on $V$.
Since $V\Subset D$ was arbitrary, $h\in\cF_{\loc}(D)$.
The gradients converge in energy on $V$, so Cauchy-Schwarz
justifies passage to the weak formulation for tests supported there.
\end{proof}

\subsection{Interior oscillation and H\"older continuity}\label{sec:interior}

We now use the uniformity in \eqref{eq:UC}. Let
$u\in\cF_{\loc}(D)$ be a weak solution of $\La u=0$
in an open set $D$. For every cube $Q\Subset D$,
Lemma~\ref{lem:boundedcompletion} gives $u|_Q\in\Ha(Q)$
as an $\Ha$-solution. The local boundedness
assertion of Proposition~\ref{prop:AAKT} therefore applies to
$u$ and $-u$, showing that $u\in\mathrm L^\infty_{\loc}(D)$.

The following lemma converts the uniform Harnack inequality induced by \eqref{eq:UC} and Proposition~\ref{prop:AAKT} into oscillation decay
and an interior H\"older estimate.

\begin{lem}\label{lem:interiorosc}
Let $u$ be a bounded weak solution on a cube $Q$. Under \eqref{eq:UC},
\begin{equation}\label{eq:interiorcontraction}
 \essosc_{(1/2)Q}u\leq\kappa_I\essosc_Qu,
 \qquad \kappa_I={(H-1)}/{(H+1)}<1.
\end{equation}
Consequently, with $\alpha_I=-\log\kappa_I/\log2\in(0,1)$,
\begin{equation}\label{eq:interiorpower}
 \essosc_{\rho Q}u\leq C\rho^{\alpha_I}\essosc_Qu,
 \qquad 0<\rho\leq1.
\end{equation}
\end{lem}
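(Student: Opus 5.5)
The plan is the classical Moser oscillation argument, applied with the uniform Harnack constant $H$ from \eqref{eq:Hconstants}. Set $M=\esssup_Q u$ and $m=\essinf_Q u$, both finite because $u$ is bounded. The functions $M-u$ and $u-m$ are nonnegative weak solutions in $Q$. The Harnack inequality \eqref{eq:EHIcube} is stated for $\Ha(Q)$-solutions, but by the remark after Proposition~\ref{prop:AAKT} it also applies to weak solutions in $\cF_{\loc}(Q)$. That remark restricts to concentric cubes $Q'\Subset Q$ (Lemma~\ref{lem:boundedcompletion}) and lets $Q'\uparrow Q$, and the constant stays $\exp(C_0\sqrt{\Theta_a(Q')})\le H$ uniformly by \eqref{eq:UC}. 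Writing $M_{1/2}$ and $m_{1/2}$ for the essential supremum and infimum over $(1/2)Q$, Harnack gives
\[
 M-m_{1/2}\le H\,(M-M_{1/2}),\qquad M_{1/2}-m\le H\,(m_{1/2}-m).
\]
Adding these gives $(M-m)+\essosc_{(1/2)Q}u\le H\bigl((M-m)-\essosc_{(1/2)Q}u\bigr)$. Rearranging yields \eqref{eq:interiorcontraction} with $\kappa_I=(H-1)/(H+1)$, and $\kappa_I<1$ because $H<\infty$. One technical point is that \eqref{eq:EHIcube} controls $\esssup$ by $\essinf$ on the same half cube, and that is exactly the form used here.

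For \eqref{eq:interiorpower}, iterate on the concentric cubes $2^{-j}Q$. Each is a cube on which $u$ is a bounded weak solution, so the contraction applies at every scale with the same constant, again because of \eqref{eq:UC}. This gives $\essosc_{2^{-j}Q}u\le\kappa_I^j\essosc_Qu$. For $0<\rho\le1$, choose $j$ with $2^{-j-1}<\rho\le2^{-j}$. Monotonicity of the oscillation under inclusion then gives
\[
 \essosc_{\rho Q}u\le\kappa_I^{j}\essosc_Qu=2^{-j\alpha_I}\essosc_Qu\le 2^{\alpha_I}\rho^{\alpha_I}\essosc_Qu,
\]
so the claim holds with $C=2^{\alpha_I}\le2$. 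Since $H\ge4$, we get $\kappa_I\ge3/5>1/2$, and hence $\alpha_I=-\log\kappa_I/\log2\in(0,1)$.

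The main point needing care is applying Harnack to $M-u$ on the full cube $Q$ rather than on a compactly contained cube. The route is to work on $Q'=(1-\epsilon)Q$. There the Harnack inequality gives the two displayed bounds with $M_{1/2}$ and $m_{1/2}$ replaced by the corresponding quantities over $(1/2)Q'$, while $M$ and $m$ over $Q$ still dominate. Letting $\epsilon\downarrow0$, the essential sup and inf over the increasing half cubes converge to those over the open cube $(1/2)Q$. Everything else is algebra.
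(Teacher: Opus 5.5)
Your proposal is correct and follows the paper's proof. You apply the uniform Harnack inequality \eqref{eq:EHIcube} to $u-m$ and $M-u$, add the two inequalities to get the contraction with $\kappa_I=(H-1)/(H+1)$, then iterate on the dyadic concentric cubes and use monotonicity of the oscillation to get $C=2^{\alpha_I}$. Passing from $\cF_{\loc}(Q)$ to $\Ha$ on slightly smaller cubes and letting them increase to $Q$ is exactly the justification the paper records after Proposition~\ref{prop:AAKT}, and your check that $H\geq4$ forces $\alpha_I<1$ fills in a detail the paper leaves implicit.
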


\begin{proof}
Write $M_0=\esssup_Qu$, $m=\essinf_Qu$, and write $M_0'$ and $m'$ for the
corresponding quantities on $(1/2)Q$. Both $u-m$ and $M_0-u$ are nonnegative
solutions. Harnack's inequality gives
\[
 M_0'-m\leq H(m'-m),\qquad M_0-m'\leq H(M_0-M_0').
\]
Adding gives $(H+1)(M_0'-m')\leq(H-1)(M_0-m)$. Iteration proves the estimate
for $\rho=2^{-j}$. Monotonicity of oscillation gives all intermediate
scales with a factor at most $2^{\alpha_I}$.
\end{proof}

The nested essential ranges in \eqref{eq:interiorpower} determine a unique
continuous representative. Applying the estimate to cubes centered at
nearby points gives the usual local H\"older estimate. We use these
representatives throughout the rest of the paper.

The following lemma gives Harnack comparison on balls and the strong maximum
principle.

\begin{lem}
\label{lem:ballHarnack}
Under \eqref{eq:UC}, there is $H_b=H_b(\mathbf d)\geq4$ such that
every nonnegative solution of $\La u=0$ in
$B(x,R)$ satisfies
\begin{equation}\label{eq:ballHarnack}
 \sup_{B(x,R/2)}u\leq H_b\inf_{B(x,R/2)}u.
\end{equation}
A nonnegative solution on a connected open set is either identically zero
or strictly positive.
\end{lem}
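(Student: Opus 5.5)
The plan is to deduce \eqref{eq:ballHarnack} from the cube Harnack inequality \eqref{eq:EHIcube}. The point is that, under \eqref{eq:UC}, this inequality holds with the uniform constant $H$ from \eqref{eq:Hconstants} on every axis-parallel cube, and it applies to weak solutions in $\cF_{\loc}$ by the remark following Proposition~\ref{prop:AAKT}. The ball $B(x,R/2)$ will be covered by finitely many small cubes whose doubles lie in $B(x,R)$, and the estimates will be linked by a chain.

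First, by translation and dilation (which preserve \eqref{eq:UC} with the same $K$), we may assume $x=0$ and $R=1$. Fix a side length $\ell=c_n$, small enough, say $\ell=1/(8\sqrt n)$. Consider the cubes $Q$ of side $2\ell$, that is, $Q=y+(-\ell,\ell)^n$, with centers $y$ on the lattice $(\ell/2)\mathbb Z^n$ and with $|y|\leq 1/2+\ell\sqrt n$. For such cubes, $\overline Q\subset B(0,3/4)\Subset B(0,1)$. The half-cubes $(1/2)Q$ have side $\ell$, and since their centers form a lattice of spacing $\ell/2$, they cover $B(0,1/2)$. Their number $N$ depends only on $n$.

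On each $Q$ the continuous representative satisfies $\sup_{(1/2)Q}u\leq H\inf_{(1/2)Q}u$. Two half-cubes whose centers are adjacent lattice points overlap in a set of positive measure, so their infima and suprema are linked: $\sup_{(1/2)Q}u\leq H\inf_{(1/2)Q}u\leq H\sup_{(1/2)Q'}u\leq H^2\inf_{(1/2)Q'}u$. Any two of the chosen cubes can be joined by a lattice path of length at most $N$ within the family, because the set of admissible centers is a discrete approximation of a ball and hence lattice-connected. Consequently
\[
 \sup_{B(0,1/2)}u\leq H^{2N}\inf_{B(0,1/2)}u .
\]
We then set $H_b=\max\{4,H^{2N}\}$, which depends only on $\mathbf d$. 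Continuity of the representative lets us pass from essential bounds to pointwise ones.

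For the strong maximum principle, let $G$ be connected and open and let $u\geq0$ solve $\La u=0$ in $G$. Let $Z=\{y\in G:u(y)=0\}$. This set is closed in $G$ by continuity. It is also open: if $u(y)=0$, apply \eqref{eq:ballHarnack} on a ball $B(y,R)\subset G$, which gives $\sup_{B(y,R/2)}u\leq H_b u(y)=0$. By connectedness, either $Z=G$ or $Z=\emptyset$.

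The only delicate point is the connectivity and overlap bookkeeping in the covering. This is routine; choosing the lattice spacing as half the side of the half-cubes makes neighboring half-cubes overlap in a set of positive measure.
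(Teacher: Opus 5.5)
Your proposal is correct and follows the paper's proof. The paper's steps are the same: a fixed dimensional family of axis-parallel cubes whose half-cubes cover $B(x,R/2)$ and which lie in $B(x,R)$, chaining of \eqref{eq:EHIcube} through overlapping half-cubes with the uniform constant $H$ from \eqref{eq:Hconstants}, and the open--closed argument for the zero set. Your explicit lattice construction (centers on $(\ell/2)\mathbb Z^n$, half-cubes of side $\ell$) makes concrete the covering and chaining that the paper leaves implicit, and the exponent $2N$ is a harmless overestimate of the chain length.
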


\begin{proof}
Choose a fixed finite family of cubes of side $c_nR$ whose
half cubes cover $B(x,R/2)$, whose doubles lie in $B(x,R)$,
and whose half cubes can be joined by overlapping chains.
The number of cubes and the maximum chain length depend only on $n$. Repeated use of \eqref{eq:EHIcube} gives \eqref{eq:ballHarnack} with
$H_b\leq H^{N_n}$ after enlarging $N_n$. If $u$ vanishes at an interior
point, \eqref{eq:ballHarnack} makes it vanish in a neighborhood of that
point. Connectedness and overlapping balls propagate this property.
\end{proof}

\begin{rem} The proof gives an explicit dependence of the interior oscillation exponent
on the Harnack constant. In particular, as $H$ becomes large,
\begin{equation}\label{eq:exponentdependence}
 \alpha_I=\frac1{\log2}\log({(H+1)}/{(H-1)})
 \asymp H^{-1}.
\end{equation}
Under \eqref{eq:Hconstants}, this is an exponential dependence on
$\sqrt K$. No polynomial lower bound for this exponent is inferred.
\end{rem}

\subsection{Nonuniform coarse control and interior continuity}
\label{sec:goodscales}

The preceding argument uses a uniform bound on the coarse ellipticity
ratio. We now record the continuity criteria obtained when this ratio
varies along nested scales. These conclusions concern interior
regularity. The boundary comparison arguments below still require
uniform Harnack and weak Harnack constants.

Fix $x$ and $R>0$, and write $Q_j=Q_{2^{-j}R}(x)$.
Suppose that $\Theta_a(Q_j)<\infty$ for every $j\geq0$
and that $u$ is a bounded $\Ha$-solution on $Q_0$. Increase the structural constant $C_0$ so that
$H_j=\exp(C_0\sqrt{\Theta_a(Q_j)})\geq4$. The proof of
Lemma~\ref{lem:interiorosc} gives
\begin{equation}\label{eq:nonuniformiteration}
 \essosc_{Q_N}u\leq
 \prod_{j=0}^{N-1}\frac{H_j-1}{H_j+1}\,\essosc_{Q_0}u
 \leq\exp\left(-2\sum_{j=0}^{N-1}H_j^{-1}\right)\essosc_{Q_0}u.
\end{equation}
The last inequality follows from
$\log((1-z)/(1+z))\leq-2z$ for $0<z<1$.

The following proposition gives continuity and H\"older criteria in terms of the
coarse ellipticity ratios along nested scales.

\begin{prop}\label{prop:goodscales}
Under the assumptions of the preceding paragraph, if
\begin{equation}\label{eq:divergencecriterion}
 \sum_{j=0}^\infty\exp(-C_0\sqrt{\Theta_a(Q_j)})=\infty,
\end{equation}
then the essential oscillation of $u$ at $x$ tends to zero. If this holds
at every point of an open set, $u$ has a continuous representative there.
For a quantitative version, let $G$ be open, let $u$ be bounded
and $\La$-harmonic in $G$, and fix $E\Subset G$ and a common radius $R>0$
such that $Q_R(x)\Subset G$ for every $x\in E$. Assume that the
ratios satisfy $\Theta_a(Q_{2^{-j}R}(x))<\infty$ for every
$x\in E$ and $j\geq0$. If, for every $x\in E$ and integer $N\geq1$,
\begin{equation}\label{eq:gooddensity}
 \#\{0\leq j<N:\Theta_a(Q_{2^{-j}R}(x))\leq K_*\}
 \geq\delta_*N-A_*
\end{equation}
with fixed $1\leq K_*<\infty$, $0<\delta_*\leq1$ and $A_*\geq0$, then
\begin{equation}\label{eq:goodholder}
 |u(x)-u(y)|\leq C\|u\|_{\mathrm L^\infty(G)}
       \left(\frac{|x-y|}{R}\right)^{\alpha_*},
 \qquad x,y\in E,
\end{equation}
where the values at points of $E$ are defined by the limiting essential
ranges. The exponent is
\begin{equation}\label{eq:goodexponent}
 \alpha_*=-\frac{\delta_*}{\log2}
  \log\left(\frac{H_*-1}{H_*+1}\right),
 \qquad H_*=\exp(C_0\sqrt{K_*}).
\end{equation}
Here $C$ depends only on $n,p,q,s,t,K_*,\delta_*,A_*$.
For the first assertion, the finiteness and $\La$-harmonicity assumptions
are understood at each center where the criterion is applied.
\end{prop}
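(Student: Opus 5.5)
The plan is to iterate the one-step contraction \eqref{eq:nonuniformiteration} along the nested cubes $Q_j$. For the first assertion, divergence of \eqref{eq:divergencecriterion} gives $\sum_j H_j^{-1}=\infty$, because $H_j=\exp(C_0\sqrt{\Theta_a(Q_j)})$ (after the harmless enlargement of $C_0$ that keeps $H_j\geq4$, and at worst replacing $C_0$ by a larger structural constant in the criterion). Hence the right-hand side of \eqref{eq:nonuniformiteration} tends to zero as $N\to\infty$. Since the essential ranges on $Q_N$ are nested and shrinking to a point, this defines $u(x)$ as the common limit. If this happens at every point of an open set, the oscillation at each point is zero, and the limiting values form a representative which is continuous. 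For continuity, fix $x$ and $\varepsilon$, choose $N$ with $\essosc_{Q_N(x)}u<\varepsilon$, and note that for $y\in Q_{N+1}(x)$ every small cube around $y$ lies in $Q_N(x)$. Consequently $u(y)$ lies in the essential range over $Q_N(x)$.

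For the quantitative part, I would bound each factor $(H_j-1)/(H_j+1)$ by $1$ at bad scales and by $\kappa_*=(H_*-1)/(H_*+1)$ at good scales where $\Theta_a\leq K_*$. Monotonicity of $z\mapsto(z-1)/(z+1)$ justifies this. The density hypothesis \eqref{eq:gooddensity} then gives, for every $x\in E$ and $N\geq1$,
\[
 \essosc_{Q_{2^{-N}R}(x)}u\leq\kappa_*^{\delta_*N-A_*}\essosc_{Q_R(x)}u
 \leq 2\kappa_*^{-A_*}\|u\|_{\mathrm L^\infty(G)}\,2^{-\alpha_*N},
\]
with $\alpha_*$ as in \eqref{eq:goodexponent}, since $\kappa_*^{\delta_*}=2^{-\alpha_*}$. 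Here the good count is understood as at least $\max\{0,\delta_*N-A_*\}$, and $\kappa_*<1$ makes the bound valid in all cases.

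To pass to two points, take $x,y\in E$ and distinguish by size. If $|x-y|_\infty\geq R/4$, the claim is trivial with constant $2\cdot4^{\alpha_*}$. Otherwise, choose the largest $N\geq1$ with $|x-y|_\infty<2^{-N}R$, so that $2^{-N}R\leq 2|x-y|_\infty$. Then $y\in Q_{2^{-N}R}(x)$, and the value $u(y)$, being the limit of essential ranges on cubes shrinking to $y$ and eventually contained in $Q_{2^{-N}R}(x)$, lies in the closed essential range of $u$ on that cube, as does $u(x)$. Therefore $|u(x)-u(y)|\leq\essosc_{Q_{2^{-N}R}(x)}u\leq C\|u\|_\infty(|x-y|/R)^{\alpha_*}$, with $C$ depending on $K_*,\delta_*,A_*$ and the structural constants. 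Equivalence of norms on $\R^n$ absorbs the switch from $|\cdot|_\infty$ to the Euclidean distance. Note that $u(y)$ is well defined because the first assertion applies at $y\in E$: the density condition forces infinitely many good scales, so the series diverges.

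The main obstacle is the bookkeeping rather than the analysis. One must check that \eqref{eq:nonuniformiteration} applies to an $\La$-harmonic $u$ in $G$ on each $Q_j$. This follows because $Q_R(x)\Subset G$, Lemma~\ref{lem:boundedcompletion} places $u|_{Q_j}$ in $\Ha(Q_j)$, and the Harnack step of Lemma~\ref{lem:interiorosc} is applied cube by cube with $H_j$ in place of $H$. One must also confirm that the values defined by limiting essential ranges are consistent, so that points of $E$ not at cube centers are treated correctly.
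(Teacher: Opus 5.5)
Your proposal is correct and follows essentially the same route as the paper: iterate \eqref{eq:nonuniformiteration}, bound the factor by $\kappa_*$ at the good scales and by one elsewhere, and compare two points through the closed essential range of a dyadic cube about $x$ that contains $y$. The differences are cosmetic. You work with $|x-y|_\infty$ and the largest admissible $N$, where the paper uses the choice $2|x-y|<2^{-N}R\le4|x-y|$. You also make explicit some points the paper leaves implicit, such as $u|_{Q_j}\in\Ha(Q_j)$ and the case $\delta_*N-A_*<0$.
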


\begin{proof}
Condition \eqref{eq:divergencecriterion} makes the last factor in
\eqref{eq:nonuniformiteration} tend to zero. Nested essential ranges then
define the value at $x$. If the condition holds at every point, a sufficiently
small cube about a nearby point lies in a fixed small cube about $x$.
Its limiting value belongs to that cube's essential range. This proves
continuity of the resulting representative. At every scale counted in \eqref{eq:gooddensity}, the contraction factor
is at most $\kappa_*=(H_*-1)/(H_*+1)$. At all other scales it is at most
one. Therefore
\[
 \essosc_{Q_N}u
 \leq\kappa_*^{\delta_*N-A_*}\essosc_{Q_0}u
 =\kappa_*^{-A_*}2^{-N\alpha_*}\essosc_{Q_0}u.
\]
The density assumption implies the divergence criterion at every
$x\in E$. If $0<|x-y|<R/4$, choose $N$ with
$2|x-y|<2^{-N}R\leq4|x-y|$. The point $y$ lies strictly inside
$Q_{2^{-N}R}(x)$, so its representative, defined by shrinking
cubes about $y$, belongs to that cube's closed essential range.
The oscillation estimate therefore proves \eqref{eq:goodholder}.
For $|x-y|\geq R/4$, the boundedness of $u$ gives the same estimate
after increasing $C$.
\end{proof}

\section{Uniform coarse ellipticity and \texorpdfstring{$A_2$}{A2} weighted ellipticity}\label{sec:coefficientclasses}

We now prove Theorem~\ref{thm:coefficientclasses} and explain how the
coefficient assumptions differ from classical $A_2$ weighted ellipticity.
We first record the pointwise consequence of \eqref{eq:UC} and the
stronger consequence of uniform moment products. We then construct a
scalar coefficient outside $A_2$, prove the variational estimate on
every simplex, and sum the coarse quantities uniformly over all cubes.
The last example proves the reverse non-inclusion. Throughout this
section, $a_T$ and $a_{*,T}$ denote precisely the variational matrices
defined in Subsection~\ref{sec:coarse} in the approximable energy class.

\subsection{Pointwise anisotropy and scalar variation}

Although \eqref{eq:UC} is formulated through effective energies, its
validity down to arbitrarily small scales controls the ratio of the
largest and smallest eigenvalues at almost every point. The following
proposition makes this consequence precise.

\begin{prop}\label{prop:pointwiseanisotropy}
Assume \eqref{eq:basic} and \eqref{eq:UC}. Then
\begin{equation}\label{eq:pointwiseanisotropy}
 |a(x)||a(x)^{-1}|\leq K
 \quad\hbox{for almost every }x\in\R^n.
\end{equation}
Consequently, with $w(x)=|a(x)|$,
\begin{equation}\label{eq:pointwisescalarcomparison}
 K^{-1}w(x)|\xi|^2\leq a(x)\xi\cdot\xi
 \leq w(x)|\xi|^2,
 \qquad w,w^{-1}\in\mathrm L^1_{\loc}(\R^n).
\end{equation}
\end{prop}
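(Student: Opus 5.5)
The plan is to shrink cubes to a Lebesgue point and compare the coarse constants with the pointwise values there. Only the $k=0$ terms of \eqref{eq:Lambdadef} and \eqref{eq:lambdadef} are needed, together with a lower bound on $a_T$ and an upper bound on $a_{*,T}^{-1}$ on each simplex.

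First, the sums in \eqref{eq:Lambdadef} and \eqref{eq:lambdadef} have nonnegative terms. Keeping only the terms $k=0$ gives
\[
 \Lambda_{s,1,p}(Q)\geq(1-3^{-s})^2\Bigl(\frac1{n!}\sum_{T\in\mathcal T_0(Q)}|a_T|^p\Bigr)^{1/p}
 \quad\text{and}\quad
 \lambda_{t,1,q}(Q)^{-1}\geq(1-3^{-t})^2\Bigl(\frac1{n!}\sum_{T}|a_{*,T}^{-1}|^q\Bigr)^{1/q}.
\]
These truncations lose constants $(1-3^{-s})^2(1-3^{-t})^2$, which would spoil the sharp constant $K$. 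To avoid this, I will keep all generations. For each $k$, the averages over $\mathcal T_k(Q)$ will be bounded below by quantities that are asymptotically constant as $Q$ shrinks to a Lebesgue point. The weights $(1-3^{-s})\sum_k 3^{-ks}=1$ then recover the full factor.

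The key pointwise-type inequalities should be $a_T\geq a_{*,T}$ and, more usefully, two comparisons with the coefficient. For the upper matrix, completing the square gives $a_T\geq(\avgint_T a^{-1})^{-1}$. Indeed, for a competitor $h$ with $h-e\cdot x\in\Haz(T)$, the mean of $\nabla h$ is $e$. For any $\xi$, Cauchy--Schwarz then gives
\[
 (\xi\cdot e)^2=\Bigl(\avgint_T\xi\cdot\nabla h\Bigr)^2\leq\avgint_T a^{-1}\xi\cdot\xi\,\avgint_T a\nabla h\cdot\nabla h .
\]
For the lower matrix, I restrict the supremum in \eqref{eq:lowermatrix} to affine $h=\xi\cdot x$ and optimize over $\xi$. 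This gives $a_{*,T}^{-1}\geq(\avgint_T a)^{-1}$ in the sense of quadratic forms. Hence
\[
 |a_T|\geq|(\avgint_T a^{-1})^{-1}|,\qquad |a_{*,T}^{-1}|\geq|(\avgint_T a)^{-1}|.
\]

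Next, I take $x_0$ to be a common Lebesgue point of $a$ and $a^{-1}$ (entrywise, using \eqref{eq:basic}) and put $Q=Q_r(x_0)$. All simplices in $\mathcal T_k(Q)$ have volume comparable to $|Q|3^{-kn}$, but they do not all shrink toward $x_0$. Lebesgue point control of the averages of $a$ and $a^{-1}$ over small simplices near $x_0$ is therefore only uniform for bounded $k$. I handle this with the discount: fix $N$ and drop the terms $k>N$, which costs a factor $1-3^{-(N+1)s}$. For $k\leq N$ and each $T\in\mathcal T_k(Q_r(x_0))$, the inclusion $T\subset Q$ with $|T|\geq c_N|Q|$ gives
\[
 \avgint_T|a-a(x_0)|\leq c_N^{-1}\avgint_Q|a-a(x_0)|\to0,
\]
and similarly for $a^{-1}$. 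Hence $(\avgint_T a^{-1})^{-1}\to a(x_0)$ and $(\avgint_T a)^{-1}\to a(x_0)^{-1}$ uniformly over these finitely many generations as $r\to0$. This yields
\[
 \liminf_{r\to0}\Theta_a(Q_r(x_0))\geq(1-3^{-(N+1)s})^2(1-3^{-(N+1)t})^2|a(x_0)||a(x_0)^{-1}|.
\]
Letting $N\to\infty$ and using \eqref{eq:UC} gives \eqref{eq:pointwiseanisotropy}.

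Finally, with $w=|a|$, the upper bound $a\xi\cdot\xi\leq w|\xi|^2$ is immediate. For the lower bound, $a\xi\cdot\xi\geq|a^{-1}|^{-1}|\xi|^2\geq K^{-1}w|\xi|^2$. For integrability, $w\leq\tr a$ gives $w\in\mathrm L^1_{\loc}$, and $w^{-1}\leq K|a^{-1}|\leq K\tr a^{-1}$ gives $w^{-1}\in\mathrm L^1_{\loc}$.

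The step I expect to need the most care is the uniform convergence of the simplex averages at a Lebesgue point. The truncation of generations is what makes this work. I also need to check that the lower bound on $a_T$ from the mean-gradient identity is legitimate in the completion $\Haz(T)$. For that I will use approximation by $C_0^\infty$ together with $L^1$ convergence of gradients, which follows from $\tr a^{-1}\in\mathrm L^1$.
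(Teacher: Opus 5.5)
Your proposal is correct and is essentially the paper's proof. Both use the mean-gradient Cauchy--Schwarz bound $a_T\geq(\avgint_T a^{-1}\d x)^{-1}$ (justified by approximation in $\Haz(T)$), the affine-competitor bound $a_{*,T}^{-1}\geq(\avgint_T a\d x)^{-1}$, convergence of simplex averages at a common Lebesgue point over the finitely many generations $k\leq N$, and removal of the factor $(1-3^{-(N+1)s})(1-3^{-(N+1)t})$ by letting $N\to\infty$. The only difference is that you use just these one-sided bounds, which suffice because only lower bounds on $\Lambda_{s,1,p}$ and $\lambda_{t,1,q}^{-1}$ are needed, whereas the paper also proves the reverse inequalities and states convergence of the variational matrices themselves.
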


\begin{proof}
For every simplex $T$, the variational definitions give
\begin{align}\label{eq:twosideduppermatrix}
 \left(\avgint_Ta^{-1}\d x\right)^{-1}
 &\leq a_T\leq\avgint_Ta\d x,\qquad
 \left(\avgint_Ta\d x\right)^{-1}
 \leq a_{*,T}^{-1}\leq\avgint_Ta^{-1}\d x,
\end{align}
where the inequalities are understood in the sense of quadratic
forms. Write
\[
 M_T=\avgint_Ta^{-1}\d x,
 \qquad N_T=\avgint_Ta\d x.
\]
Both matrices are finite and positive definite. To prove the first lower bound in \eqref{eq:twosideduppermatrix}, fix $e\in\R^n$ and an admissible
competitor $h=e\cdot x+v$, with $v\in\Haz(T)$.
Choose $v_j\in C_0^\infty(T)$ converging to $v$ in $\Ha(T)$.
Weighted Cauchy-Schwarz gives
\[
 \int_T|\nabla v_j-\nabla v|\d x
 \leq
 \left(\int_T|a^{-1}|\d x\right)^{1/2}
 \cE_T(v_j-v,v_j-v)^{1/2}
 \longrightarrow0.
\]
Since $\int_T\nabla v_j\d x=0$ by the divergence theorem, it follows that
$\avgint_T\nabla h\d x=e$. Consequently, for every
$\zeta\in\R^n$,
\begin{align*}
 (\zeta\cdot e)^2
 &=\left(\avgint_T
       (a^{-1/2}\zeta)\cdot(a^{1/2}\nabla h)\d x\right)^2\leq
 \left(\avgint_Ta\nabla h\cdot\nabla h\d x\right)
 (M_T\zeta\cdot\zeta).
\end{align*}
For $e\neq0$, choosing $\zeta=M_T^{-1}e$ and dividing by
$M_T^{-1}e\cdot e$ yields
\[
 \avgint_Ta\nabla h\cdot\nabla h\d x
 \geq M_T^{-1}e\cdot e.
\]
The case $e=0$ is immediate. Taking the infimum over admissible
$h$ proves $M_T^{-1}\leq a_T$. Taking the affine competitor
$h(x)=e\cdot x$ gives the upper bound $a_T\leq N_T$. For the second lower bound in \eqref{eq:twosideduppermatrix}, restrict the supremum in
\eqref{eq:lowermatrix} to affine functions $h(x)=\xi\cdot x$.
Then
\[
 a_{*,T}^{-1}e\cdot e
 \geq\sup_{\xi\in\R^n}
       \bigl(2e\cdot\xi-N_T\xi\cdot\xi\bigr)
 =N_T^{-1}e\cdot e,
\]
where the maximum is attained at $\xi=N_T^{-1}e$.
Finally, completing the square gives, almost everywhere,
\[
 2e\cdot\nabla h-a\nabla h\cdot\nabla h
 =
 a^{-1}e\cdot e
 -a(\nabla h-a^{-1}e)\cdot(\nabla h-a^{-1}e)
 \leq a^{-1}e\cdot e.
\]
Averaging and taking the supremum over $h\in\Ha(T)$ proves
$a_{*,T}^{-1}\leq M_T$, completing the proof of
\eqref{eq:twosideduppermatrix}.

Let $x$ be a common Lebesgue point of $a$ and $a^{-1}$, and set
$Q_\rho=x+\rho\Qunit$. For each fixed generation $k$, every simplex
$T\in\mathcal T_k(Q_\rho)$ has volume a fixed positive fraction of
$|Q_\rho|$. Thus its averages of $a$ and $a^{-1}$ tend to $a(x)$
and $a(x)^{-1}$, respectively, as $\rho\downarrow0$.
The inequalities in \eqref{eq:twosideduppermatrix} imply convergence of the corresponding
variational matrices to these same limits.

Fix $N\geq0$ and retain only the terms $0\leq k\leq N$ in
\eqref{eq:Lambdadef} and \eqref{eq:lambdadef}. Their product is bounded
above by $(\Theta_a(Q_\rho))^{1/2}\leq\sqrt K$. Passing to the limit gives
\[
 \sqrt K\geq
 (1-3^{-(N+1)s})(1-3^{-(N+1)t})
 |a(x)|^{1/2}|a(x)^{-1}|^{1/2}.
\]
Letting $N\to\infty$ proves \eqref{eq:pointwiseanisotropy}.
The eigenvalue bounds yield \eqref{eq:pointwisescalarcomparison}.
Finally, $w\leq\tr a$ and $w^{-1}\leq|a^{-1}|\leq\tr a^{-1}$
give local integrability.
\end{proof}

\begin{rem}
The scalar weight $w(x)=|a(x)|$ in
\eqref{eq:pointwisescalarcomparison} need not belong to $A_2$,
as the Cantor construction in Theorem~\ref{a2:thm:example} shows.
Thus \eqref{eq:UC} gives pointwise comparability to a scalar weight
without ensuring classical $A_2$ weighted ellipticity. By contrast,
the next proposition shows that uniform bounds on coefficient
moment products force the scalar weight to belong to $A_2$.
\end{rem}
\subsection{Uniform moment products and classical weighted ellipticity}\label{sec:momentproducts}

The uniformity of the moment bound over all cubes forces classical
weighted ellipticity. The next proposition proves this reduction for
arbitrary $p,q>1$, without any relation between these exponents and the
dimension.

\begin{prop}\label{prop:A2}
Suppose that for fixed $p,q>1$,
\begin{equation}\label{eq:uniformmoments}
 \sup_Q
 \left(\avgint_Q|a|^p\d x\right)^{1/p}
 \left(\avgint_Q|a^{-1}|^q\d x\right)^{1/q}\leq K_0<\infty,
\end{equation}
where both moments are finite on every cube. Then $w=|a|$ is an $A_2$
weight with $[w]_{A_2}\leq K_0$, and
\begin{equation}\label{eq:weightedellipticity}
 K_0^{-1}w(x)|\xi|^2\leq a(x)\xi\cdot\xi
 \leq w(x)|\xi|^2
\end{equation}
for almost every $x$ and every $\xi\in\R^n$.
\end{prop}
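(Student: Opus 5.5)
The plan has two parts: an $A_2$ bound for $w=|a|$ obtained from H\"older's inequality, and a pointwise anisotropy bound obtained from Lebesgue differentiation.

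\emph{The $A_2$ bound.} Fix an axis-parallel cube $Q$. Since $p,q>1$, H\"older's (or Jensen's) inequality gives
\[
 \avgint_Q w\d x\leq\Bigl(\avgint_Q|a|^p\d x\Bigr)^{1/p}.
\]
Next, for almost every $x$ the operator norm satisfies $|a(x)|^{-1}\leq|a(x)^{-1}|$, because $1=|\Id|\leq|a||a^{-1}|$. Hence $w^{-1}\leq|a^{-1}|$, and by the same inequality
\[
 \avgint_Q w^{-1}\d x\leq\Bigl(\avgint_Q|a^{-1}|^q\d x\Bigr)^{1/q}.
\]
Multiplying the two bounds and using \eqref{eq:uniformmoments} gives $\bigl(\avgint_Q w\bigr)\bigl(\avgint_Q w^{-1}\bigr)\leq K_0$. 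Definition \eqref{eq:A2definition} takes the supremum over cubes, so $[w]_{A_2}\leq K_0$. Local integrability of $w$ and $w^{-1}$ follows because both moments are finite on every cube.

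\emph{The pointwise comparison \eqref{eq:weightedellipticity}.} The upper bound $a\xi\cdot\xi\leq|a||\xi|^2$ is immediate. For the lower bound it suffices to show $|a(x)||a(x)^{-1}|\leq K_0$ almost everywhere, since then $a\xi\cdot\xi\geq|a^{-1}|^{-1}|\xi|^2\geq K_0^{-1}w|\xi|^2$. Take $x$ to be a Lebesgue point of both $|a|^p$ and $|a^{-1}|^q$; these are locally integrable by hypothesis, so almost every $x$ qualifies. Set $Q_\rho=x+\rho\Qunit$. As $\rho\downarrow0$ we have $\avgint_{Q_\rho}|a|^p\to|a(x)|^p$ and $\avgint_{Q_\rho}|a^{-1}|^q\to|a(x)^{-1}|^q$. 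Passing to the limit in \eqref{eq:uniformmoments} on $Q_\rho$ then gives $|a(x)||a(x)^{-1}|\leq K_0$.

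The main point needing care is this Lebesgue point step. The argument differentiates the scalar functions $|a|^p$ and $|a^{-1}|^q$ directly, not the matrices, so continuity of the operator norm is never needed. It also uses only cubes centred at $x$, which \eqref{eq:uniformmoments} allows because the supremum is over all cubes. Alternatively, one could invoke Lemma~\ref{lem:moments} and Proposition~\ref{prop:pointwiseanisotropy}, but that route carries the admissibility restriction on the exponents, which the present statement avoids. The direct differentiation argument works for arbitrary $p,q>1$.
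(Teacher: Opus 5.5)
Your proposal is correct and follows essentially the same route as the paper: Lebesgue differentiation of the two moments on shrinking cubes gives $|a(x)||a(x)^{-1}|\leq K_0$ almost everywhere, hence the lower bound via the smallest eigenvalue $|a^{-1}|^{-1}$. The $A_2$ bound then follows from $w^{-1}\leq|a^{-1}|$ and H\"older's inequality.
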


\begin{proof}
Lebesgue differentiation in \eqref{eq:uniformmoments} gives
$|a(x)||a(x)^{-1}|\leq K_0$ almost everywhere. The smallest eigenvalue
is $|a^{-1}|^{-1}$, so the lower bound in
\eqref{eq:weightedellipticity} follows. The upper bound is the definition
of $w$. Also $w^{-1}\leq|a^{-1}|$, so H\"older's inequality yields
the required $A_2$ bound
\[
 \left(\avgint_Qw\d x\right)\left(\avgint_Qw^{-1}\d x\right)
 \leq
 \left(\avgint_Q|a|^p\d x\right)^{1/p}
 \left(\avgint_Q|a^{-1}|^q\d x\right)^{1/q}\leq K_0.\qedhere
\]
\end{proof}

We next record the moment criterion as a direct verification of the
coarse hypothesis. Its boundary regularity conclusions recover a
classical weighted case.

\begin{cor}\label{cor:momentcriterion}
Suppose that \eqref{eq:uniformmoments} holds for some $p,q>1$ satisfying
\begin{equation}\label{eq:momentcriterion}
 1/p+1/q<2/{(n-1)}.
\end{equation}
Then there are $s,t>0$ satisfying \eqref{eq:admissible} for which
\eqref{eq:basic} and \eqref{eq:UC} hold, with $K\leq K_0$.
In particular, the conclusions of Theorems~\ref{thm:dirichletmain} and
\ref{thm:BHImain} hold, with quantitative constants depending only on
$n,p,q,K_0$ and the NTA data.
\end{cor}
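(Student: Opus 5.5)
The corollary reduces to choosing $s,t$ and then invoking Lemma~\ref{lem:moments} cube by cube. Put
\[
 \theta_0=1-\frac{n-1}{2}\Bigl(\frac1p+\frac1q\Bigr),
\]
which is positive exactly by \eqref{eq:momentcriterion}. Take $s=t=\theta_0/4$. Then
\[
 \theta=1-s-t-\frac{n-1}{2}\Bigl(\frac1p+\frac1q\Bigr)=\theta_0/2>0.
\]
The hypothesis \eqref{eq:uniformmoments} gives $p,q>1$, and $p,q<\infty$ is implicit in the statement, so \eqref{eq:admissible} holds. These choices depend only on $n,p,q$. Any admissible choice would do, and fixing one explicitly keeps the dependence of later constants on $n,p,q$ alone.

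Next comes \eqref{eq:basic}. Finiteness of both moments on every cube, with $p,q>1$, gives $|a|,|a^{-1}|\in\mathrm L^1_{\loc}$ by H\"older's inequality. Since $\tr a\le n|a|$ and $\tr a^{-1}\le n|a^{-1}|$, condition \eqref{eq:basic} follows.

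For \eqref{eq:UC}, fix an arbitrary axis-parallel cube $Q$. The proof of Lemma~\ref{lem:moments} gives the two separate bounds
\[
 \Lambda_{s,1,p}(Q)\le\Bigl(\avgint_Q|a|^p\Bigr)^{1/p}<\infty,
 \qquad
 \lambda_{t,1,q}(Q)^{-1}\le\Bigl(\avgint_Q|a^{-1}|^q\Bigr)^{1/q}<\infty .
\]
The second bound is what yields $\lambda_{t,1,q}(Q)>0$; the product bound alone would not give this. Multiplying the two bounds and using \eqref{eq:uniformmoments} gives $\Theta_a(Q)\le K_0$. Since $Q$ was arbitrary, \eqref{eq:UC} holds with $K=K_0$, and the data $\mathbf d$ depend only on $n,p,q,K_0$.

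The final assertion follows by applying Theorems~\ref{thm:dirichletmain} and \ref{thm:BHImain} with these data. Their constants depend on $\mathbf d=(n,p,q,s,t,K)$ and the NTA data. Here $s,t$ are functions of $n,p,q$, and $K=K_0$, so the constants depend only on $n,p,q,K_0$ and the NTA data. I expect no real obstacle. The only point needing care is that Lemma~\ref{lem:moments} has to supply the individual finiteness and positivity of $\Lambda$ and $\lambda$, not just a bound on their ratio. Its proof does give this, through the Jensen step \eqref{eq:matrixaveragebounds} and summation of the geometric series.
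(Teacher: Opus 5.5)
Your proposal is correct and follows the paper's proof. The paper likewise fixes admissible $s,t$ depending only on $n,p,q$ (your explicit choice $s=t=\theta_0/4$ is one such choice), obtains \eqref{eq:basic} from the finite moments, and applies Lemma~\ref{lem:moments} on every cube to get \eqref{eq:UC} with $K=K_0$. You also note that the separate bounds on $\Lambda_{s,1,p}(Q)$ and $\lambda_{t,1,q}(Q)^{-1}$ from the proof of that lemma are what give finiteness and positivity, which the paper leaves implicit.
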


\begin{proof}
Choose $s,t>0$ depending only on $n,p,q$ so that
\eqref{eq:admissible} holds. Lemma~\ref{lem:moments} gives
\eqref{eq:UC} with $K\leq K_0$. Local integrability in
\eqref{eq:basic} follows from the finite moments and positivity.
\end{proof}

\begin{rem} The additional restriction \eqref{eq:momentcriterion} belongs to this
verification of the coarse hypothesis. The reduction in
Proposition~\ref{prop:A2} applies for every $p,q>1$. Consequently, under
\eqref{eq:uniformmoments}, continuous Dirichlet solvability, boundary
Harnack, and H\"older continuity of quotients in the stated NTA domains
are already consequences of the classical weighted theory
\cite{FKS,FJK,FJKB}, without \eqref{eq:momentcriterion}.
The bounds $[w]_{A_2}\leq K_0$ and
\eqref{eq:weightedellipticity} control the structural constants in terms
of $n,K_0$ and the NTA data. The corollary therefore supplies a convenient
sufficient condition for applying the coarse estimates, and no new
boundary regularity result is claimed in this moment regime.
\end{rem}

\begin{rem}
To show that Theorems~\ref{thm:dirichletmain} and
\ref{thm:BHImain} apply beyond classical $A_2$ weighted ellipticity,
we below construct a scalar coefficient satisfying the full variational
condition \eqref{eq:UC} but not elliptic with respect to any $A_2$
weight.
\end{rem}

\subsection{An explicit coefficient outside the weighted class}\label{a2:sec:cantorexample}

The following theorem constructs an explicit coefficient outside the
weighted $A_2$ class for every admissible tuple. Its singular set has
codimension $s$, which allows the same upper coarse discount exponent
to control all the fine scales.

\begin{thm}\label{a2:thm:example}
Let $n\geq3$ and let $(p,q,s,t)$ satisfy \eqref{eq:admissible}. Set
\begin{equation}\label{a2:eq:Cantorparameters}
 \beta=s,\qquad \delta={(n-\beta)}/{n},\qquad
 b=2^{-1/\delta}=2^{-n/(n-\beta)}.
\end{equation}
Let $C_b\subset[0,1]$ be the Cantor set generated by the two maps
$x\mapsto bx$ and $x\mapsto1-b+bx$. Put $E=C_b^n$ and
$d_E(x)=\dist(x,E)$. Define
\begin{equation}\label{a2:eq:Phi}
 \begin{aligned}
 \Phi_\beta(r)&=
 \begin{cases}
 \displaystyle\frac{16r^{-\beta}}{(4+\beta\log(1/r))^2},&0<r<1,\\[4pt]
 1,&r\geq1,
 \end{cases}\\
 w(x)&=\Phi_\beta(d_E(x))\quad(x\notin E),\qquad w|_E=1.
 \end{aligned}
\end{equation}
Then $w\geq1$ almost everywhere, $w,w^{-1}\in\mathrm L^1_{\loc}(\R^n)$,
and $a=w\Id$ satisfies \eqref{eq:UC} for every admissible tuple
with this fixed $s$, with $K$ depending only on $n$ and $s$. However, the weight $w$ does not belong to
$A_2$, and $a$ is not elliptic with respect to any $A_2$ weight with
fixed comparison constants.
\end{thm}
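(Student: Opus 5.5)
The proof splits into four parts: the elementary properties of $w$, the failure of $A_2$, the upper variational bound on simplices, and the lower bound followed by summation of the coarse series.

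\emph{Elementary properties.} We have $\Phi_\beta\geq1$ on $(0,1)$, since $16r^{-\beta}\geq(4+\beta\log(1/r))^2$; with $x=\beta\log(1/r)$ this is $4e^{x/2}\geq 4+x$, which holds. Hence $w\geq1$ and $w^{-1}\leq1$. The set $E$ has Lebesgue measure zero because its dimension is $n\log2/\log(1/b)=n-\beta<n$.

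For local integrability I will use the self-similar structure. The generation-$j$ pieces of $E$ are $2^{nj}$ cubes of side $b^j$, and $2^{nj}=b^{-j(n-\beta)}$. This gives the volume estimate
\[
 |\{d_E<r\}|\approx r^{\beta},\qquad r<1.
\]
Then
\[
 \int_{\{d_E<1\}} w\approx\int_0^1 \Phi_\beta(r)\,r^{\beta-1}\d r
 =\int_0^1\frac{16}{r(4+\beta\log(1/r))^2}\d r<\infty .
\]

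\emph{Failure of $A_2$.} Let $B$ be a ball or cube of radius $R$ centred on $E$. The same layer computation, restricted to $B$, gives
\[
 \avgint_B w\approx R^{-\beta}\int_0^R\Phi_\beta(r)\,r^{\beta-1}\d r\approx\frac{\Phi_\beta(R)\,(4+\beta\log(1/R))}{\beta}.
\]
On a set of proportional measure in $B$ we have $d_E\approx R$, so $\avgint_B w^{-1}\gtrsim\Phi_\beta(R)^{-1}$. The product of the two averages therefore grows like $\log(1/R)$, and $w\notin A_2$. If $a$ were comparable to some $\rho\in A_2$, then $\rho\approx w$ pointwise and $w$ would lie in $A_2$, a contradiction.

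\emph{Upper bound on simplices.} For a simplex $T$ of scale $R$ I aim for
\[
 a_Te\cdot e\leq C|e|^2F_T,\qquad F_T=\Phi_\beta(\dist(T,E)+R).
\]
If $\dist(T,E)\gtrsim R$, the affine competitor suffices, since $w\approx F_T$ on $T$.

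Otherwise I build a competitor $h_e$ from $e\cdot x$ as follows. Decompose the part of $E$ near $T$ into Cantor pieces $P$ with $\operatorname{diam}P\approx\dist(P,\partial T)$, a Whitney-type decomposition relative to $\partial T$. On a neighbourhood of each such $P$ of size comparable to its diameter, make $h_e$ constant, using a cutoff interpolation. The singular set is then essentially neutralised, and the remaining gradient, bounded by $C|e|$, lies on the region where $w$ is controlled or where $\dist(x,\partial T)\leq Cd_E(x)$.

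The energy near $\partial T$ is controlled by hyperplane concentration estimates. The pieces of generation $j$ within distance $b^j$ of a hyperplane are at most $C2^{(n-1)j}$ in number, which, relative to the $2^{nj}$ pieces in total, supplies the extra exponent $\delta$. Summing over generations, the weight $b^{-j\beta}$ is beaten because $2^{-j}=b^{j\delta}$, so the geometric sum converges. This yields
\[
 \avgint_T w|\nabla h_e|^2\d x\leq C|e|^2F_T .
\]
This step is the main obstacle: one must make the interpolation and the layer counting uniform even when faces of $T$ cut through $E$. I would first attempt it with the triadic-versus-$b$-adic mismatch absorbed into constants that depend only on $n$ and $s$.

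\emph{Lower bound and summation.} By \eqref{eq:twosideduppermatrix} and $w^{-1}\leq C/F$ averaged on $T$, we get $|a_{*,T}^{-1}|\leq\avgint_T w^{-1}\lesssim F_T^{-1}$. For a cube $Q$ set $F_Q=\Phi_\beta(\dist(Q,E)+\ell(Q))$. For $T\in\mathcal T_k(Q)$, near-monotonicity of $\Phi_\beta$ gives
\[
 F_T\leq CF_Q3^{k\beta},\qquad F_T^{-1}\leq CF_Q^{-1}.
\]
Inserting these into \eqref{eq:Lambdadef} and \eqref{eq:lambdadef}, the first series becomes $\sum_k 3^{-ks}\,3^{ks/2}$, which converges since $\beta=s$, so $\Lambda_{s,1,p}(Q)\leq CF_Q$. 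The second series is trivially bounded, so $\lambda_{t,1,q}(Q)^{-1}\leq CF_Q^{-1}$. The factor $F_Q$ cancels in the ratio, which proves \eqref{eq:UC} with $K=K(n,s)$, independently of $p,q,t$.
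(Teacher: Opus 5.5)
Your proposal is correct and follows essentially the same route as the paper. The common steps are: a stopping-time (Whitney-type) selection of maximal construction cubes $P$ whose enlargements lie compactly inside $T$ and whose sides are at most $R_*/4$; a competitor constant on small dilates $(1+\tau)P$, with $\tau$ small relative to the gap ratio $(1-2b)/b$ so the dilates are disjoint; the support property $\delta_T(x)\lesssim d_E(x)$ wherever $\nabla h_e\neq0$ near $E$; the hyperplane concentration bound $\mu(\{\dist(\cdot,H)<r\})\lesssim r^{\delta}$, which is equivalent to your per-generation count; dyadic shells in $d_E$ summed with ratio $2^{-j\delta}$; and the level bounds $|a_T|\lesssim F_Q3^{k\beta}$, $|a_{*,T}^{-1}|\lesssim F_Q^{-1}$ summed using $\beta=s$. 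The details you left open, namely disjointness of the enlarged pieces and admissibility of the infinite sum in $\Haz(T)$, are handled in the paper by these same means, the latter through finite partial sums that converge in energy because $\int_T w<\infty$.
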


\begin{rem}
Although $w(x)\to\infty$ as $d_E(x)\downarrow0$ with $x\notin E$,
the assignment $w|_E=1$ specifies a finite, positive representative.
Since $E$ has Lebesgue measure zero, any other finite, positive
assignment there gives the same energy form and coarse quantities.
\end{rem}

The proof of Theorem~\ref{a2:thm:example} is given in
Subsections~\ref{a2:sec:geometry}-\ref{a2:sec:proof}.
Proposition~\ref{a2:prop:converse} then establishes the opposite
non-inclusion. Together they prove Theorem~\ref{thm:coefficientclasses}.

\subsection{Geometry of the product Cantor set}\label{a2:sec:geometry}

Let $\nu$ be the probability measure on $C_b$ which gives mass $2^{-k}$
to every construction interval of generation $k$, and put
$\mu=\nu^{\otimes n}$.  By self-similarity and separation, $\nu$ is the normalized
$\delta$-dimensional Hausdorff measure on $C_b$, and $\mu$
is the normalized $d$-dimensional Hausdorff measure on
$E=C_b^n$, where $d=n-\beta=n\delta$. Admissibility implies $0<\beta=s<1$, so $0<\delta<1$ and $b<1/2$.
Unless otherwise indicated, all constants in the construction depend
only on $n,\beta$. The construction cubes have side $b^k$ and mass
$2^{-nk}=(b^k)^d$. Different children of a construction cube of side
$L$ are separated by at least $(1-2b)L$ in at least one coordinate.

The following lemma gives both the dimension estimate and a quantitative
bound near every affine hyperplane.

\begin{lem}\label{a2:lem:geometry}
There are constants $c,C>0$ depending only on $n,\beta$ such that
\begin{equation}\label{a2:eq:ahlfors}
 cR^d\leq\mu(B(z,R))\leq CR^d,
 \qquad z\in E,\quad 0<R\leq1.
\end{equation}
For every $(n-1)$-dimensional affine hyperplane $H\subset\R^n$,
every $z\in\R^n$, and $0<r\leq R\leq1$,
\begin{equation}\label{a2:eq:hyperplane}
 \mu\bigl(B(z,R)\cap\{\dist(\cdot,H)<r\}\bigr)
 \leq C R^{(n-1)\delta}r^\delta.
\end{equation}
The upper estimate in \eqref{a2:eq:ahlfors} holds for arbitrary centers.
\end{lem}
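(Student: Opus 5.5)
The plan is to exploit the product structure $\mu=\nu^{\otimes n}$ and the self-similarity of $\nu$, reducing everything to counting construction cubes. The basic one-dimensional fact is that $\nu(I)\leq C|I|^\delta$ for every interval $I\subset\R$ with $|I|\leq1$. To prove it, let $k$ be the generation with $b^{k+1}<|I|\leq b^k$. The generation-$k$ intervals have length $b^k$ and gaps at least $(1-2b)b^{k-1}\geq b^k$, because $b<1/2$ gives $1-2b>0$ and hence, after enlarging $C$ if needed, the gap bound suffices. So $I$ meets at most two generation-$k$ intervals, and
\[
 \nu(I)\leq 2\cdot2^{-k}=2(b^k)^\delta\leq 2b^{-\delta}|I|^\delta .
\]
The matching lower bound holds at points of $C_b$: if $z\in C_b$, the generation-$k$ interval containing $z$ lies in $[z-b^k,z+b^k]$.

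Ahlfors regularity then follows directly. For the upper bound in \eqref{a2:eq:ahlfors} with an arbitrary center $z$, note that $B(z,R)\subset\prod_i(z_i-R,z_i+R)$. The product structure gives
\[
 \mu(B(z,R))\leq\prod_{i=1}^n\nu\bigl((z_i-R,z_i+R)\bigr)\leq CR^{n\delta}=CR^d .
\]
For the lower bound with $z\in E$, choose $k$ with $\sqrt n\,b^k\leq R<\sqrt n\,b^{k-1}$, and treat the finitely many scales $R$ near $1$ separately. The generation-$k$ construction cube containing $z$ has diameter $\sqrt n\,b^k\leq R$, so it lies in $B(z,R)$. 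Its mass is $2^{-nk}=(b^k)^d\geq cR^d$.

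The hyperplane estimate \eqref{a2:eq:hyperplane} is the main obstacle, since $H$ is arbitrary and not axis-parallel. Write $H=\{x:x\cdot\omega=c_0\}$ with $|\omega|=1$, and choose a coordinate $j$ with $|\omega_j|\geq n^{-1/2}$. Put $x=(x_j,x')$. For fixed $x'$, the slab section $\{x_j:|x\cdot\omega-c_0|<r\}$ is an interval of length $2r/|\omega_j|\leq2\sqrt n\,r$. Because $B(z,R)$ confines $x'$ to the box $\prod_{i\neq j}(z_i-R,z_i+R)$, Fubini for the product measure gives
\[
 \mu\bigl(B(z,R)\cap\{\dist(\cdot,H)<r\}\bigr)
 \leq\int_{\prod_{i\neq j}(z_i-R,z_i+R)}\nu\bigl(\{x_j:|x\cdot\omega-c_0|<r\}\bigr)\d\nu^{\otimes(n-1)}(x').
\]
The one-dimensional bound controls the integrand by $C(2\sqrt n\,r)^\delta$ when $2\sqrt n\,r\leq1$; otherwise $r\gtrsim1\geq R$ and the claim reduces to the Ahlfors upper bound. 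The remaining factor $\nu^{\otimes(n-1)}$ of the box is at most $CR^{(n-1)\delta}$. Multiplying the two factors gives $CR^{(n-1)\delta}r^\delta$.

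The main care needed is the case distinction when $r$ is comparable to $1$, together with the convention that the two-interval count uses the gap at the correct generation. Both are routine once $b<1/2$ is fixed from $0<\beta<1$.
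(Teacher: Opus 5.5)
Your proposal is correct and follows essentially the paper's route: the one-dimensional bound $\nu(I)\le C|I|^\delta$ comes from counting construction intervals, the two Ahlfors bounds come from the product structure and construction cubes, and the hyperplane estimate comes from Fubini in a coordinate $x_j$ with $|\omega_j|\ge n^{-1/2}$. One justification is wrong, although the step it supports still holds. The inequality $(1-2b)b^{k-1}\ge b^k$ requires $b\le 1/3$, whereas $b>2^{-3/2}>1/3$ for $n\ge3$ and $0<\beta<1$. You do not need it, because an interval of length at most $b^k$ that met three disjoint generation-$k$ intervals would have to contain the middle one together with two gaps of positive length, so it meets at most two.
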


\begin{proof}
At the generation for which $b^k\leq R<b^{k-1}$, an interval of length
$2R$ meets at most a bounded number of construction intervals, since
their separation is comparable to $b^k$. Its $\nu$ measure is therefore
at most $CR^\delta$. If its center is in $C_b$, a construction interval
containing that center and having length comparable to a sufficiently
small multiple of $R$ lies inside it. This gives the corresponding
lower estimate. Taking products, and comparing balls with cubes, proves
\eqref{a2:eq:ahlfors}. To prove \eqref{a2:eq:hyperplane}, we use the estimate $\nu(I)\leq C|I|^\delta$ which holds for every interval $I\subset\R$. Write
$H=\{x:v\cdot x=c_0\}$ with $|v|=1$.
Since $\sum_{j=1}^n|v_j|^2=1$, there is an index $i$ such that
$|v_i|\geq n^{-1/2}$. Fix the remaining coordinates $x'=(x_j)_{j\neq i}$.
Because $v$ is a unit normal, the condition
$\dist(x,H)<r$ is equivalent to
\[
 \biggl|v_i x_i+\sum_{j\neq i}v_jx_j-c_0\biggr|<r.
\]
Thus the admissible values of $x_i$ form an interval $J(x')$
of length
\[
 |J(x')|={2r}/{|v_i|}\leq2\sqrt n\,r.
\]
The interval estimate therefore gives
$\nu(J(x'))\leq Cr^\delta$, uniformly in $x'$ and in the
position and orientation of $H$. Each other coordinate in $B(z,R)$ lies in an interval of length $2R$,
whose measure is at most $CR^\delta$. Fubini's theorem gives
$C R^{(n-1)\delta}r^\delta$ in \eqref{a2:eq:hyperplane}.
\end{proof}

Set $E_r=\{x:d_E(x)<r\}$. The next lemma converts the measure estimates
into the volume estimates needed for the energy calculation.

\begin{lem}\label{a2:lem:tubes}
For $0<r\leq R\leq1$ and arbitrary $z$,
\begin{equation}\label{a2:eq:tubeupper}
 |E_r\cap B(z,R)|\leq C R^{n-\beta}r^\beta.
\end{equation}
For $z\in E$ and $0<r<cR\leq c$,
\begin{equation}\label{a2:eq:tubelower}
 |E_r\cap B(z,R)|\geq cR^{n-\beta}r^\beta.
\end{equation}
For every fixed $A\geq1$, every $(n-1)$-dimensional affine hyperplane
$H\subset\R^n$, and $0<r\leq R\leq1$,
\begin{equation}\label{a2:eq:stripvolume}
 |E_r\cap B(z,R)\cap\{\dist(\cdot,H)<Ar\}|
 \leq C_A R^{(n-1)\delta}r^{\beta+\delta}.
\end{equation}
The global versions are $|E_r|\leq Cr^\beta$ and
$|E_r\cap\{\dist(\cdot,H)<Ar\}|\leq C_A r^{\beta+\delta}$ for $0<r\leq1$.
\end{lem}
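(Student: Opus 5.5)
The plan is to convert the measure estimates of Lemma~\ref{a2:lem:geometry} into volume estimates for the tube $E_r$ by a covering argument at scale $r$. The key is that $E_r$ is contained in the union of the balls $B(y,r)$, $y\in E$, and each such ball carries $\mu$-mass comparable to $r^d$ by \eqref{a2:eq:ahlfors}.

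For the upper bound \eqref{a2:eq:tubeupper}, take a maximal $r$-separated set $\{y_i\}\subset E\cap B(z,R+r)$. The balls $B(y_i,2r)$ cover $E_r\cap B(z,R)$, since a point within $r$ of $E$ lies within $r$ of some point of $E\cap B(z,R+r)$, which is within $r$ of some $y_i$. The balls $B(y_i,r/2)$ are disjoint, each has $\mu$ mass at least $cr^d$, and they lie in $B(z,R+2r)\subset B(z,3R)$. The upper estimate in \eqref{a2:eq:ahlfors} for arbitrary centers then gives
\[
 \#\{i\}\leq C(R/r)^d .
\]
Here one must handle $3R>1$; this is harmless by covering $B(z,3R)$ with boundedly many balls of radius at most $1$. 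Hence
\[
 |E_r\cap B(z,R)|\leq C(R/r)^d r^n=CR^{n-\beta}r^\beta .
\]
The global bound $|E_r|\leq Cr^\beta$ follows because $E\subset[0,1]^n$, so $E_r$ lies in a fixed bounded set covered by boundedly many unit balls.

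For the lower bound \eqref{a2:eq:tubelower}, with $z\in E$, take a maximal $2r$-separated set $\{y_i\}$ in $E\cap B(z,R/2)$. The balls $B(y_i,r)$ are disjoint, lie in $E_r\cap B(z,R)$ once $r<R/2$, and the balls $B(y_i,4r)$ cover $E\cap B(z,R/2)$. The lower estimate in \eqref{a2:eq:ahlfors} gives $\mu(B(z,R/2))\geq cR^d$, and the upper estimate gives mass at most $Cr^d$ for each covering ball. So the number of points is at least $c(R/r)^d$, and the total volume is at least $c(R/r)^d r^n$.

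For the strip estimate \eqref{a2:eq:stripvolume}, repeat the upper covering argument but keep only the centers relevant to the strip. If $x\in E_r\cap B(z,R)$ with $\dist(x,H)<Ar$, then the nearby center $y_i$ satisfies $\dist(y_i,H)<(A+2)r$. The disjoint balls $B(y_i,r/2)$ therefore lie in $B(z,3R)\cap\{\dist(\cdot,H)<(A+3)r\}$. By \eqref{a2:eq:hyperplane}, applied with strip width $(A+3)r$, this set has $\mu$ mass at most $C_AR^{(n-1)\delta}r^\delta$. Here I would adjust constants when $(A+3)r>R$, using instead the trivial bound $CR^d\leq C_A R^{(n-1)\delta}r^\delta$. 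Dividing by the per-ball mass $cr^d$ bounds the count by $C_AR^{(n-1)\delta}r^{\delta-d}$. Multiplying by $r^n$ gives
\[
 C_A R^{(n-1)\delta}r^{\delta+n-d}=C_A R^{(n-1)\delta}r^{\beta+\delta}.
\]
The global strip version follows in the same way as the global tube bound.

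The only delicate points are bookkeeping: radii exceeding $1$, and the hyperplane estimate requiring $r\leq R$ for the enlarged width. I expect both to be handled by finite coverings and the trivial bounds above. No real obstacle is anticipated.
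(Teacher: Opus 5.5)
Your proposal is correct and follows essentially the same route as the paper. Both arguments take maximal separated families of points of $E$, count the centers using the lower Ahlfors bound on the disjoint small balls, and bound the total $\mu$-mass by the upper Ahlfors bound or, for the strip, by the hyperplane estimate \eqref{a2:eq:hyperplane}, with radii above $1$ and the regime $r\gtrsim R$ handled by trivial bounds and finite coverings. The differences are cosmetic: you take centers in $B(z,R+r)$ rather than $B(z,2R)$, use disjoint $r/2$-balls instead of $r/3$-balls, and treat the case $r\gtrsim R$ of the strip estimate at the level of $\mu$-mass rather than reducing it to \eqref{a2:eq:tubeupper}.
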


\begin{proof}
To prove \eqref{a2:eq:tubeupper}, fix $0<r\leq R\leq1$ and
choose a maximal family of points
$z_1,\ldots,z_N\in E\cap B(z,2R)$ whose pairwise distances
are at least $r$. This family is finite because it is
$r$-separated and contained in a bounded set. By maximality,
\[
 E\cap B(z,2R)\subset\bigcup_{j=1}^N B(z_j,r),
\]
while the smaller balls $B(z_j,r/3)$ are pairwise disjoint
and contained in $B(z,3R)$. Since their centers belong to $E$,
the lower bound in \eqref{a2:eq:ahlfors} gives
\[
 cNr^d
 \leq\sum_{j=1}^N\mu(B(z_j,r/3))
 \leq\mu(B(z,3R))
 \leq CR^d.
\]
For the last inequality, if $3R\leq1$, we use the upper
bound in \eqref{a2:eq:ahlfors}, which holds for arbitrary
centers. If $3R>1$, we instead use
\[
 \mu(B(z,3R))\leq\mu(E)=1\leq3^dR^d.
\]
Thus $N\leq C(R/r)^d$. We next show that the balls $B(z_j,2r)$ cover
$E_r\cap B(z,R)$. If $x$ belongs to this set, choose
$y\in E$ with $|x-y|<r$. Then $|y-z|\leq|y-x|+|x-z|<r+R\leq2R$, so $y\in E\cap B(z,2R)$. The covering above gives an index
$j$ with $|y-z_j|<r$, and hence $|x-z_j|<2r$. Consequently,
\[
 |E_r\cap B(z,R)|
 \leq\sum_{j=1}^N|B(z_j,2r)|
 \leq CNr^n
 \leq C(R/r)^d r^n
 =CR^{n-\beta}r^\beta,
\]
where we used $d=n-\beta$. This proves
\eqref{a2:eq:tubeupper} throughout the stated range of radii.  For \eqref{a2:eq:tubelower}, choose a maximal $r$-separated
family in $E\cap B(z,R/2)$. Its $r$-balls cover this set, so
\eqref{a2:eq:ahlfors} gives at least $c(R/r)^d$ centers.
Their disjoint $r/3$-balls lie in $E_r\cap B(z,R)$ and give
the lower bound. For \eqref{a2:eq:stripvolume}, the case $r\geq R/(A+2)$
follows from \eqref{a2:eq:tubeupper}. Otherwise, choose a
maximal $r$-separated family in
\[
 E\cap B(z,R+r)\cap\{\dist(\cdot,H)<(A+1)r\}.
\]
Every point counted in \eqref{a2:eq:stripvolume} is within
distance $r$ of this set, so the $2r$-balls cover it.
The disjoint $r/3$-balls lie in
$B(z,3R)\cap\{\dist(\cdot,H)<(A+2)r\}$.
By \eqref{a2:eq:ahlfors} and \eqref{a2:eq:hyperplane},
the number of centers is at most $C_A(R/r)^{d-\delta}$.
Multiplication by $Cr^n$ gives \eqref{a2:eq:stripvolume}.
If $3R>1$, apply the hyperplane estimate using a fixed
finite covering of $E$ by unit balls, noting that
$R^{d-\delta}\asymp1$. The global bounds follow by covering
the bounded set $E_1$ with finitely many unit balls.
\end{proof}

For a simplex $T$ similar to one of the reference simplices, write $R$
for its scale, so $|T|=R^n/n!$ and $\diam(T)\leq\sqrt n R$.
Let $R_*=\min\{R,1\}$ and $\delta_T(x)=\dist(x,\partial T)$.
The boundary $\partial T$ lies in the union of $n+1$ hyperplanes. Lemma \ref{a2:lem:tubes} consequently gives, for fixed $A$,
\begin{equation}\label{a2:eq:simplexstrip}
 |\{x\in T:d_E(x)<r,\ \delta_T(x)<Ar\}|
 \leq C_A |T|\left(\frac r{R_*}\right)^{\beta+\delta},
 \qquad 0<r\leq R_*.
\end{equation}
For $C_nR\leq1$, use \eqref{a2:eq:stripvolume} in a ball of
radius $C_nR$ containing $T$. If $R\leq1<C_nR$, then $R$ is
bounded below by a dimensional constant. The global strip bound
therefore gives the same estimate, since
$|T|R^{-\beta-\delta}=R^{(n-1)\delta}/n!$ is bounded below
on this range. For $R\geq1$, use the global strip bound and
$|T|\geq1/n!$.

\subsection{Integrability and failure of the \texorpdfstring{$A_2$}{A2} condition}\label{a2:sec:weight}

Write $L_\beta(r)=4+\beta\log(1/r)$ for $0<r\leq1$.
For $0<r<1$, direct differentiation gives
\begin{equation}\label{a2:eq:phiderivative}
 \begin{aligned}
 -\Phi_\beta'(r)
 &=\frac{16\beta}{r^{\beta+1}L_\beta(r)^2}
       \left(1-\frac2{L_\beta(r)}\right),\qquad \frac{8\beta}{r^{\beta+1}L_\beta(r)^2}
 \leq-\Phi_\beta'(r)
       \leq\frac{16\beta}{r^{\beta+1}L_\beta(r)^2}.
 \end{aligned}
\end{equation}
Thus $\Phi_\beta$ is nonincreasing and at least one. Its logarithmic slope lies
between $-\beta$ and $-\beta/2$ below scale one. In particular,
\begin{equation}\label{a2:eq:phiratio}
 1\leq\frac{\Phi_\beta(r)}{\Phi_\beta(R)}\leq\left(\frac Rr\right)^\beta,
 \qquad 0<r\leq R,
\end{equation}
where the extension $\Phi_\beta=1$ above scale one is understood.
Comparing arguments by a fixed multiplicative factor $A\geq1$ therefore
changes $\Phi_\beta$ by at most $A^\beta$.

The following proposition verifies the qualitative hypotheses and the
failure of all comparable $A_2$ descriptions.

\begin{prop}\label{a2:prop:notA2}
The weight in \eqref{a2:eq:Phi} satisfies $w\geq1$ almost everywhere and
$w,w^{-1}\in\mathrm L^1_{\loc}(\R^n)$. Furthermore, there are constants $c,C>0$
such that, for $z\in E$ and $0<R<c$,
\begin{equation}\label{a2:eq:A2divergence}
 \left(\fint_{B(z,R)}w\d x\right)
 \left(\fint_{B(z,R)}w^{-1}\d x\right)
 \geq cL_\beta(R)=c\bigl(4+\beta\log(1/R)\bigr).
\end{equation}
Consequently $w\notin A_2$, and no $A_2$ weight is comparable to $w$
with fixed positive comparison constants.
\end{prop}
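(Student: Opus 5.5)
The plan is to read everything off the tube estimates of Lemma~\ref{a2:lem:tubes} through a layer-cake decomposition in the variable $d_E$. Since $\Phi_\beta\geq1$ everywhere and $E$ is Lebesgue null (indeed $|E_r|\leq Cr^\beta\to0$), we get $w\geq1$ almost everywhere, so $0<w^{-1}\leq1$ and $w^{-1}\in\mathrm L^1_{\loc}$ is trivial. For $w\in\mathrm L^1_{\loc}$, note $w=1$ outside $E_1$, and $E_1$ is bounded. It therefore suffices to bound $\int_{E_1}w\d x$. Splitting into dyadic shells $E_{2^{-j}}\setminus E_{2^{-j-1}}$ and using \eqref{a2:eq:tubeupper} (global form $|E_r|\leq Cr^\beta$) together with \eqref{a2:eq:phiratio}, each shell contributes at most $C2^{-j\beta}\Phi_\beta(2^{-j-1})\leq C(4+j\beta\log2)^{-2}$. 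The sum over $j$ converges; this is exactly where the squared logarithm is used.

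For \eqref{a2:eq:A2divergence}, fix $z\in E$ and $R<c$. Bound $\fint_{B(z,R)}w^{-1}$ below by a constant. By \eqref{a2:eq:tubeupper}, the set $E_{\epsilon R}\cap B(z,R)$ has measure at most $C\epsilon^\beta|B(z,R)|$, so for a small fixed $\epsilon$ at least half of the ball has $d_E\geq\epsilon R$. Also $d_E\leq R$ on the ball since $z\in E$. There $w\leq\Phi_\beta(\epsilon R)\leq\epsilon^{-\beta}\Phi_\beta(R)$, which gives
\[
 \fint_{B(z,R)}w^{-1}\d x\geq c\,\Phi_\beta(R)^{-1}.
\]
For the average of $w$, use the lower tube bound \eqref{a2:eq:tubelower}. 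For dyadic $r=2^{-j}R$ with $r<cR$, the shell $\{r/2\leq d_E<r\}\cap B(z,R)$ contains a portion of measure at least $cR^{n-\beta}r^\beta$. Take the difference of \eqref{a2:eq:tubelower} at $r$ and the upper bound \eqref{a2:eq:tubeupper} at $\epsilon' r$, which is positive for a fixed small $\epsilon'$; then work with shells of ratio $\epsilon'$ instead of $1/2$. On that shell $w\geq\Phi_\beta(r)=16r^{-\beta}L_\beta(r)^{-2}$, so the shell contributes at least $c\,R^{-\beta}L_\beta(r)^{-2}$ to the average over $B(z,R)$. Summing over $r=\epsilon'^jR$ with $r\geq R^{2}$, say, each of the roughly $\log(1/R)$ terms has $L_\beta(r)\asymp L_\beta(R)$. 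Hence
\[
 \fint_{B(z,R)}w\d x\geq cR^{-\beta}\,\frac{\log(1/R)}{L_\beta(R)^2}\geq cR^{-\beta}L_\beta(R)^{-1}
\]
for $R$ small. Multiplying by the lower bound for $\fint w^{-1}$, and noting $\Phi_\beta(R)^{-1}=R^\beta L_\beta(R)^2/16$, gives $cL_\beta(R)$, which is \eqref{a2:eq:A2divergence}. The main care point is the disjoint-shell lower bound: one must choose the shell ratio so that \eqref{a2:eq:tubelower} dominates \eqref{a2:eq:tubeupper}, with the constants from Lemma~\ref{a2:lem:tubes} fixed beforehand.

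The conclusions follow directly. Since $L_\beta(R)\to\infty$ as $R\downarrow0$ and balls are comparable to cubes, $[w]_{A_2}=\infty$. If $c_1\rho\leq w\leq c_2\rho$ for some $\rho\in A_2$, then the $A_2$ products of $w$ would be bounded by $(c_2/c_1)[\rho]_{A_2}$, which is a contradiction. Comparability of $a=w\Id$ with $\rho$ in the sense of \eqref{eq:A2classdefinition} is exactly such a scalar comparison, with $\xi$ a unit vector.
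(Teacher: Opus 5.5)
Your proof is correct and follows the paper's argument in substance. Integrability and the lower bound for $\fint_{B(z,R)}w\d x$ come from the tube estimates of Lemma~\ref{a2:lem:tubes}, organized by the value of $d_E$. The bound $\fint_{B(z,R)}w^{-1}\d x\geq c\,\Phi_\beta(R)^{-1}$ comes, exactly as in the paper, from the half of the ball where $d_E\geq\epsilon R$ together with \eqref{a2:eq:phiratio}. The only difference is technical. The paper writes $\int_{B(z,R)}w\d x\geq\int_0^{cR}|E_r\cap B(z,R)|\,(-\Phi_\beta'(r))\d r$ and uses \eqref{a2:eq:phiderivative}; this feeds the cumulative lower tube bound in directly, giving $cR^{n-\beta}/L_\beta(cR)$, and so avoids your shell-differencing step and the truncation at $r\geq R^2$.
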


\begin{proof}
Since $E$ has Lebesgue measure zero, its assigned point values do not
affect the integrals. The layer-cake identity gives
\[
 \int_{\R^n}(w-1)\d x
 =\int_0^1|E_r|\,(-\Phi_\beta'(r))\d r
 \leq C\beta\int_0^1\frac{\d r}{rL_\beta(r)^2}<\infty.
\]
Also $w^{-1}\leq1$. This proves the integrability assertions and
\eqref{eq:basic} for $a=w\Id$. Applying the lower tube bound inside $B(z,R)$ gives
\begin{align*}
 \int_{B(z,R)}w\d x
 &\geq\int_0^{cR}|E_r\cap B(z,R)|\,(-\Phi_\beta'(r))\d r\\
 &\geq c\beta R^{n-\beta}\int_0^{cR}\frac{\d r}{rL_\beta(r)^2}
 =\frac{cR^{n-\beta}}{L_\beta(cR)}
 \geq\frac{cR^{n-\beta}}{L_\beta(R)}.
\end{align*}
Choose a fixed $\varepsilon>0$ so small that \eqref{a2:eq:tubeupper}
gives $|E_{\varepsilon R}\cap B(z,R)|\leq\tfrac12|B(z,R)|$.
On the remaining half of the ball $d_E(x)\geq \varepsilon R$ and
$w^{-1}\geq(\Phi_\beta(\varepsilon R))^{-1}\geq c(\Phi_\beta(R))^{-1}$ by
\eqref{a2:eq:phiratio}. Hence
\[
 \fint_{B(z,R)}w\d x\geq\frac{cR^{-\beta}}{L_\beta(R)},\qquad
 \fint_{B(z,R)}w^{-1}\d x\geq cR^\beta L_\beta(R)^2.
\]
Their product proves \eqref{a2:eq:A2divergence}. The definition of $A_2$
using balls is equivalent to that using cubes, by inscribed and
circumscribed cubes with comparable volumes. The reverse bounds for the two means also hold. Indeed,
\[
 \int_{B(z,R)}w\d x
 \leq |B(z,R)|\Phi_\beta(R)
       +C\beta R^{n-\beta}\int_0^R\frac{\d r}{rL_\beta(r)^2}
 \leq\frac{CR^{n-\beta}}{L_\beta(R)},
\]
by the upper tube bound, while $d_E(x)\leq R$ throughout the ball
gives $w^{-1}\leq\Phi_\beta(R)^{-1}$ almost everywhere. Thus the $A_2$
product on these balls is, in fact, comparable to $L_\beta(R)$. If $c_0\rho\leq w\leq C_0\rho$ and $\rho\in A_2$, then the
$A_2$ product of $w$ on every ball is at most
$C_n(C_0/c_0)[\rho]_{A_2}$. This contradicts
\eqref{a2:eq:A2divergence}. In particular, ellipticity of $w\Id$ with
respect to an $A_2$ weight is impossible.
\end{proof}

\subsection{The variational estimate on every simplex}\label{a2:sec:variational}

It remains to prove that $a=w\Id$ satisfies \eqref{eq:UC}
for the prescribed admissible tuple, uniformly over all cubes.
The main step is to estimate the minimum energy among functions
with prescribed affine boundary values on a simplex. Using the
affine function itself gives a bound in terms of the integral
of $w$, which is too large for the required coarse estimates. We therefore modify the affine function to make it constant near
the Cantor pieces lying sufficiently far inside the simplex.
Its gradient vanishes near these pieces, so the energy at very
small distances from $E$ is confined to a region near the simplex
boundary. Figure~\ref{fig:cantorcompetitor} illustrates the product
Cantor construction and the regions where the competitor is
made constant.

\begin{figure}[htbp]
 \centering
\includegraphics[width=\textwidth]{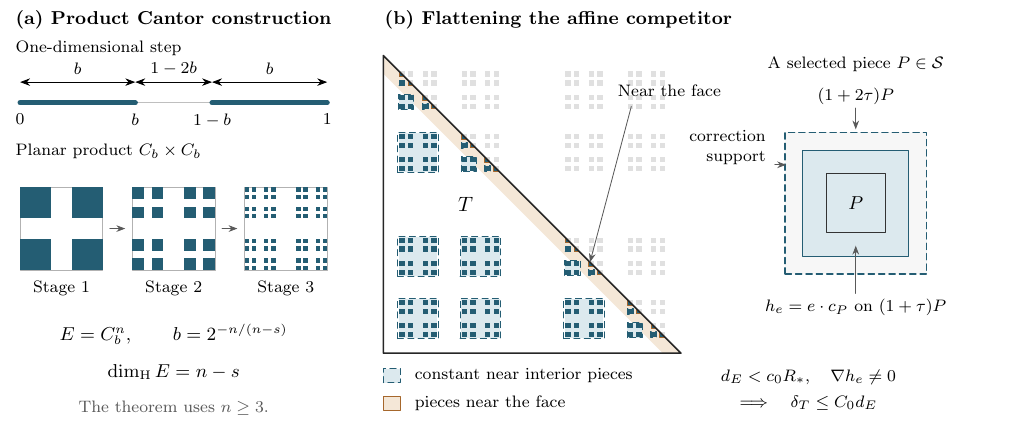}
 \caption{The Cantor construction and the affine competitor.
 Left: three stages of the planar product $C_b\times C_b$,
 illustrating $E=C_b^n$.
 Right: selected construction cubes inside a simplex $T$.
 The competitor is constant on $(1+\tau)P$, and the corresponding
 correction is supported in $(1+2\tau)P$.
 The enlargement widths in the inset are exaggerated for visibility.}
 \label{fig:cantorcompetitor}
\end{figure}

The following lemma constructs the competitor and proves that
$h_e-e\cdot x\in\Haz(T)$, making $h_e$ admissible in the
variational definition \eqref{eq:uppermatrix}. Recall that $\delta_T(x)$ denotes the distance from $x$ to $\partial T$.

\begin{samepage}
\begin{lem}\label{a2:lem:competitor}
Let $T$ be a reference-shape simplex of scale $R$, and set
$R_*=\min\{R,1\}$. For every $e\in\R^n$, there is a globally Lipschitz
function $h_e$ with $h_e=e\cdot x$ outside $T$,
$h_e-e\cdot x\in\Haz(T)$, and
\begin{equation}\label{a2:eq:gradientbound}
 |\nabla h_e|\leq C|e|\quad\hbox{almost everywhere}.
\end{equation}
Furthermore, there are $c_0,C_0>0$, depending only on $n,\beta$, such that
\begin{equation}\label{a2:eq:gradientsupport}
 \nabla h_e(x)=0
 \quad\hbox{for almost every }x\in T\hbox{ with }
 d_E(x)<c_0R_*\hbox{ and }
 \delta_T(x)>C_0d_E(x).
\end{equation}
\end{lem}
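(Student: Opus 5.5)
The plan is to build $h_e$ by modifying the affine function $e\cdot x$ on a selected family of construction cubes of the Cantor set. Each cube in the family will lie deep inside $T$ relative to its own size. We make the function constant on a slightly enlarged neighbourhood of each cube and interpolate back to $e\cdot x$ by a Lipschitz cutoff.

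\textbf{Selecting the cubes.} Fix a small $\tau\in(0,1/8)$ depending only on $n,\beta$, and a large constant $A$. Call a construction cube $P$ of $E$ (side $b^k$, with $b^k\le c_1R_*$) \emph{good} if $\dist(P,\partial T)\geq A\,\ell(P)$. Call it \emph{maximal} if, in addition, its parent is not good or has side exceeding $c_1R_*$.

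The maximal good cubes are pairwise disjoint. We want the enlargements $(1+2\tau)P$ of distinct maximal cubes to be disjoint as well. This should follow from the separation property: different children of a cube of side $L$ are separated by at least $(1-2b)L$ in some coordinate. Two maximal cubes need not be siblings. If one is much smaller than the other, it lies in a sibling branch whose parent failed to be good, so it lies at distance comparable to its ancestor's side from the larger cube. Taking $\tau$ small relative to $1-2b$ keeps the enlargements disjoint.

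\textbf{Defining $h_e$.} For each maximal $P$ with centre $x_P$, choose a Lipschitz cutoff $\chi_P$ that equals $1$ on $(1+\tau)P$, vanishes outside $(1+2\tau)P$, and satisfies $|\nabla\chi_P|\le C/(\tau\ell(P))$. Then set
\[
h_e(x)=e\cdot x-\sum_P\chi_P(x)\,e\cdot(x-x_P).
\]
On each $(1+2\tau)P$ only one term is active, and there the correction has gradient bounded by $C|e|$, because $|x-x_P|\lesssim\ell(P)$. Hence $h_e$ is globally Lipschitz and \eqref{a2:eq:gradientbound} holds. On $(1+\tau)P$ the function is constant, equal to $e\cdot x_P$. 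Since $A$ is large, $(1+2\tau)P\Subset T$, so $h_e=e\cdot x$ outside $T$.

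\textbf{Admissibility.} The difference $g=h_e-e\cdot x$ is Lipschitz on a neighbourhood of $\overline T$ and vanishes on $\partial T$. Lemma~\ref{lem:Lipzero} therefore gives $g\in\Vzero(T)$, which equals $\Haz(T)$ by Remark~\ref{rem:energyspaces}. The sum may be infinite, but a locally finite sum of disjointly supported Lipschitz functions with uniform gradient bound is still Lipschitz, which is all the lemma needs.

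\textbf{Proving \eqref{a2:eq:gradientsupport}.} This is the main obstacle. Let $x\in T$ with $d:=d_E(x)<c_0R_*$ and $\delta_T(x)>C_0d$. Pick $y\in E$ with $|x-y|=d$, and let $P_k$ be the generation-$k$ construction cube containing $y$, with $k$ chosen so that $b^k\sim d/\tau$, i.e.\ $d\le\tau b^k/2$ up to a factor $b$. Then $x\in(1+\tau)P_k$.

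Next, $P_k$ is good: $\dist(P_k,\partial T)\geq \delta_T(x)-d-\sqrt n\,b^k\geq (C_0-C)d\geq A b^k$, provided $C_0$ is large relative to $A/\tau$. Also $b^k\le c_1R_*$ once $c_0$ is small. So $P_k$ is contained in some maximal good cube $P$ (the ancestor chain of good cubes stops at a maximal one).

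It remains to show $x\in(1+\tau)P$, so that $\nabla h_e(x)=0$. If $P=P_k$ this holds by the choice of $k$. If $P$ is a strict ancestor, then $x$ is within $d\le\tau b^k\le\tau\ell(P)/2$ of $P_k\subset P$, so again $x\in(1+\tau)P$.

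The delicate bookkeeping is the order in which the constants are fixed: first $\tau$ from the separation, then $A$, then $C_0\gg A/\tau$ and $c_0\ll c_1\tau$. The disjointness claim for the enlargements of non-sibling maximal cubes is the step I would check most carefully. A fallback is to drop disjointness and take $h_e$ to be a suitable min/max of the individual modifications, or to select cubes from a Whitney-type stopping rule that guarantees separation.
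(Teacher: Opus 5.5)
Your construction and verification are correct and essentially the paper's: a stopping-time family of maximal construction cubes lying well inside $T$, with $e\cdot x$ frozen to $e\cdot x_P$ on $(1+\tau)P$. The only differences are that you get admissibility from Lemma~\ref{lem:Lipzero} rather than from the paper's finite partial sums, and you prove \eqref{a2:eq:gradientsupport} directly (the generation-$k$ cube through the nearest point $y$ at scale $b^k\sim d/\tau$ is already good) rather than by contradiction through the failed parent.

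Two small repairs are needed. First, add $P\subset T$ to the definition of a good cube, since otherwise cubes far outside $T$ qualify. Second, the disjointness you were unsure about holds for \emph{any} two construction cubes incomparable under inclusion: at their last common ancestor of side $L$ they lie in different children, separated by $(1-2b)L$ in one coordinate, while the dilation margins total at most $2\tau bL$, so $\tau<(1-2b)/(2b)$ suffices and no appeal to maximality or to failed goodness is needed.
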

\end{samepage}

\begin{proof}
All construction cubes are closed and axis-parallel. Choose
$\tau>0$ smaller than $\min\{1/8,(1-2b)/(16b)\}$.
If two construction cubes are incomparable under inclusion, their
$(1+2\tau)$-dilates are disjoint. To see this, take their last common
ancestor of side $L$. They lie in different children, separated in
one coordinate by $(1-2b)L$, while both their side lengths are at
most $bL$. The added margins are smaller than this separation by
the choice of $\tau$.

Let $\cS$ consist of the maximal construction cubes $P$ satisfying
\begin{equation}\label{a2:eq:stopping}
 \ell(P)\leq R_*/4,\qquad (1+4\tau)P\subset T.
\end{equation}
The inclusion is in the open simplex, so each such enlarged cube is
a compact subset of $T$. Since $T$ is open, every $x\in E\cap T$ satisfies
$\delta_T(x)=\dist(x,\partial T)>0$. The construction cubes
containing $x$ have diameters tending to zero, so sufficiently
small ones satisfy \eqref{a2:eq:stopping}. Hence $x$ belongs
to a selected cube of $\cS$. The cubes in $\cS$ are incomparable.

Choose $\eta_P\in C_0^\infty(\operatorname{int}((1+2\tau)P))$ with
$0\leq\eta_P\leq1$, $\eta_P=1$ on $(1+\tau)P$, and
$|\nabla\eta_P|\leq C/\ell(P)$. If $c_P$ is the center of $P$, put
\begin{equation}\label{a2:eq:competitorformula}
 h_e(x)=e\cdot x+
       \sum_{P\in\cS}\eta_P(x)\,e\cdot(c_P-x).
\end{equation}
The family $\cS$ is at most countable but need not be finite,
since maximal cubes may occur at arbitrarily fine generations
near $\partial T$. Indeed, if points of $E\cap T$ approach
$\partial T$, then $\cS$ must be infinite, because a finite
union of selected cubes is compactly contained in $T$ and
cannot cover such points. However, for every $\varepsilon>0$, only finitely many cubes
in $\cS$ have side length at least $\varepsilon$, since only
finitely many generations contribute and each contains finitely
many construction cubes. To justify the series in \eqref{a2:eq:competitorformula} and
establish the admissibility of $h_e$, we consider finite partial
sums. Let $\cS_N$ be increasing finite subcollections exhausting
$\cS$, and define
\[
 h_e^{(N)}(x)=e\cdot x+
       \sum_{P\in\cS_N}\eta_P(x)\,e\cdot(c_P-x).
\]
Then $h_e^{(N)}-e\cdot x\in C_0^\infty(T)$. The supports of the corrections are disjoint. Each correction has
gradient bounded by $C|e|$ and amplitude bounded by $C|e|\ell(P)$.
Since only finitely many selected cubes have side length above any
fixed positive threshold, $h_e^{(N)}\to h_e$ uniformly.
Moreover, the disjointness of the supports gives a Lipschitz bound
$C|e|$ for every $h_e^{(N)}$. Passing to the uniform limit proves
\eqref{a2:eq:gradientbound}. On $(1+\tau)P$, the function $h_e$
is the constant $e\cdot c_P$.

We next verify convergence in the energy space. The omitted
corrections are supported in
\[
 U_N=\bigcup_{P\in\cS\setminus\cS_N}(1+2\tau)P\subset T,
\]
where $(1+2\tau)P$ is the cube concentric with $P$ and of side
length $(1+2\tau)\ell(P)$. The gradient bounds and disjointness
of the supports give
\[
 \cE_T(h_e-h_e^{(N)},h_e-h_e^{(N)})
 \leq C|e|^2\int_{U_N}w\d x
 =C|e|^2\sum_{P\in\cS\setminus\cS_N}
       \int_{(1+2\tau)P}w\d x
 \longrightarrow0.
\]
Indeed, the enlarged cubes are pairwise disjoint subsets of $T$,
so the full series of integrals is bounded by $\int_Tw\d x<\infty$.
Uniform convergence also gives convergence of the means.
Thus $h_e^{(N)}-e\cdot x$ converges to $h_e-e\cdot x$ in
$\Ha(T)$, proving that $h_e-e\cdot x\in\Haz(T)$.

It remains to prove \eqref{a2:eq:gradientsupport}. At a point where the
gradient is nonzero, let $r=d_E(x)>0$ and choose a nearest point
$y\in E$. If $y\notin T$, then $\delta_T(x)\leq r$. Otherwise let
$P\in\cS$ contain $y$. Since the gradient is zero on $(1+\tau)P$,
we may assume that $x$ lies outside this enlarged cube, and hence
\begin{equation}\label{a2:eq:stoppingsize}
 r\geq\tau\ell(P)/2.
\end{equation}
The cube $P$ has a parent $\widehat P$, since its side is at most
$1/4$. If the parent violates the size restriction in
\eqref{a2:eq:stopping}, then $\ell(P)>bR_*/4$, which contradicts
$r<c_0R_*$ provided $c_0<\tau b/8$.
Otherwise maximality implies that $(1+4\tau)\widehat P$ is not a
subset of $T$. Since it contains $y$, this gives
$\delta_T(y)\leq C\ell(\widehat P)=C\ell(P)/b$.
Together with \eqref{a2:eq:stoppingsize},
\[
 \delta_T(x)\leq|x-y|+\delta_T(y)\leq C_0r.
\]
This proves the asserted support property. Sets where gradients are
undefined, and $E$ itself, have Lebesgue measure zero.
\end{proof}

The next lemma turns the competitor into a uniform bound for the upper
variational matrix.

\begin{lem}\label{a2:lem:cell}
For every reference-shape simplex $T$ of scale $R$,
\begin{equation}\label{a2:eq:cell}
 |a_T|\leq C\Phi_\beta\bigl(\dist(T,E)+R\bigr).
\end{equation}
The constant is independent of the location and scale of $T$.
\end{lem}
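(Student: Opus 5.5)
The plan is to use the competitor $h_e$ of Lemma~\ref{a2:lem:competitor} in \eqref{eq:uppermatrix}. Since $a=w\Id$ is scalar, $a_T$ is symmetric positive definite and $|a_T|=\sup_{|e|=1}a_Te\cdot e$. It therefore suffices to show
\[
 \avgint_T w|\nabla h_e|^2\d x\leq C|e|^2\Phi_\beta\bigl(\dist(T,E)+R\bigr).
\]
Write $D=\dist(T,E)$. If $D\geq c_0R_*/2$, or if $R\geq1$ with $D$ large, no competitor is needed: the affine competitor with \eqref{eq:matrixaveragebounds} gives $|a_T|\leq\avgint_T w$. In that case I will bound the average by splitting $T$ into $\{d_E<\rho\}$ and its complement, with $\rho\approx D+R_*$. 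Lemma~\ref{a2:lem:tubes} together with the layer-cake formula and \eqref{a2:eq:phiratio} then gives $\avgint_T w\leq C\Phi_\beta(D+R)$. When $D\gtrsim R_*$, every point of $T$ has $d_E\geq D$, and $d_E\leq D+\sqrt nR$ only matters for monotonicity, so \eqref{a2:eq:phiratio} directly gives $w\leq C\Phi_\beta(D+R)$ on $T$. When $R\geq1$ the bound is $\Phi_\beta(D+R)=1$, and the average of $w$ over a simplex of size at least one is bounded by the global estimate $\int(w-1)<\infty$ divided by $|T|\gtrsim1$.

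The main case is $D<c_0R_*/2$ with $R\leq1$, so $\Phi_\beta(D+R)\asymp\Phi_\beta(R)$ by \eqref{a2:eq:phiratio}. I split $T$ into three regions. On $\{d_E\geq c_0R_*\}$ we have $w\leq C\Phi_\beta(R)$ and $|\nabla h_e|\leq C|e|$, which contributes at most $C|e|^2\Phi_\beta(R)$ to the average. On $\{d_E<c_0R_*\}$, by \eqref{a2:eq:gradientsupport}, the gradient is supported where $\delta_T\leq C_0d_E$. I then decompose dyadically in $r=d_E\in[2^{-j-1}c_0R,2^{-j}c_0R)$. On the $j$-th shell, $w\leq\Phi_\beta(2^{-j-1}c_0R)\leq C2^{j\beta}\Phi_\beta(R)$ by \eqref{a2:eq:phiratio}. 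By \eqref{a2:eq:simplexstrip} with $A=C_0$ and $r=2^{-j}c_0R$, the shell intersected with $\{\delta_T<C_0r\}$ has volume at most $C|T|2^{-j(\beta+\delta)}$. Summing gives
\[
 \avgint_T w|\nabla h_e|^2\leq C|e|^2\Phi_\beta(R)\Bigl(1+\sum_{j\geq0}2^{j\beta}2^{-j(\beta+\delta)}\Bigr)\leq C|e|^2\Phi_\beta(R),
\]
since $\delta>0$. This is exactly where the extra exponent $\delta$ from the hyperplane estimate absorbs the $r^{-\beta}$ growth, and it removes the logarithmic loss from $\avgint_T w\approx R^{-\beta}/L_\beta(R)\cdot$ (which by itself would also be fine). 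Note that the losing quantity really appears only through the $A_2$ product, so the key point is uniformity in position.

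I expect the main obstacle to be bookkeeping rather than analysis. The regimes (near versus far from $E$, $R$ below or above one, simplices partly outside $[0,1]^n$) have to be matched so that the constant depends only on $n,\beta$. In particular, when $D$ is comparable to $c_0R_*$ the two estimates must be joined using \eqref{a2:eq:phiratio} with a fixed factor. One also has to check that \eqref{eq:uppermatrix} applies because $h_e-e\cdot x\in\Haz(T)$, which Lemma~\ref{a2:lem:competitor} provides.
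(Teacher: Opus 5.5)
Your proof is correct and is essentially the paper's. The main case uses the same competitor $h_e$ and the same dyadic shells in $d_E$, controlled by \eqref{a2:eq:simplexstrip} and \eqref{a2:eq:phiratio}, and the far case uses the affine competitor with $w\leq\Phi_\beta(D)$ on $T$. Your only deviation is treating $R\geq1$ by the affine competitor together with $\int_{\R^n}(w-1)\d x<\infty$ and $|T|\geq1/n!$, instead of running the competitor with $R_*=1$, and that step is valid. One correction to your commentary: the mean $\avgint_T w\approx R^{-\beta}/L_\beta(R)$ would \emph{not} be fine, since for simplices near $E$ it exceeds $\Phi_\beta(R)=16R^{-\beta}/L_\beta(R)^2$ by the unbounded factor $L_\beta(R)/16$. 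This excess is exactly why the competitor is needed in your main case.
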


\begin{proof} Note that the simplex $T$ may have arbitrary location and scale.
In particular, it need not intersect $E$, since the coarse
condition must be verified on every ambient cube. Fix $e\in\R^n$ and let $h_e$ be the competitor from
Lemma~\ref{a2:lem:competitor}. We first prove
$|a_T|\leq C\Phi_\beta(R_*)$, where $R_*=\min\{R,1\}$.
Decreasing $c_0$ if necessary, assume $0<c_0\leq1$. On the region where $d_E\geq c_0R_*$, monotonicity and
\eqref{a2:eq:phiratio} give
\[
 w\leq\Phi_\beta(c_0R_*)
 \leq c_0^{-\beta}\Phi_\beta(R_*).
\]
Together with $|\nabla h_e|\leq C|e|$, this bounds the energy
on that region by $C|e|^2|T|\Phi_\beta(R_*)$. To estimate the remaining energy, set
\[
 r_j=c_0R_*2^{-j},\qquad
 S_j=\{x\in T:r_{j+1}\leq d_E(x)<r_j\},
 \qquad j\geq0.
\]
These shells cover $\{x\in T:0<d_E(x)<c_0R_*\}$, and
$E$ has Lebesgue measure zero. On the part of $S_j$ where
$\nabla h_e\neq0$, \eqref{a2:eq:gradientsupport} gives
\[
 \delta_T(x)\leq C_0d_E(x)<C_0r_j.
\]
Thus \eqref{a2:eq:simplexstrip} implies
\[
 |S_j\cap\{\nabla h_e\neq0\}|
 \leq C|T|\left(\frac{r_j}{R_*}\right)^{\beta+\delta}.
\]
On $S_j$, the profile bound gives
\[
 w\leq\Phi_\beta(r_{j+1})
 \leq\left(\frac{R_*}{r_{j+1}}\right)^\beta
       \Phi_\beta(R_*).
\]
Combining these estimates with \eqref{a2:eq:gradientbound},
and using $r_{j+1}=r_j/2$, we obtain
\begin{align*}
 \int_{S_j}w|\nabla h_e|^2\d x
 &\leq C|e|^2|T|\Phi_\beta(R_*)
       \left(\frac{R_*}{r_{j+1}}\right)^\beta
       \left(\frac{r_j}{R_*}\right)^{\beta+\delta}\leq C|e|^2|T|\Phi_\beta(R_*)\,2^{-j\delta}.
\end{align*}
Since $\delta>0$, summing over the shells and adding the
contribution from $d_E\geq c_0R_*$ yields
\begin{equation}\label{a2:eq:cellenergy}
 \int_Tw|\nabla h_e|^2\d x
 \leq C|e|^2|T|\Phi_\beta(R_*)
       \biggl(1+\sum_{j\geq0}2^{-j\delta}\biggr)
 \leq C|e|^2|T|\Phi_\beta(R_*).
\end{equation}
\eqref{a2:eq:cellenergy}, the admissibility of $h_e$, and the variational definition give
\[
 a_Te\cdot e
 \leq\avgint_Tw|\nabla h_e|^2\d x
 \leq C\Phi_\beta(R_*)|e|^2.
\]
Since $e$ is arbitrary, this proves the preliminary bound. To incorporate the distance from $T$ to $E$, put
$D=\dist(T,E)$. If $D\leq R$, then $R\leq D+R\leq2R$.
Since $\Phi_\beta(R_*)=\Phi_\beta(R)$,
\eqref{a2:eq:phiratio} gives
\[
 |a_T|\leq C\Phi_\beta(R)
 \leq C2^\beta\Phi_\beta(D+R).
\]
If $D>R$, then $d_E(x)\geq D$ throughout $T$, so the
affine competitor $x\mapsto e\cdot x$ gives
$|a_T|\leq\Phi_\beta(D)$. Applying \eqref{a2:eq:phiratio}
once more,
\[
 |a_T|\leq\Phi_\beta(D)
 \leq\left(\frac{D+R}{D}\right)^\beta\Phi_\beta(D+R)
 \leq2^\beta\Phi_\beta(D+R).
\]
This proves \eqref{a2:eq:cell} in both cases.
\end{proof}

\subsection{Uniformity of the coarse estimates on all cubes}\label{a2:sec:UC}

We now combine the simplex estimates to verify \eqref{eq:UC} uniformly
over all cubes. For each cube, we choose a common scalar quantity
against which to compare the variational matrices of all its
simplices at every finer scale. These bounds control both discounted
sums, with constants independent of the spatial moment exponents.
After proving the theorem from these cellwise bounds, we record a
sharper spatial moment estimate using the tube-volume bounds.

\begin{lem}\label{a2:lem:spatialmoment}
Let $Q$ have side $R$, and put
\begin{equation}\label{a2:eq:cubescale}
 D=\dist(Q,E),\qquad F_Q=\Phi_\beta(D+R),\qquad h_k=3^{-k}R.
\end{equation}
For every $k\geq0$,
\begin{equation}\label{a2:eq:levelbounds}
 \max_{T\in\cT_k(Q)}|a_T|\leq CF_Q\,3^{k\beta},\qquad
 \max_{T\in\cT_k(Q)}|a_{*,T}^{-1}|\leq CF_Q^{-1}.
\end{equation}
In particular, for every $p>1$,
\begin{equation}\label{a2:eq:uniformspatialmoment}
 \left(\frac1{\#\cT_k(Q)}
       \sum_{T\in\cT_k(Q)}|a_T|^p\right)^{1/(2p)}
 \leq CF_Q^{1/2}\,3^{k\beta/2},
\end{equation}
where these constants depend only on $n,\beta$.
\end{lem}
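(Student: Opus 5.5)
The plan is to derive both bounds in \eqref{a2:eq:levelbounds} cellwise and then average. For $T\in\cT_k(Q)$ the scale is comparable to $h_k=3^{-k}R$, since $T\subset Q$. Lemma~\ref{a2:lem:cell} gives $|a_T|\leq C\Phi_\beta(\dist(T,E)+c h_k)$. The task is therefore to compare $\Phi_\beta(\dist(T,E)+ch_k)$ with $F_Q=\Phi_\beta(D+R)$.

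For the upper bound, note that $\dist(T,E)\geq D$ because $T\subset Q$. Monotonicity of $\Phi_\beta$ then gives $\Phi_\beta(\dist(T,E)+ch_k)\leq\Phi_\beta(D+ch_k)$. Since
\[
 D+ch_k\geq c3^{-k}(D+R),
\]
the ratio bound \eqref{a2:eq:phiratio} with multiplicative factor $c^{-1}3^{k}$ yields $\Phi_\beta(D+ch_k)\leq C3^{k\beta}\Phi_\beta(D+R)=C3^{k\beta}F_Q$. This is the first estimate. The moment bound \eqref{a2:eq:uniformspatialmoment} follows immediately, because an $\ell^p$ average is at most the maximum. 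Taking the $1/(2p)$ power gives $CF_Q^{1/2}3^{k\beta/2}$, with $C^{1/2}$ independent of $p$.

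For the lower matrix I would use \eqref{eq:twosideduppermatrix}, namely $|a_{*,T}^{-1}|\leq|\avgint_T a^{-1}|=\avgint_T w^{-1}$. It then suffices to show $w^{-1}\leq CF_Q^{-1}$ almost everywhere on $Q$, i.e.\ $w\geq cF_Q$ on $Q$. Every $x\in Q$ satisfies $d_E(x)\leq D+\diam Q\leq D+\sqrt n R\leq\sqrt n(D+R)$. Since $\Phi_\beta$ is nonincreasing, $w(x)=\Phi_\beta(d_E(x))\geq\Phi_\beta(\sqrt n(D+R))$. Applying \eqref{a2:eq:phiratio} with factor $A=\sqrt n$ gives $\Phi_\beta(\sqrt n(D+R))\geq n^{-\beta/2}F_Q$. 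Points of $E$, where $w=1$, form a null set and play no role.

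Two details need checking. First, the scale of simplices in $\cT_k(Q)$ must match the convention of Lemma~\ref{a2:lem:cell}: side $3^{-k}R$ of the triadic subcube, so scale $h_k$. Second, the case $D+ch_k\geq1$ needs attention, since there $\Phi_\beta=1$; but \eqref{a2:eq:phiratio} is stated with that extension, so it covers this case. I expect no step to be a genuine obstacle. The only subtle point is to get the constant $3^{k\beta}$ rather than something depending on $D/R$, and the inequality $D+ch_k\geq c3^{-k}(D+R)$ handles this uniformly. Hence all constants depend only on $n$ and $\beta$.
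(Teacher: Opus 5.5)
Your proposal is correct and is essentially the paper's own proof. The lower bound comes from $w\geq n^{-\beta/2}F_Q$ almost everywhere on $Q$ together with $a_{*,T}^{-1}\leq\fint_T a^{-1}$. The upper bound comes from Lemma~\ref{a2:lem:cell}, $\dist(T,E)\geq D$, and \eqref{a2:eq:phiratio} with $(D+R)/(D+h_k)\leq 3^k$, after which the moment bound is immediate from the maximum. The only cosmetic difference is that you allow a constant $c$ in the simplex scale, whereas the simplices of $\cT_k(Q)$ have scale exactly $h_k$.
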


\begin{proof}
For $x\in Q$, we have $d_E(x)\leq D+\sqrt nR$. Monotonicity and
\eqref{a2:eq:phiratio} give
\[
 w(x)\geq\Phi_\beta(D+\sqrt nR)
       \geq n^{-\beta/2}F_Q
 \quad\hbox{for almost every }x\in Q.
\]
Using this estimate and \eqref{eq:matrixaveragebounds}, we see that every simplex $T\subset Q$ satisfies
\begin{equation}\label{a2:eq:inversecell}
 |a_{*,T}^{-1}|\leq\fint_Tw^{-1}\d x
                    \leq n^{\beta/2}F_Q^{-1}.
\end{equation}
For $T\in\cT_k(Q)$, Lemma~\ref{a2:lem:cell} and
$\dist(T,E)\geq \dist(Q,E)=D$ yield
\[
 \begin{aligned}
 |a_T|&\leq C\Phi_\beta(D+h_k)\leq CF_Q\left(\frac{D+R}{D+h_k}\right)^\beta
       \leq CF_Q\left(\frac R{h_k}\right)^\beta
       =CF_Q\,3^{k\beta}.
 \end{aligned}
\]
This proves \eqref{a2:eq:levelbounds} for all locations and scales.
Taking the spatial moment gives \eqref{a2:eq:uniformspatialmoment}. \end{proof}

As we next prove, the bounds for individual cells suffice to verify the full coarse
condition for the prescribed tuple. In particular, the resulting coarse
bound does not depend on $p,q,t$.

\subsection{Proof of Theorem~\ref{a2:thm:example}}\label{a2:sec:proof}

Proposition~\ref{a2:prop:notA2} proves the qualitative assumptions and
the failure of $A_2$ weighted ellipticity. Fix an arbitrary cube $Q$,
and use the quantity $F_Q$ in \eqref{a2:eq:cubescale}.
By \eqref{a2:eq:levelbounds} and the normalized geometric sum in
\eqref{eq:lambdadef},
\begin{equation}\label{a2:eq:lambdafinal}
 (\lambda_{t,1,q}(Q))^{-1}\leq CF_Q^{-1}.
\end{equation}
This holds for every $q>1$ and $t>0$, with $C$ depending only on
$n,\beta$. Equation~\eqref{a2:eq:uniformspatialmoment} likewise gives
\begin{equation}\label{a2:eq:Lambdafinal}
 (\Lambda_{s,1,p}(Q))^{1/2}
 \leq CF_Q^{1/2}(1-3^{-s})
       \sum_{k\geq0}3^{-k(s-\beta/2)}.
\end{equation}
Our choice $\beta=s$ implies
\begin{equation}\label{a2:eq:scondition}
 s-\frac\beta2=\frac s2>0,\qquad
 \frac{1-3^{-s}}{1-3^{-(s-\beta/2)}}
 =1+3^{-s/2}<2.
\end{equation}
Thus $\Lambda_{s,1,p}(Q)\leq CF_Q$. Combining this with
\eqref{a2:eq:lambdafinal} proves
\[
 \Theta_a(Q)=\Lambda_{s,1,p}(Q)(\lambda_{t,1,q}(Q))^{-1}
 \leq C(n,s)
\]
uniformly over all cubes. Both defining sums are finite and strictly
positive, by the displayed bounds and the positive definiteness of
the variational matrices. This verifies every part of \eqref{eq:UC}
for the prescribed admissible tuple.\qed

\begin{rem} The distinction between a coefficient moment and an effective energy
is visible quantitatively. For balls centered on $E$, the ordinary
mean of $w$ has an additional factor comparable to $L_\beta(R)$ relative
to $\Phi_\beta(R)$. The affine-boundary competitor removes that factor
from the variational upper estimate. The hyperplane estimate
\eqref{a2:eq:hyperplane} makes this removal uniform even when the boundary
of a simplex intersects $E$. Choosing the codimension $\beta=s$ then
ensures summability for every prescribed admissible tuple.
\end{rem}

The cellwise bounds above suffice for the theorem and give a
constant independent of $p,q,t$. For completeness, the upper
tube-volume estimate \eqref{a2:eq:tubeupper} also yields the
sharper spatial moment bound below. It quantifies the gain
from the small volume occupied by cells close to $E$,
although this gain is not needed for \eqref{eq:UC}.

\begin{lem}\label{a2:lem:sharpermoment}
Use the notation of Lemma~\ref{a2:lem:spatialmoment}.
For every $p>1$ and $k\geq0$, if $R\leq1$ and $D\leq R$, the estimate
\begin{equation}\label{a2:eq:spatialmoment}
 \left(\frac1{\#\cT_k(Q)}
       \sum_{T\in\cT_k(Q)}|a_T|^p\right)^{1/(2p)}
 \leq C_p(\Phi_\beta(R))^{1/2}\,3^{k\beta(1-1/p)/2}
\end{equation}
holds. If $R\geq1$, this last estimate holds with $\Phi_\beta(R)=1$.
The constant $C_p$ may also depend on $p$.
\end{lem}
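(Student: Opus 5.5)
The plan is to bound each $|a_T|$ at generation $k$ by a quantity depending on $\dist(T,E)$, and then to count how many cells of $\cT_k(Q)$ lie at each distance from $E$ using the upper tube bound \eqref{a2:eq:tubeupper}. Throughout, $R\leq1$ and $D\leq R$, so $F_Q\asymp\Phi_\beta(R)$ by \eqref{a2:eq:phiratio}. For $T\in\cT_k(Q)$, Lemma~\ref{a2:lem:cell} gives
\[
 |a_T|\leq C\Phi_\beta(\dist(T,E)+h_k),
 \qquad h_k=3^{-k}R .
\]
The cells are now sorted dyadically by distance. Let $\mathcal A_0$ be the cells with $\dist(T,E)\leq h_k$. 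For $1\leq j\leq J$, where $2^J h_k\approx R$, let $\mathcal A_j$ be the cells with $2^{j-1}h_k<\dist(T,E)\leq 2^jh_k$.

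On $\mathcal A_j$ we have $|a_T|\leq C\Phi_\beta(2^jh_k)\leq C\Phi_\beta(R)(R/(2^jh_k))^\beta$ by \eqref{a2:eq:phiratio}. Every cell in $\mathcal A_j$ lies inside $E_{C2^jh_k}\cap B(x_Q,C R)$, because $\diam T\leq\sqrt n h_k$. Since all cells have the same volume comparable to $h_k^n$, \eqref{a2:eq:tubeupper} gives
\[
 \#\mathcal A_j\leq C\frac{R^{n-\beta}(2^jh_k)^\beta}{h_k^n}
 =C\,\#\cT_k(Q)\,\Bigl(\frac{2^jh_k}{R}\Bigr)^{\beta},
\]
using $\#\cT_k(Q)\asymp(R/h_k)^n$. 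Cells farther than $R$ from $E$ satisfy $|a_T|\leq C\Phi_\beta(R)$, and there are at most $\#\cT_k(Q)$ of them.

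Writing $x_j=2^jh_k/R\in[3^{-k},1]$, the normalized $p$-th moment is bounded by
\[
 C\Phi_\beta(R)^p\Bigl(1+\sum_{j=0}^{J}x_j^{\beta}x_j^{-\beta p}\Bigr)
 =C\Phi_\beta(R)^p\Bigl(1+\sum_{j}x_j^{-\beta(p-1)}\Bigr).
\]
Since $p>1$, this is a geometric series dominated by its smallest term $x_0=3^{-k}$. It therefore equals at most $C_p3^{k\beta(p-1)}$, with $C_p\approx(1-2^{-\beta(p-1)})^{-1}$; this is where the dependence of the constant on $p$ enters. Taking the power $1/(2p)$ gives the factor $\Phi_\beta(R)^{1/2}3^{k\beta(1-1/p)/2}$, which is \eqref{a2:eq:spatialmoment}.

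For $R\geq1$ the same scheme should work with $\Phi_\beta(R)$ replaced by $1$, but there are two adjustments. Cells with $\dist(T,E)\geq1$ have $|a_T|\leq1$. For the remaining cells, $\mathcal A_j$ runs only up to $2^jh_k\approx1$, and one counts them with the global bound $|E_r|\leq Cr^\beta$ instead of the local one. The resulting count is $\#\mathcal A_j\leq C(2^jh_k)^\beta h_k^{-n}$, which is at most $C\#\cT_k(Q)(2^jh_k)^\beta R^{-n}$.

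The weight bound in this case is $\Phi_\beta(2^jh_k)\leq(2^jh_k)^{-\beta}$, and multiplying gives $|a_T|^p\,\#\mathcal A_j\leq C\#\cT_k(Q)(2^jh_k)^{-\beta(p-1)}R^{-n}$. Summing the geometric series as before yields a total of order $h_k^{-\beta(p-1)}R^{-n}\leq(3^{k})^{\beta(p-1)}$, since $R\geq1$. The extra factor $R^{-n}\leq1$ only helps here.

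I expect the main obstacle to be the counting step, specifically applying \eqref{a2:eq:tubeupper} with $r=C2^jh_k$. That bound requires $r\leq R\leq1$, so the constant multiples need care, and the top dyadic layers where $r\gtrsim R$ must be absorbed into the trivial bound $\#\mathcal A_j\leq\#\cT_k(Q)$. The case $R\geq1$ raises the same issue and is handled by the global tube bound as above.
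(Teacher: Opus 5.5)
Your proof is correct and follows essentially the paper's route: the cellwise bound from Lemma~\ref{a2:lem:cell}, the upper tube estimate \eqref{a2:eq:tubeupper} on dyadic distance layers, and a geometric series with ratio $2^{-\beta(p-1)}$, with the global tube bound for $R\geq1$. The only difference is bookkeeping: you count cells in each layer directly, whereas the paper first bounds the normalized sum by $C_p\fint_Q(\Phi_\beta(d_E(x)+h_k))^p\d x$ and then applies the same layer decomposition to $d_E$.
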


\begin{proof}
If $x\in T\in\cT_k(Q)$, the numbers
$\dist(T,E)+h_k$ and $d_E(x)+h_k$ differ by a bounded factor.
Lemma~\ref{a2:lem:cell}, \eqref{a2:eq:phiratio}, and the equality of simplex
volumes therefore give
\begin{equation}\label{a2:eq:momentintegral}
 \frac1{\#\cT_k(Q)}\sum_T|a_T|^p
 \leq C_p\fint_Q(\Phi_\beta(d_E(x)+h_k))^p\d x.
\end{equation}
Suppose first that $R\leq1$ and $D\leq R$.
The local tube bound gives
\[
 |Q\cap E_r|\leq CR^{n-\beta}r^\beta,\qquad 0<r\leq R.
\]
For $0<h\leq R$, integration of this distribution estimate yields
\begin{equation}\label{a2:eq:truncatedmoment}
 \int_{Q\cap\{d_E<R\}}(\max\{d_E,h\})^{-\beta p}\d x
 \leq C_pR^{n-\beta}h^{-\beta(p-1)}.
\end{equation}
Indeed, split off $d_E<h$ and sum the layers
$2^jh\leq d_E<2^{j+1}h\leq R$. Their contributions form a geometric
series with ratio $2^{-\beta(p-1)}<1$. The last partial layer satisfies
the same bound. On $d_E+h\leq R$, \eqref{a2:eq:phiratio} gives
\[
 \Phi_\beta(d_E+h)
 \leq\Phi_\beta(R)\left(\frac R{\max\{d_E,h\}}\right)^\beta,
\]
and on the complementary set it gives
$\Phi_\beta(d_E+h)\leq\Phi_\beta(R)$.
Thus \eqref{a2:eq:truncatedmoment} implies
\[
 \fint_Q(\Phi_\beta(d_E+h))^p\d x
 \leq C_p(\Phi_\beta(R))^p(R/h)^{\beta(p-1)}.
\]
Insert $h=h_k$ into \eqref{a2:eq:momentintegral} to obtain
\eqref{a2:eq:spatialmoment}. If $R\geq1$ and $h_k\geq1$, the right side of
\eqref{a2:eq:momentintegral} is bounded by a constant. If $h_k<1$, use
$\Phi_\beta(r)\leq r^{-\beta}$ for $r<1$, $\Phi_\beta(r)=1$ for $r\geq1$,
and the global tube bound $|E_r|\leq Cr^\beta$.
The same distribution calculation gives
\[
 \fint_Q(\Phi_\beta(d_E+h_k))^p\d x
 \leq C_p\bigl(1+R^{-n}h_k^{-\beta(p-1)}\bigr)
 \leq C_p\,3^{k\beta(p-1)}.
\]
This proves the asserted large-cube estimate.
\end{proof}

\subsection{The reverse inclusion also fails}\label{a2:sec:converse}

The Cantor construction gives a coefficient satisfying uniform coarse
ellipticity but lying outside the classical $A_2$ weighted class.
To prove incomparability, we now establish the opposite non-inclusion.
The following proposition exhibits a single $A_2$ weight whose scalar
coefficient fails the coarse hypothesis for every admissible choice
of parameters.

\begin{prop}\label{a2:prop:converse}
Let $n\geq3$ and, for $x\notin\{0,e_1\}$, define
\begin{equation}\label{a2:eq:reverseweight}
 w_0(x)={|x|^2}/{|x-e_1|^2},\qquad a_0(x)=w_0(x)\Id,
\end{equation}
with arbitrary positive values at the two exceptional points.
Then $w_0\in A_2(\R^n)$ and $a_0$ satisfies \eqref{eq:basic}.
However, there is no choice of $p,q,s,t$ satisfying \eqref{eq:admissible}
for which both coarse sums are finite on $Q=(-1,2)^n$.
\end{prop}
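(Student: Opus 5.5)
The plan is to verify the three assertions of Proposition~\ref{a2:prop:converse} in order: the $A_2$ property, condition \eqref{eq:basic}, and divergence of a coarse sum. The divergence part rests only on the two-sided matrix bounds \eqref{eq:twosideduppermatrix} and on the observation that admissibility forces at least one of $p,q$ into a range where a power singularity is too strong for the discount.

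\emph{Step 1: $A_2$ and \eqref{eq:basic}.} I would use the classical fact that $|x-x_0|^{\gamma}\in A_2(\R^n)$ iff $-n<\gamma<n$. For $n\geq3$ this applies with $\gamma=2$ at $x_0=0$ and with $\gamma=-2$ at $x_0=e_1$. To handle the product $w_0$, I would split cubes into three cases.
\begin{itemize}
\item If $\ell(Q)\geq1/10$, then $w_0$ is comparable to $|x|^2$ near $0$, to $|x-e_1|^{-2}$ near $e_1$, and to $1$ elsewhere. The means of $w_0$ and $w_0^{-1}$ over $Q$ are then controlled by those of $\max\{1,|x|^2\}\,|x-e_1|^{-2}$ and its reciprocal, and these give a uniformly bounded product.
\item If $\ell(Q)<1/10$ and $Q$ lies within distance $1/4$ of $0$, then $w_0\asymp|x|^2$ on $Q$, so the bound reduces to the power-weight case. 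The same holds near $e_1$ with $w_0\asymp|x-e_1|^{-2}$.
\item Otherwise $w_0\asymp1$ on $Q$.
\end{itemize}
Local integrability of $w_0$ and $w_0^{-1}$ is immediate, since $2<n$.

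\emph{Step 2: lower bounds on the cells.} Put $h_k=3^{1-k}$, the side of the generation-$k$ subcubes of $Q=(-1,2)^n$. From \eqref{eq:twosideduppermatrix},
\[
|a_T|\geq\Bigl(\fint_T w_0^{-1}\Bigr)^{-1},\qquad |a_{*,T}^{-1}|\geq\Bigl(\fint_T w_0\Bigr)^{-1}.
\]
Take a simplex $T\in\mathcal T_k(Q)$ within distance $1/4$ of $e_1$, and let $\rho_T=\dist(T,e_1)+h_k$. On $T$ we have $w_0^{-1}\leq C|x-e_1|^2\leq C\rho_T^2$, so $|a_T|\geq c\rho_T^{-2}$. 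The number of such simplices with $\rho_T\asymp jh_k$ is about $j^{n-1}$, and $\#\mathcal T_k(Q)\asymp h_k^{-n}$. Therefore
\[
\frac1{\#\mathcal T_k(Q)}\sum_T|a_T|^p\geq c\,h_k^{n}\sum_{1\leq j\leq c/h_k} j^{n-1}(jh_k)^{-2p}\geq c\,h_k^{n-2p},
\]
where the bound keeps only the $j=1$ term. It follows that the $k$-th term of \eqref{eq:Lambdadef} is at least $c\,3^{-ks}3^{k(1-n/(2p))}$. If $s\leq1-n/(2p)$, these terms do not tend to zero, and $\Lambda_{s,1,p}(Q)=\infty$. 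The symmetric argument near $0$, where $w_0\leq C|x|^2$, gives $|a_{*,T}^{-1}|\geq c\rho_T^{-2}$ and hence $\lambda_{t,1,q}(Q)^{-1}=\infty$ whenever $t\leq1-n/(2q)$. Both $0$ and $e_1$ lie in the interior of $Q$, so enough cells exist at every generation.

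\emph{Step 3: the exponent dichotomy.} It remains to show that every admissible tuple satisfies one of the two divergence conditions. Suppose instead that $s+n/(2p)>1$ and $t+n/(2q)>1$. Adding these gives $s+t+\frac n2(\frac1p+\frac1q)>2$. On the other hand, \eqref{eq:admissible} gives $s+t+\frac{n-1}2(\frac1p+\frac1q)<1$, and $\frac12(\frac1p+\frac1q)<1$, so $s+t+\frac n2(\frac1p+\frac1q)<2$. This is a contradiction.

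\emph{Main obstacle.} The only delicate point is the counting in Step 2. I need to ensure that the harmonic-mean lower bound holds uniformly, including for the simplices touching $e_1$ itself. This works because $w_0^{-1}\leq C|x-e_1|^2$ holds even there, and the bound only requires that $\fint_T w_0^{-1}$ is at most $Ch_k^2$. The divergence needs just the single cell nearest to the singularity, and the rest of the argument is arithmetic.
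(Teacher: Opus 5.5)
Your proposal is correct and follows the paper's proof essentially step by step. It uses the same power-weight reduction for $A_2$, and the same lower bounds on the cells nearest $e_1$ and $0$ via the mean bounds \eqref{eq:twosideduppermatrix}; their derivation uses only \eqref{eq:basic}, so they apply to $a_0$ even though \eqref{eq:UC} fails. It then concludes with the same exponent arithmetic.

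There is one small slip in the large-cube case of Step~1. Near the origin, $w_0^{-1}\sim|x|^{-2}$ is not dominated by the reciprocal of $\max\{1,|x|^2\}|x-e_1|^{-2}$. Use instead $w_0\leq2+2|x-e_1|^{-2}$ and $w_0^{-1}\leq2+2|x|^{-2}$, whose means are bounded on cubes of side at least $1/10$ because $n>2$.
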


\begin{proof}
Near $0$, the weight is comparable to $|x|^2$, and near $e_1$ it
is comparable to $|x-e_1|^{-2}$. Since $n>2$, each power and its
inverse is locally integrable. The elementary power-weight calculation
gives uniformly bounded $A_2$ products on sufficiently small balls
near either point. Small balls away from the two points either have
distances to both points larger than a fixed multiple of their radius,
so $w_0$ is comparable to a constant, or are contained in one of the
preceding power-weight neighborhoods. It remains to consider balls whose radii are at least the
fixed positive threshold used above. The inequalities
\[
 w_0(x)\leq2+2/{|x-e_1|^2},\qquad
 w_0(x)^{-1}\leq2+2/{|x|^2}
\]
show that each normalized integral is uniformly bounded on such balls.
Indeed, the integral of $|x-z|^{-2}$ over a ball of radius $R$ is at
most $C_nR^{n-2}$. This proves $w_0\in A_2$ globally.

The points $0$ and $e_1$ are vertices of the triadic grid of
$Q=(-1,2)^n$ at every generation $k\geq1$.  Write $h_k=3^{1-k}$. For all sufficiently large
$k$, there is a simplex $T_k^-$ with vertex $e_1$ and scale $h_k$.
On this simplex $w_0\geq c h_k^{-2}$. Fix $e\in\mathbb R^n$. Every admissible competitor $h$ with
$h-e\cdot x\in H^1_{a_0,0}(T_k^-)$ satisfies
\[
 \fint_{T_k^-}\nabla h\d x=e.
\]
Again this follows by approximation of the zero boundary part. The coefficient lower bound and Cauchy-Schwarz therefore give
\[
 (a_0)_{T_k^-}e\cdot e
 \geq c h_k^{-2}
       \inf_{h-e\cdot x\in H^1_{a_0,0}(T_k^-)}
       \fint_{T_k^-}|\nabla h|^2\d x
 \geq c h_k^{-2}|e|^2.
\]
Near $e_1$, the coefficient lower bound also ensures
the unweighted gradient integrability needed for this calculation. There is likewise a simplex $T_k^+$ with vertex $0$ on which
$w_0\leq C h_k^2$. In \eqref{eq:lowermatrix}, restricting to affine
functions and optimizing their gradients gives
\[
 (a_0)_{*,T_k^+}^{-1}
 \geq\left(\fint_{T_k^+}a_0\d x\right)^{-1}
 \geq c h_k^{-2}\Id.
\]
Since $\#\cT_k(Q)=n!\,3^{kn}$, the $k$th terms of the two
discounted sums are bounded below, up to fixed positive factors, by
\[
 3^{-k\left(s-1+n/(2p)\right)},\qquad
 3^{-k\left(t-1+n/(2q)\right)},
\]
respectively. Finiteness of both sums thus requires
\begin{equation}\label{a2:eq:necessary}
 2s>2- n/p,\qquad 2t>2- n/q.
\end{equation}
Set $A=1/p+1/q<2$. Adding \eqref{a2:eq:necessary} gives
\[
 2(s+t)+(n-1)A>4-A>2.
\]
This contradicts \eqref{eq:admissible}, which requires the same
left side to be strictly smaller than two.
\end{proof}

We can now conclude the coefficient-class theorem stated in the introduction.

\begin{proof}[Proof of Theorem~\ref{thm:coefficientclasses}]
Fix an arbitrary tuple $(p,q,s,t)$ satisfying \eqref{eq:admissible}.
Theorem~\ref{a2:thm:example}, with Cantor codimension $\beta=s$, proves
the first assertion for this tuple. Proposition~\ref{a2:prop:converse}
proves the second assertion. The first coefficient belongs to the
prescribed coarse class and to no $A_2$ weighted elliptic class, while
the second belongs to the latter class and admits no admissible coarse
parameters. This proves incomparability for each fixed tuple and also
for the union over all admissible tuples.
\end{proof}

\begin{rem} For the Cantor coefficient, Theorems~\ref{thm:dirichletmain} and
\ref{thm:BHImain} hold in the stated NTA geometries, with quantitative
constants depending only on $n,p,q,s,t$ and the geometric data, since
the coarse bound is controlled by $n,s$. This supplies an example
outside the weighted class for every admissible parameter choice.
No restriction
on the position of $E$ relative to $\partial\Om$ is needed. In particular,
the boundary may pass through $E$. Proposition~\ref{prop:A2} also shows
that this coefficient cannot satisfy the uniform moment criterion for
any $p,q>1$. The example therefore proves that the full coarse criterion
strictly extends that sufficient moment criterion, while
Theorem~\ref{thm:coefficientclasses} specifies its different relation to
the entire $A_2$ weighted class.
\end{rem}

\section{Boundary regularity and the continuous Dirichlet problem}\label{sec:boundarydirichlet}

Having established the scope of the coefficient condition, we now prove
the boundary regularity and continuous Dirichlet assertions of
Theorem~\ref{thm:dirichletmain}. We assume \eqref{eq:basic}, \eqref{eq:admissible}, and \eqref{eq:UC}
throughout this section, except where a weaker hypothesis is stated
explicitly.

\subsection{Zero extension and boundary decay}\label{sec:boundary}

The boundary argument uses weak Harnack on ambient cubes.
Recall that $Q_R(w)=w+(-R,R)^n$, and let $\omega_n$ denote the
volume of the unit ball. The exterior corkscrew condition gives
\begin{equation}\label{eq:exteriordensity}
 \bigl|(5/8)Q_R(w)\setminus\Om\bigr|
 \geq c_e\bigl|(5/8)Q_R(w)\bigr|,
 \qquad c_e=\omega_n(\kappa/4)^n,
 \qquad w\in\partial\Om,\quad 0<R<R_0.
\end{equation}
Indeed, an exterior corkscrew at scale $R/2$ gives a ball of radius
$\kappa R/2$ inside $B(w,R/2)\subset(5/8)Q_R(w)$. Its relative volume
is $\omega_n(2\kappa/5)^n\geq c_e$. The smaller fraction $c_e$ also
works for the cubes used in the thin-layer argument below.

The next lemma shows that zero extension preserves the subsolution property
under the local energy boundary condition.

\begin{lem}\label{lem:zeroextension}
Let $\Om,W\subset\R^n$ be open, and let $u\geq0$ be a weak subsolution
in $\Om\cap W$, with zero local energy boundary values on
$\partial\Om\cap W$ in the sense of Definition~\ref{def:zerotrace}.
Its zero extension $\widetilde u$ is a weak subsolution in $W$.
\end{lem}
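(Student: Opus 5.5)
We first place $\widetilde u$ in $\cF_{\loc}(W)$ and then verify the subsolution inequality with a truncated test function. For the first step, fix an open $V\Subset W$ and choose $\zeta\in C_0^\infty(W)$ with $\zeta=1$ near $\overline V$. By Definition~\ref{def:zerotrace}, $\widetilde{\zeta u}\in\cF_0(\Om)\subset\cF$, and it agrees with $\widetilde u$ on $V$. Hence $\widetilde u\in\cF_{\loc}(W)$. By Lemma~\ref{lem:closability}, $\nabla\widetilde u=\nabla\widetilde{\zeta u}$ almost everywhere on $V$. This gradient vanishes almost everywhere on $V\setminus\Om$, because $\widetilde u=0$ there and the $W^{1,1}$ gradient vanishes on level sets. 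It follows that
\[
 \int_{\Om\cap W'}a\nabla u\cdot\nabla u\d x<\infty
 \quad\text{for every open }W'\Subset W.
\]

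Now fix $0\leq\phi\in C_0^\infty(W)$. We must show $\int_W a\nabla\widetilde u\cdot\nabla\phi\d x\leq0$. The function $\phi$ itself is not admissible in Lemma~\ref{lem:products}(iii) for the equation in $\Om\cap W$, since it does not vanish on $\partial\Om$. We therefore use the truncation of Lemma~\ref{lem:truncations}. Set $S_\epsilon(t)=\min\{t^+/\epsilon,1\}$ and
\[
 \psi_\epsilon=\phi\,S_\epsilon(\widetilde{\zeta u}),
\]
with $\zeta=1$ on a neighborhood of $\supp\phi$. The function $S_\epsilon$ is Lipschitz and fixes zero, so the Markov property in $\cF_0(\Om)$ gives $S_\epsilon(\widetilde{\zeta u})\in\cF_0(\Om)$. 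This function is bounded, so Lemma~\ref{lem:products}(i) gives $\psi_\epsilon\in\cF_0(\Om)$. Moreover $0\leq\psi_\epsilon\leq\phi$, and $\psi_\epsilon$ vanishes outside $\supp\phi\Subset W$. Lemma~\ref{lem:products}(iii), in its subsolution form, together with the energy bound from the first paragraph, then yields
\[
 \int_{\Om\cap W}S_\epsilon(u)\,a\nabla u\cdot\nabla\phi\d x
 +\frac1\epsilon\int_{\{0<u<\epsilon\}}\phi\,a\nabla u\cdot\nabla u\d x\leq0.
\]
The second term is nonnegative and can be dropped. As $\epsilon\downarrow0$, $S_\epsilon(u)\to\chi_{\{u>0\}}$ pointwise. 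Dominated convergence applies, with majorant $|a\nabla u\cdot\nabla\phi|\leq(a\nabla u\cdot\nabla u)^{1/2}(a\nabla\phi\cdot\nabla\phi)^{1/2}$, which is integrable by the local energy bound and \eqref{eq:basic}. In the limit we obtain
\[
 \int_{\Om\cap W\cap\{u>0\}}a\nabla u\cdot\nabla\phi\d x\leq0.
\]
Since $\nabla u=0$ almost everywhere on $\{u=0\}$ and $\nabla\widetilde u=0$ almost everywhere off $\Om$, the left side equals $\int_W a\nabla\widetilde u\cdot\nabla\phi\d x$, as required.

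The main obstacle is justifying the test function $\psi_\epsilon$ near $\partial\Om$. Its membership in $\cF_0(\Om)$ rests on the Markov property of the restricted form together with the cutoff product lemma. Applying Lemma~\ref{lem:products}(iii) also needs the local energy bound up to the boundary, which we derived from the zero boundary condition. Lemma~\ref{lem:products}(iii) is stated for smooth tests extended to bounded nonnegative elements of $\cF_0(\Om)$, and $\psi_\epsilon$ is such an element, so no further approximation is needed. Only chain-rule bookkeeping on the level sets $\{u=0\}$ and $\{u=\epsilon\}$ remains.
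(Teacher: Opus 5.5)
Your proof is correct and follows the paper's argument essentially step for step. Both derive $\widetilde u\in\cF_{\loc}(W)$ and the energy bound up to $\partial\Om$ from the boundary condition, place $\phi\,S_\epsilon(\widetilde{\zeta u})$ in $\cF_0(\Om)$ via the Markov property and Lemma~\ref{lem:products}(i), apply the boundary testing assertion (iii), drop the nonnegative level-set term, and let $\epsilon\downarrow0$ by dominated convergence; you only spell out the local energy bound a little more explicitly than the paper does.
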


\begin{proof}
Local membership of $\widetilde u$ in the ambient form domain follows from
the definition of the boundary condition. Let $\phi\in C_0^\infty(W)$ be
nonnegative and let $T_\varepsilon(z)=\min\{z/\varepsilon,1\}$ for $z\geq0$.
Choose $0\leq\zeta\in C_0^\infty(W)$ equal to one near
$\supp\phi$. By the boundary condition, the zero extension of
$\zeta u$ belongs to $\cF_0(\Om)$. Its composition with
$T_\varepsilon$ is bounded and belongs to the same space, so
$\phi T_\varepsilon(\zeta u)\in\cF_0(\Om)$ by
Lemma~\ref{lem:products}. On $\Om$ this is
$\phi T_\varepsilon(u)$. The boundary condition also gives finite
energy of $u$ on the intersection of $\Om$ with every compact
subset of $W$. Thus the boundary testing assertion of
Lemma~\ref{lem:products} applies. Explicitly, bounded nonnegative
core approximants to $T_\varepsilon(\zeta u)$, multiplied by
$\phi$, give nonnegative tests in $C_0^\infty(\Om\cap W)$
converging in energy. The subsolution inequality therefore gives
\begin{equation}\label{eq:zerotest}
 \int_{\Om}T_\varepsilon(u)a\nabla u\cdot\nabla\phi\d x
 +\frac1\varepsilon\int_{\{0<u<\varepsilon\}}
       \phi a\nabla u\cdot\nabla u\d x\leq0.
\end{equation}
The second term is nonnegative. The first converges by energy
Cauchy-Schwarz and dominated convergence. The gradient of $u$ is zero
almost everywhere on $\{u=0\}$. Hence
\[
 \int_W a\nabla\widetilde u\cdot\nabla\phi\d x
 =\int_{\Om}a\nabla u\cdot\nabla\phi\d x\leq0.
\]
This is the asserted subsolution inequality.
\end{proof}

The following lemma is the precise consequence of weak Harnack that will
be iterated. Its measure is ordinary Lebesgue measure.

\begin{lem}\label{lem:zeroset}
Suppose $f$ is an ambient subsolution in a cube $Q$, $0\leq f\leq M_0$,
and
\[
 |\{f=0\}\cap(5/8)Q|\geq\mu |(5/8)Q|,
 \qquad 0<\mu\leq1.
\]
Then
\begin{equation}\label{eq:zeroset}
 \esssup_{(1/2)Q}f\leq\vartheta M_0,
 \qquad \vartheta=1-\frac{\mu^{1/\eta}}{H_\eta}\in(0,1).
\end{equation}
\end{lem}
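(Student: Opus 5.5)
The plan is to apply the weak Harnack inequality \eqref{eq:WH} of Proposition~\ref{prop:AAKT} to the nonnegative function $g=M_0-f$. Since $f$ is a weak subsolution in $Q$ in the ambient sense, $g$ is a weak supersolution in $Q$, and $g\geq0$ because $f\leq M_0$. By the discussion following Proposition~\ref{prop:AAKT}, \eqref{eq:WH} applies to supersolutions in $\cF_{\loc}(Q)$ with the uniform constant $H_\eta$ from \eqref{eq:Hconstants}, where $\eta=\min\{1,m_t^*/2\}$. Here we use the exhaustion by slightly smaller concentric cubes supplied by Lemma~\ref{lem:boundedcompletion}. This gives
\[
 \left(\avgint_{(5/8)Q}g^\eta\d x\right)^{1/\eta}
 \leq H_\eta\essinf_{(1/2)Q}g.
\]

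Next I bound the left side from below using the zero set. On $\{f=0\}$ we have $g=M_0$, so
\[
 \avgint_{(5/8)Q}g^\eta\d x\geq M_0^\eta\,\frac{|\{f=0\}\cap(5/8)Q|}{|(5/8)Q|}\geq\mu M_0^\eta.
\]
Hence $\mu^{1/\eta}M_0\leq H_\eta\essinf_{(1/2)Q}(M_0-f)=H_\eta\bigl(M_0-\esssup_{(1/2)Q}f\bigr)$. Rearranging gives $\esssup_{(1/2)Q}f\leq(1-\mu^{1/\eta}/H_\eta)M_0$, which is \eqref{eq:zeroset}. Since $\mu\leq1$ and $H_\eta\geq2$, we also get $\vartheta\in[1/2,1)\subset(0,1)$.

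The only delicate point is justifying that $g$ is an admissible supersolution for \eqref{eq:WH}. Constants belong to $\cF_{\loc}$: multiplying by a smooth cutoff equal to one near the relevant set gives an element of $\cF$. The weak inequality for $g$ follows from that for $f$ because $\nabla g=-\nabla f$. The passage from $\cF_{\loc}(Q)$ to $\Ha$ on inner cubes, followed by letting those cubes increase to $Q$, is exactly the argument already recorded after Proposition~\ref{prop:AAKT}. If $M_0=0$ the claim is trivial, so we may assume $M_0>0$.
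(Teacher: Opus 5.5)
Your proof is correct and is the same argument as the paper's. You apply \eqref{eq:WH} to the nonnegative supersolution $M_0-f$, bound the $\eta$-mean over $(5/8)Q$ from below by $\mu^{1/\eta}M_0$ using the zero set, and rearrange; your remarks on why $M_0-f$ is admissible and why $\vartheta\in[1/2,1)$ are accurate additions to the paper's brief proof.
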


\begin{proof}
The function $M_0-f$ is a nonnegative supersolution. By
\eqref{eq:WH},
\[
 \mu^{1/\eta}M_0
 \leq\left(\avgint_{(5/8)Q}(M_0-f)^\eta\d x\right)^{1/\eta}
 \leq H_\eta\left(M_0-\esssup_{(1/2)Q}f\right).
\]
Rearrangement gives the assertion, including the trivial case $M_0=0$.
\end{proof}

The following proposition combines zero extension with exterior density to
obtain boundary decay.

\begin{prop}\label{prop:boundarycontraction}
Assume \eqref{eq:UC} on the cubes involved and the exterior
density condition \eqref{eq:exteriordensity} for $\Om$. Let
$w\in\partial\Om$ and $0<R<R_0$. Suppose that $u\geq0$
is a bounded weak subsolution of $\La u=0$ in
$\Om\cap Q_R(w)$, with zero local energy boundary values
on $\partial\Om\cap Q_R(w)$.  With the fixed exponent
$\eta$ chosen after \eqref{eq:Hconstants}, set
\begin{equation}\label{eq:boundaryconstants}
 \varepsilon_B={c_e^{1/\eta}}/{H_\eta},\qquad
 \kappa_B=1-\varepsilon_B,\qquad
 \beta_B=-{\log\kappa_B}/{\log2}>0.
\end{equation}
Then
\begin{align}
 \esssup_{\Om\cap Q_{R/2}(w)}u
 &\leq\kappa_B\esssup_{\Om\cap Q_R(w)}u,\quad\mbox{and}
       \label{eq:boundarycontraction}\\
 \esssup_{\Om\cap Q_\rho(w)}u
 &\leq C\left(\frac\rho R\right)^{\beta_B}
                 \esssup_{\Om\cap Q_R(w)}u,
 \qquad 0<\rho\leq R.\label{eq:boundarypower}
\end{align}
\end{prop}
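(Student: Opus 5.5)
The contraction \eqref{eq:boundarycontraction} will follow by combining Lemma~\ref{lem:zeroextension}, the exterior density \eqref{eq:exteriordensity}, and Lemma~\ref{lem:zeroset}. The power decay \eqref{eq:boundarypower} then follows by iterating the contraction over dyadic cubes.

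\emph{Setting up the zero extension.} Put $M_0=\esssup_{\Om\cap Q_R(w)}u$. Apply Lemma~\ref{lem:zeroextension} with $W=Q_R(w)$. Then the zero extension $\widetilde u$ is a nonnegative weak subsolution in $Q=Q_R(w)$, and $0\leq\widetilde u\leq M_0$ almost everywhere in $Q$. Its zero set contains $Q\setminus\Om$. By \eqref{eq:exteriordensity},
\[
 |\{\widetilde u=0\}\cap(5/8)Q|\geq c_e|(5/8)Q|.
\]

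\emph{The one-step contraction.} Lemma~\ref{lem:zeroset} is stated for ambient subsolutions and uses \eqref{eq:WH}. That estimate is valid for supersolutions in $\cF_{\loc}(Q)$ under \eqref{eq:UC}, by the remark after \eqref{eq:Hconstants}. We apply it to $M_0-\widetilde u$. This is a nonnegative supersolution in $Q$, since constants are solutions and $\cF_{\loc}$ is stable under adding constants locally: multiply by a cutoff, and note the constant times a cutoff lies in $\cF$. With $\mu=c_e$, Lemma~\ref{lem:zeroset} gives
\[
 \esssup_{(1/2)Q}\widetilde u\leq\bigl(1-c_e^{1/\eta}/H_\eta\bigr)M_0=\kappa_BM_0.
\]
Since $(1/2)Q=Q_{R/2}(w)$, this is \eqref{eq:boundarycontraction}. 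The constant $\kappa_B$ lies in $(0,1)$ because $H_\eta\geq2$ and $c_e\leq1$.

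\emph{Iteration.} For $0<\rho\leq R$, choose $j\geq0$ with $2^{-j-1}R<\rho\leq2^{-j}R$. Every $2^{-i}R$ with $i\geq0$ is still below $R_0$. The restriction of $u$ to $\Om\cap Q_{2^{-i}R}(w)$ satisfies the same hypotheses, because zero local energy boundary values on a larger set imply them on a smaller set: cutoffs in $C_0^\infty(Q_{2^{-i}R}(w))$ lie in $C_0^\infty(Q_R(w))$. Applying the contraction $j$ times gives
\[
 \esssup_{\Om\cap Q_\rho(w)}u\leq\kappa_B^{j}M_0=2^{-j\beta_B}M_0\leq2^{\beta_B}(\rho/R)^{\beta_B}M_0,
\]
so \eqref{eq:boundarypower} holds with $C=2^{\beta_B}$.

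\emph{Main obstacle.} The only delicate point is justifying that $M_0-\widetilde u$ is an admissible supersolution to which \eqref{eq:WH} applies on all of $Q$, in particular near $\partial Q$. I would handle this as the remark after \eqref{eq:Hconstants} prescribes. First apply \eqref{eq:WH} on concentric cubes $Q'\Subset Q$, where Lemma~\ref{lem:boundedcompletion} places the restriction in the $\Ha$ class. The density bound for $(5/8)Q'$ still holds in the limit, as $|(5/8)Q'\setminus\Om|\to|(5/8)Q\setminus\Om|$. Then let $Q'\uparrow Q$. If preferred, one can instead apply the argument directly at a slightly smaller scale, since the exterior ball $B(A^-_{R/2}(w),\kappa R/2)$ already lies in $B(w,R/2)$, which is compactly inside $(5/8)Q$.
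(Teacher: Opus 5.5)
Your proposal is correct and follows the paper's proof essentially verbatim. It uses the zero extension from Lemma~\ref{lem:zeroextension}, applies Lemma~\ref{lem:zeroset} with $\mu=c_e$ from \eqref{eq:exteriordensity}, and iterates on dyadic concentric cubes to get $C=2^{\beta_B}$. Your extra justifications (the supersolution property of $M_0-\widetilde u$ in $\cF_{\loc}$, inheritance of the boundary condition on smaller cubes, and applying \eqref{eq:WH} on concentric $Q'\uparrow Q$, where \eqref{eq:exteriordensity} in fact holds directly at each smaller radius) are exactly what the remark after \eqref{eq:Hconstants} prescribes. Only your optional alternative via the exterior corkscrew would use geometry beyond the stated density hypothesis.
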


\begin{proof}
The zero extension $\widetilde u$ is an ambient subsolution by
Lemma~\ref{lem:zeroextension}. It vanishes on the exterior set in
\eqref{eq:exteriordensity}. Apply Lemma~\ref{lem:zeroset} with
$\mu=c_e$ and $M_0=\esssup_{\Om\cap Q_R(w)}u$ to obtain
\eqref{eq:boundarycontraction}. Repeating on the concentric cubes
$Q_{2^{-j}R}(w)$ gives \eqref{eq:boundarypower}, with
$C=2^{\beta_B}$ for intermediate radii.
\end{proof}

The next corollary records the same estimate with Euclidean balls,
which will be convenient in the comparison proof.

\begin{cor}\label{cor:boundaryballs}
Assume \eqref{eq:exteriordensity} and \eqref{eq:UC} on the
cubes involved. Let $w\in\partial\Om$, $0<R<R_0$, and let $h\geq0$ be a bounded
subsolution in $D_R(w)$ with zero local energy boundary values on
$\partial\Om\cap B(w,R)$. Then
\begin{equation}\label{eq:boundaryballpower}
 \esssup_{D_t(w)}h\leq C_B(t/R)^{\beta_B}
                      \esssup_{D_R(w)}h,
 \qquad 0<t\leq R,
\end{equation}
where $C_B$ depends only on $n,\kappa$ and $\mathbf d$.
\end{cor}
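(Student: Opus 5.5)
The plan is to reduce Corollary~\ref{cor:boundaryballs} to Proposition~\ref{prop:boundarycontraction} by sandwiching balls and cubes centred at $w$. Since $Q_{\rho/\sqrt n}(w)\subset B(w,\rho)\subset Q_\rho(w)$ for every $\rho>0$, a cube statement at radius $R/\sqrt n$ feeds into balls. Set $R'=R/\sqrt n<R_0$. Then $h$ is a bounded nonnegative subsolution in $\Om\cap Q_{R'}(w)\subset D_R(w)$. We need zero local energy boundary values on $\partial\Om\cap Q_{R'}(w)$, and these follow directly from Definition~\ref{def:zerotrace}: every cutoff in $C_0^\infty(Q_{R'}(w))$ is also a cutoff in $C_0^\infty(B(w,R))$. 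So the hypotheses of Proposition~\ref{prop:boundarycontraction} hold on $Q_{R'}(w)$, with the exterior density \eqref{eq:exteriordensity} available because $R'<R_0$.

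Now take $0<t\le R$. If $t\le R'$, apply \eqref{eq:boundarypower} with $\rho=t$ and radius $R'$:
\[
 \esssup_{D_t(w)}h\le\esssup_{\Om\cap Q_t(w)}h
 \le C\Bigl(\frac{t}{R'}\Bigr)^{\beta_B}\esssup_{\Om\cap Q_{R'}(w)}h
 \le C n^{\beta_B/2}\Bigl(\frac tR\Bigr)^{\beta_B}\esssup_{D_R(w)}h.
\]
The last step uses $Q_{R'}(w)\subset B(w,R)$. If instead $R'<t\le R$, use monotonicity of the supremum together with $(t/R)^{\beta_B}\ge n^{-\beta_B/2}$. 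In both cases
\[
 C_B=\max\{C,1\}\,n^{\beta_B/2}.
\]
The exponent $\beta_B$ depends on $c_e=\omega_n(\kappa/4)^n$ and on $H_\eta$, hence only on $n,\kappa,\mathbf d$. The constant $C=2^{\beta_B}$ from the proposition has the same dependence, so $C_B$ depends only on $n,\kappa$ and $\mathbf d$, as claimed.

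I do not expect a serious obstacle. The two points to verify are that the boundary condition restricts from balls to the inscribed cube, and that \eqref{eq:UC} is needed only on the ambient cubes inside $Q_{R'}(w)$ that the proposition uses. The first follows from the inclusion of cutoff classes. The second holds because the \eqref{eq:UC} assumption "on the cubes involved" covers exactly those cubes.
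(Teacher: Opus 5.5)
Your proposal is correct and follows the paper's proof. Both arguments inscribe a cube of side comparable to $R/\sqrt n$ in $B(w,R)$, apply \eqref{eq:boundarypower} there for small $t$, and use monotonicity of the supremum for the remaining range. The only difference is that you use $Q_{R/\sqrt n}(w)$ where the paper uses $Q_{R/(2\sqrt n)}(w)$, which gives the slightly better constant $(2\sqrt n)^{\beta_B}$ instead of $(4\sqrt n)^{\beta_B}$.
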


\begin{proof}
Set $a_R=R/(2\sqrt n)$. For $t\leq a_R$, use
$D_t(w)\subset\Om\cap Q_t(w)$ and
$Q_{a_R}(w)\subset B(w,R)$ in
\eqref{eq:boundarypower}. This gives the estimate with
$C_B=(4\sqrt n)^{\beta_B}$. For $a_R<t\leq R$, the same bound follows
from monotonicity of the supremum and this choice of $C_B$.
\end{proof}

The boundedness assumption in Proposition~\ref{prop:boundarycontraction}
and Corollary~\ref{cor:boundaryballs} is automatic on smaller
neighborhoods compactly contained in a region where the zero local
energy boundary condition holds. Indeed, the zero extension is an
ambient subsolution by Lemma~\ref{lem:zeroextension} and is therefore
locally bounded by Proposition~\ref{prop:AAKT}. The boundary decay
estimate then shows that every nonnegative weak solution with zero
local energy boundary values extends continuously to the corresponding
physical boundary portion, with value zero there. This proves the
zero boundary decay assertion in Theorem~\ref{thm:dirichletmain}.

The next proposition establishes the converse: for nonnegative weak
solutions, continuous vanishing on a boundary portion implies zero
local energy boundary values there.

\begin{prop}
\label{prop:continuouszero}
Let $\Om,W\subset\R^n$ be open, and let $u\geq0$ be a weak solution
in $\Om\cap W$. Suppose that $u$ is continuous on
$(\Om\cup\partial\Om)\cap W$ and equals zero on
$\partial\Om\cap W$. Then $u$ has zero local energy boundary values in
$W$. This implication only uses \eqref{eq:basic} and the weak formulation in the local form domain.
\end{prop}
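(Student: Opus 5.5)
The plan is to approximate $\zeta u$ by the truncations $\zeta(u-\varepsilon)^+$, which vanish near the boundary, and to pass to the limit $\varepsilon\downarrow0$ using a uniform Caccioppoli bound and weak compactness in $\cF_0(\Om)$. Fix $\zeta\in C_0^\infty(W)$, and let $K=\supp\zeta\cap\overline\Om$, a compact subset of $(\Om\cup\partial\Om)\cap W$. Since $u$ is continuous on this set and vanishes on $\partial\Om\cap W$, $u$ is bounded on $K\cap\Om$, say by $M$. Moreover, for every $\varepsilon>0$ the set $K_\varepsilon=\{x\in K\cap\Om:u(x)\geq\varepsilon\}$ is closed in $K$ and disjoint from $\partial\Om$, hence a compact subset of $\Om\cap W$. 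Consequently $\psi_\varepsilon=\zeta(u-\varepsilon)^+$ and $\zeta^2(u-\varepsilon)^+$ are bounded by $\|\zeta\|_\infty^2 M$ and vanish outside compact subsets of $\Om\cap W$.

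First I would show that each $\psi_\varepsilon$ (zero extended) lies in $\cF_0(\Om)$. Choose an open $V$ with $K_\varepsilon\subset V\Subset\Om\cap W$ and an ambient $f\in\cF$ agreeing with $u$ on $V$. The function $g=\min\{(f-\varepsilon)^+,M\}$ lies in $\cF$ by the normal contraction property, and it agrees with $(u-\varepsilon)^+$ on $V$. Take $\chi\in C_0^\infty(V)$ equal to one near $K_\varepsilon$. Then $\zeta\chi g$ equals the zero extension of $\psi_\varepsilon$. By Lemma~\ref{lem:products}(i), with bounded core approximants $g_j$ from Lemma~\ref{lem:strongtruncation}, the functions $\zeta\chi g_j\in C_0^\infty(\Om)$ converge to it in the form norm. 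Hence $\psi_\varepsilon\in\cF_0(\Om)$.

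Next comes a bound uniform in $\varepsilon$. The function $\zeta^2(u-\varepsilon)^+$ is bounded, lies in $\cF_{\loc}(\Om\cap W)$ by the same construction, and vanishes outside a compact subset of $\Om\cap W$. So Lemma~\ref{lem:products}(ii) allows it as a test function. Because $\nabla(u-\varepsilon)^+=\mathbf 1_{\{u>\varepsilon\}}\nabla u$, the Caccioppoli computation of Lemma~\ref{lem:Caccioppoli} gives
\[
 \int\zeta^2a\nabla(u-\varepsilon)^+\cdot\nabla(u-\varepsilon)^+\d x
 \leq4\int((u-\varepsilon)^+)^2a\nabla\zeta\cdot\nabla\zeta\d x
 \leq4M^2\int_{\supp\zeta}\tr a\,|\nabla\zeta|^2\d x.
\]
Combined with \eqref{eq:productenergy}, this makes $\|\psi_\varepsilon\|_{\cF}$ bounded independently of $\varepsilon$; only \eqref{eq:basic} enters here.

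Finally, $\psi_\varepsilon\to\zeta u$ in $\mathrm L^2(\R^n)$ by dominated convergence, with both functions supported in $\supp\zeta$ and bounded by $\|\zeta\|_\infty M$. The closed subspace $\cF_0(\Om)$ of the Hilbert space $\cF$ is weakly closed. A bounded sequence $\psi_{\varepsilon_j}$ therefore has a subsequence converging weakly in $\cF$ to some element of $\cF_0(\Om)$. Weak convergence in $\cF$ implies weak convergence in $\mathrm L^2$, so this limit equals the zero extension of $\zeta u$, which is the assertion.

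The main technical point is the first step: checking that the truncations are genuinely in $\cF_0(\Om)$ and admissible as tests, with only local form membership of $u$ available. The key observation is that continuous vanishing at the boundary makes the supports of these truncations compact in $\Om\cap W$.
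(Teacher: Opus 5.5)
Your proof is correct. It uses the same approximants $\zeta(u-\varepsilon)^+$ and the same level-set Caccioppoli estimate as the paper; the routes diverge only in how the limit $\varepsilon\downarrow0$ is taken.

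The paper lets $\varepsilon\downarrow0$ in the Caccioppoli inequality, applied also with a larger cutoff. By monotone convergence and $\nabla u=0$ a.e.\ on $\{u=0\}$, it obtains that $u$ has finite energy up to $\partial\Om$ on $\supp\zeta$. It then shows that the truncations converge to $\zeta u$ in the form norm: the gradient error consists of $\zeta\nabla u$ on the shrinking sets $\{0<u\le\varepsilon\}$, plus a term bounded by $\varepsilon|\nabla\zeta|$. Closedness finishes the argument.

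You stop at the $\varepsilon$-uniform bound on $\|\zeta(u-\varepsilon)^+\|_{\cF}$. You then use weak sequential compactness in the Hilbert space $\cF$ and weak closedness of the norm-closed subspace $\cF_0(\Om)$. The strong $\mathrm L^2$ convergence (in fact uniform, since $|(u-\varepsilon)^+-u|\le\varepsilon$) identifies the weak limit.

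What each route buys:
\begin{itemize}
\item Your route is shorter. It never needs to control the gradient on the thin sets or to establish finite energy up to the boundary beforehand.
\item The paper's route gives norm approximation and makes the finite energy of $u$ up to $\partial\Om\cap W$ explicit. This also follows a posteriori from your conclusion, via \eqref{eq:closedenergy} applied to $\zeta u$ with $\zeta=1$ near a given compact subset of $W$.
\item You justify the membership $\zeta(u-\varepsilon)^+\in\cF_0(\Om)$, which the paper only asserts.
\end{itemize}

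One small slip: $g=\min\{(f-\varepsilon)^+,M\}$ agrees with $(u-\varepsilon)^+$ only where $u\le M+\varepsilon$. That holds on $V\cap\supp\zeta$, not necessarily on all of $V$. This is all you actually use, since $\zeta\chi g$ is still the zero extension of $\zeta(u-\varepsilon)^+$. Alternatively, truncate at $\sup_{\overline V}u$.
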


\begin{proof}
Fix $\eta\in C_0^\infty(W)$. Choose a second cutoff equal to one near
$\supp\eta$ to keep all subsequent supports in a compact subset of $W$.
Continuity and vanishing on the physical boundary show that the support of
$\eta(u-\varepsilon)_+$ stays a positive distance from $\partial\Om$ for
each $\varepsilon>0$. The function $u$ is bounded on this compact boundary
neighborhood. Test the equation with $\eta^2(u-\varepsilon)_+$ to obtain
\begin{equation}\label{eq:levelenergy}
 \int_{\{u>\varepsilon\}}\eta^2a\nabla u\cdot\nabla u\d x
 \leq4\int_{\Om}(u-\varepsilon)_+^2
                   a\nabla\eta\cdot\nabla\eta\d x.
\end{equation}
The right side is bounded independently of $\varepsilon$. Applying the
estimate also to the larger cutoff establishes finite energy of $u$ up
to the physical boundary on $\supp\eta$. Letting $\varepsilon$ decrease to
zero proves this claim by monotone convergence and the fact that
$\nabla u=0$ almost everywhere on $\{u=0\}$.

Each zero extension of $\eta(u-\varepsilon)_+$ belongs to $\cF_0(\Om)$.
These functions converge to the zero extension of $\eta u$ in $\mathrm L^2$.
The gradient error is the sum of a term supported on
$\{0<u\leq\varepsilon\}$ and a term bounded by
$\varepsilon|\nabla\eta|$. The first has energy tending to zero by the
finite energy just established, and the second is controlled by
$\varepsilon^2\int a\nabla\eta\cdot\nabla\eta$. Closedness proves the
asserted membership in $\cF_0(\Om)$.
\end{proof}

\begin{rem}\label{rem:traceequivalence}
Note the equivalence of the two boundary
conditions. Indeed, under the standing coefficient assumptions and the
exterior density condition \eqref{eq:exteriordensity}, nonnegative
$\La$-harmonic functions with zero local energy boundary values vanish continuously by
Proposition~\ref{prop:boundarycontraction}. Proposition~\ref{prop:continuouszero}
gives the converse. Hence the two boundary
conditions are equivalent. Thus Theorem~\ref{thm:BHImain} also applies directly to
positive weak solutions that vanish continuously on the boundary patch,
as in the usual formulation of the boundary Harnack principle.
\end{rem}

\subsection{The continuous Dirichlet problem}\label{sec:dirichlet}

Let $\Om$ be a bounded NTA domain. We construct solutions with
continuous boundary data by uniformly approximating the data by
Lipschitz functions and passing to the limit in the corresponding
variational solutions. To prove that these solutions attain their
boundary values continuously, we first verify that their level
truncations satisfy the local energy boundary condition required
for the boundary decay estimates. The following lemma establishes
this property using the approximation that defines $\cF_0(\Om)$.

\begin{lem}\label{lem:boundarylevels}
Let $F$ be bounded and Lipschitz on a neighborhood of $\overline\Om$,
let $z\in\cF_0(\Om)$ be bounded, and write $u=F+z$ in $\Om$.
If $W$ is open and $F\leq b$ on $\partial\Om\cap W$, then
$(u-b)^+$ has zero local energy boundary values in $W$.
If $F\geq b$ there, the same holds for $(b-u)^+$.
\end{lem}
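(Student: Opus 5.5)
By symmetry (replace $F,z,b$ by $-F,-z,-b$) it suffices to treat $(u-b)^+$. Fix $\zeta\in C_0^\infty(W)$. We must show that the zero extension of $\zeta(u-b)^+$ belongs to $\cF_0(\Om)$. Write $\zeta=\zeta_1-\zeta_2$ with $0\leq\zeta_i\in C_0^\infty(W)$, as in the proof of Lemma~\ref{lem:truncations}. This reduces the claim to nonnegative $\zeta$, by linearity of $\cF_0(\Om)$.

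The key idea is to approximate $z$ by core functions and push the boundary condition through the Lipschitz part. Choose uniformly bounded $z_j\in C_0^\infty(\Om)$ with $z_j\to z$ in the form norm and almost everywhere; Lemma~\ref{lem:strongtruncation} supplies these. Set
\[
 g_j=\zeta(F+z_j-b)^+,\qquad g=\zeta(F+z-b)^+ .
\]
For each fixed $j$, $\supp z_j$ is a compact subset $K_j\subset\Om$. Near $\partial\Om\cap W$ we therefore have $F+z_j-b=F-b$.

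The claim for fixed $j$ is $g_j\in\cF_0(\Om)$. We compare with $\zeta(F-b)^+$, which need not vanish near the boundary. To fix this, use $F\leq b$ on $\partial\Om\cap W$ and replace $F$ by $F_\varepsilon=\min\{F,b+\varepsilon\}$; this makes $(F-b-\varepsilon)^+$ vanish in a relative neighborhood of $\partial\Om\cap\supp\zeta$. More concretely, consider
\[
 g_{j,\varepsilon}=\zeta\,(F+z_j-b-\varepsilon)^+ .
\]
Since $F$ is continuous and $F\leq b$ on $\partial\Om\cap\supp\zeta$, and $z_j=0$ near $\partial\Om$, the set $\{F+z_j>b+\varepsilon\}\cap\supp\zeta\cap\overline\Om$ stays a positive distance from $\partial\Om$. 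Hence $g_{j,\varepsilon}$ is a bounded Lipschitz function with compact support in $\Om$. Lemma~\ref{lem:Lipzero}, or mollification as in its proof, gives $g_{j,\varepsilon}\in\cF_0(\Om)$, since $F+z_j$ is Lipschitz on a neighborhood of $\overline\Om$.

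Next let $\varepsilon\downarrow0$. In $\mathrm L^2$, $|g_{j,\varepsilon}-g_j|\leq\varepsilon\zeta$ on the bounded set $\Om$. For the energy, the gradient of the difference splits into two terms:
\[
 \zeta\nabla(F+z_j)\,1_{\{b<F+z_j\leq b+\varepsilon\}}
 \qquad\text{and}\qquad
 \text{a term bounded by }\varepsilon|\nabla\zeta| .
\]
Both are dominated by $C_j\,\tr a$ on $\supp\zeta$. The indicator sets decrease to a set on which the gradient of $(F+z_j-b)$ vanishes almost everywhere. Dominated convergence then gives energy convergence, and closedness of $\cF_0(\Om)$ gives $g_j\in\cF_0(\Om)$.

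Finally let $j\to\infty$. The map $T(s)=(s-b)^+$ is Lipschitz and piecewise $C^1$ with one breakpoint, though $T(0)\neq0$ may occur. So apply Lemma~\ref{lem:strongtruncation} to $f_j=F+z_j\to F+z$ on $\Om$: energy convergence of $F+z_j$ holds since $F$ is fixed, and convergence in measure holds. This yields $a^{1/2}\nabla(T(f_j)-T(f))\to0$ in $\mathrm L^2(\Om)$. Multiplication by $\zeta$ is handled as in Lemma~\ref{lem:products}: the cutoff-gradient term $|T(f_j)-T(f)|\,a^{1/2}\nabla\zeta$ tends to zero by dominated convergence, using the uniform bound $|T(f_j)-T(f)|\leq|z_j-z|$ together with almost everywhere convergence. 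The same bound gives $\mathrm L^2$ convergence of $g_j\to g$. Closedness of $\cF_0(\Om)$ then yields $g\in\cF_0(\Om)$.

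I expect the main obstacle to be the fixed-$j$ step. It requires the support of $g_{j,\varepsilon}$ to stay away from $\partial\Om$, which means carefully using only $F\leq b$ on $\partial\Om\cap W$, continuity of $F$, and compactness of $\supp\zeta\cap\overline\Om$. Everything else is the standard truncation calculus already recorded.
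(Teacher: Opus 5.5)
Your proof is correct and follows essentially the same route as the paper. You approximate $z$ by uniformly bounded core functions $z_j$, show $\zeta(F+z_j-b)^+\in\cF_0(\Om)$, pass to the limit using Lemma~\ref{lem:strongtruncation} together with dominated convergence for the cutoff term, and reduce general cutoffs to nonnegative ones. The only difference is your $\varepsilon$-shift in the fixed-$j$ step, which is harmless but unnecessary. The paper applies Lemma~\ref{lem:Lipzero} to $g_j$ directly: $g_j$ is already Lipschitz on a neighborhood of $\overline\Om$ and vanishes on all of $\partial\Om$, and that lemma only requires vanishing on the boundary, not near it.
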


\begin{proof}
Choose uniformly bounded $z_j\in C_0^\infty(\Om)$ converging
to $z$ in the form norm and almost everywhere, as supplied by
Lemma~\ref{lem:strongtruncation}. Fix a nonnegative
$\zeta\in C_0^\infty(W)$. The functions
$g_j=\zeta(F+z_j-b)^+$ are Lipschitz on a neighborhood of
$\overline\Om$ and vanish on all of $\partial\Om$. Indeed,
$z_j$ is zero near the boundary, $F\leq b$ on the boundary portion in
$W$, and $\zeta$ vanishes outside $W$.
Lemma~\ref{lem:Lipzero} puts their zero extensions in $\cF_0(\Om)$.

Lemma~\ref{lem:strongtruncation} gives convergence of the gradients
of $(F+z_j-b)^+$ to that of $(u-b)^+$ in the $a$ energy. For the
cutoff term, uniform boundedness and almost everywhere convergence give
dominated convergence against $a\nabla\zeta\cdot\nabla\zeta$.
The functions also converge in $\mathrm L^2$. Closedness therefore
places the zero extension of $\zeta(u-b)^+$ in $\cF_0(\Om)$.
Differences of nonnegative cutoffs give the full boundary condition.
Apply the same argument to $-u$ for the second assertion.
\end{proof}

We now establish boundary continuity for the bounded variational solutions
from Proposition~\ref{prop:variational}. Let $f$ be Lipschitz on
$\partial\Om$ and let $u$ be its variational extension. Fix
$w\in\partial\Om$ and $0<R<R_0$. Put
\begin{equation}\label{eq:boundarylevels}
 b_R^+=\sup_{\partial\Om\cap Q_R(w)}f,\qquad
 b_R^-=\inf_{\partial\Om\cap Q_R(w)}f.
\end{equation}
The functions $(u-b_R^+)_+$ and $(b_R^--u)_+$ have zero local energy
boundary values in $Q_R(w)$ by Lemma~\ref{lem:boundarylevels}. The two truncations are subsolutions. Applying
Proposition~\ref{prop:boundarycontraction} gives, for $0<\rho\leq R$,
\begin{equation}\label{eq:boundarymodulus}
 \sup_{\Om\cap Q_\rho(w)}|u-f(w)|
 \leq\osc_{\partial\Om\cap Q_R(w)}f
       +C\left(\frac\rho R\right)^{\beta_B}\|f\|_{C(\partial\Om)}.
\end{equation}
Interior continuity has already been proved. Letting first $\rho$ and then
$R$ tend to zero shows that $u$ extends continuously to the boundary with
value $f$.

\subsection{Proof of Theorem~\ref{thm:dirichletmain}}

We first give an explicit approximation and extension of the boundary
data. Set $m=\min_{\partial\Om}f$, $M_0=\max_{\partial\Om}f$, and
\[
 F_j(x)=\min\left\{M_0,\inf_{z\in\partial\Om}
                  \bigl(f(z)+j|x-z|\bigr)\right\},
 \qquad f_j=F_j|_{\partial\Om}.
\]
Then $F_j$ is globally Lipschitz with constant at most $j$, and
$m\leq F_j\leq M_0$. Uniform continuity of $f$ on the compact boundary
shows that $f_j\to f$ uniformly. Indeed, points $z$ close to a given boundary
point give an error controlled by the modulus of $f$, while points
farther away incur the term $j|x-z|$. Let $u_j$ be the variational
solutions furnished by Proposition~\ref{prop:variational}.
The preceding argument gives $u_j\in C(\overline\Om)$ with boundary values
$f_j$. The contraction estimate gives
\begin{equation}\label{eq:uniformCauchy}
 \|u_j-u_k\|_{C(\overline\Om)}\leq\|f_j-f_k\|_{C(\partial\Om)}.
\end{equation}
Consequently $u_j$ converges uniformly on $\overline\Om$ to a continuous
function $u$ with boundary values $f$. To verify that the limit is a weak
solution, fix a compactly supported cutoff $\eta$ in $\Om$ and apply
Lemma~\ref{lem:Caccioppoli} to $u_j-u_k$. It shows convergence in local
energy as well as uniform convergence. Hence $u\in\cF_{\loc}(\Om)$ and
$\int_\Om a\nabla u\cdot\nabla\phi\d x=0$ for every
$\phi\in C_0^\infty(\Om)$.

To prove uniqueness in the stated class, let $h$ be the difference of two
solutions with the same boundary values. It is continuous on
$\overline\Om$ and vanishes on $\partial\Om$. For $\varepsilon>0$,
$(h-\varepsilon)_+$ has compact support in $\Om$ and is an admissible
test function. Testing gives
$\cE((h-\varepsilon)_+,(h-\varepsilon)_+)=0$. The zero boundary coercivity
\eqref{eq:energycoerciveLone} makes this function zero. Apply the same
argument to $-h$ and then let $\varepsilon\downarrow0$.

Linearity, positivity and the contraction estimate pass from the
variational solutions to their uniform limits. The constant function one
is the solution with boundary value one. For each fixed $x\in\Om$, the
map $f\mapsto H_\Om f(x)$ is a positive linear functional of norm one on
$C(\partial\Om)$. The Riesz representation theorem gives the unique
probability measure in \eqref{eq:representationmain}. The interior and
zero boundary regularity assertions follow from
Subsections~\ref{sec:interior} and \ref{sec:boundary}.\qed

The next proposition quantifies the boundary modulus of the solution and yields
H\"older estimates for H\"older boundary data.

\begin{prop}\label{prop:datamodulus}
Let $u=H_\Om f$, $w\in\partial\Om$, and $0<R<R_0$.
For $R_j=2^{-j}R$ set
\[
 A_j=\sup_{\Om\cap Q_{R_j}(w)}|u-f(w)|,\qquad
 \omega_j=\sup_{\partial\Om\cap Q_{R_j}(w)}|f-f(w)|.
\]
Then
\begin{equation}\label{eq:datarecursion}
 A_{j+1}\leq\kappa_B A_j+\omega_j,
 \qquad
 A_k\leq\kappa_B^k A_0+
       \sum_{j=0}^{k-1}\kappa_B^{k-1-j}\omega_j.
\end{equation}
In particular, suppose
$|f(z)-f(w)|\leq F|z-w|^\gamma$ for
$z\in\partial\Om\cap Q_R(w)$, where $F\geq0$ and $0<\gamma\leq1$.
Then, for every
$0<\beta<\min\{\beta_B,\gamma\}$,
\begin{equation}\label{eq:holderdata}
 |u(x)-f(w)|\leq C_\beta
 \left(\|f\|_\infty+FR^\gamma\right)
 \left(\frac{|x-w|}{R}\right)^\beta
\end{equation}
for $x\in\Om\cap B(w,R/2)$, where $C_\beta$ depends only on
$n,\mathbf d,\kappa,\gamma,\beta$.
\end{prop}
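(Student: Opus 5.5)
The plan is to obtain the one-step recursion by applying Proposition~\ref{prop:boundarycontraction} to the two level truncations of $u$ on $Q_{R_j}(w)$, and then to iterate it. Fix $j$ and put $b^\pm=f(w)\pm\omega_j$, so that $b^-\leq f\leq b^+$ on $\partial\Om\cap Q_{R_j}(w)$. The main obstacle is that Lemma~\ref{lem:boundarylevels} was proved for variational solutions $u=F+z$, while $u=H_\Om f$ is only a uniform limit. I would first treat the approximants $u_k$ with Lipschitz data $f_k$. Let $\varepsilon_k=\|f_k-f\|_{C(\partial\Om)}$. Then $f_k\leq b^++\varepsilon_k$ on the boundary patch, so Lemma~\ref{lem:boundarylevels} shows that $(u_k-b^+-\varepsilon_k)^+$ has zero local energy boundary values in $Q_{R_j}(w)$. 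By Lemma~\ref{lem:truncations} it is a bounded nonnegative subsolution there.

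Proposition~\ref{prop:boundarycontraction} then gives
\[
 \sup_{\Om\cap Q_{R_{j+1}}(w)}(u_k-b^+-\varepsilon_k)^+
 \leq\kappa_B\sup_{\Om\cap Q_{R_j}(w)}(u_k-b^+-\varepsilon_k)^+ .
\]
Essential suprema agree with suprema here, by continuity of $u_k$ on $\overline\Om$. Let $k\to\infty$, using uniform convergence. Then bound the right side by $(A_j-\omega_j)^+\leq A_j$ and the left side from below by $\sup(u-f(w))-\omega_j$. This gives $\sup_{\Om\cap Q_{R_{j+1}}(w)}(u-f(w))\leq\kappa_BA_j+\omega_j$. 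The lower truncation $(b^--\varepsilon_k-u_k)^+$ gives the symmetric bound. Together they yield $A_{j+1}\leq\kappa_BA_j+\omega_j$, and the closed form in \eqref{eq:datarecursion} follows by induction on $k$.

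For \eqref{eq:holderdata}, first note $A_0\leq2\|f\|_\infty$ by the maximum principle (contraction of $H_\Om$), and $\omega_j\leq F(\sqrt nR_j)^\gamma=CFR^\gamma2^{-j\gamma}$. Write $\kappa_B=2^{-\beta_B}$ and fix $\beta<\min\{\beta_B,\gamma\}$. Then
\[
 \sum_{j<k}2^{-\beta_B(k-1-j)}2^{-j\gamma}\leq C\,k\,2^{-k\min\{\beta_B,\gamma\}}\leq C_\beta2^{-k\beta},
\]
where the strict inequality on $\beta$ absorbs the possible factor $k$ that arises when $\beta_B=\gamma$. Hence $A_k\leq C_\beta(\|f\|_\infty+FR^\gamma)2^{-k\beta}$.

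Finally, given $x\in\Om\cap B(w,R/2)$, I would choose the largest $k$ with $x\in Q_{R_k}(w)$. Then $R_{k+1}<|x-w|\sqrt{n}$, up to the comparison between cubes and balls, so $2^{-k}\leq C|x-w|/R$. Note that $x\in Q_{R_0}(w)$ because $|x-w|<R/2$. This gives \eqref{eq:holderdata}, with constants depending on $n,\mathbf d,\kappa$ through $\kappa_B$ and on $\gamma,\beta$.
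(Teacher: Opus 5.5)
Your proposal is correct and follows the paper's proof: you apply Proposition~\ref{prop:boundarycontraction} to the two level truncations on $Q_{R_j}(w)$ for the Lipschitz approximants, pass to the uniform limit, iterate by induction, and bound the geometric sum, using the strict inequality $\beta<\min\{\beta_B,\gamma\}$ to absorb the factor $k$. Your shift of the truncation levels by $\varepsilon_k$ is only a tidy way of carrying out the limit that the paper phrases as uniform convergence of the boundary suprema with the data.
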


\begin{proof}
First use Lipschitz data. Apply the boundary contraction to
$(u-f(w)-\omega_j)_+$ and $(f(w)-u-\omega_j)_+$ on $Q_{R_j}(w)$.
Each has supremum at most $A_j$, and both have zero energy boundary values
on that cube. This gives the first inequality in \eqref{eq:datarecursion}.
Uniform approximation of continuous data passes the same inequality to
$u=H_\Om f$, since each boundary supremum converges uniformly with the data.
Iteration gives the second inequality. Under the H\"older assumption,
$\omega_j\leq C_nFR^\gamma2^{-j\gamma}$. The resulting geometric sum is
bounded by $C_\beta2^{-k\beta}$ for the indicated strict range of $\beta$.
Comparing $|x-w|$ with the dyadic cube radii proves \eqref{eq:holderdata}.
At coinciding endpoint exponents the convolution can contain a logarithmic
factor, which is why the strict inequality is convenient.
\end{proof}

The following corollary compares harmonic measures with different interior
poles.

\begin{cor}
\label{cor:measurepoles}
For any $x,y\in\Om$, there is $C_{x,y}<\infty$ such that
\begin{equation}\label{eq:polecomparison}
 C_{x,y}^{-1}\omega^y_\Om(E)
 \leq\omega^x_\Om(E)\leq C_{x,y}\omega^y_\Om(E)
\end{equation}
for every Borel set $E\subset\partial\Om$.
\end{cor}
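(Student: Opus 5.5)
The plan is to reduce the measure comparison to the interior Harnack inequality applied to the functions $H_\Om f$ with nonnegative continuous data. Fix $x,y\in\Om$. Since $\Om$ is a domain, it is connected and open, so there is a compact polygonal path $\gamma\subset\Om$ joining $x$ to $y$. Let $d=\dist(\gamma,\partial\Om)>0$ and cover $\gamma$ by finitely many balls $B(z_i,d/4)$, $i=1,\ldots,N$, with centers on $\gamma$, consecutive balls overlapping, $z_1=x$, and $z_N=y$. Each doubled ball $B(z_i,d/2)$ lies in $\Om$. The number $N$ depends only on $x,y$ and $\Om$, through the length of $\gamma$ and $d$.

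Now let $0\leq f\in C(\partial\Om)$. By Theorem~\ref{thm:dirichletmain}, $H_\Om f$ is a nonnegative weak solution in $\Om$, where positivity of the operator gives nonnegativity, and it is continuous in $\Om$. Apply Lemma~\ref{lem:ballHarnack} in each $B(z_i,d/2)$ to get $\sup_{B(z_i,d/4)}H_\Om f\leq H_b\inf_{B(z_i,d/4)}H_\Om f$. Chaining through overlapping balls gives
\[
 H_\Om f(x)\leq H_b^{N}H_\Om f(y),
\]
and symmetrically with $x$ and $y$ interchanged. Set $C_{x,y}=H_b^N$. This constant does not depend on $f$. By the representation \eqref{eq:representationmain},
\[
 \int f\d\omega^x_\Om\leq C_{x,y}\int f\d\omega^y_\Om
\]
for all nonnegative continuous $f$.

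To pass to Borel sets, note that $\omega^x_\Om$ and $\omega^y_\Om$ are finite Borel measures on the compact metric space $\partial\Om$, hence regular. For compact $K\subset\partial\Om$, take continuous $0\leq f_k\leq1$ decreasing to $\mathbf 1_K$, for instance $f_k(z)=\max\{0,1-k\dist(z,K)\}$. Monotone convergence gives $\omega^x_\Om(K)\leq C_{x,y}\omega^y_\Om(K)$. Inner regularity then extends this inequality to every Borel set $E$. Equivalently, the measure $C_{x,y}\omega^y_\Om-\omega^x_\Om$ is nonnegative on continuous functions, and Riesz uniqueness identifies it as a positive measure. The reverse inequality follows by symmetry.

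The only point needing care is the Harnack chain. It requires the uniform constant $H_b$ from Lemma~\ref{lem:ballHarnack}, which is available under \eqref{eq:UC}, and a finite chain inside $\Om$, which follows from connectedness and compactness of the path. No NTA quantitative structure is needed, since the constant is allowed to depend on $x$ and $y$.
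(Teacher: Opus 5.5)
Your proposal is correct and follows essentially the same route as the paper: a finite Harnack chain of interior balls joining $x$ to $y$, applied to $H_\Om f$ for nonnegative continuous $f$, and then passage from the inequality between positive linear functionals to their representing regular Borel measures. You add the details the paper leaves implicit, namely the compactness of the path, the explicit constant $H_b^N$, and the regularity (or Riesz uniqueness) step.
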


\begin{proof}
A finite chain of interior balls connects $x$ and $y$. Apply Harnack along
the chain to $H_\Om f$ for nonnegative continuous $f$. The resulting
inequalities for positive linear functionals imply the corresponding
inequalities for their representing regular Borel measures.
\end{proof}

\begin{rem}
Under the present hypotheses, bounded-domain Green kernels are
available from the abstract theory in \cite{BCM,ChenBHP}, using
the energy-space identifications above and the elliptic Harnack
inequality. The direct argument in the next section avoids their
construction and uses local estimates and boundary testing.
Separate questions concern global fundamental solutions under
suitable additional hypotheses and quantitative estimates for suitably
normalized kernels in terms of the coarse ellipticity data. We do not pursue these
questions here.
\end{rem}

\section{Boundary Harnack and H\"older continuity of quotients}\label{sec:comparison}

Throughout this section, assume
\eqref{eq:basic}, \eqref{eq:admissible}, and \eqref{eq:UC}, and let
$\Om$ have NTA data $(\kappa,M,R_0)$. All constants depend only on
\begin{equation}\label{eq:comparisondata}
 \mathbf D=(\mathbf d,\kappa,M),
\end{equation}
unless a more restricted dependence is stated. The fixed Harnack
constants $H_b,H_\eta$ and the exponent $\eta$ have already been chosen.
The boundary comparison proof follows the method of
\cite[Sections 2 and 3.2]{DSS}, with the energy tests supplied by
Subsections~\ref{sec:energy} and \ref{sec:boundary}.

We choose the localization constants in the following order. First,
$L_0\geq100(1+M)$ is fixed large enough for the interior chains
in Lemma~\ref{lem:chains}. $L_0$ is then kept fixed in the subsequent lemmas.
Next, the boundary contraction and short-chain constants determine
the Carleson and almost-positivity constants. For the main theorem
we finally take $L=8L_0$. Thus $r<R_0/L$ implies
$4r<R_0/L_0$, $2r<R_0$, and $D_{4L_0r}(w)\subset D_{Lr}(w)$.
Restriction to a smaller ambient neighborhood preserves the zero
local energy boundary condition, since its cutoffs are also admissible
in the original neighborhood. These observations will be used when
changing centers and scales.

\subsection{Harnack chains and the Carleson estimate}\label{sec:Carleson}

We first record the two interior chain consequences separately. The
second one has a constant independent of the boundary layer thickness.
This independence is essential in the almost-positivity argument.

\begin{lem}\label{lem:chains}
There are $L_0\geq16$, $C_g\geq1$, and $\gamma>0$, depending only on
$n,\kappa,M,H_b$, with the following properties.
\begin{enumerate}
\item If $w\in\partial\Om$, $0<r<R_0/L_0$ and $h\geq0$ is $\La$-harmonic in
$D_{L_0r}(w)$, then
\begin{equation}\label{eq:chain-growth}
 h(x)\leq C_g\left(\frac r{\delta(x)}\right)^\gamma h(A_r(w)),
 \qquad x\in D_{6r}(w).
\end{equation}
For each fixed $\varepsilon>0$ there is $c_\varepsilon>0$, depending
only on the same data and $\varepsilon$, such that
\begin{equation}\label{eq:core-lower}
 h(x)\geq c_\varepsilon h(A_r(w)),
 \qquad x\in D_{4r}(w),\quad\delta(x)>2\varepsilon r.
\end{equation}
\item There are $\varepsilon_g>0$ and $c_H\in(0,1/2]$, depending only
on $n,\kappa,M,H_b$, with the following property. If
$z\in\partial\Om$, $0<\varepsilon\leq\varepsilon_g$, $0<R<R_0$, and
\[
 x\in D_{R/2}(z),\qquad \varepsilon R/2<\delta(x)\leq\varepsilon R,
\]
then there is $y\in D_R(z)$ with $\delta(y)>\varepsilon R$ such that
every nonnegative $\La$-harmonic function $h$ in $D_R(z)$ satisfies
\begin{equation}\label{eq:short-chain}
 h(x)\geq c_H h(y).
\end{equation}
The constant $c_H$ does not depend on $\varepsilon$ or $R$.
\end{enumerate}
\end{lem}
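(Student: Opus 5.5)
Both parts reduce to the ball Harnack inequality of Lemma~\ref{lem:ballHarnack} applied along chains of Whitney-type balls. The chains are built from the uniform curves in Definition~\ref{def:NTA}(ii) and the interior corkscrew points in Definition~\ref{def:NTA}(i). I will use the following basic chain fact. Let $\gamma$ be a uniform curve of length $\ell\leq M|x-y|$ joining $x$ to $y$, and cover it by balls $B(\gamma(t_i),\delta(\gamma(t_i))/4)$ whose doubles lie in $\Om$. Consecutive balls overlap. The number of balls needed is bounded by $C(M)\bigl(1+\log(1+|x-y|/\min\{\delta(x),\delta(y)\})\bigr)$, because the cigar condition makes $\delta(\gamma(t))\gtrsim\min\{t,\ell-t\}$ grow linearly away from the endpoints, so dyadic pieces of the curve need boundedly many balls each. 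Each link costs a factor $H_b$, and every ball stays inside $B(w,C M r)\cap\Om$. The constant $L_0$ is chosen so that all such balls lie in $D_{L_0r}(w)$.

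For (i), fix $x\in D_{6r}(w)$. The first case is $\delta(x)\geq c r$ for a small fixed $c$. Then $|x-A_r(w)|\leq 7r$, and both points are at distance $\gtrsim\kappa r$ from the boundary. The chain has boundedly many links, giving two-sided comparability $h(x)\approx h(A_r(w))$. The second case is $\delta(x)<cr$. I would not join $x$ to $A_r(w)$ directly by a uniform curve, since $|x-A_r(w)|$ could exceed $R_0$ only if $r$ were large, which is excluded by $r<R_0/L_0$. The uniform curve from $x$ to $A_r(w)$ has length $\leq 7Mr$, hence stays in $B(w,(6+7M)r)$. The chain count above gives at most $C(1+\log(r/\delta(x)))$ links. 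This yields $h(A_r(w))\geq H_b^{-N}h(x)$, i.e.\ \eqref{eq:chain-growth} with $\gamma=C\log H_b$. Reversing the same chain gives the lower bound $h(x)\geq H_b^{-N}h(A_r(w))$. For $\delta(x)>2\varepsilon r$, the link count $N$ is bounded in terms of $\varepsilon$, which gives \eqref{eq:core-lower}. One should check that every ball in the chain has its double inside $\Om\cap B(w,L_0r)$. This holds because radii are $\leq\delta/4$ and centers lie in $B(w,(6+7M)r)$, so $L_0\geq 100(1+M)$ suffices.

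For (ii), the point is to move from $x$ to a point $y$ with $\delta(y)>\varepsilon R$ using a \emph{bounded} number of balls, independent of $\varepsilon$. Take the nearest boundary point $z'$ to $x$, so $|x-z'|=\delta(x)\leq\varepsilon R$ and $z'\in\Delta_{R/2+\varepsilon R}(z)$. Use the interior corkscrew $y=A_{4\varepsilon R}(z')$. Then $\delta(y)\geq 4\kappa\varepsilon R$. To get $\delta(y)>\varepsilon R$, I would instead use radius $\rho=\varepsilon R/\kappa$, which gives $\delta(y)\geq\varepsilon R$, with strictness after a slight enlargement. We also have $|x-y|\leq\rho+\varepsilon R\leq 2\varepsilon R/\kappa$. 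Both $\delta(x)$ and $\delta(y)$ are comparable to $\varepsilon R$ with constants depending only on $\kappa$, and $|x-y|\lesssim\varepsilon R$. So the uniform curve gives a chain whose length is bounded in terms of $M,\kappa$ alone. The curve stays in $B(x,2M\varepsilon R/\kappa)$, which lies in $D_R(z)$ once $\varepsilon_g$ is small in terms of $M,\kappa$. We need $|x-y|<R_0$, which holds since $\varepsilon R<R_0$. Harnack along the chain gives $h(x)\geq c_Hh(y)$ with $c_H=H_b^{-N(M,\kappa)}$, which can be reduced to be $\leq 1/2$. The corkscrew also requires $\rho<R_0$, again ensured by small $\varepsilon_g$.

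The main obstacle is bookkeeping rather than ideas. The key step is the uniform-curve chain count, including the logarithmic bound and the containment of all doubled balls in the domain of harmonicity. This is what fixes $L_0$ in (i) and $\varepsilon_g$ in (ii). I would prove it once as a sublemma and then apply it in both parts.
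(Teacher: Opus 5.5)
Your proposal is correct and follows essentially the same route as the paper. For (i) you use Whitney-type Harnack chains along a uniform curve to $A_r(w)$ with a logarithmic link count, and for (ii) a corkscrew point at scale comparable to $\varepsilon R/\kappa$ above a nearest boundary point (the paper takes $y=A_{4\varepsilon R/\kappa}(p)$), joined to $x$ by a chain of boundedly many balls. Two small points of bookkeeping: $|x-y|<R_0$ and $\rho<R_0$ follow from $R<R_0$ once $\varepsilon_g$ is a small multiple of $\kappa$, not from $\varepsilon R<R_0$ alone, and you should check that the doubled Harnack balls, not just the curve, stay in $B(z,R)$, which again only needs $\varepsilon_g$ small.
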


\begin{proof}
For (i), join $x$ to $A_r(w)$ by a uniform curve. Its length is at most
$7Mr$, and it stays inside $B(w,(6+7M)r)$. Near the endpoint $x$, split
the curve into arc length portions of size comparable to
$\delta(x),2\delta(x),4\delta(x),\ldots$ up to scale $r$. Perform the
same construction from the other endpoint, whose boundary distance is
at least $\kappa r$. The cigar condition and the fact that $\delta$ is
1-Lipschitz give an interior chain of balls, with doubled balls in $\Om$,
of length at most
\[
 N(x)\leq C\bigl(1+\log^+(r/\delta(x))\bigr).
\]
Here is the local reason for this count. At arc distance $t$ from an
endpoint, once $t$ exceeds a small multiple of that endpoint's boundary
distance and before reaching the middle of the curve, the cigar estimate
gives boundary distance at least $ct$. Each portion with
$t\in[2^jt_0,2^{j+1}t_0]$ is therefore covered by a bounded number of
balls of radius comparable to $2^jt_0$. The initial endpoint portions
are covered using the 1-Lipschitz property of $\delta$.
The balls can be chosen with radii at most a fixed small multiple
of their boundary distances. Their doubled balls then remain in
$\Om$ and in $B(w,C(1+M)r)$, with $C$ dimensional.
Choose $L_0\geq100(1+M)$ larger than this constant. The uniform-curve
hypothesis applies because $|x-A_r(w)|<7r<R_0$.
Repeated use of \eqref{eq:ballHarnack} gives an upper bound
$H_b^{N(x)}h(A_r(w))$ and the reverse lower bound. These imply
\eqref{eq:chain-growth} and \eqref{eq:core-lower}.
For $\delta(x)>r$, increase $C_g$, using $\delta(x)\leq6r$.

For (ii), choose a nearest boundary point $p$ to $x$ and set
$b_0=4/\kappa$. Let $y=A_{b_0\varepsilon R}(p)$. Choose $C_*=10(M+1)(b_0+1)$ and fix
\[
 0<\varepsilon_g<\min\{1/(4b_0),1/(8C_*)\}.
\]
Then $b_0\varepsilon R<R_0$ and
$|x-y|\leq(b_0+1)\varepsilon R<R_0$, so both the corkscrew
and uniform-curve hypotheses apply. In particular,
\[
 \delta(y)\geq4\varepsilon R,\qquad
 |x-y|\leq(b_0+1)\varepsilon R.
\]
A uniform curve from $x$ to $y$ has length at most
$M(b_0+1)\varepsilon R$. On its first short portion, the distance from
the boundary is controlled from below by $\delta(x)>\varepsilon R/2$
and the 1-Lipschitz property. The same argument applies near $y$.
On the remaining portion the cigar estimate gives a lower bound
$c\varepsilon R$. Hence the entire curve admits a chain of a bounded
number $N_*$ of balls of radius comparable to $\varepsilon R$, with
doubled balls in $\Om$. All these balls and their doubles can be chosen within
$C_*\varepsilon R$ of $x$, with the fixed $C_*$ above.
Since $x\in B(z,R/2)$ and $C_*\varepsilon_g<1/8$, they remain
in $B(z,R)$.
Iterated Harnack gives \eqref{eq:short-chain} with, for example,
$c_H=\min\{1/2,H_b^{-N_*}\}$.
\end{proof}

The following proposition establishes the Carleson estimate, which
bounds a nonnegative solution with zero local energy boundary values
by its value at an interior corkscrew point. The proof uses a growth
contradiction and requires no separate integrability estimate for
negative powers of the boundary distance $\delta$.

\begin{prop}\label{prop:Carleson}
For the fixed $L_0$ above, there is a constant $C_C\geq1$,
depending only on \eqref{eq:comparisondata}, such that
\begin{equation}\label{eq:Carleson}
 \sup_{D_{4r}(w)}h\leq C_C h(A_r(w))
\end{equation}
whenever $w\in\partial\Om$, $0<r<R_0/L_0$ and $h\geq0$ is $\La$-harmonic in
$D_{L_0r}(w)$ with zero local energy boundary values on
$\partial\Om\cap B(w,L_0r)$.
\end{prop}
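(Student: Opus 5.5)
We will prove \eqref{eq:Carleson} by the classical growth-contradiction argument of Caffarelli--Fabes--Mortola--Salsa, using Lemma~\ref{lem:chains}(i) and the boundary decay of Corollary~\ref{cor:boundaryballs}. Normalize $h(A_r(w))=1$; this is possible if $h(A_r(w))>0$. If $h(A_r(w))=0$, Lemma~\ref{lem:ballHarnack} gives $h\equiv0$ on the component containing $A_r(w)$, and the Harnack chains of Lemma~\ref{lem:chains}(i) give $h\equiv 0$ in $D_{6r}(w)$, so the estimate is trivial. By the remark after Corollary~\ref{cor:boundaryballs}, $h$ is bounded near $\overline{D_{5r}(w)}$ and extends continuously by zero to $\Delta_{5r}(w)$. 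In particular $\sup_{D_{4r}(w)}h$ is finite, and the issue is only the quantitative bound.

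Fix a large constant $N$, to be chosen, and suppose $x_0\in D_{4r}(w)$ satisfies $h(x_0)>N$. By \eqref{eq:chain-growth}, $N<C_g(r/\delta(x_0))^\gamma$, so $\delta(x_0)<r(C_g/N)^{1/\gamma}$. Let $w_0$ be a nearest boundary point to $x_0$, and choose the radius $\rho_0=K_1\delta(x_0)$ with a fixed geometric $K_1>1$. Apply Corollary~\ref{cor:boundaryballs} to $h$ on $D_{\rho_0}(w_0)$ with $t=2\delta(x_0)$; here $x_0\in D_{2\delta(x_0)}(w_0)$. This gives
\[
 N<h(x_0)\leq C_B(2/K_1)^{\beta_B}\sup_{D_{\rho_0}(w_0)}h .
\]
Choose $K_1$, depending only on $\mathbf D$, so large that $C_B(2/K_1)^{\beta_B}\leq 1/2$. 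Then there is $x_1\in D_{\rho_0}(w_0)$ with $h(x_1)>2N$. In particular $|x_1-x_0|\leq(K_1+1)\delta(x_0)$.

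We iterate this step to obtain points $x_j$ with $h(x_j)>2^jN$ and $|x_{j+1}-x_j|\leq(K_1+1)\delta(x_j)$. The growth bound gives $\delta(x_j)\leq r(C_g/(2^jN))^{1/\gamma}$, so
\[
 \sum_j|x_{j+1}-x_j|\leq (K_1+1)\,r\,(C_g/N)^{1/\gamma}\sum_j2^{-j/\gamma}.
\]
Choosing $N=C_C$ large makes this sum less than $r$. The sequence then stays in $D_{5r}(w)$. Each step needs the inductive condition $x_j\in D_{5r}(w)$, so that the growth bound applies; for that we use \eqref{eq:chain-growth} on $D_{6r}(w)$. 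Each step also needs the ball $B(w_j,\rho_j)$ to lie inside $B(w,L_0r)$ with $\rho_j<R_0$, so that the zero boundary condition is available there. Both hold since $\rho_j\leq K_1r$ is small and $L_0\geq 16$ has room; if needed, we enlarge $L_0$ compatibly with the ordering of constants. We therefore obtain $h(x_j)\to\infty$ along points in a compact subset of $D_{5r}(w)\cup\Delta_{5r}(w)$. This contradicts the boundedness of $h$ there, or equivalently the continuity of $h$ with value zero at the limit point on the boundary. Hence $\sup_{D_{4r}(w)}h\leq N=C_C$.

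The main obstacle is the bookkeeping in the iteration. We must check that the radii $\rho_j=K_1\delta(x_j)$ in Corollary~\ref{cor:boundaryballs} stay admissible, and that the chain-growth estimate continues to apply at the new points, since it is stated only for $x\in D_{6r}(w)$. The geometric summability $\sum 2^{-j/\gamma}$ ensures this once $N$ is chosen after $K_1$, and this is where the ordering of constants ($L_0$, then $K_1$ from $C_B,\beta_B$, then $C_C$) matters. A minor point is whether to take suprema or actual values: the solutions are continuous by Lemma~\ref{lem:interiorosc}, so the points $x_j$ can be chosen pointwise.
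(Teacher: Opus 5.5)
Your proposal is correct and follows the paper's proof essentially step by step: the same growth contradiction, combining the chain growth bound \eqref{eq:chain-growth} with Corollary~\ref{cor:boundaryballs} at nearest boundary points, and the same geometric summation of displacements. It also uses the same contradiction with local boundedness of the zero extension near $\overline{D_{5r}(w)}$; your factor $1/2$ instead of the paper's $1/4$ is harmless. One small correction: you never need to enlarge $L_0$. Your choice of $N$ already forces $(K_1+1)\delta(x_j)<r$, so every ball $B(w_j,\rho_j)$ lies in $B(w,7r)\subset B(w,L_0r)$ and has radius below $r<R_0$.
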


\begin{proof}
If $h(A_r(w))=0$, Harnack chains give $h=0$ on $D_{6r}(w)$,
so the assertion is immediate. Otherwise normalize $h(A_r(w))=1$.
By Lemma~\ref{lem:zeroextension}, the zero extension is an ambient
subsolution in $B(w,L_0r)$. Proposition~\ref{prop:AAKT} therefore makes
it locally bounded there, and hence bounded on $B(w,7r)$.
This bound is finite but is not yet quantitative.
Choose a fixed $K_b>4$ such that
\begin{equation}\label{eq:Kb}
 C_B(2/K_b)^{\beta_B}\leq1/4.
\end{equation}
Suppose $x_0\in D_{4r}(w)$ and $h(x_0)>T$, for a sufficiently large
constant $T$ to be fixed using only the data. If $x_j$ is constructed,
write $\delta_j=\delta(x_j)$ and choose a nearest boundary point $z_j$.
From \eqref{eq:chain-growth}, as long as $x_j\in D_{6r}(w)$,
\begin{equation}\label{eq:small-distance}
 \delta_j\leq r\bigl({C_g}/{h(x_j)}\bigr)^{1/\gamma}.
\end{equation}
Apply Corollary~\ref{cor:boundaryballs} at $z_j$ with outer radius $K_b\delta_j$ and inner
radius $2\delta_j$. Since $x_j\in D_{2\delta_j}(z_j)$, it gives
\[
 h(x_j)\leq\frac14\sup_{D_{K_b\delta_j}(z_j)}h.
\]
We can therefore choose $x_{j+1}$ in that set with
$h(x_{j+1})\geq2h(x_j)$. Its displacement satisfies
\begin{equation}\label{eq:displacement}
 |x_{j+1}-x_j|\leq(K_b+1)\delta_j
 \leq(K_b+1)r\left(\frac{C_g}{2^j h(x_0)}\right)^{1/\gamma}.
\end{equation}
Choose $T$ so large that
\[
 (K_b+1)(C_g/T)^{1/\gamma}
       \sum_{j=0}^\infty2^{-j/\gamma}<1/2.
\]
In particular, $(K_b+1)\delta_j<r/2$ at every stage for which
\eqref{eq:small-distance} holds. This ensures that the boundary
balls used in the induction have radii below $r$ and stay in the
fixed neighborhood of $\La$-harmonicity.
Inductively, every $x_j$ lies in $D_{5r}(w)$, and every ball
$B(z_j,K_b\delta_j)$ is contained in $B(w,6r)$. All its radii are
less than $r<R_0$, so every preceding estimate remains applicable.
But $h(x_j)\geq2^j h(x_0)\to\infty$, contradicting the finite bound on
$B(w,7r)$. Therefore $h\leq T$ on $D_{4r}(w)$, proving the assertion.
\end{proof}

\subsection{Thin boundary layers and almost positivity}\label{sec:almost}

For $z\in\partial\Om$, define the interior core
\begin{equation}\label{eq:core-definition}
 \mathcal A_R^\varepsilon(z)
 =\{x\in D_R(z):\delta(x)>\varepsilon R\}.
\end{equation}
The next lemma quantifies how fast a nonnegative ambient subsolution
decays when its support inside $\Om$ lies in a thin boundary layer.

\begin{lem}\label{lem:thin}
Set $b=4\sqrt n$. There is $c_*>0$, depending only on
$n,\kappa,H_\eta,\eta$, such that the following holds for
$z\in\partial\Om$, $0<\varepsilon\leq1/(4b)$, and
$0<R<R_0$. Suppose $f$ is an ambient
subsolution in $B(z,R)$, $0\leq f\leq M_0$, and $f=0$ almost everywhere
on both $B(z,R)\setminus\Om$ and
$\mathcal A_R^\varepsilon(z)$. Then
\begin{equation}\label{eq:thin}
 \esssup_{B(z,R/2)}f\leq M_0\exp(-c_*/\varepsilon).
\end{equation}
One may take $c_*=-\log\kappa_B/(4b)$, with $\kappa_B$ as in
\eqref{eq:boundaryconstants}.
\end{lem}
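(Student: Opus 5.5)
The estimate will follow from iterating the contraction of Lemma~\ref{lem:zeroset} on a chain of roughly $1/(4b\varepsilon)$ nested cubes, each centred at a boundary point near the current location. The zero set supplying the density at each step will be the exterior corkscrew set together with the core $\mathcal A_R^\varepsilon(z)$. The key point is that once a cube of side comparable to $\varepsilon R$ is centred at a boundary point, its $(5/8)$-subcube already contains a fixed fraction $c_e$ of points where $f=0$. This fraction comes from \eqref{eq:exteriordensity}, exactly as in Proposition~\ref{prop:boundarycontraction}. Hence each step of size about $\varepsilon R$ gains a factor $\kappa_B$, and moving a distance of order $R$ gives roughly $1/\varepsilon$ such gains.

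\textbf{Step 1: one-step decay.} Set $\rho=b\varepsilon R=4\sqrt n\,\varepsilon R\leq R/4$. Let $x\in B(z,R-j\rho)$ for a step index $j$, and put $m_j=\esssup_{B(z,R-j\rho)}f$. Suppose first that $\delta(x)\leq\varepsilon R$. Choose a nearest boundary point $p$, so $|x-p|\leq\varepsilon R$. The cube $Q_{\rho/2}(p)$ lies in $B(p,\sqrt n\rho/2)$. I will arrange the step so that this ball sits inside $B(z,R-(j-1)\rho)$ and $x\in(1/2)Q_{\rho/2}(p)=Q_{\rho/4}(p)$; the latter holds since $\varepsilon R<\rho/4$. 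On this cube $f$ is an ambient subsolution bounded by $m_{j-1}$. Moreover $f=0$ on the exterior part, which has density at least $c_e$ in $(5/8)Q_{\rho/2}(p)$ by \eqref{eq:exteriordensity}, valid since $\rho/2<R_0$. Lemma~\ref{lem:zeroset} then gives $f\leq\kappa_B m_{j-1}$ near $x$.

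\textbf{Step 2: the core case.} Suppose instead that $\delta(x)>\varepsilon R$ and $x\in D_R(z)$, so $x\in\mathcal A^\varepsilon_R(z)$. Then $f=0$ near $x$, because the core is open, so $f=0$ a.e.\ in a neighbourhood. If $x\notin\overline\Om$, then $f=0$ there as well. Points of $\partial\Om$ form a null set, and the previous case applies to them as limits. Thus $m_j\leq\kappa_B m_{j-1}$ whenever the geometry is consistent. The constraint is that the enlarged ball of radius $\varepsilon R+\sqrt n\rho/2\leq\rho$ around $x$ must stay in $B(z,R-(j-1)\rho)$. This holds because $x\in B(z,R-j\rho)$.

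\textbf{Step 3: iteration and the constant.} Iterate for $j=1,\dots,J$ with $J=\lfloor R/(2\rho)\rfloor\geq 1/(2b\varepsilon)-1$, so that $R-J\rho\geq R/2$. This gives $\esssup_{B(z,R/2)}f\leq\kappa_B^J M_0$. Since $\varepsilon\leq1/(4b)$, we have $1/(2b\varepsilon)\geq2$, hence $J\geq 1/(4b\varepsilon)$. Therefore $\kappa_B^J\leq\exp(\log\kappa_B/(4b\varepsilon))=\exp(-c_*/\varepsilon)$ with $c_*=-\log\kappa_B/(4b)$, as stated.

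The main obstacle is the bookkeeping in Steps 1--2. One must check that the cube $Q_{\rho/2}(p)$ used in Lemma~\ref{lem:zeroset} has half-cube containing a neighbourhood of $x$ and stays inside the previous ball, and that the scales remain below $R_0$. It must also be checked that "$f=0$ a.e.\ on the core" gives vanishing of the essential supremum near core points; openness of $\mathcal A^\varepsilon_R(z)$ handles this. If the factor $\sqrt n$ margins do not close with $b=4\sqrt n$, I would shrink the cube side to $\rho/4$; this only changes constants absorbed into the admissible choice of $b$ and $c_*$.
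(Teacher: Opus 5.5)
Your scheme is the same as the paper's: a recursion of essential suprema over nested balls with step $b\varepsilon R$, a cube centred at a nearest boundary point $p$, Lemma~\ref{lem:zeroset} with exterior density $c_e$, vanishing on the core, and $\lfloor 1/(2b\varepsilon)\rfloor\ge 1/(4b\varepsilon)$ iterations. The gap is in the one geometric inequality that drives the recursion.

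\textbf{The failing step.} Your cube $Q_{\rho/2}(p)$ has half-side $\rho/2=2\sqrt n\,\varepsilon R$ and circumradius $\sqrt n\rho/2=2n\varepsilon R$. The inequality you need, $\varepsilon R+\sqrt n\rho/2\le\rho$, therefore reads $1+2n\le 4\sqrt n$. This is equivalent to $2(\sqrt n-1)^2\le 1$, which is false for every $n\ge3$. So $Q_{\rho/2}(p)$ need not lie in $B(z,R-(j-1)\rho)$. Lemma~\ref{lem:zeroset} then cannot be applied with upper bound $m_{j-1}$, and the step $m_j\le\kappa_B m_{j-1}$ is not established.

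\textbf{The fallback does not repair it.} Shrinking to $Q_{\rho/4}(p)$ fails in two ways:
\begin{enumerate}
\item[(a)] Its half-cube has half-side $\tfrac{\sqrt n}{2}\varepsilon R$. This does not contain $x$ (let alone a neighbourhood) when $n\le 4$ and $x-p$ points along a coordinate axis with $|x-p|=\varepsilon R$.
\item[(b)] Its circumradius is $n\varepsilon R$, and $\varepsilon R+n\varepsilon R\le 4\sqrt n\,\varepsilon R$ fails once $n\ge14$.
\end{enumerate}
Enlarging $b$ instead is not available either. Both the range $\varepsilon\le 1/(4b)$ and the value $c_*=-\log\kappa_B/(4b)$ are fixed in the statement. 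With a longer step, the iteration count can even drop to zero near $\varepsilon=1/(4b)$.

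\textbf{The fix.} The half-side must be a dimension-free multiple of $\varepsilon R$. The half-cube has to contain a neighbourhood of any point within $\varepsilon R$ of $p$, which needs half-side $>2\varepsilon R$. Meanwhile $\sqrt n$ times the half-side, plus $\varepsilon R$, must stay below $b\varepsilon R=4\sqrt n\,\varepsilon R$. The paper takes $Q=p+(-3\varepsilon R,3\varepsilon R)^n$. Then:
\begin{enumerate}
\item[(a)] $|x-p|_\infty\le\varepsilon R<\tfrac32\varepsilon R$, so a neighbourhood of $x$ lies in $(1/2)Q$.
\item[(b)] $Q\subset B(p,3\sqrt n\,\varepsilon R)\subset B(x,(3\sqrt n+1)\varepsilon R)\subset B(x,b\varepsilon R)\subset B(z,t+b\varepsilon R)$.
\item[(c)] $(5/8)Q\supset B(p,\varepsilon R)$ contains an exterior corkscrew ball of radius $\kappa\varepsilon R$. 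Its relative volume is $\omega_n(4\kappa/15)^n\ge c_e$; alternatively, apply \eqref{eq:exteriordensity} at radius $3\varepsilon R<R_0$.
\end{enumerate}
With this cube in place of $Q_{\rho/2}(p)$, your Steps~2 and~3 go through unchanged and give exactly $c_*=-\log\kappa_B/(4b)$. The paper runs the same recursion $F(t)\le\kappa_B F(t+b\varepsilon R)$ outward from $t=R/2$ rather than inward from $R$, which is only a reindexing of yours.
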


\begin{proof}
Let $F(t)=\esssup_{B(z,t)}f$. We claim that
\begin{equation}\label{eq:layer-recursion}
 F(t)\leq\kappa_B F(t+b\varepsilon R),
 \qquad t+b\varepsilon R\leq R.
\end{equation}
Consider a point $x\in D_t(z)$ with $\delta(x)\leq\varepsilon R$ and
a nearest boundary point $p$. Take
\[
 Q=p+(-3\varepsilon R,3\varepsilon R)^n.
\]
Then $x\in(1/2)Q$, and
$Q\subset B(x,b\varepsilon R)\subset B(z,t+b\varepsilon R)$.
The cube $(5/8)Q$ contains $B(p,\varepsilon R)$, which contains an
exterior corkscrew ball of radius $\kappa\varepsilon R$. Its relative
volume in $(5/8)Q$ is $\omega_n(4\kappa/15)^n\geq c_e$.
Lemma~\ref{lem:zeroset}, with upper bound
$F(t+b\varepsilon R)$, bounds $f$ by the right side of
\eqref{eq:layer-recursion} almost everywhere near $x$.
The same bound holds at every Lebesgue point in this layer. Elsewhere
$f=0$. Taking the essential supremum proves the claim. Next, iterate the claim starting at $t=R/2$ for
$k=\lfloor1/(2b\varepsilon)\rfloor$ steps. The restriction on
$\varepsilon$ gives $k\geq1/(4b\varepsilon)$, and hence
\[
 F(R/2)\leq\kappa_B^k F(R/2+kb\varepsilon R)
 \leq M_0\exp\bigl({\log\kappa_B}/{(4b\varepsilon)}\bigr).
\]
This is \eqref{eq:thin}.
\end{proof}

The following lemma is the signed comparison step. It applies to a
linear combination of the original solutions, so no auxiliary Dirichlet
problem on a boundary layer is required.

\begin{lem}\label{lem:almost}
There are constants $M_*>1$, $\varepsilon_*\in(0,1/4)$, and
$a_*\in(0,1)$, depending only on \eqref{eq:comparisondata}, such that if
$z\in\partial\Om$, $0<R<R_0$, and $h$ is a $\La$-harmonic function
in $D_R(z)$, with zero local energy boundary values on
$\partial\Om\cap B(z,R)$, and
\begin{equation}\label{eq:almost-assumptions}
 h\geq-1\quad\hbox{in }D_R(z),\qquad
 h\geq M_*\quad\hbox{in }\mathcal A_R^{\varepsilon_*}(z),
\end{equation}
then
\begin{equation}\label{eq:almost-conclusion}
 h\geq-a_*\quad\hbox{in }D_{R/2}(z),\qquad
 h\geq a_*M_*\quad\hbox{in }\mathcal A_{R/2}^{\varepsilon_*}(z).
\end{equation}
\end{lem}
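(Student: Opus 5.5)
The plan follows the De Silva--Savin almost-positivity scheme. I split $h$ into its negative part and its positive part on the core, and control each with a separate earlier estimate.

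\emph{Negative part.} By Lemma~\ref{lem:truncations}, $h^-$ is a weak subsolution in $D_R(z)$ with zero local energy boundary values on $\partial\Om\cap B(z,R)$. Since $0\leq h^-\leq1$, Lemma~\ref{lem:zeroextension} shows that its zero extension $f$ is an ambient subsolution in $B(z,R)$ with $0\leq f\leq1$. Moreover $f=0$ outside $\Om$, and $f=0$ on $\mathcal A_R^{\varepsilon_*}(z)$ because $h\geq M_*>0$ there. If $\varepsilon_*\leq1/(16\sqrt n)$, Lemma~\ref{lem:thin} gives
\[
 h\geq-\exp(-c_*/\varepsilon_*)\quad\text{in }D_{R/2}(z).
\]
This is only the first half of the bound, since we must also transmit positivity from the core.

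\emph{Positivity on the smaller core.} Let $x\in\mathcal A_{R/2}^{\varepsilon_*}(z)$, so $\delta(x)>\varepsilon_*R/2$. If $\delta(x)>\varepsilon_*R$, then $x$ lies in the original core and $h(x)\geq M_*$ directly. Otherwise $\varepsilon_*R/2<\delta(x)\leq\varepsilon_*R$ and $x\in D_{R/2}(z)$, so Lemma~\ref{lem:chains}(ii) applies provided $\varepsilon_*\leq\varepsilon_g$. It yields $y$ with $\delta(y)>\varepsilon_*R$ and $y\in D_R(z)$, hence $y\in\mathcal A_R^{\varepsilon_*}(z)$.

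The difficulty is that $h$ is signed, so Harnack cannot be applied to $h$ itself. I therefore apply it to the nonnegative function $g=h+1$ on $D_R(z)$. Then
\[
 h(x)+1\geq c_H\bigl(h(y)+1\bigr)\geq c_H(M_*+1),
\]
so $h(x)\geq c_HM_*-1$. The chain constant $c_H$ does not depend on $\varepsilon_*$, which is exactly why this step works.

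\emph{Choice of constants.} The constants are chosen in the following order.
\begin{enumerate}
\item Set $a_*=c_H/2$.
\item Take $M_*$ large enough that $c_HM_*-1\geq a_*M_*$, i.e.\ $M_*\geq 2/c_H$.
\item Take $\varepsilon_*\leq\min\{\varepsilon_g,1/(16\sqrt n),1/8\}$ small enough that $\exp(-c_*/\varepsilon_*)\leq a_*$.
\end{enumerate}
Because the negative-part bound does not involve $M_*$ and the chain step does not involve $\varepsilon_*$, no circularity arises. The two conclusions in \eqref{eq:almost-conclusion} then follow.

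\emph{Main obstacle.} The delicate step is justifying the thin-layer decay for $h^-$. Lemma~\ref{lem:thin} requires $f=0$ almost everywhere on the core and outside $\Om$, and it requires the ambient subsolution property in all of $B(z,R)$. The latter follows from the boundary condition via Lemma~\ref{lem:zeroextension}, applied with $W=B(z,R)$. One should also check that $h^-$ is bounded, which holds here by hypothesis since $h\geq-1$.
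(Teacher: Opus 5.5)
Your proposal is correct and follows the paper's proof essentially step for step. It uses the same choices $a_*=c_H/2$ and $M_*\geq 2/c_H$, with $\varepsilon_*$ chosen last so that $\exp(-c_*/\varepsilon_*)\leq a_*$; the short-chain estimate applied to $h+1$ on the thin core layer; and Lemma~\ref{lem:thin} applied to the zero extension of $h^-$, with interior continuity turning the essential-supremum bound into a pointwise one.
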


\begin{proof}
First take $c_H$ and $\varepsilon_g$ from
Lemma~\ref{lem:chains}(ii), and set
\begin{equation}\label{eq:choice-aM}
 a_*=c_H/2,\qquad M_*=\max\{2,2/c_H\}.
\end{equation}
Choose $\varepsilon_*>0$ only after these constants have been fixed,
so that
\begin{equation}\label{eq:choice-eps}
 \varepsilon_*<\min\{\varepsilon_g,1/4,1/(4b)\},\qquad
 \exp(-c_*/\varepsilon_*)\leq a_*.
\end{equation}
These choices are successive, since $c_H$, $M_*$ and $a_*$ are independent of
the layer thickness. For $x\in\mathcal A_{R/2}^{\varepsilon_*}(z)$ with
$\delta(x)>\varepsilon_*R$, the desired positive lower bound follows
already from \eqref{eq:almost-assumptions}. Otherwise
$\varepsilon_*R/2<\delta(x)\leq\varepsilon_*R$. Apply the short chain
estimate to the nonnegative $\La$-harmonic function $h+1$, connecting $x$
to a point $y\in\mathcal A_R^{\varepsilon_*}(z)$. It gives
\[
 h(x)\geq c_H(h(y)+1)-1
 \geq c_H(M_*+1)-1\geq a_*M_*.
\]
This proves the second conclusion. For the first conclusion, $h^-$ is a subsolution with zero local
energy boundary values by Lemma~\ref{lem:truncations}. Extend it
by zero. Lemma~\ref{lem:zeroextension} makes this an ambient
subsolution. It is bounded by 1 and vanishes on
$\mathcal A_R^{\varepsilon_*}(z)$. Lemma~\ref{lem:thin} and
\eqref{eq:choice-eps} give
$h^-\leq\exp(-c_*/\varepsilon_*)\leq a_*$ in $D_{R/2}(z)$.
Interior continuity converts the almost everywhere inequalities to
pointwise inequalities.
\end{proof}

The next corollary states exactly what repeated almost positivity gives
at a fixed center. Its restriction to a non-tangential region will be
removed by recentering in Subsection~\ref{sec:BHI}.

\begin{cor}\label{cor:iteration}
Under \eqref{eq:almost-assumptions}, for every integer $j\geq0$,
\begin{equation}\label{eq:almost-iteration}
 h\geq-a_*^j\ \hbox{in }D_{2^{-j}R}(z),\qquad
 h\geq M_*a_*^j\ \hbox{in }\mathcal A_{2^{-j}R}^{\varepsilon_*}(z).
\end{equation}
In particular, $h(x)>0$ for every $x\in\Om$ satisfying
\begin{equation}\label{eq:non-tangential}
 |x-z|<R,\qquad \delta(x)>2\varepsilon_*|x-z|.
\end{equation}
\end{cor}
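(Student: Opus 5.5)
The plan is to prove \eqref{eq:almost-iteration} by induction on $j$, applying Lemma~\ref{lem:almost} at each step to a rescaled multiple of $h$. The case $j=0$ is exactly \eqref{eq:almost-assumptions}. Suppose the two inequalities hold for some $j$. We set $h_j=a_*^{-j}h$ on $D_{2^{-j}R}(z)$. This function is $\La$-harmonic there, and by linearity (Lemma~\ref{lem:truncations}) it has zero local energy boundary values on $\partial\Om\cap B(z,2^{-j}R)$. Here we use that restriction to a smaller ambient ball preserves the boundary condition. The induction hypothesis says $h_j\geq-1$ in $D_{2^{-j}R}(z)$ and $h_j\geq M_*$ in $\mathcal A_{2^{-j}R}^{\varepsilon_*}(z)$.

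These are precisely the hypotheses of Lemma~\ref{lem:almost} with $R$ replaced by $2^{-j}R<R_0$. The lemma then gives $h_j\geq-a_*$ in $D_{2^{-j-1}R}(z)$ and $h_j\geq a_*M_*$ in $\mathcal A_{2^{-j-1}R}^{\varepsilon_*}(z)$. Multiplying by $a_*^j$ yields the two inequalities at level $j+1$. The constants $M_*$, $\varepsilon_*$ and $a_*$ do not depend on the scale, so the induction closes.

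For the positivity consequence, take $x$ satisfying \eqref{eq:non-tangential}. Since $x\in\Om$ and $z\in\partial\Om$, we have $|x-z|\geq\delta(x)>0$. Choose $j\geq0$ with $2^{-j-1}R\leq|x-z|<2^{-j}R$; this is possible because $|x-z|<R$. Then $x\in D_{2^{-j}R}(z)$ and
\[
 \delta(x)>2\varepsilon_*|x-z|\geq\varepsilon_*2^{-j}R,
\]
so $x\in\mathcal A_{2^{-j}R}^{\varepsilon_*}(z)$. The second inequality in \eqref{eq:almost-iteration} gives $h(x)\geq M_*a_*^j>0$.

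The proof is routine. The only point needing care is checking at each step that the rescaled function meets the hypotheses of Lemma~\ref{lem:almost} on the smaller ball, namely the boundary condition and the admissible radius. Both follow from the localization remarks made at the start of this section.
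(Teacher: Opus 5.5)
Your proposal is correct and follows essentially the same route as the paper. The paper also iterates Lemma~\ref{lem:almost} on the rescaled functions $h/a_*^j$ at radii $2^{-j}R$. For positivity it uses the same dyadic radius, $|x-z|<2^{-j}R\leq2|x-z|$, which places $x$ in the core $\mathcal A_{2^{-j}R}^{\varepsilon_*}(z)$.
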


\begin{proof}
After the first step, apply Lemma~\ref{lem:almost} to $h/a_*$
at radius $R/2$. Continue inductively to obtain
\eqref{eq:almost-iteration}. For a point satisfying
\eqref{eq:non-tangential}, choose a dyadic radius
$\rho=2^{-j}R$ with $|x-z|<\rho\leq2|x-z|$. Then
$\delta(x)>\varepsilon_*\rho$, so $x$ belongs to the positive core
at this radius.
\end{proof}

\subsection{Boundary Harnack in NTA domains}\label{sec:BHI}

We now apply the signed lemma to a normalized difference and use a
different boundary center for each point. This is the step that turns
positivity in the regions \eqref{eq:non-tangential} into a full
boundary Harnack inequality.

\begin{prop}\label{prop:comparison}
For the fixed $L_0$ above, there is $T\geq2$ depending
only on \eqref{eq:comparisondata} such that, for $w\in\partial\Om$ and
$0<r<R_0/L_0$, nonnegative $\La$-harmonic functions
$u,v$ in $D_{L_0r}(w)$ with zero local energy boundary values on
$\partial\Om\cap B(w,L_0r)$ and $u(A_r(w)),v(A_r(w))>0$ are positive in $D_r(w)$ and satisfy
\begin{equation}\label{eq:comparison}
 T^{-1}\frac{u(A_r(w))}{v(A_r(w))}
 \leq\frac{u(x)}{v(x)}
 \leq T\frac{u(A_r(w))}{v(A_r(w))},\qquad x\in D_r(w).
\end{equation}
\end{prop}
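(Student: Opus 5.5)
Normalize $u(A_r(w))=v(A_r(w))=1$; the general case follows by scaling. Positivity in $D_r(w)$ will come out of the lower bound below, and can also be seen from interior Harnack chains and the strong maximum principle (Lemma~\ref{lem:ballHarnack}). By symmetry it suffices to prove $u\geq T^{-1}v$ in $D_r(w)$. Consider the signed combination
\[
 h=\frac{M_*+1}{c_{\varepsilon}}\,u-C_C^{-1}v ,
\]
or a variant with suitably chosen constants. It is $\La$-harmonic and has zero local energy boundary values, by linearity (Lemma~\ref{lem:truncations}).

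\textbf{Verifying the hypotheses of Lemma~\ref{lem:almost}.} The plan is to check \eqref{eq:almost-assumptions} for $h$ on balls $D_{R}(z)$ with $R$ comparable to $r$. Centers $z\in\Delta_{2r}(w)$ keep $D_R(z)\subset D_{4r}(w)$.
\begin{itemize}
\item[] \emph{Lower bound $h\geq-1$.} The Carleson estimate (Proposition~\ref{prop:Carleson}) gives $v\leq C_C$ on $D_{4r}(w)$, so $-C_C^{-1}v\geq-1$ and hence $h\geq-1$.
\item[] \emph{Core bound $h\geq M_*$.} On the core $\mathcal A_R^{\varepsilon_*}(z)$ we have $\delta(x)>\varepsilon_*R$. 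Taking $R=r$, \eqref{eq:core-lower} gives $u\geq c_{\varepsilon}$ there with $\varepsilon=\varepsilon_*/2$. Hence the first term of $h$ is at least $M_*+1$, and $h\geq M_*$ on the core.
\end{itemize}
The order of constants matters. $\varepsilon_*$ and $M_*$ are already fixed by Lemma~\ref{lem:almost}. $c_\varepsilon$ is then determined, so the coefficient of $u$ is a data constant, which we call $T_0$.

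\textbf{Recentering.} Corollary~\ref{cor:iteration} gives $h>0$ only in the nontangential region \eqref{eq:non-tangential} around each center $z$. To cover $D_r(w)$, fix $x\in D_r(w)$.
\begin{itemize}
\item[] If $\delta(x)\geq\varepsilon_* r/4$, say, then \eqref{eq:core-lower} and the Carleson bound directly give $u(x)\geq c\geq c\,C_C^{-1}v(x)$.
\item[] Otherwise choose a nearest boundary point $z\in\partial\Om$. Then $|x-z|=\delta(x)<r$, so $z\in\Delta_{2r}(w)$. Apply the construction at center $z$ with $R=r$, checking that $D_r(z)\subset D_{2r}(w)\subset D_{L_0r}(w)$ and that $\mathcal A^{\varepsilon_*}_r(z)\subset D_{4r}(w)$ lies at distance $>\varepsilon_* r$ from $\partial\Om$, so \eqref{eq:core-lower} applies. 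Since $\delta(x)=|x-z|>2\varepsilon_*|x-z|$, the condition \eqref{eq:non-tangential} holds trivially, and Corollary~\ref{cor:iteration} yields $h(x)>0$.
\end{itemize}
In both cases $u(x)\geq T_0^{-1}C_C^{-1}v(x)$. Exchanging the roles of $u$ and $v$ gives the other inequality, with $T=\max\{T_0C_C,\,c^{-1}C_C,\,2\}$. Positivity of $u$ and $v$ in $D_r(w)$ follows from this comparison, or directly from $u(A_r)>0$ and the chains.

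\textbf{Main obstacle.} The delicate step is the bookkeeping of scales. We must ensure that the Carleson estimate and \eqref{eq:core-lower} (applied at $w$ with scale $r$ and $L_0 r$) control $u$ and $v$ on every $D_r(z)$ for $z\in\Delta_{2r}(w)$. We also need the core $\mathcal A_r^{\varepsilon_*}(z)$ to lie in $D_{4r}(w)$ with $\delta>2\cdot(\varepsilon_*/2)r$. If the factor $4r$ is insufficient, the first thing to try is to shrink the center radius, using $R=r/2$ with $z\in\Delta_{r}(w)$, or to invoke the Carleson estimate at scale $2r$ combined with a Harnack chain comparing $A_{2r}(w)$ and $A_r(w)$. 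Either way the constants remain dependent only on \eqref{eq:comparisondata}.
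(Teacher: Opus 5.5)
Your proof is correct and follows the paper's argument. You normalize the signed combination $h$ using the Carleson bound $v\le C_C$ on $D_{4r}(w)$ and the core lower bound \eqref{eq:core-lower}, check the hypotheses of Lemma~\ref{lem:almost} at every $z\in\Delta_{2r}(w)$, and recenter at a nearest boundary point of each $x\in D_r(w)$ to get $h(x)>0$. The paper works at radius $2r$ and iterates the lemma directly, while you work at radius $r$. Your case split for $\delta(x)\ge\varepsilon_*r/4$ is unnecessary, and $D_r(z)$ lies in $D_{3r}(w)$ rather than $D_{2r}(w)$; neither point affects the argument.
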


\begin{proof}
Normalize $u(A_r(w))=v(A_r(w))=1$. Interior Harnack chains
give positivity of both functions on $D_{6r}(w)$, and the
Carleson estimate gives $v\leq C_C$ in $D_{4r}(w)$.
Let $M_*$ and $\varepsilon_*$ be the constants from
Lemma~\ref{lem:almost}, chosen in \eqref{eq:choice-aM}
and \eqref{eq:choice-eps}. The core lower bound
\eqref{eq:core-lower} gives
\[
 u\geq c_{\varepsilon_*}>0
 \quad\hbox{on }D_{4r}(w)\cap\{\delta>2\varepsilon_*r\}.
\]
Set
\begin{equation}\label{eq:comparison-T}
 T=\max\left\{2,\frac{C_C(M_*+1)}{c_{\varepsilon_*}}\right\},
 \qquad h=\frac{Tu-v}{C_C}.
\end{equation}
Then $h$ is a signed $\La$-harmonic function in $D_{4r}(w)$
with zero local energy boundary values on
$\partial\Om\cap B(w,4r)$. Since $u\geq0$ and $v\leq C_C$
there, the choice of $T$ yields
\begin{equation}\label{eq:W-start}
 \begin{aligned}
 h&\geq-1
 &&\hbox{in }D_{4r}(w),\\
 h&\geq\frac{T c_{\varepsilon_*}-C_C}{C_C}\geq M_*
 &&\hbox{in }D_{4r}(w)\cap\{\delta>2\varepsilon_*r\}.
 \end{aligned}
\end{equation}
For every $z\in\Delta_{2r}(w)$, we have
$D_{2r}(z)\subset D_{4r}(w)$ and
\[
 \mathcal A_{2r}^{\varepsilon_*}(z)
 \subset D_{4r}(w)\cap\{\delta>2\varepsilon_*r\}.
\]
Thus the hypotheses of Lemma~\ref{lem:almost} hold at
center $z$ and radius $R=2r$, with the same constants
for every such $z$. Now fix $x\in D_r(w)$ and choose a nearest boundary point $z$.
Since $w\in\partial\Om$,
\[
 |x-z|=\delta(x)\leq|x-w|<r,\qquad
 |z-w|\leq|z-x|+|x-w|<2r.
\]
Choose a dyadic radius $\rho=2^{-j}(2r)$ satisfying
$\delta(x)<\rho\leq2\delta(x)$. Since $\varepsilon_*<1/2$,
we have $\delta(x)>\varepsilon_*\rho$, and hence
$x\in\mathcal A_\rho^{\varepsilon_*}(z)$.
Iterating Lemma~\ref{lem:almost} gives
\[
 h(x)\geq M_*a_*^j>0.
\]
Consequently, $v(x)\leq Tu(x)$ throughout $D_r(w)$.
Interchanging $u$ and $v$ gives $u(x)\leq Tv(x)$.
Undoing the normalization proves \eqref{eq:comparison}.
\end{proof}

\begin{rem}\label{rem:comparison-inputs}
Proposition~\ref{prop:comparison} compares solutions that are already
given. Its proof uses local boundedness, interior Harnack, weak Harnack,
the zero-extension test, and NTA geometry. It does not require solving
the continuous Dirichlet problem first. The constants depend on the
uniform local estimates and the geometry, without a separate
$\mathrm L^2$ coercivity or energy Sobolev constant.
\end{rem}

\subsection{H\"older continuity of quotients and logarithms}\label{sec:quotient}

The final step is an oscillation calculation. We include it because a
two-sided bound for a quotient, without its scale iteration, does not
itself assert existence of a boundary limit. The following lemma converts boundary comparison into a contraction of
the quotient oscillation on a smaller boundary ball.

\begin{lem}\label{lem:quotient-contraction}
Let $z\in\partial\Om$, $0<R<R_0$, and let $q=u/v$ for positive $\La$-harmonic
functions $u,v$ in $D_R(z)$ with zero local energy boundary values on
$\partial\Om\cap B(z,R)$. Suppose $q$ is bounded in $D_R(z)$. Put
\[
 C_Q=T^2,\qquad \sigma={(C_Q-1)}/{(C_Q+1)}\in(0,1),
\]
where $T$ is the constant in Proposition~\ref{prop:comparison}.
Then
\begin{equation}\label{eq:quotient-contraction}
 \osc_{D_{R/L_0}(z)}q\leq\sigma\osc_{D_R(z)}q.
\end{equation}
Consequently, with $\alpha_B=-\log\sigma/\log L_0>0$,
\begin{equation}\label{eq:quotient-boundary-power}
 \osc_{D_\rho(z)}q
 \leq C(\rho/R)^{\alpha_B}\osc_{D_R(z)}q,
 \qquad 0<\rho\leq R.
\end{equation}
\end{lem}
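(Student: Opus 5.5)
The plan is the classical oscillation argument for quotients. Write $M=\sup_{D_R(z)}q$ and $m=\inf_{D_R(z)}q$; both are finite because $q$ is bounded and $u,v>0$. The key observation is that $Mv-u$ and $u-mv$ are nonnegative $\La$-harmonic functions in $D_R(z)$. By Lemma~\ref{lem:truncations} (linear combinations), they also have zero local energy boundary values on $\partial\Om\cap B(z,R)$.

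We apply Proposition~\ref{prop:comparison} at center $z$ with $r=R/L_0$, so that $D_{L_0r}(z)=D_R(z)$ and $r<R_0/L_0$. Each pair is compared at the corkscrew point $A=A_r(z)$.

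\emph{Step 1: both differences are strictly positive at $A$.} If $Mv-u$ vanishes at $A$, then by the strong maximum principle in Lemma~\ref{lem:ballHarnack} it vanishes identically on the connected set $D_R(z)$. In that case $q\equiv M$, the oscillation on the smaller set is zero, and \eqref{eq:quotient-contraction} holds trivially. The same applies to $u-mv$. Connectedness of $D_R(z)$ is a minor point: if needed, we work on the component containing the Harnack chains supplied by Lemma~\ref{lem:chains}, or we argue directly with chains.

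\emph{Step 2: comparison and summation.} With both differences positive at $A$, Proposition~\ref{prop:comparison} applied to the pairs $(Mv-u,v)$ and $(u-mv,v)$ gives, on $D_r(z)$,
\[
 M-q\geq T^{-1}\bigl(M-q(A)\bigr),\qquad q-m\geq T^{-1}\bigl(q(A)-m\bigr).
\]
Let $M'=\sup_{D_r(z)}q$ and $m'=\inf_{D_r(z)}q$. Taking the infimum over $D_r(z)$ in each inequality and adding yields
\[
 (M-M')+(m'-m)\geq T^{-1}(M-m).
\]
Hence $M'-m'\leq(1-T^{-1})(M-m)$. Since $1-T^{-1}\leq(T^2-1)/(T^2+1)=\sigma$ whenever $T\geq1$ (check: $(T-1)(T^2+1)\leq T(T^2-1)$ reduces to $T^2\geq1$), this is \eqref{eq:quotient-contraction}. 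Alternatively, the two-sided form of comparison gives $\sup(M-q)\leq T^2\inf(M-q)$ in the Harnack-type form used in Lemma~\ref{lem:interiorosc}, which produces exactly $\sigma$ with $C_Q=T^2$. I would present it that way, matching the stated constant.

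\emph{Step 3: iteration.} We iterate on the radii $L_0^{-k}R$. Each smaller domain again satisfies the hypotheses: positivity and boundedness of $q$ are inherited, and the zero boundary condition restricts to smaller ambient neighborhoods. Monotonicity of oscillation between consecutive radii yields \eqref{eq:quotient-boundary-power} with $C=L_0^{\alpha_B}$. The only delicate point is the degenerate-case and connectedness bookkeeping in Step 1, together with verifying the admissible radius in Proposition~\ref{prop:comparison}; the rest is routine.
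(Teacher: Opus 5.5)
Your proposal is correct and follows the paper's proof. It uses the same nonnegative functions $Mv-u$ and $u-mv$, applies Proposition~\ref{prop:comparison} at radius $R/L_0$ so that the outer region is exactly $D_R(z)$, and iterates over the radii $L_0^{-k}R$. The differences are minor: the paper handles vanishing numerators by adding $\epsilon v$ and letting $\epsilon\downarrow0$ instead of using Harnack chains, which avoids the connectedness issue you flagged. Your one-sided comparison from $A_{R/L_0}(z)$ actually gives the sharper factor $1-T^{-1}\leq\sigma$, although the cross-multiplied inequality reduces to $(T-1)^2\geq0$, not to $T^2\geq1$ as you wrote.
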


\begin{proof}
Write
\[
 m=\inf_{D_R(z)}q,\quad M_0=\sup_{D_R(z)}q,\quad
 m'=\inf_{D_{R/L_0}(z)}q,\quad M_0'=\sup_{D_{R/L_0}(z)}q.
\]
Apply Proposition~\ref{prop:comparison} with radius $R/L_0$.
Its outer region is exactly $D_R(z)$ and its scale condition follows
from $R<R_0$. The functions $u-mv$ and $M_0v-u$ are nonnegative $\La$-harmonic functions
with the same zero local energy boundary values. To apply comparison
with strictly positive numerators, add $\epsilon v$ and then let
$\epsilon\downarrow0$. Comparing their quotients with $v$ at two
arbitrary points of $D_{R/L_0}(z)$ gives
\begin{equation}\label{eq:two-inequalities}
 M_0'-m\leq C_Q(m'-m),\qquad
 M_0-m'\leq C_Q(M_0-M_0').
\end{equation}
Indeed, each normalized comparison in
Proposition~\ref{prop:comparison} has factor $T$, so the
supremum-to-infimum comparison has factor $T^2=C_Q$.
Adding \eqref{eq:two-inequalities} and rearranging gives
\[
 (C_Q+1)(M_0'-m')\leq(C_Q-1)(M_0-m),
\]
which proves \eqref{eq:quotient-contraction}. Iteration at radii
$L_0^{-j}R$ and monotonicity prove
\eqref{eq:quotient-boundary-power}.
\end{proof}

\subsection{Proof of Theorem~\ref{thm:BHImain}}

Take $L=8L_0$, as fixed at the beginning of the section.
Then $4r<R_0/L_0$, and all applications of comparison at radius
$4r$ take place in $D_{4L_0r}(w)\subset D_{Lr}(w)$. Interior chains and the positive values at $A_r(w)$
give $u,v>0$ throughout $D_{6r}(w)$, including at $A_{4r}(w)$. Put
\[
 q=\frac uv,\qquad q_0=\frac{u(A_r(w))}{v(A_r(w))},\qquad \widehat q=\frac q{q_0}.
\]
Apply Proposition~\ref{prop:comparison} with radius $4r$.
Its comparison point is $A_{4r}(w)$, and $A_r(w)\in D_{4r}(w)$.
Comparing both $x$ and $A_r(w)$ with this point gives
\begin{equation}\label{eq:normalized-bounds}
 T^{-2}\leq \widehat q\leq T^2\quad\hbox{in }D_{4r}(w).
\end{equation}
For each $z\in\Delta_{2r}(w)$, $D_r(z)\subset D_{3r}(w)$, so
Lemma~\ref{lem:quotient-contraction} and
\eqref{eq:normalized-bounds} imply
\begin{equation}\label{eq:uniform-boundary-osc}
 \osc_{D_\rho(z)}\widehat q\leq C(\rho/r)^{\alpha_B},
 \qquad 0<\rho\leq r.
\end{equation}

There is an analogous interior estimate. Suppose that $u$ and $v$
are positive and $\La$-harmonic in $B(x,R)$ and that $\widehat q$
is bounded there. Put
\[
 m_B=\inf_{B(x,R)}\widehat q,\qquad
 M_B=\sup_{B(x,R)}\widehat q.
\]
Apply interior Harnack to the nonnegative harmonic functions
$u/q_0-m_Bv$ and $M_Bv-u/q_0$, and to $v$.
On $B(x,R/2)$ their quotients by $v$ have supremum-to-infimum
ratio at most $H_b^2$. Repeating the algebra
in \eqref{eq:two-inequalities} gives an oscillation contraction
with factor
\[
 \sigma_J={(H_b^2-1)}/{(H_b^2+1)}<1.
\]
Iteration yields, with $\alpha_J=-\log\sigma_J/\log2>0$,
\begin{equation}\label{eq:interior-quotient}
 \osc_{B(x,s)}\widehat q\leq C(s/R)^{\alpha_J}\osc_{B(x,R)}\widehat q,
 \qquad 0<s\leq R.
\end{equation}
An arbitrarily small addition of $v$ handles zero nonnegative numerators
as before.

Take $x,y\in D_r(w)$, set $d=|x-y|>0$, and choose a nearest boundary
point $z$ to $x$. If $d\geq r/16$, the desired bound follows at once
from \eqref{eq:normalized-bounds}. Suppose $d<r/16$.
If $\delta(x)\leq4d$, both points lie in $D_{6d}(z)$, and
$z\in\Delta_{2r}(w)$. Equation~\eqref{eq:uniform-boundary-osc}
therefore gives $|\widehat q(x)-\widehat q(y)|\leq C(d/r)^{\alpha_B}$. If $\delta(x)>4d$, the ball $B(x,\delta(x)/2)$ lies in $\Om$ and
in $D_{3\delta(x)/2}(z)$. By
\eqref{eq:uniform-boundary-osc}, or by
\eqref{eq:normalized-bounds} when $3\delta(x)/2>r$,
\[
 \osc_{B(x,\delta(x)/2)}\widehat q\leq C(\delta(x)/r)^{\alpha_B}.
\]
Apply \eqref{eq:interior-quotient} with $s=2d$ to obtain
\[
 |\widehat q(x)-\widehat q(y)|
 \leq C\left(\frac d{\delta(x)}\right)^{\alpha_J}
          \left(\frac{\delta(x)}r\right)^{\alpha_B}
 \leq C(d/r)^\alpha,
 \qquad \alpha=\min\{\alpha_B,\alpha_J,1/2\}.
\]
The last inequality uses $d<\delta(x)\leq r$ and is valid whichever
of $\alpha_B$ and $\alpha_J$ is smaller.

We have proved a uniform H\"older estimate for $\widehat q$ on $D_r(w)$.
It gives a unique extension to $\overline\Om\cap B(w,r)$ by taking
limits of interior sequences. The lower bound in
\eqref{eq:normalized-bounds} makes the extension positive.
Restoring $q_0$ proves \eqref{eq:quotientmain}, and comparison follows
from Proposition~\ref{prop:comparison}. Since the logarithm is
Lipschitz on $[T^{-2},T^2]$, the same estimate gives
\eqref{eq:logmain}. Finally, continuous vanishing implies the
energy boundary condition by Proposition~\ref{prop:continuouszero}.\qed

\section{Further examples and localization}\label{sec:examples}

The following examples clarify the role and scope of the coefficient
hypotheses. Local moment bounds
$|a|\in\mathrm L^p_{\loc}$ and $|a^{-1}|\in\mathrm L^q_{\loc}$,
even with $1/p+1/q<2/(n-1)$, need not ensure continuity of weak
solutions when the normalized moment products in
\eqref{eq:uniformmoments} are not uniformly bounded over cubes.
We then show that uniform coarse ellipticity can hold even when
the energy fails to control the $\mathrm L^2(\d x)$ norm on a
bounded ball. Finally, we specify the local coefficient and
geometric assumptions needed for the boundary arguments.

\subsection{A discontinuous solution with admissible coefficient moments}

Finite coefficient moments give finite coarse constants on individual
cubes, but do not provide uniform control across scales. The next example
shows that this distinction can obstruct even interior continuity.

\begin{prop}\label{prop:discontinuous}
For every $n\geq3$ there is a symmetric positive definite coefficient
field with $|a|\leq1$ and $|a^{-1}|\in \mathrm L^q_{\loc}(\R^n)$ for every
$1\leq q<n$, and a bounded weak solution in $B(0,1)$ that is discontinuous at
the origin. Its boundary values on $\partial B(0,1)$ are smooth.
In particular the moment condition
\begin{equation}\label{eq:optimalmoments}
 1/p+1/q<2/{(n-1)}
\end{equation}
does not, without a uniform scale condition, imply continuity of all weak
solutions or solvability of the continuous Dirichlet problem in the stated
continuous solution class.
\end{prop}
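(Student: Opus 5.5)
We would build an explicit example. The solution is bounded and homogeneous of degree zero in the limit at the origin, and the coefficient degenerates only in the tangential directions. Write $r=|x|$, $\omega=x/|x|$, and $P(x)=\Id-\omega\otimes\omega$ for the tangential projection. We take
\[
 a(x)=\omega\otimes\omega+\nu(r)P(x),\qquad u(x)=f(r)\,\omega_1,
\]
with a scalar $0<\nu\leq1$ and a profile $f$ to be chosen. Since $\omega_1$ is a first spherical harmonic, $\Delta_{S^{n-1}}\omega_1=-(n-1)\omega_1$. The radial derivative of $u$ is $f'\omega_1$ and the tangential gradient is $f r^{-1}\nabla_S\omega_1$, so $a\nabla u=f'\omega_1\,\omega+\nu f r^{-1}\nabla_S\omega_1$. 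Splitting the divergence in polar coordinates should give, for $x\neq0$,
\[
 \operatorname{div}(a\nabla u)=\Bigl(r^{1-n}\bigl(r^{n-1}f'\bigr)'-(n-1)\nu(r)\frac{f}{r^2}\Bigr)\omega_1 .
\]
We check this first, including the point that $\nu$ depends only on $r$, so no cross terms appear. We then choose $f(r)=1+r^\beta$ with a small $\beta\in(0,1]$ and define $\nu$ by the vanishing of the bracket:
\[
 \nu(r)=\frac{\beta(\beta+n-2)\,r^\beta}{(n-1)(1+r^\beta)} .
\]
This is positive, and it is at most $1$ for $r\leq1$ once $\beta$ is small. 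For $r\ge1$ we freeze the coefficient by setting $\nu(r)=\nu(1)$ there. Then $|a|\leq1$, and $|a^{-1}|=\nu^{-1}\sim r^{-\beta}$ near $0$, which lies in $\mathrm L^q_{\loc}$ for $q<n/\beta$, in particular for all $q<n$. The function $u$ is bounded by $2$ on $B(0,1)$. Its limit along the ray in direction $\omega$ is $\omega_1$, so $u$ is discontinuous at the origin. On the unit sphere $u=2x_1$, which is smooth.

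The main technical step is to verify that $u$ is a weak solution in the whole ball $B(0,1)$ and not only away from the origin, and that it lies in $\cF_{\loc}(B(0,1))$. For the energy bound, $a\nabla u\cdot\nabla u\lesssim |f'|^2+\nu f^2 r^{-2}\lesssim r^{\beta-2}$, and this is integrable near $0$ because $n\geq3$. For membership in $\cF_{\loc}$, we approximate $u$ by $(1-\chi_\varepsilon)u$ times a fixed cutoff, where $\chi_\varepsilon$ is a cutoff equal to $1$ on $B(0,\varepsilon)$ with $|\nabla\chi_\varepsilon|\lesssim\varepsilon^{-1}$. These functions are smooth and compactly supported away from $0$, hence in $\cF$ after mollification by the argument of Lemma~\ref{lem:Lipzero}. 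The error energy is at most $\int u^2 a\nabla\chi_\varepsilon\cdot\nabla\chi_\varepsilon\lesssim\varepsilon^{n-2}\to0$, using $|a|\leq1$ and $n\geq3$. We also need $\int\chi_\varepsilon^2 a\nabla u\cdot\nabla u\to0$, which follows from the finite energy. For the weak equation with $\phi\in C_0^\infty(B(0,1))$, we split $\phi=(1-\chi_\varepsilon)\phi+\chi_\varepsilon\phi$. The first piece gives zero by the pointwise equation off the origin. The second is at most $\bigl(\int_{B_{2\varepsilon}}a\nabla u\cdot\nabla u\bigr)^{1/2}\bigl(\int a\nabla(\chi_\varepsilon\phi)\cdot\nabla(\chi_\varepsilon\phi)\bigr)^{1/2}$, and both factors tend to zero. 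This is the removable-point argument, and it is where the restriction $n\geq3$ is used.

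Finally, fix $q\in(n-1,n)$. Since $1/q<1/(n-1)<2/(n-1)$, choosing $p$ large (and $|a|\leq1$ gives every $p$) makes \eqref{eq:optimalmoments} hold while $u$ stays discontinuous. We also observe that the uniform condition must fail here. Otherwise Lemma~\ref{lem:interiorosc} would force continuity, and $u$ would be the variational solution with smooth data $2x_1$: uniqueness in $2x_1+\Vzero(B)$ follows from the orthogonality in Proposition~\ref{prop:variational}, once $u-2x_1\in\Vzero$ is checked by the same cutoff approximation. So the continuous Dirichlet problem has no solution in the stated class, and the moment condition alone is insufficient.
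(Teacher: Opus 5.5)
Your example is the paper's. At $\beta=1$ your formula gives $\nu(r)=r/(1+r)$ and $u=(1+r)\omega_1=x_1+x_1/|x|$, which are exactly the paper's coefficient and solution. The value $\beta=1$ is admissible, since $\nu\leq1/2$ on $r\leq1$ for every $\beta\in(0,1]$, so smallness of $\beta$ is not needed.

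For $\beta=1$ the flux is constant, $a\nabla u=e_1$. The paper uses this to get the equation across the origin at once, and it gets $\cF_{\loc}$ membership from $\nabla u\in\mathrm L^2$ and $a\leq\Id$. It exhibits the scale obstruction through the explicit Harnack ratio $4/R+3$ for $u+1+R$ on $B(0,R)$, rather than by contradiction with Lemma~\ref{lem:interiorosc}. Your removable-point argument is correct for every $\beta\in(0,1]$: the cutoff error is $\lesssim\varepsilon^{n-2}$, and Cauchy--Schwarz on $B_{2\varepsilon}$ handles the rest. The family also gives slightly more, namely $|a^{-1}|\in\mathrm L^q_{\loc}$ for all $q<n/\beta$.

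The step that does not go through as written is the Dirichlet claim. Uniqueness of the minimizer in $2x_1+\Vzero(B)$ only rules out competitors in that affine class. Suppose $v\in C(\overline B)\cap\cF_{\loc}(B)$ solves the equation with $v=2x_1$ on $\partial B$. Such a $v$ is not known to have finite energy up to $\partial B$; the paper stresses after Theorem~\ref{thm:dirichletmain} that continuous solutions need not. So you cannot place $v-2x_1$ in $\Vzero(B)$ and invoke orthogonality, and ``no solution in the stated class'' does not follow.

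What is missing is a comparison that uses only continuity near the boundary, as in the paper. Both $u$ and $v$ are continuous near $\partial B$ and equal $2x_1$ there. Hence, for each $\varepsilon>0$, the function $(v-u-\varepsilon)_+$ is bounded, lies in $\cF_{\loc}(B)$, and vanishes near $\partial B$. By Lemma~\ref{lem:products} it is an admissible test for the equation satisfied by $v-u$, so it has zero energy, and \eqref{eq:energycoerciveLone} forces it to vanish. Repeating this with $u-v$ and letting $\varepsilon\downarrow0$ gives $v=u$ almost everywhere. This contradicts the essential discontinuity of $u$ at the origin. With this argument added, the nonexistence claim no longer needs $u-2x_1\in\Vzero(B)$.
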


\begin{proof}
For $x\neq0$, put $r=|x|$ and let $\Pi(x)={(x\otimes x)}/{r^2}$ be the orthogonal projection onto the radial direction.
Set
\begin{equation}\label{eq:countercoefficient}
 a(x)=\Pi(x)+\frac r{1+r}(\Id-\Pi(x)),\qquad
 u(x)=x_1+{x_1}/{|x|}.
\end{equation}
The value of $a$ or $u$ at the origin is immaterial. The radial eigenvalue
of $a$ is one and its tangential eigenvalues are $r/(1+r)$. Hence
$|a|=1$ and $|a^{-1}|=1+r^{-1}$. The asserted integrability follows.
Direct differentiation gives
\begin{equation}\label{eq:counterflux}
 \nabla u=\Pi(x)e_1+(1+r^{-1})(\Id-\Pi(x))e_1,
 \qquad a\nabla u=e_1.
\end{equation}
Since $n\geq3$, $\nabla u\in \mathrm L^2(B(0,1))$. Smooth approximation in
$H^1$ is also approximation in the $a$ energy because $a\leq\Id$.
Thus $u$ is an approximable weak solution, and its constant flux in
\eqref{eq:counterflux} verifies the equation distributionally across the
origin. Along the two directions $e_1$ and $-e_1$, its limits at the
origin are $1$ and $-1$, respectively. On the unit sphere, $u=2x_1$.

One can choose $p=q=n-1/2$, which satisfies \eqref{eq:optimalmoments},
and then choose $s,t>0$ small enough to satisfy \eqref{eq:admissible}.
Lemma~\ref{lem:moments} gives finite coarse constants on every fixed cube.
Nevertheless they cannot satisfy \eqref{eq:UC}.
To see the scale obstruction directly, let $0<R<1$.
The function $h_R=u+1+R$ is nonnegative and $\La$-harmonic in $B(0,R)$, and
\begin{equation}\label{eq:counterHarnack}
 \frac{\esssup_{B(0,R/2)}h_R}{\essinf_{B(0,R/2)}h_R}
 =\frac{2+3R/2}{R/2}=\frac4R+3.
\end{equation}
The Harnack constant diverges as $R\downarrow0$.

The function $u-2x_1$ belongs to $H^1_0(B(0,1))$, and therefore to the
zero boundary energy space. Thus $u$ is the variational solution with
boundary datum $2x_1$. If there were another solution $v$ continuous on the closed
ball with these boundary values, $v-u$ would be a bounded local form
solution tending uniformly to zero near the sphere. For every
$\varepsilon>0$, its positive and negative truncations above level
$\varepsilon$ have compact support in the ball and can be used as tests.
They have zero energy and hence vanish. Thus $v=u$ almost everywhere,
which contradicts the existence of a continuous representative.
\end{proof}

\subsection{Failure of coercivity under uniform coarse ellipticity}

The following proposition shows that the coefficient hypotheses of
Theorems~\ref{thm:dirichletmain} and \ref{thm:BHImain} allow the bottom
of the form spectrum on $\mathrm L^2(\d x)$ to be zero even in a
bounded ball.

\begin{prop}\label{prop:nogap}
For every $n\geq3$, there exist parameters $(p,q,s,t)$ satisfying
\eqref{eq:admissible} and a scalar coefficient field satisfying
\eqref{eq:basic} and \eqref{eq:UC} such that
\begin{equation}\label{eq:zerogap}
 \lambda_1(B(0,1))
 :=\inf_{0\neq\phi\in C_0^\infty(B(0,1))}
       \frac{\cE(\phi,\phi)}{\|\phi\|_{\mathrm L^2}^2}=0.
\end{equation}
In particular, for this coefficient there are no $\chi>1$ and $S<\infty$
such that
\begin{equation}\label{eq:excludedSobolev}
 \|\phi\|_{\mathrm L^{2\chi}(B(0,1))}^2
 \leq S\cE(\phi,\phi),\qquad \phi\in C_0^\infty(B(0,1)).
\end{equation}
Nevertheless, Theorems~\ref{thm:dirichletmain} and \ref{thm:BHImain}
apply to this coefficient.
\end{prop}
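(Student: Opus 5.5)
We plan to reuse the Cantor construction of Theorem~\ref{a2:thm:example}, but with a singularity placed so that $w$ is very large on a set of substantial measure in $B(0,1)$. The key observation is that \eqref{eq:UC} is scale invariant under multiplication of $a$ by a constant but is not affected by translations of the singular set. Coercivity fails when the energy of a bump can be made small relative to its $\mathrm L^2$ norm, and this requires $w$ to be small, not large. So we instead take $a=w^{-1}\Id$ with the dual profile, or more simply use a coefficient of the form $a=\Phi_\beta(d_E)^{-1}\Id$ with $E$ a product Cantor set. Near $E$ we then have $w^{-1}\to0$. We need a nonzero function supported near $E$, with energy small compared to its $\mathrm L^2$ mass, and the difficulty is that $E$ has measure zero.

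A cleaner route is to take the singular set to be a point or an accumulating sequence of points. We choose $a=|x|^{\gamma}\Id$ with $\gamma>0$ suitably large, or better a scalar weight vanishing at infinitely many points $x_k\to0$. The spectral bottom is then governed by bumps $\phi_k$ on $B(x_k,\rho_k)$, for which
\[
 \cE(\phi_k,\phi_k)/\|\phi_k\|_2^2\lesssim \rho_k^{-2}\sup_{B(x_k,\rho_k)}w.
\]
For a single power $|x|^\gamma$ this ratio behaves like $\rho^{\gamma-2}$, which tends to $0$ as $\rho\to0$ as soon as $\gamma>2$. This gives \eqref{eq:zerogap} with bumps concentrating at the origin.

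The steps, in order, are as follows.
\begin{enumerate}
\item Fix $w(x)=|x|^\gamma$ with $2<\gamma<n$, possibly modified so that $w\le 1$ is bounded above and locally integrable together with $w^{-1}$. The condition $\gamma<n$ gives $w^{-1}\in\mathrm L^1_{\loc}$, so \eqref{eq:basic} holds.
\item Verify \eqref{eq:UC} through Corollary~\ref{cor:momentcriterion}. We check the uniform moment bound \eqref{eq:uniformmoments} for power weights with exponents $p,q$ satisfying $\gamma q<n$ and $1/p+1/q<2/(n-1)$. The standard power-weight calculation, scaling plus the dichotomy between cubes near the origin and cubes far from it as in Proposition~\ref{a2:prop:converse}, gives uniform boundedness of the normalized moment products whenever $\gamma q<n$. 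We then take $p$ large and $q$ slightly below $n/\gamma$. The condition $1/p+1/q<2/(n-1)$ then requires $\gamma/n<2/(n-1)$, that is $\gamma<2n/(n-1)$, which is compatible with $\gamma>2$. Corollary~\ref{cor:momentcriterion} supplies admissible $s,t$.
\item Compute the Rayleigh quotient for $\phi_\rho(x)=\phi(x/\rho)$. Both $\cE(\phi_\rho,\phi_\rho)\approx\rho^{n-2+\gamma}$ and $\|\phi_\rho\|_2^2\approx\rho^n$, so the quotient is about $\rho^{\gamma-2}\to0$. This proves \eqref{eq:zerogap}.
\item Observe that \eqref{eq:excludedSobolev} together with Hölder's inequality on the bounded ball would give $\|\phi\|_2^2\le C S\,\cE(\phi,\phi)$, contradicting the previous step.
\item Conclude that Theorems~\ref{thm:dirichletmain} and \ref{thm:BHImain} apply, since their hypotheses are exactly \eqref{eq:basic}, \eqref{eq:admissible} and \eqref{eq:UC}.
\end{enumerate}

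The main obstacle is the exponent bookkeeping in the second step. We need $\gamma>2$ for the failure of coercivity, and simultaneously $\gamma<2n/(n-1)$ and $q<n/\gamma$ for the moment criterion. The window $2<\gamma<2n/(n-1)$ is nonempty for every $n\ge3$. We must also check that $p$ can be taken large while $|a|\in\mathrm L^p$ holds uniformly on large cubes. Here we would cap the weight at $1$ away from the origin, setting $w=\min\{|x|^\gamma,1\}$. This cap keeps $|a|\le1$, and the $A_2$-type power calculation remains uniform over all cubes. The remaining check is the uniform moment products on large cubes containing the origin, which follow because $w\approx1$ on most of such a cube.
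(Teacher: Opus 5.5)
Your proposal is correct and follows the paper's proof: the power weight $|x|^\gamma\Id$ with $2<\gamma<2n/(n-1)$, uniformly bounded moment products with $q$ just below $n/\gamma$ and $p$ large, giving \eqref{eq:UC} via Lemma~\ref{lem:moments} (equivalently Corollary~\ref{cor:momentcriterion}), rescaled bumps giving a Rayleigh quotient of order $\rho^{\gamma-2}\to0$, and H\"older's inequality to exclude \eqref{eq:excludedSobolev}. The cap $\min\{|x|^\gamma,1\}$ is harmless but unnecessary, because scaling already bounds the moment products of the uncapped weight uniformly over all cubes. The exploratory Cantor-set and dual-profile remarks at the start play no role in the final argument.
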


\begin{proof}
Choose
\begin{equation}\label{eq:powerchoice}
 2<\gamma<{2n}/{(n-1)},\qquad a(x)=|x|^\gamma\Id.
\end{equation}
The value at the origin may be chosen arbitrarily positive definite.
Since $2n/(n-1)\leq n$ for $n\geq3$, the inverse coefficient is locally
integrable. Choose
\[
 \max\{1,(n-1)/2\}<q<n/\gamma
\]
and then choose $p>1$ sufficiently large that
$1/p+1/q<2/(n-1)$. These choices are possible by
\eqref{eq:powerchoice}. The normalized $\mathrm L^p$ moment of
$|x|^\gamma$ times the normalized $\mathrm L^q$ moment of
$|x|^{-\gamma}$ is uniformly bounded over cubes.
Indeed, on cubes far from the origin the weight is comparable to a
constant. On cubes at distance at most a fixed dimensional multiple of
their side length from the origin, dilation and $\gamma q<n$ bound
the two factors by $C\ell(Q)^\gamma$ and $C\ell(Q)^{-\gamma}$.
Choose small positive $s,t$ satisfying \eqref{eq:admissible}.
Lemma~\ref{lem:moments} proves \eqref{eq:UC}.

Take $0\neq\phi\in C_0^\infty(B(0,1))$ and set
$\phi_\epsilon(x)=\phi(x/\epsilon)$. Then
\begin{equation}\label{eq:gapcounter}
 \frac{\cE(\phi_\epsilon,\phi_\epsilon)}
      {\|\phi_\epsilon\|_{\mathrm L^2}^2}
 =\epsilon^{\gamma-2}
   \frac{\int |y|^\gamma|\nabla\phi(y)|^2\d y}
        {\int|\phi(y)|^2\d y}\longrightarrow0.
\end{equation}
This proves \eqref{eq:zerogap}. If \eqref{eq:excludedSobolev} held,
H\"older's inequality on $B=B(0,1)$ would give
\[
 \|\phi\|_{\mathrm L^2(B)}^2
 \leq |B|^{1-1/\chi}\|\phi\|_{\mathrm L^{2\chi}(B)}^2
 \leq |B|^{1-1/\chi}S\cE(\phi,\phi),
\]
contradicting \eqref{eq:zerogap}. The coefficient hypotheses of these
boundary theorems have already been checked.
\end{proof}

\begin{rem}
The weight in Proposition~\ref{prop:nogap} is classical and
belongs to $A_2$. The example shows that an
$\mathrm L^2(\d x)$ Poincar\'e inequality is not needed for
variational existence in the energy completion, for the
continuous Dirichlet construction, or for our direct boundary
comparison proof. The conclusions also apply to NTA domains
whose boundary passes through the origin. The vanishing spectral gap prevents a direct application of
\cite[Assumption~4.9 and Theorem~1.1]{BM} with reference measure
$\d x$. It does not exclude the existence of a Green kernel
or obstruct application of the later boundary Harnack theorem
\cite{ChenBHP}. The Green function theory developed in
\cite{BCM} does not require such a spectral gap: a Green kernel
need not define a bounded inverse on $\mathrm L^2(\d x)$.
\end{rem}

\subsection{Localization and the geometric assumptions}\label{sec:localization}

For the continuous Dirichlet theorem it is enough that the basic
integrability assumptions hold near $\overline\Om$ and that the uniform
coarse estimates hold on the ambient cubes used there. One may first
restrict to a relatively compact neighborhood of $\overline\Om$ and
extend the coefficient farther away, since no estimate there enters
the proof. For boundary comparison near a fixed boundary point, all
cubes and chains are contained in a fixed geometry-dependent enlargement
of the stated neighborhood. Thus uniformity on these cubes suffices.
An extension across the physical boundary with the required local
properties remains part of the hypotheses when only an interior
coefficient is prescribed.

The continuous Dirichlet argument uses only boundedness, connectedness,
and exterior volume density with a fixed positive fraction on the
scales considered. Its boundary regularity constants depend on that
fraction and the coefficient data. The comparison proof additionally uses
the interior corkscrews and Harnack chains. Its use of the exterior
corkscrew is confined to a positive Lebesgue measure fraction in
boundary cubes. Accordingly, the same proof works with the interior
NTA geometry and a quantitative exterior volume density assumption.
No boundary surface measure estimate is used. The exterior volume density condition is an input of this direct
proof. The abstract boundary Harnack theorem
\cite[Theorem~1.1]{ChenBHP} applies to uniform domains without
this exterior assumption.

The moment bound and exponent restriction in
Corollary~\ref{cor:momentcriterion} provide an explicit sufficient
criterion for \eqref{eq:UC}, recovering a classical $A_2$ weighted case.
The construction in Section~\ref{sec:coefficientclasses} verifies the
full coarse condition directly for a Cantor coefficient outside that
class. These verifications concern the ambient coefficient alone and
impose no additional
compatibility between its singular set and the NTA boundary. For other
coefficient models, uniform control of the variational quantities
remains the analytic hypothesis needed to apply the boundary argument.

\medskip
\noindent
{\bf Conflict of interest.}
The author states that there is no conflict of interest.

\medskip
\noindent
{\bf Data availability.}
Data sharing is not applicable to this article as no datasets
were generated or analysed during the current study.

\medskip
\noindent
{\bf Declaration of use of AI.}
The author used GPT-6 Astra, developed by OpenAI, during the preparation and revision of this manuscript.  The tool was
used for organization and for assistance with checking mathematical
arguments, clarity, and internal consistency.  The mathematical arguments, results,
and conclusions were developed and verified by the author, who also
checked the references and takes full responsibility for the content of
the manuscript.

\end{document}